\newtheorem{theorem}{Theorem}
\newtheorem{proposition}{Proposition}
\newtheorem{lemma}{Lemma}
\newtheorem{corollary}{Corollary}
\newtheorem{te}{Theorem}[section]
\newtheorem{pr}[te]{Proposition}
\newtheorem{rem}[te]{Remark}
\newtheorem{rems}[te]{Remarks}
\newtheorem{co}[te]{Corollary}
\newtheorem{lem}[te]{Lemma}
\newtheorem{prob}[te]{Problem}
\newtheorem{de}[te]{Definition}
\newcommand{\Z}{\mathbb{Z}}
\newcommand{\R}{\mathbb{\R}}
\newcommand{\s}{\mathfrak{s}}
\def\R{\mathbb{R}}
\def\N{\mathbb{N}}
\def\H{\mathbb{H}}
\def\P{\mathbb{P}}
\def\Z{\mathbb{Z}}
\def\cA{\mathcal{A}}
\def\cE{\mathcal{E}}
\def\cF{\mathcal{F}}
\def\cG{\mathcal{G}}
\def\cH{\mathcal{H}}
\def\cL{\mathcal{L}}
\def\cP{\mathcal{P}}
\def\cR{\mathcal{R}}
\def\cS{\mathcal{S}}
\def\cT{\mathcal{T}}
\def\cX{\mathcal{X}}
\newcommand{\Ker}{{\rm Ker\,}}
\def\be{\begin{equation}}
\def\ee{\end{equation}}
\newcommand{\w}{\mathfrak{w}}
\def\bp{\begin{proof}}
\def\ep{\end{proof}}
\def\ben{\begin{enumerate}}
\def\een{\end{enumerate}}
\def\ba{\begin{eqnarray*}}
\def\ea{\end{eqnarray*}}
\def\lra{\longrightarrow}
\def\Llra{\Longleftrightarrow}
\def\Lra{\Longrightarrow}
\def\w4{\,\widehat{1/4}}
\newcommand{\wh}{\widehat}
\newcommand{\la}{\langle}
\newcommand{\ra}{\rangle}
\newcommand{\wt}{\widetilde}
\newcommand{\sm}{\setminus}
\newcommand{\sse}{\subseteq}
\newcommand{\es}{\varnothing} 
\newcommand{\fG}{\mathfrak{G}}
\newcommand{\fP}{\mathfrak{P}}
\newcommand{\fH}{\mathfrak{H}}
\newcommand{\fX}{\mathfrak{X}}
\newcommand{\fm}{\mathfrak{m}}
\newcommand{\q}{\quad}
\newcommand{\eps}{\varepsilon}
\newcommand{\gam}{\gamma}
\newcommand{\Gam}{\Gamma}
\newcommand{\Lam}{\Lambda}
\newcommand{\lam}{\lambda}
\newcommand{\om}{\omega}
\newcommand{\Om}{\Omega}
\newcommand{\si}{\sigma}
\def\cA{\mathcal{A}}
\def\cE{\mathcal{E}}
\def\cF{\mathcal{F}}
\def\cG{\mathcal{G}}
\def\cR{\mathcal{R}}
\def\cS{\mathcal{S}}
\def\cL{\mathcal{L}}
\def\cH{\mathcal{H}}
\def\cX{\mathcal{X}}
\def\O{\mathcal{O}}
\newenvironment{proofof}[2]{\par{\em Proof of #1}~#2}{{\hfill$\Box$\par}}
\begin{document}
\title[Group actions, polygroup extensions, and group presentations]
{Group actions, deformations, polygroup extensions, and group presentations}
\author[{\c S}.\,A. Basarab and T.\,W. M\"uller]{{\c S}erban Basarab$^\dagger$ and Thomas M\"uller$^*$}

\address{$^\dagger$Institute  of Mathematics of the Romanian Academy, 
 P.O.Box~1-764, RO-70109 Bucharest, Romania}

\address{$^*$School of Mathematical Sciences, Queen Mary
\& Westfield College, University of London,
Mile End Road, London E1 4NS, United Kingdom.}

\thanks{$^*$Research supported by Lise Meitner Grant M1661-N25 of the Austrian Science Foundation FWF}

\keywords{Group actions, presentations, polygroups, group extensions, valuation rings, valued fields, deformations, general linear groups, special linear groups, Mathieu groups}

\subjclass[2010]{Primary 20B20, 20B22, 20F05; Secondary 16W60, 20D08}

\begin{abstract}
Generalizing classical extension theory, we solve a Schreier-type extension problem for polygroups by groups. As a consequence, we obtain a method for computing a presentation for a group from its action on a set. The usefulness of this method is illustrated by deriving explicit presentations for the groups $GL_2$ over valuation rings and over valued fields, for the groups $SL_3$ over arbitrary fields, as well as for the five Mathieu groups. Moreover, we sketch some aspects of a new deformation technique for groups, their actions, and presentations,  and apply it to compute presentations for the sharply $3$-transitive Zassenhaus groups $M(q^2)$ (in the notation of Huppert and Blackburn) for any odd prime power $q$. This computation serves to demonstrate how suitable deformation of groups and their actions interacts with, and thereby enhances, the presentation method.
\end{abstract}

\maketitle

\begin{center}
\textit{Dedicated to the memory of John R. Stallings (1935--2008)}
\end{center}

\section*{Introduction}
Ever since Felix Klein's 1872 Erlangen Program \cite{Klein}, the close connection between group theory and geometry has inspired a fair amount of mathematical research. Klein's original idea was to study geometries via the invariants of the group(s) of symmetries naturally associated with this geometry, and to understand the hierarchy of geometries in terms of the corresponding hierarchy of their associated groups. It soon transpired however that this idea is not a one-way road: it may be possible, conversely, to make an abstract group act on some sort of geometric space, and to derive structural information concerning this group from details of such an  action. This strategy is already present -- in somewhat imprecise form -- in Poincar\'e's first Acta memoir \cite{Poin} (see in particular sections III and X), where he explains how to obtain a presentation for a Fuchsian group from its action on a (previously constructed) fundamental domain; in full generality the defining relations of a Fuchsian group were first obtained  by Klein \cite{Klein2}.\footnote{The same approach also underlies the classification given by Klein and Fricke \cite{FrickeKlein} of Fuchsian groups with compact orbit space.} This idea of deriving structural information (usually in the form of a presentation) for a group from its action on a suitable space has, subsequently, been made precise by a number of authors in a variety of contexts; cf.\ Abels~\cite{Abels}, Bass~\cite{Bass}, Behr~\cite{Behr}, Bridson and Haefliger~\cite{BridHaef}, Brown~\cite{Brown}, Gerstenhaber~\cite{Gerstenhaber}, Haefliger~\cite{Haefliger}, Macbeath~\cite{Macbeath}, Serre~\cite{Trees}, Siegel~\cite{CLS1}, Soul\'e~\cite{Soule}, Stallings~\cite{Stallings}, Swan~\cite{Swan}, Tits~\cite[App.~2]{Tits}, and Weil~\cite{Weil} for a nonexhaustive sample.  

The present paper introduces three new (though interrelated)  ideas: (i) a generalisation of group extension theory, which works without the assumption of normality, (ii) a deformation technique for groups, actions, and presentations, and (iii) a method for obtaining presentations from group actions on sets. In this introduction, we shall briefly explain these ideas, and put them into context. 

Firstly, we contribute to the line of research described in the previous paragraph by establishing a method for deriving a presentation of a group from its action on a mere set (finite or infinite, with or without extra structure). The corresponding result, Theorem~\ref{Thm:PresGpAct} in Section~\ref{Subsec:PresGpAct}, is both general and powerful. We illustrate its usefulness in Sections~\ref{Sec:GL2}--\ref{Sec:SL3}, by analysing the groups $GL_2$ over valuation rings and over fields with valuation, and the groups $SL_3$ over arbitrary (discrete) fields, obtaining explicit presentations for these groups; cf.\ Theorems~\ref{Th:PresGL_2(O)}, \ref{The:PresGL_2(K)}, and \ref{Thm:SL3Pres}. In Section~\ref{Sec:Mathieu}, we derive, in a uniform way, new presentations for the five Mathieu groups by combining Theorem~\ref{Thm:PresGpAct} (or rather its consequence Corollary~\ref{Cor:Main2Trans}, where the action is assumed to be multiply transitive) with a version of Witt's construction \cite{Witt}. 

The group actions underlying the results of Sections~\ref{Sec:GL2} and \ref{Sec:SL3} are the natural action of $GL_2(\O)$ on the projective line $\mathbb{P}^1(K)$, where $K$ is the field of fractions of the valuation ring $\O$, and the ($2$-transitive) action of $SL_3(K)$ on the points of the  projective plane $\mathbb{P}^2(K)$, respectively. In Section~\ref{Sec:GL2/VF}, the main result, Theorem~\ref{The:PresGL_2(K)}, is obtained from the action of $GL_2$ over the valued field $(K, v)$ on a suitable $\Lambda$-tree $\mathfrak{X}$, where $\Lambda = v(K^\times)$ is the value group of $v$, whose underlying set is the global residue structure 
\[
\mathfrak{X} = \bigsqcup_{\alpha\in\Lambda} K/\{z\in K: v(z) \geq \alpha\}
\]
of $(K, v)$. This action may be viewed as a deformation, induced by the valuation $v$, of the corresponding projective action.

At present, two approaches to Theorem~\ref{Thm:PresGpAct} are known, each leading further into a fruitful area of research: the theory of group actions on (connected) groupoids, and generalized extension theory. 

In the first case, Theorem~\ref{Thm:PresGpAct} appears as the special case of a structure theorem for such group actions  where the action is of simplicial type (i.e., the corresponding groupoid is a blow-up of a non-empty set); cf.\ \cite{Ba1} and \cite[Chap.~I]{M2}. Incorporating further geometric structure elements, this line of thought eventually leads to a Bass-Serre-type theory for group actions on $\Lambda$-trees, where $\Lambda$ is a totally ordered (or even a lattice-ordered) abelian group. The latter theory, whose presentation requires substantial preparation, will be described in a separate publication \cite{M2}. 

A more direct road to Theorem~\ref{Thm:PresGpAct}, which we adopt in the present paper, is via a suitable generalization of group extension theory. Given a group $G$ and a subgroup $H\leq G$, let $E = H\backslash G/H$ be the corresponding space of $(H,H)$-double cosets. If $H$ is normal in $G$, the set $E$, which carries a natural polygroup structure, is isomorphic to the quotient group $G/H$, and $G$ is an extension of $E$ by $H$. In general, for $H$ not necessarily normal in $G$, the group $G$ may still be viewed as a kind of extension of the polygroup $E$ by the group $H$. One is thus led to formulate a \textit{Schreier-type extension problem} for such generalized extensions, and it is this extension problem which is solved in Section~\ref{Sec:ExtPr}; cf.\ Theorem~\ref{Thm:SolExtP}. Just as Schreier described (equivalence classes of) group extensions via (equivalence classes of) factor systems ($2$-cocycles), Theorem~\ref{Thm:SolExtP} parametrizes (equivalence classes of) group extensions of polygroups by groups via (equivalence classes of) certain objects called \textit{group-like graphs of groups}, which arise from a suitable extension and modification of Serre's graph of groups concept. Theorem~\ref{Thm:PresGpAct} is an immediate consequence of results obtained in Section~\ref{Sec:ExtPr}. 

The theory of generalized group extensions, as initiated in Section~\ref{Sec:ExtPr}, while clearly in need of further research, is of considerable interest in its own right. Even at the present (still very incomplete) state, it has a number of applications beyond a proof of Theorem~\ref{Thm:PresGpAct}: new characterisations for a number of interesting groups (for instance, symmetric or sporadic simple groups) via their corresponding extension space; a general structure theory for $2$-transitive groups (finite or infinite), in the finite case largely without recourse to the classification of finite simple groups; a better understanding of infinite sharply $2$-transitive groups, shedding new light on a recent result by Rips, Segev, and Tent \cite{RST} concerning the existence of infinite sharply $2$-transitive groups without abelian normal subgroups, etc. These results, together with a further development of the underlying extension theory will eventually be presented in a separate publication; cf.\ \cite{M}.

The third idea introduced in this paper concerns a new kind of deformation technique for groups, their associated actions, and presentations. Technically speaking, we use actions $\varphi: G\rightarrow \mathrm{Aut}(G)$ by automorphisms of a group $G$ on itself to twist the extension class of $G$, with $G$ being viewed as an extension of the image $\varphi(G)$ by the kernel of $\varphi$, and to deform associated $G$-actions and presentations. The machinery of group deformations was originally conceived by the first author in the context of profinite and abstract co-Galois theory (see \cite{CGA} and \cite{GFrame}); it was subsequently generalised and developed further by the authors; cf.\ \cite{BM1}. In order to keep the length of this paper within reasonable bounds, 
we confine ourselves in Section~\ref{Sec:Deform} to  describing some of the more elementary (discrete) aspects of deformation theory, demonstrating by means of an example how this technique interacts with (and thereby enhances) the presentation method of Theorem~\ref{Thm:PresGpAct}. The main result of that section, Theorem~\ref{Thm:MPres}, provides a presentation for the sharply $3$-transitive Zassenhaus group $\mathrm{M}(q^2)$ (in the notation of Huppert and Blackburn \cite{HuBl}) for any odd prime power $q=p^m$, the group $\mathrm{M}(q^2)$ being viewed as a deformation (not as a twist!) of the group $\mathrm{PGL}_2(q^2)$. The smallest of these groups, $\mathrm{M}(3^2)$, plays a role in Section~\ref{Subsec:SmallMathieu}, being the stabilizer of a point in the Mathieu group $\mathrm{M}_{11}$.

The contents of this paper represents the first fruit of a, rather intense, collaboration of the authors, which started in 2009, and came to an abrupt end with the unexpected death of {\c S}erban Basarab on 14 July 2014 (TWM). 

\textit{Contents}
\begin{enumerate}
\item[\S1.] Hypergroups and polygroups.
\item[\S2.] Presentations arising from group actions on sets.
\item[\S3.] $GL_2$ over valuation rings.
\item[\S4.] $GL_2$ over valued fields.
\item[\S5.] $SL_3$ over fields.
\item[\S6.] Deformations of groups and presentations.
\item[\S7.] The Mathieu groups
\item[\S8.] Group extensions of polygroups by groups.
\end{enumerate}

\section{Hypergroups and polygroups}
\label{Sec:PolyGps}

For the convenience of the reader, this brief section collects  together some definitions and facts from the theory of hypergroups and polygroups which are needed in the sequel.
\subsection{Definition of a hypergroup} 
The concept of a hypergroup, introduced by F. Marty \cite{M1} in 1934, originally arose as a generalization of the 
concept of an abstract group. Hypergroups play a central role in the theory of algebraic hyperstructures, having connections with 
classical algebraic structures as well as manifold  applications in geometry, topology, combinatorics, probability theory, category theory, 
logic, etc.; cf.\ \cite{Yaacov}--\cite{Dietzmann}, \cite{Harrison}, \cite{Hos/Chv}, \cite{Krasner}, \cite{Mittas1}--\cite{Mittas3}, \cite{Prenowitz1}--\cite{Roth}, and \cite{Tallini}--\cite{Voug}, for a sample.

We state some definitions. Let $E$ be a non-empty set. Denoting by $\cP^\ast(E)$ the collection of all non-empty subsets of $E$, the injective map $E \rightarrow \cP^\ast(E),$ $a \mapsto \{a\}$, identifies $E$ with a subset of $\cP^\ast(E)$. 
A {\em hyperoperation} $\circ$ in $E$ is a map $E \times E \rightarrow \cP^\ast(E),$ denoted by $(a, b) \mapsto a \circ b$. The 
hyperoperation $\circ$ is extended to non-empty subsets of $E$ in a natural way by setting  
$A \circ B:= \bigcup\limits_{(a,b)\in A\times B} a \circ b$.
\begin{de}
{\em A non-empty set $E$ endowed with a hyperoperation $\circ$ is called a {\em hypergroup} if the following are satisfied for all $a, b, c \in E$:
\ben
\vspace{-2mm}
\item[\rm (i)]  $a \circ (b \circ c) = (a \circ b) \circ c$, that is,
$\bigcup_{u \in b \circ c} a \circ u = \bigcup_{v \in a \circ b} v \circ c$\, (Associativity);
\vspace{2mm}
\item[\rm (ii)]  $E \circ a = E = a \circ E$, that is, $\bigcup_{u \in E} u \circ a = E = 
\bigcup_{u \in E} a \circ u$\, (Reproduction Axiom).
\een}
\end{de}

{\footnotesize 
\begin{rems} \em
(1) The associativity condition (i) implies associativity for non-empty subsets of $E$; that is, we have 
$A \circ (B \circ C) = (A \circ B) \circ C$.

\vspace{-2mm}

(2) The reproduction axiom (ii) is equivalent to the following:
\[
b/a:= \{x \in E:\,b \in x \circ a\} \neq \es\,\,\mbox{and}\,\,a \backslash b:= \{x \in E:\,b \in a \circ x\} 
\neq \es\,\,\mbox{ for all}\,\,a, b \in E.
\]
\end{rems}}

\subsection{Polygroups}

We shall need the more restrictive notion of a polygroup introduced by S.\,D. Comer in \cite{Comer}.
\begin{de} 
{\em A {\em polygroup} or {\em quasicanonical hypergroup} is a hypergroup $(E, \circ)$ satisfying the following conditions:
\ben
\vspace{-2mm}
\item[\rm (i)] there exists a {\em scalar identity}, that is, there exists some $e \in E$ such that \\
$a \circ e = a = e \circ a$ for all $a \in E$;

\vspace{2mm}
 
\item[\rm (ii)] for each $a \in E$, there exists some $\bar{a} \in E$ such that, for all $b \in E$, \\
$b/a = b \circ \bar{a}$ and $a \backslash b = \bar{a} \circ b$.  
\een}
\end{de}

{\footnotesize 
\begin{rems} 
\label{Rem:Poly}
\em
(1) The scalar identity $e$ is necessarily unique.

\vspace{-2mm}

(2) For any $a \in E$, the element $\bar{a}$ is unique; moreover, we have $e \in (a \circ \bar{a}) \cap (\bar{a} \circ a)$,\, $e = \bar{e}$,\,
$\bar{\bar{a}} = a$, and $\overline{a \circ b}:= \{\bar{x}\,:\,x \in a \circ b\} = \bar{b} \circ \bar{a}$.

\vspace{-2mm}

(3) A non-empty set $E$, endowed with an associative hyperoperation $\circ$ and a scalar identity $e$, is a polygroup
if, and only if, for each $a \in E$, there exists $\bar{a} \in E$ such that, for all $a, b, c \in E$, 
$a \in b \circ c$ implies $b \in a \circ \bar{c}$ and $c \in \bar{b} \circ a$.  

\vspace{-2mm}

(4) a hypergroup $(E, \circ)$ with a scalar identity is a polygroup if, and only if, 
the following {\em transposition condition} is satisfied:
\[
(b \backslash a) \cap (c/d) \neq \es \Lra (a \circ d) \cap (b \circ c) \neq \es\,\,\mbox{for all}\,\,a, b, c, d \in E;
\] 
cf.\ \cite[Corollary 3]{J}.
\end{rems}}

{\em Commutative polygroups}, that is, polygroups $(E,\circ, e, {}^-)$ satisfying $a\circ b=b\circ a$ for all $a, b\in E$, are also called {\em canonical hypergroups}. Note that, by Part~(2) of Remarks~\ref{Rem:Poly},  polygroups $E$ satisfying $\bar{a} = a$ for all $a \in E$ are commutative.

Given a polygroup $(E;\,\circ,\,e,\,\bar{}\,\,)$, we denote by $\Gam(E)$ its {\em associated graph} consisting of one vertex and $E$ as set of edges, together with the involution\, $\bar{}: E \rightarrow E$, $f \mapsto \bar{f}$. By an {\em orientation} of the 
graph $\Gam(E)$ we mean a subset $E_+ \sse E$ such that 
\[
E = E_+ \amalg \big\{f \in E\,:\,\bar{f} \in E_+\,\mbox{and}\, f \neq \bar{f}\big\};
\] 
in particular, $e = \bar{e} \in E_+$. Note that the graph $\Gam(E)$ does not contain any information concerning the hyperoperation $\circ$.

\subsection{Isomorphisms of polygroups}

An \emph{isomorphism} $\Phi: E_1\rightarrow E_2$ between two polygroups $E_1 = (E_1, \circ,\,\bar{}, e_1)$ and $E_1 = (E_1, \circ,\,\bar{}, e_1)$ is a bijection $E_1\rightarrow E_2$ between the underlying sets, which respects the hyperoperation in the sense that
\[
\Phi(a\circ b) = \Phi(a) \circ \Phi(b),\quad (a, b\in E_1);
\] 
that is, which is a strong homomorphism in the usual terminology of hypergroup theory; cf.\, for instance, Definitition~3.3.15 in \cite{Davvaz}. It can be shown that an isomorphism of polygroups $\Phi: E_1 \rightarrow E_2$ respects inversion, that is, it satisfies 
\[
\Phi(\bar{a}) = \overline{\Phi(a)},\quad a\in E_1,
\] 
and maps the scalar identity of $E_1$ onto that of $E_2$. 

\subsection{Double coset spaces as polygroups}
\label{Subsec:DoubleCosetSpace}
Given a group $G$ and a subgroup $H$, consider the set $E:= H \backslash G /H$ of $(H, H)$-double cosets 
$C(g) = H g H$ for $g \in G$. The set $E$ comes equipped with a canonical polygroup structure  
$(E;\,\circ,\,e,\,\bar{}\,\,)$, with the associative hyperoperation $\circ$ given by 
\[
C(g_1) \circ C(g_2) = \big\{C(g_1 h g_2):\,h \in H\big\}
\] 
(so that $C(g_1)\circ C(g_2)$ results by splitting the complex product $Hg_1H\cdot Hg_2H$ into double cosets modulo $H$), the scalar identity $e = C(1) = H$, and the involution \, $\bar{}: E\rightarrow E$ given by $C(g) \mapsto \overline{C(g)} := C(g^{- 1})$. Double coset spaces as polygroups seem to have first been explicitly discussed in \cite{DO}.

The polygroup $E$ associated with the pair $(G, H)$ is a group if, and only if, $H$ is a normal subgroup of $G$; in this case, $E$ is isomorphic to the quotient group $G/H$, so that $G$ is a group extension of $E$ by $H$. In general, for $H$
not necessarily normal in $G$, the group $G$ may still be viewed  as an extension of the polygroup $E$ by the group $H$; see Section~\ref{Sec:ExtPr}, where the corresponding more general extension problem is formulated and solved.

\section{Presentations arising from group actions on sets}
\subsection{The general result}
\label{Subsec:PresGpAct}
Let $G$ be a group acting transitively from the left on a set $\Omega,$ let $\omega_0$ be a given point of $\Omega$, and let $H:=G_{\omega_0}$ be the stabiliser of $\omega_0$ in $G$. The bijection $\varphi: G/H \rightarrow \Om$, induced by the
surjective map $G \rightarrow \Om = G \om_0$, $g \mapsto g \om_0$ is $H$-equivariant with respect to the action of $H$ by left multiplication on the set $G/H$ and the action of $H$ on $\Om$ induced by restriction from the given transitive action of $G$.

The bijection $\varphi: G/H \lra \Om$ thus induces a bijection
\[
\Phi: H \backslash G / H \lra H \backslash \Om,\quad C(g) \mapsto H \cdot g \om_0,   
\]
from the set $H \backslash G / H$ of double cosets $C(g):= H g H$ with $g \in G$ onto the set $H \sm \Om$ of $H$-orbits $H \om$ 
with $\om \in \Om$. Moreover the sets $H \backslash G / H$ and $H \backslash \Om$ are equipped with canonical structures of polygroups, and the map $\Phi$ is an isomorphism of polygroups. Explicitly, the associative hyperoperation $\circ$ on $H\backslash\Omega$ is defined by
\begin{equation*}
H \om_1 \circ H \om_2 = \big\{H \cdot g h \om_2\,:\,h \in H\big\},\quad(\omega_1, \omega_2\in \Omega,\,\omega_1 = g\omega_0),  
\end{equation*}
the scalar identity is $H \om_0 = \{\om_0\}$, while inversion is defined by $\overline{H \om} = 
H \cdot g^{- 1} \om_0$ for $\om = g \om_0 \in \Om$. (The polygroup structure of the double coset space $H\backslash G/H$ has already been discussed in Section~\ref{Subsec:DoubleCosetSpace}.) 

Note that the polygroup $E:= H \sm G/ H \cong H \sm \Om$ is a group
if, and only if, $H$ is a normal subgroup of $G$; in this case, the stabilizer $H = G_{\om_0}$ is the kernel of the 
transitive action of $G$ on the set $\Om$, $E$ is the quotient group $G/H$, thus $G$ is an extension of the group 
$E = G/H$ by the group $H$. In the general case, when the subgroup $H$ is not necessarily normal in $G$, the group $G$ 
may still be seen as an extension of the polygroup $E$ by the group $H$. The more general form of the classical 
Schreier group extension problem concerning group extensions of polygroups by groups is discussed in 
Section~\ref{Sec:ExtPr}. In this more general framework,  Schreier's factor systems ($2$-cocycles) are replaced by certain {\em group-like graphs of groups}.\footnote{The concept of a group-like graph of groups was originally introduced in \cite[Section 4]{Ba1} in the context of group actions on connected groupoids; cf.\ also \cite[Chap.~1]{M2}.} In particular, we obtain a presentation of the group $G$ in terms of 
the subgroup $H$ and the polygroup $E = H \backslash G / H \cong H \backslash \Om$ as follows.

We fix a section $\si: E \rightarrow G$ of the projection map $G \rightarrow E = H \backslash G / H$ satisfying 
$\si(e) = 1$, $\si(\bar{f}) = \si(f)^{- 1}$ for $\bar{f} \neq f$, and $\theta_f:= \si(f)^2 \in H$ for 
$\bar{f} = f$. For $f \in E$, set $H_f:= H \cap \si(f) H \si(f)^{- 1}$, and define an isomorphism
$\iota_f: H_f \rightarrow H_{\bar{f}}$ by $\iota_f(h) = \si(f)^{- 1} h \si(f)$. Note that $H_e = H$, that $\iota_e = 1_H$, and that 
$\iota_{\bar{f}} = \iota_f^{- 1}$ for $\bar{f} \neq f$, while for $\bar{f} = f$, we have $\theta_f \in H_f$,
$\iota_f \in {\rm Aut}(H_f)$, $\iota_f(\theta_f) = \theta_f$, and $\iota_f^2 = \iota_f \circ \iota_f$ is the 
inner automorphism of $H_f$ given by $h \mapsto \theta_f^{- 1} h \theta_f$.

Next, for each $f \in E$, we choose a left transversal $P_f$ for $H$ modulo $H_f$ with $1 \in P_f$. Consequently,
we obtain a normal form for the elements $g \in G$
\begin{equation}
\label{Eq:NormForm}
g = \rho_g\, \si(C(g))\, \lam_g\quad(\rho_g \in P_{C(g)}, \lam_g \in H).
\end{equation}
On the other hand, for each pair $(f, f') \in E \times E$, we choose a set $Q_{f, f'} \sse H$ of 
pairwise inequivalent representatives of the double cosets $H_f h H_{f'}$ in the space $H_f \sm H/ H_{f'}$ with
$1 \in Q_{f, f'}$, and $Q_{f', f} = Q_{f, f'}^{- 1}$ provided $f \neq f'$. 

Next, we choose an \textit{orientation} on $(E,{}^-)$; that is, a subset $E_+\subseteq E$ such that 
\[
E = E_+ \amalg \big\{\bar{f}: f\in E_+\mbox{ and }f\neq \bar{f}\big\};
\]
in particular, $e=\bar{e}\in E_+$. 

We denote by $F$ the free group with basis $X = \{x_f\,:\,f \in E_+ - \{e\}\}$, in one-to-one correspondence with 
the elements of the set $E_+ - \{e\}$, defining $x_e := 1$ and $x_f := x_{\bar{f}}^{- 1}$ for $f \in E - E_+$. 
Finally, we fix a total order $\leq$ on the set $E - \{e\}$.

With this notation and the above conventions, our basic result concerning group actions on sets is the following.
\begin{te}
\label{Thm:PresGpAct}
The group $G$ is generated by the 
free product $H \ast F$ modulo the type {\em (I)} relations
\begin{equation}
\label{Eq:GenType(I)Rels}
h x_f = x_f \iota_f(h)\quad(f\in E_+-\{e\},\, h\in H_f),
\end{equation}
together with the type {\em (II)} relations
\begin{equation}
\label{Eq:GenType(II)Rels}
x_f h x_{f'} = \rho_g x_{C(g)} \lam_g\quad(f, f' \in E - \{e\}, \bar{f} \leq f', h \in Q_{\bar{f}, f'}, g = \si(f) h \si(f')),
\end{equation}
where $\rho_g$ and $\lambda_g$ are defined as in {\em (\ref{Eq:NormForm})}.
\end{te}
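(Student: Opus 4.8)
The plan is to realise $G$ as the quotient $\widetilde G := (H\ast F)/N$, where $N$ is the normal closure of the relators (\ref{Eq:GenType(I)Rels}) and (\ref{Eq:GenType(II)Rels}), and to prove that the obvious comparison map is an isomorphism. First I would define a homomorphism $\pi: H\ast F\to G$ by letting $\pi$ restrict to the inclusion on $H$ and setting $\pi(x_f) = \sigma(f)$ for $f\in E_+-\{e\}$; by the conventions $x_e=1$ and $x_f = x_{\bar f}^{-1}$, together with $\sigma(\bar f)=\sigma(f)^{-1}$, one checks that $\pi(x_f)=\sigma(f)$ for every $f\in E$. A direct computation shows that $\pi$ kills both families of relators: for type (I), $\sigma(f)\iota_f(h)=\sigma(f)\sigma(f)^{-1}h\sigma(f)=h\sigma(f)$ for $h\in H_f$; for type (II), the right-hand side is by construction the normal form (\ref{Eq:NormForm}) of $g=\sigma(f)h\sigma(f')$, so $\pi(\rho_g x_{C(g)}\lambda_g)=\rho_g\sigma(C(g))\lambda_g=g=\pi(x_fhx_{f'})$. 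Hence $\pi$ factors through a surjection $\widetilde\pi:\widetilde G\to G$, surjectivity being clear since $H$ and the $\sigma(f)$ generate $G$. It remains to prove $\widetilde\pi$ injective.

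For this I would introduce the set $S$ of formal normal forms $\rho\,x_f\,\lambda$ with $f\in E$, $\rho\in P_f$, $\lambda\in H$, regarded as elements of $\widetilde G$, and the evaluation bijection $\beta:S\to G$, $\rho\,x_f\,\lambda\mapsto\rho\,\sigma(f)\,\lambda$. That $\beta$ is a bijection is exactly the existence and uniqueness of the normal form (\ref{Eq:NormForm}): given $g$, the double coset $C(g)=f$ is determined; since $P_f\subseteq H$ is a transversal for $H/H_f$ and $\rho\mapsto\rho\,\sigma(f)H$ is injective on $P_f$ (because $\rho^{-1}\rho'\in H\cap\sigma(f)H\sigma(f)^{-1}=H_f$ forces $\rho=\rho'$), the representative $\rho$, and then $\lambda$, are uniquely determined. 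Writing $\mu:S\to\widetilde G$ for the map induced by inclusion, we have $\beta=\widetilde\pi\circ\mu$. Thus, once I show that $\mu$ is surjective, i.e.\ that every element of $\widetilde G$ admits a normal-form representative, injectivity of $\widetilde\pi$ follows formally: if $\widetilde\pi(a)=1$, choose $s\in S$ with $\mu(s)=a$; then $\beta(s)=1$, whence $s$ is the identity normal form and $a=1$.

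The heart of the argument is therefore the surjectivity of $\mu$, which I would prove by showing that the image $\widetilde S:=\mu(S)$ contains $1$ and is closed under left multiplication by each generator of $\widetilde G$, so that $\widetilde S=\widetilde G$. Closure under left multiplication by $\eta\in H$ uses the type (I) relations: given $\rho\,x_f\,\lambda\in\widetilde S$, write $\eta\rho=\rho'h'$ with $\rho'\in P_f$, $h'\in H_f$ (possible since $P_f$ is a transversal), so that $\eta\rho\,x_f\,\lambda=\rho'\,(h'x_f)\,\lambda=\rho'\,x_f\,(\iota_f(h')\lambda)$, which is again a normal form; here one needs the type (I) relation for general $f\in E-\{e\}$, but for $f\in E-E_+$ it follows from the stated relation for $\bar f\in E_+-\{e\}$ by conjugating with $x_{\bar f}=x_f^{-1}$ and using $\iota_{\bar f}=\iota_f^{-1}$ (the cases $f=e$ and $x_f=1$ being trivial). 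Closure under left multiplication by a generator $x_{f_0}$ ($f_0\in E-\{e\}$) is where the type (II) relations enter. One reduces $x_{f_0}\,\rho\,x_f$ as follows: factor $\rho=h_1\,q\,h_2$ with $q\in Q_{\overline{f_0},f}$, $h_1\in H_{\overline{f_0}}$, $h_2\in H_f$ (this is the defining property of the double-coset transversal $Q_{\overline{f_0},f}$), and move $h_1$ across $x_{f_0}$ and $h_2$ across $x_f$ via type (I), reducing to the core word $x_{f_0}\,q\,x_f$. If $\overline{f_0}\le f$, type (II) rewrites this directly as a normal form $\rho_g x_{C(g)}\lambda_g$; if instead $f<\overline{f_0}$, I would apply type (II) to the inverse word $x_{\bar f}\,q^{-1}\,x_{\overline{f_0}}$ (legitimate because $q^{-1}\in Q_{f,\overline{f_0}}$ and $f\le\overline{f_0}$), then invert back and re-normalise using the already-established closure under left and right multiplication by $H$. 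The remaining $H$-factors are absorbed into the two ends of the normal form, and the case $f=e$ is immediate.

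I expect the principal obstacle to be this last reduction of $x_{f_0}\,\rho\,x_f$: one must verify that the double-coset factorisation of $\rho$, the transport of $h_1,h_2$ by the type (I) relations, and the order-dependent application of type (II) (together with the inversion symmetry $Q_{f',f}=Q_{f,f'}^{-1}$) fit together to produce a bona fide normal form, with all stray $H$-factors correctly collected. Granting this bookkeeping, the three ingredients---the normal form (\ref{Eq:NormForm}) in $G$, the closure of $\widetilde S$ under the generators, and the formal injectivity argument via $\beta=\widetilde\pi\circ\mu$---combine to show that $\widetilde\pi$ is an isomorphism, which is the assertion of the theorem.
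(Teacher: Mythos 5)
Your argument is correct, but it reaches the theorem by a genuinely different route than the paper. The paper proves Theorem~\ref{Thm:PresGpAct} indirectly, through the extension theory of Section~\ref{Sec:ExtPr}: the pair $(G,H)$ is first encoded as a group-like graph of groups via the section $\sigma$ and the data $H_f,\iota_f,\theta_f$ (the essential-surjectivity half of Theorem~\ref{Thm:SolExtP}), the fundamental group of any such object is presented in Theorem~\ref{Thm:Presentation}, and the restriction of the type (II) relations to pairs with $\bar f\leq f'$ is Lemma~\ref{Lem:ReduceII}. Within Theorem~\ref{Thm:Presentation} the injectivity mechanism also differs from yours: the paper defines the normal-form map $\psi:\mathfrak{G}\to\widetilde{\mathfrak{G}}/\!\sim$, $\mathfrak{g}\mapsto\rho_{\mathfrak{g}}\,x_{\varepsilon(\mathfrak{g})}\,\lambda_{\mathfrak{g}}$, and proves that $\psi$ is a \emph{homomorphism}, which requires the explicit law (Lemma~\ref{Lem:InvDepMult}) governing the invariants of a product of two arbitrary normal forms; $\psi\circ\varphi=\mathrm{id}$ on generators then forces $\varphi$ to be injective. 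You instead establish only the special case of that computation in which one factor is a single generator (closure of $\widetilde{S}$ under left multiplication), and extract injectivity formally from the bijectivity of the evaluation map $\beta$ on formal normal forms, that is, from existence and uniqueness in (\ref{Eq:NormForm}). This is a lighter argument: you never verify the three-part product formula of Lemma~\ref{Lem:InvDepMult}, only that one-step rewriting terminates in a normal form; what the paper's detour buys in exchange is the solution of the full Schreier-type extension problem and a presentation valid for arbitrary group-like graphs of groups, of which the theorem is one application. Your wrong-order device (inverting $x_{f_0}qx_f$ and applying type (II) to $x_{\bar f}\,q^{-1}\,x_{\overline{f_0}}$) is exactly the paper's Lemma~\ref{Lem:ReduceII}. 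Two pieces of bookkeeping deserve an explicit line in a final write-up: closure of $\widetilde{S}$ under left multiplication by the generators yields $\widetilde{S}=\widetilde{G}$ only once inverses are also covered, which for $f=\bar f$ requires the included relation $x_f^2=\theta_f$ (type (II) with $h=1$, $f'=f$, available since $1\in Q_{f,f}$ and $\leq$ is reflexive) to write $x_f^{-1}=\theta_f^{-1}x_f$; and the identity $(x_{f_0}qx_f)^{-1}=x_{\bar f}\,q^{-1}\,x_{\overline{f_0}}$ holds literally only when $f\neq\bar f$ and $f_0\neq\overline{f_0}$, the involutive cases acquiring $\theta$-corrections from that same relation --- these are precisely the stray $H$-factors you flag, and they are absorbed by your already-established $H$-closure step.
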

A proof of Theorem~\ref{Thm:PresGpAct} is given in Section~\ref{Sec:ExtPr} in the context of group extensions of polygroups by groups; cf.\ Section~\ref{Subsec:ProofMT}.

\subsection{Multiply transitive actions}
In the context of Theorem~\ref{Thm:PresGpAct}, suppose that the action of $G$ on the set $\Omega$, of cardinality at least $2$,
is $2$-transitive. Then the action of the stabiliser $H:= G_{\om_0}$ on the set $\Omega-\{\omega_0\}$ is transitive,
thus $H \sm \Om = \{\{\om_0\}, \Om - \{\om_0\}\}$ and $H \sm G/ H = \{H, G - H\}$. The polygroup $E = H \sm G/ H \cong H \sm \Om$ of
cardinality $2$ is a group (cyclic of order $2$) if, and only if, $|\Om| = (G:H) = 2$, while, for $|\Om| \geq 3$, $E$ is, 
up to isomorphism, the unique polygroup $\fP_2 = \{0, 1\}$ of cardinality $2$, with the scalar identity $0$, 
$\bar{1} = 1$, and $1 \circ 1 = \{0, 1\}$ (that is, the smallest non-trivial polygroup). In particular, $E_+ = E$ in both cases.

To obtain a section $\si: E \lra G$ of the projection map $G \lra E = H \sm G/ H$, in accordance with the requirements of 
\ref{Subsec:PresGpAct}, we choose a point $\om_1 \in \Om - \{\om_0\}$ and some element $\tau \in G - H$ which interchanges
the points $\om_0$ and $\om_1$ (such an element exists by $2$-transitivity), and set $\si(H) = 1$, $\si(G - H) = \tau$. Thus $G_{\om_1} = \tau H \tau^{- 1}$ is the 
stabilizer of $\om_1$ in $G$, $\theta:= \tau^2 \in H_1:= H \cap \tau H \tau^{- 1}$, and the automorphism $\iota$ of
$H_1$ is given by $h \mapsto \tau^{- 1} h \tau$, in particular, $\iota(\theta) = \theta$. Next, we choose a left
transversal $P \sse H$ for $H$ modulo $H_1$ and a set $Q \sse H$ of pairwise inequivalent representatives of the double 
cosets in the space $H_1 \sm H/ H_1$ with $1 \in P \cap Q$. Note that $|Q| = 1$, that is $H = H_1$, if, and only if,
$|\Om| = 2$; while $|Q| = 2$, that is $H - H_1$ is the unique nontrivial double coset in $H_1 \sm H/ H_1$, if, and only 
if, $|\Om| \geq 3$ and the action of $G$ on the set $\Om$ is $3$-transitive. 

For any $q \in Q$ with $q \neq 1$, the element $\rho_q':= \rho_{\tau q \tau} \in P$ is uniquely determined by the condition that 
$\tau q \tau \in \rho_q' \tau H$, that is $\tau q \om_1 = \rho_q' \om_1$. Set $\lam_q':= \lam_{\tau q \tau} = 
((\rho_q')^{- 1} \tau q)^\tau$ for $q \in Q - \{1\}$. 
                     
Applying Theorem~\ref{Thm:PresGpAct}, we obtain the following.
\begin{co}
\label{Cor:Main2Trans}
Let $G$ be a group acting $2$-transitive from the left on a set $\Omega,$ and let $\omega_0, \omega_1$ be two 
distinct points of $\Omega,$ with stabilisers $H:=G_{\omega_0}$ and $G_{\omega_1} = \tau H \tau^{- 1},$ respectively,
where $\tau \in G$ is an element interchanging $\omega_0$ and $\omega_1$. Let $\{h_i\}_{i\in I}$ be a system of 
generators for the subgroup $H_1:= H \cap \tau H \tau^{- 1}$ of $G$. Then, given the above notation and conventions, 
$G$ is generated by the group $H$ plus one  new generator $x,$ subject to the relations
\begin{align}
h_i\, x &= x\, \iota(h_i)\quad(i\in I),\label{Eq:Gen2TransRel1}\\[1mm]
x^2 &= \theta\,,\label{Eq:Gen2TransRel2}\\[1mm]
x\,q\,x &= \rho_q'\,x\,\lam_q'\quad(q \in Q - \{1\}).\label{Eq:Gen2TransRel3}
\end{align}
\end{co}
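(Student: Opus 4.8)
The plan is to obtain Corollary~\ref{Cor:Main2Trans} purely by specializing Theorem~\ref{Thm:PresGpAct} to the present $2$-transitive data and then simplifying the resulting presentation. First I would record what each ingredient of the general theorem becomes here. Since the action is $2$-transitive on a set of size $\ge 2$, the polygroup $E = H\sm G/H$ has exactly two elements $e$ and $1$, and $E_+ = E$, so $E_+-\{e\}=\{1\}$ is a singleton; hence the free group $F$ on $X=\{x_f:f\in E_+-\{e\}\}$ is infinite cyclic, generated by the single element $x:=x_1$. With the section $\si(e)=1$, $\si(1)=\tau$ fixed in the paragraph preceding the statement, the data attached to the index $f=1=\bar f$ specialize to $H_1 = H\cap\tau H\tau^{-1}$, to the automorphism $\iota=\iota_1$ of $H_1$ given by $h\mapsto\tau^{-1}h\tau$, and to $\theta=\theta_1=\tau^2\in H_1$.

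Next I would translate the two families of relations. For the unique index $f=1$, the type~(I) relations \eqref{Eq:GenType(I)Rels} reduce to $h\,x = x\,\iota(h)$ for all $h\in H_1$. To pass from these to the relations \eqref{Eq:Gen2TransRel1}, imposed only on the chosen generators $h_i$ of $H_1$, I would argue that, modulo the relations of $H$ (which are available because $H$ is a free factor of $H\ast F$), the set of $h\in H_1$ satisfying $x^{-1}h\,x=\iota(h)$ in the quotient group is a subgroup of $H_1$: both $h\mapsto x^{-1}h\,x$ and $\iota$ are homomorphisms on $H_1$, so once they agree on $\{h_i\}_{i\in I}$ they agree on all of $H_1$. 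Thus \eqref{Eq:Gen2TransRel1} is equivalent to the full family of type~(I) relations.

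The bulk of the work lies in the type~(II) relations \eqref{Eq:GenType(II)Rels}. Here $f,f'\in E-\{e\}=\{1\}$ forces $f=f'=1$, and the order constraint $\bar f\le f'$ (namely $1\le 1$) is automatically satisfied, so a single pair survives; the parameter $h$ then ranges over $Q:=Q_{1,1}$, a set of representatives of $H_1\sm H/H_1$, and $g=\si(1)h\si(1)=\tau h\tau$. The decisive computation is to determine the double coset $C(\tau h\tau)$. Using $\tau\om_0=\om_1$ and $\tau\om_1=\om_0$, one gets $\tau h\tau\,\om_0=\tau h\,\om_1$, so $\tau h\tau\in H=G_{\om_0}$ iff $\tau h\,\om_1=\om_0=\tau\om_1$, i.e.\ iff $h\,\om_1=\om_1$, i.e.\ iff $h\in H\cap G_{\om_1}=H_1$. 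Consequently, for $h=1$ we have $\tau^2=\theta\in H$, so $C(\theta)=e$ and $x_e=1$; since $H_e=H$ gives $P_e=\{1\}$, the normal form \eqref{Eq:NormForm} forces $\rho_\theta=1$ and $\lam_\theta=\theta$, and the corresponding type~(II) relation is exactly $x^2=\theta$, that is \eqref{Eq:Gen2TransRel2}. For $q\in Q-\{1\}$ we have $q\notin H_1$, hence $\tau q\tau\notin H$, so $C(\tau q\tau)=1$ and $x_{C(\tau q\tau)}=x$; writing the normal form $\tau q\tau=\rho_q'\,\tau\,\lam_q'$ with $\rho_q'=\rho_{\tau q\tau}\in P$ and $\lam_q'=\lam_{\tau q\tau}$ yields precisely \eqref{Eq:Gen2TransRel3}.

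I expect the main obstacle to be exactly this last point-stabilizer bookkeeping: correctly identifying $C(\tau h\tau)$ and checking that the single surviving pair $(1,1)$ produces exactly the split into the relation $x^2=\theta$ and the relations $x\,q\,x=\rho_q'\,x\,\lam_q'$, with no additional type~(II) relations hidden by the ordering convention. Everything else is a direct substitution into Theorem~\ref{Thm:PresGpAct}.
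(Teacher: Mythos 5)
Your proposal is correct and takes exactly the paper's route: the paper, too, derives the corollary simply by specializing Theorem~\ref{Thm:PresGpAct} to the two-element polygroup $E=H\backslash G/H$ with the section $\sigma(H)=1$, $\sigma(G-H)=\tau$, so that $E_+=E$, the free factor $F$ is generated by the single letter $x$, and the one surviving type~(II) family splits (via $P_e=\{1\}$ and the observation that $\tau q\tau\in H$ iff $q\in H_1$) into $x^2=\theta$ and $x\,q\,x=\rho_q'\,x\,\lambda_q'$. The only difference is that you make explicit two routine steps the paper leaves implicit, namely the equalizer-subgroup argument reducing the type~(I) relations from all of $H_1$ to the generators $h_i$, and the computation of $C(\tau q\tau)$; both are correct as you state them.
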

Note that Condition (\ref{Eq:Gen2TransRel3}) is empty if $|\Om| = 2$, while it consists of only one identity if
$|\Om| \geq 3$ and the action is $3$-transitive.

\section{$GL_2$ over valuation rings}

\label{Sec:GL2}
Let $K$ be a valued field with valuation $v$. We denote by 
$\mathcal{O}$ the valuation ring with maximal ideal $\fm$, residue field $k = \mathcal{O}/\fm$,
and the totally ordered abelian value group $\Lam = v(K^\times) \cong K^\times/\mathcal{O}^\times$,
where $K^\times = K \sm \{0\}$ is the multiplicative group of $K$, 
$\mathcal{O}^\times =\linebreak \mathcal{O} \sm \fm$ is the multiplicative group of invertible elements of $\mathcal{O}$, and $v(x) \leq v(y) \Llra \mathcal{O} y \sse \mathcal{O} x$ for $x, y \in K$. 

Let $\Lam_+ = \{\alpha \in \Lam:\,\alpha \geq 0\} = v(\O \sm \{0\})$,
$\overline{\Lam}_+ = \Lam_+ \cup \{\infty\}$, with $\infty + \alpha = \infty$ and $\alpha \leq \infty$ for
all $\alpha \in \overline{\Lam}_+$. Note that $\overline{\Lam}_+$ is a totally ordered commutative monoid with
$v(1) = 0$ as the neutral and also least element, while $v(0) = \infty$ is its zero and last element. Given  
 $\alpha \in \overline{\Lam}_+$, fix an element $t_\alpha \in \O$ such that $v(t_\alpha) = \alpha$; thus $t_\infty = 0$,
and we may assume that $t_0 = 1$. 

Let $G = GL_2(\O)$ be the group of all $2\times2$-matrices $A = (a_{i, j})$ over $\O$ which are invertible, that is,
$v({\rm det}(A)) = 0$. We denote by $B = B_2(\O)$ the subgroup of $G$ consisting of the upper-triangular 
matrices, and by $D = D_2(\O)$ the subgroup of diagonal matrices with entries in $\O^\times$. 
For any $a \in \O^\times$, we denote by $Z(a)$ the (scalar) diagonal matrix
$a I_2$, and by $R(a)$ the diagonal matrix $A$ with $a_{1, 1} = a$ and $a_{2, 2} = 1$; in particular, $Z(1) = R(1) = I_2$.
For every $A \in D$, we obtain the decomposition
\begin{equation} 
\label{Eq:DDecomp}
A = Z(a_{2, 2}) R(a_{1, 1} a_{2, 2}^{- 1}) = R(a_{1, 1} a_{2, 2}^{- 1}) Z(a_{2, 2}).
\end{equation}
It follows that the abelian group $D \cong \O^\times \times \O^\times$ is generated by the matrices $Z(a), R(a)$ for
$a \in \O^\times \sm \{1\}$ subject to the defining relations\footnote{Throughout this paper, the commutator $[x,y]$ of two group elements $x,y$ is taken in the form $[x,y] = xyx^{-1}y^{-1}$.}
\begin{align}
R(a) R(b) &= R(ab),\label{Eq:DRel1}\\
Z(a) Z(b) &= Z(ab),\label{Eq:DRel2}\\
[R(a), Z(b)] &= 1,\label{Eq:DRel3}
\end{align}
where $a, b \in \O^\times \sm \{1\}; R(1) = Z(1) = 1$.

In what follows, we shall apply Theorem~\ref{Thm:PresGpAct} to obtain presentations for the groups $B$ and $G$.

\subsection{A presentation for $B_2(\O)$}
\label{SubSec:B}

The group $B = B_2(\O)$ acts transitively from the left on the affine line over the valuation ring $\O$ according to the rule
\begin{equation}
\label{Eq:ActB}
A \cdot z := \frac{a_{1, 1} z + a_{1, 2}}{a_{2, 2}}\,\,{\rm for}\,A \in B \mbox{ and } z \in \O.
\end{equation}
The kernel of the action is the center 
\[
Z = Z_2(\O) = \big\{Z(a)\,|\,a \in \O^\times\big\} \cong \O^\times,
\]
while $B_0 = D \cong \O^\times \times \O^\times$ is the stabilizer of the point $0$. In order to be able to apply Theorem~\ref{Thm:PresGpAct} to the pair
$(B, D)$ we need to describe explicitely the space of double cosets $D \sm B/D$ and its polygroup structure. To this end, we set $S_\infty = I_2$ and, for $\alpha\in\Lambda_+$, 
\[
S_\alpha = 
\begin{pmatrix}-1&t_\alpha\\0&1\end{pmatrix}.
\]
We note that the matrices $S_\alpha$ are involutions; in particular, $S_0 = \begin{pmatrix}-1&1\\0&1\end{pmatrix}$. 
Next, define a hyperoperation on the set $\overline{\Lambda}_+$ via
\[
\alpha \circ \beta = \begin{cases} \{{\rm min}(\alpha, \beta)\},& \alpha \neq \beta,\\[1mm]
                      [\alpha, \infty] = \{\gam \in \overline{\Lam}_+:\,\alpha \leq \gam\},& \alpha = \beta \mbox{ and } {\rm card}(k) \geq 3,\\[1mm]
(\alpha, \infty] = \{\gam \in \Lam_+:\,\alpha < \gam\} \cup \{\infty\},& \alpha = \beta \mbox{ and } k \cong GF(2).
                     \end{cases}  
\]
The resulting polygroup $\mathcal{P}_B(\overline{\Lambda}_+)$ on $\overline{\Lambda}_+$ is clearly commutative, has  scalar identity $\infty$, and its inverse map $\alpha\mapsto \alpha^{-1}$ is the identity on $\overline{\Lambda}_+$. One immediately verifies that, 
for $\alpha\in\Lambda_+$ and any natural number $n\geq2$, 
\begin{equation}
\label{Eq:alpha^n}
\alpha^n := \underbrace{\alpha \circ \cdots \circ \alpha}_{n\,\mathrm{times}} = \begin{cases} \alpha^2 = [\alpha, \infty],& {\rm card}(k) \geq 3,\\[1mm]
\alpha^2 = (\alpha, \infty],& k \cong GF(2)\mbox{ and }2\,|\,n,\\[1mm]
\{\alpha\},& k \cong GF(2)\mbox{ and }2 \not |\,n;                                                        \end{cases}
\end{equation}
in particular, 
\begin{equation}
\label{Eq:0^2}
0^2 := 0 \circ 0 = \begin{cases} \overline{\Lam}_+,& {\rm card}(k) \geq 3,\\[1mm]
                    \overline{\Lam}_+ \sm \{0\},& k \cong GF(2).
                   \end{cases}
 \end{equation}
Given these definitions, we can now state the following.
\begin{lem}
\label{Lem:DBD}
\ben
\item[(i)] The surjective map $B \lra \overline{\Lam}_+$ given by $A\mapsto v(a_{1, 2})$ induces a bijection $\varphi_B: D \sm B/D \lra \overline{\Lam}_+,$
whose inverse sends $\alpha \in \overline{\Lam}_+$ to the double coset $D S_\alpha D$.

\vspace{2mm}
 
\item[(ii)] The bijection $\varphi_B$ yields an isomorphism between the polygroup on the space of double cosets $D \sm B/D$ 
and the commutative polygroup $\mathcal{P}_B(\overline{\Lambda}_+)$.

\vspace{2mm}

\item[(iii)] The double coset $D S_0 D$ is the unique element of the space $D \sm B/D$ which generates its associated hypergroup.
\een 
\end{lem}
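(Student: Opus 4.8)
The plan is to handle the three parts in order, with essentially all the content sitting in parts (i) and (ii). For part (i) I would show that $v(a_{1,2})$ is a complete invariant of the double coset $DAD$. A direct multiplication gives, for $h_1 = \mathrm{diag}(u_1,w_1)$ and $h_2 = \mathrm{diag}(u_2,w_2)$ in $D$, that the $(1,2)$-entry of $h_1 A h_2$ equals $u_1 a_{1,2} w_2$, whose valuation is $v(a_{1,2})$ since $u_1, w_2 \in \O^\times$; hence $A \mapsto v(a_{1,2})$ descends to $D\backslash B/D$. Surjectivity is immediate, as $S_\alpha$ has $(1,2)$-entry $t_\alpha$ of valuation $\alpha$ (and $S_\infty = I_2$ realises $\infty$). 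For injectivity and the explicit inverse, given $A \in B$ with $v(a_{1,2}) = \alpha \in \Lam_+$ I would write $a_{1,2} = u\,t_\alpha$ with $u \in \O^\times$ and solve $h_1 A h_2 = S_\alpha$ for suitable $h_1, h_2 \in D$: this is three scalar equations in four unit-parameters and is always solvable because the diagonal entries of $A$ lie in $\O^\times$. The case $\alpha = \infty$ is simply the statement $A \in D$. Thus $DAD = DS_\alpha D$, which yields (i).

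For part (ii) it suffices to match the hyperoperations, the behaviour on the scalar identity ($D \mapsto \infty$) and on inversion (the identity map, using that each $S_\alpha$ is an involution) being then automatic. By the definition of the double-coset hyperoperation together with (i), $\varphi_B\big(C(S_\alpha) \circ C(S_\beta)\big)$ is the set $\{v((S_\alpha h S_\beta)_{1,2}) : h \in D\}$. A short computation shows that for $h = \mathrm{diag}(s,t)$ the $(1,2)$-entry of $S_\alpha h S_\beta$ is $t_\alpha t - s\,t_\beta$. If $\alpha \neq \beta$ the two summands have distinct valuations $\alpha,\beta$, so the ultrametric law forces $v(t_\alpha t - s\,t_\beta) = \min(\alpha,\beta)$ for every choice of units, giving $\{\min(\alpha,\beta)\}$. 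When $\alpha = \beta$ the entry factors as $t_\alpha(t-s)$, and the whole question reduces to the set $\{v(t-s) : s,t \in \O^\times\}$. If ${\rm card}(k) \geq 3$ there are units with distinct nonzero residues, so $v(t-s)$ attains $0$; taking $s=1,\ t = 1+t_\gamma$ realises every $\gamma > 0$, and $s=t$ realises $\infty$, so $v(t_\alpha(t-s))$ sweeps out $[\alpha,\infty]$. If $k \cong GF(2)$ the only unit residue is $1$, forcing $t-s \in \fm$ and hence $v(t-s) > 0$, so the attainable valuations are exactly $(\alpha,\infty]$. Comparing with the definition of $\circ$ on $\overline{\Lam}_+$ proves (ii).

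For part (iii) I would transport the statement through $\varphi_B$ and show that $\alpha = 0$ is the unique element generating $\mathcal{P}_B(\overline{\Lam}_+)$. That $0$ generates is read off from (\ref{Eq:0^2}): in both residue cases $0 \circ 0$ contains $\overline{\Lam}_+ \setminus \{0\}$, which together with $0$ exhausts $\overline{\Lam}_+$. Conversely, for any $\alpha > 0$ I would verify that $[\alpha,\infty]$ is a proper subpolygroup: it contains the scalar identity $\infty$, is closed under the trivial inversion, and satisfies $[\alpha,\infty]\circ[\alpha,\infty] = [\alpha,\infty]$ (every $\circ$-product of elements $\geq \alpha$ stays $\geq \alpha$, while $\alpha \circ \infty = \{\alpha\}$ and the square $\alpha \circ \alpha$ refill the interval), yet $0 \notin [\alpha,\infty]$. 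Since the scalar identity $\infty$ generates only $\{\infty\}$, no element other than $0$ can generate the whole polygroup, which is the uniqueness in (iii).

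The hard part will be the diagonal case $\alpha = \beta$ in (ii): the entire trichotomy in the definition of $\circ$ is encoded in the set $\{v(t-s) : s,t \in \O^\times\}$, and the delicate point is to see exactly how ${\rm card}(k) \geq 3$ versus $k \cong GF(2)$ forces the two distinct shapes $[\alpha,\infty]$ and $(\alpha,\infty]$ of the square, the obstruction to attaining valuation $0$ being precisely the absence of a second nonzero residue.
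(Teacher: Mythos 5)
Your proposal is correct and follows essentially the same route as the paper: part (i) is the paper's normal form $A=\rho_A S_\alpha \lambda_A$ recast as solving $h_1Ah_2=S_\alpha$, part (ii) is the same $(1,2)$-entry computation with the ultrametric trichotomy and residue-field case split (the paper computes the entry of a general product $AA'$ rather than of $S_\alpha h S_\beta$, and asserts attainability where you exhibit the witnesses $s=1$, $t=1+t_\delta$), and part (iii) matches the paper's appeal to the formulas (\ref{Eq:alpha^n}) and (\ref{Eq:0^2}), your closure argument via the subpolygroup $[\alpha,\infty]$ being a slightly more careful version of the same point.
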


\bp
(i) It is straightforward to see that the assignment $A\mapsto v(a_{1,2})$ induces a well-defined map $\varphi_B: D\backslash B/D \rightarrow \overline{\Lambda}_+$. To see that the map $\overline{\Lambda}_+\rightarrow D\backslash G/D$ given by $\alpha\mapsto DS_\alpha D$ is inverse to $\varphi_B$, it suffices to note that every matrix $A \in B \sm D$ admits the normal form
\begin{equation}
\label{Eq:BNormalForm}
A =  \rho_A \cdot \eps(A) \cdot \lam_A,
\end{equation} 
where
\begin{align}
\label{NormFormFactors}
\rho_A = &\, R\big(\frac{a_{1, 2}}{t_\alpha a_{2, 2}}\big) \in D,\\[1mm]
\eps(A) = &\, S_\alpha \in B \sm D,\\[1mm]
\lam_A = &\, R\big(- \frac{t_\alpha a_{1, 1}}{a_{1, 2}}\big) \cdot Z(a_{2, 2}) \in D,\\[1mm]
\alpha =&\, v(a_{1, 2}) \in \Lam_+.
\end{align}

(ii) Let $A, A' \in B\setminus D$. Setting $A'' = A \cdot A'$, we have $a''_{1, 2} = a_{1, 1} a'_{1, 2} + a_{1, 2} a'_{2, 2}$,
therefore 
\begin{align*}
v(a''_{1, 2}) & = {\rm min}(v(a_{1, 2}), v(a'_{1, 2})),\,{\rm if}\,v(a_{1, 2}) \neq v(a'_{1, 2}),\\
v(a''_{1, 2}) & \geq v(a_{1, 2}),\,{\rm if}\,v(a_{1, 2}) = v(a'_{1, 2})\,\,{\rm and}\,\,{\rm card}(k) \geq 3,\\
v(a''_{1, 2}) & > v(a_{1, 2}),\,{\rm if}\,v(a_{1, 2}) = v(a'_{1, 2})\,\,{\rm and}\,\,k \cong GF(2).
\end{align*}
Moreover it follows easily that for any $\alpha \in \Lam_+$ and for any $\beta \geq \alpha$ provided ${\rm card}(k) \geq 3$
and $\beta > \alpha$ provided $k \cong GF(2)$ respectively, there exist matrices $A, A' \in B$ such that $v(a_{1, 2}) = v(a'_{1, 2}) = \alpha,
v(a''_{1, 2}) = \beta$. Thus, the map $\varphi_B: D\backslash B/D\rightarrow \overline{\Lambda}_+$ is an isomorphism of polygroups, as claimed.

(iii) This follows from (\ref{Eq:alpha^n}) and  (\ref{Eq:0^2}).
\ep

{\footnotesize 
\begin{rems} \em
(i) $B$ is a group extension of the commutative polygroup $\mathcal{P}(\overline{\Lambda}_+)$ (defined before 
Lemma~\ref{Lem:DBD}) by the commutative group $D \cong \O^\times \times \O^\times$. Note that the
hyperoperation $\circ$ on $\overline{\Lam}_+$ depends on whether or not the residue field $k$ is isomorphic to 
the field $GF(2)$ of cardinality $2$.

(ii) For any $a \in \O$, let
\[
E(a) := E_{1, 2}(a) = {\begin{pmatrix} 1 & a\\
                        0 & 1
                       \end{pmatrix};}
\]
in particular, $E(0) = I_2$.

Let $a \in \O \sm \{0\}$. Using the normal form (\ref{Eq:BNormalForm}), we obtain the decomposition
\[
E(a) = R\big(\frac{a}{t_\alpha}\big) \cdot S_\alpha \cdot R\big(- \frac{t_\alpha}{a}\big), 
\]
where $\alpha = v(a) \in \Lam_+$. In particular, $E(a) = R(a) \cdot S_0 \cdot R(- a)^{- 1}$ provided $a \in O^\times$, 
while $E(t_\alpha) = S_\alpha \cdot R(- 1)$ for all $\alpha \in \Lam_+$.
\end{rems}}

It remains to apply Theorem~\ref{Thm:PresGpAct} to obtain a presentation for the group $B$ in terms of the stabiliser $B_0 = D$ and the chosen representatives 
$S_\alpha \in B$ for  $\alpha \in \Lam_+$ of the non-trivial double cosets modulo $D$.

To get the relations of type (I), we note that for any $\alpha \in \Lam_+$, the associated partial
automorphism $\iota_\alpha$ of $D$, $x \mapsto S_\alpha  x S_\alpha^{- 1}$ is the identical
automorphism of $D \cap S_\alpha D S_\alpha^{- 1} = Z \cong O^\times$, the center of $B$. Consequently, 
the relations of type (I) are
\begin{equation}
\label{Eq:RelI}
[S_\alpha, Z(a)] = 1,\q (\alpha \in \Lam_+, \,a \in \O^\times \sm \{1\}).
\end{equation}

To obtain the relations of type (II), we note that, by (\ref{Eq:DDecomp}), $\{R(a):\,a \in \O^\times\}$ is a set of representatives for the space 
$Z \sm D/Z = D/Z \cong \O^\times$, so it remains only to use the normal form (\ref{Eq:BNormalForm}) to compute the invariants  
$\rho_A, \eps(A), \lam_A$ for the matrices $A = S_\alpha \cdot R(a) \cdot S_\beta$ with $\alpha, \beta \in \Lam_+,\,\alpha\leq \beta$, and $a \in \O^\times$.
It follows that the relations of type (II) are
\begin{equation}
\label{Eq:RelII1}
S_\alpha^2 = 1\q (\alpha \in \Lam_+),
\end{equation}
\begin{equation}
\label{Eq:RelII2}
S_\alpha \cdot R(a) \cdot S_\alpha = R(\frac{t_\alpha(1 - a)}{t_\gam}) \cdot S_\gam \cdot R(\frac{t_\gam a}{t_\alpha(a - 1)})
\q (\alpha \in \Lam_+, a \in \O^\times \sm \{1\}),
\end{equation}
where $\gam = \alpha + v(a - 1)$, and
\begin{equation}
 \label{Eq:RelII3}
S_\alpha \cdot R(a) \cdot S_\beta = R(\frac{t_\alpha - a t_\beta}{t_\alpha}) \cdot S_\alpha \cdot R(\frac{a t_\alpha}{a t_\beta - t_\alpha})
\q (\alpha, \beta \in \Lam_+,\, \alpha < \beta,\, a \in \O^\times).
\end{equation}

Note that the relations (\ref{Eq:RelII3}) are consequences of the relations (\ref{Eq:DRel1}), (\ref{Eq:RelII1}), 
and (\ref{Eq:RelII2}). Moreover, in accordance with the fact that the double coset $D S_0 D$ generates the 
commutative polygroup with support $D \sm B/D$ (cf.\ Lemma ~\ref{Lem:DBD}(iii)), we also find 
defining relations for $S_\alpha,\,\alpha \in \Lam_+ \sm \{0\}$ in terms of $S_0$ and diagonal matrices:
\begin{equation}
\label{Eq:DefRel}
S_\alpha = S_0 \cdot R(1 - t_\alpha) \cdot S_0 \cdot R(t_\alpha - 1)^{- 1} = [S_0, R(1 - t_\alpha)] \cdot R(- 1)\q
(\alpha \in \Lam_+ \sm \{0\}).
\end{equation}
Consequently, we obtain the following presentation of the group $B = B_2(\O)$.
\begin{pr}
\label{Pr:PresB(O)}
The group $B = B_2(\O)$ of invertible upper-triangular matrices with entries in the valuation ring $\O$ is
generated by the matrices $R(a), Z(a)$ for $a \in \O^\times \sm \{1\}$ and $S:= S_0,$ subject to the relations 
{\em (\ref{Eq:DRel1}), (\ref{Eq:DRel2}), (\ref{Eq:DRel3})}, as well as 
\begin{align}
[S, Z(a)] & = 1\q (a \in \O^\times \sm \{1\}), {\label{Eq:SI}}\\
S^2 & = 1, {\label{Eq:SII1}}\\
[S, R(1 - a t_\alpha)] & = R(a) \cdot [S, R(1 - t_\alpha)] \cdot R(a)^{- 1}\, (0 \neq \alpha \in \Lam_+, 1 \neq a \in \O^\times),
{\label{Eq:SII2}}\\ 
S \cdot R(a) \cdot S & = R(1 - a) \cdot S \cdot R(\frac{a}{a - 1})\q (a \in \mathfrak{k}), {\label{Eq:SII3}}
\end{align}
where $\mathfrak{k} \sse \O^\times \sm (1 + \fm)$ is a set of representatives modulo the maximal ideal $\fm$ of the
elements in $k^\times \sm \{1\}$.  
\end{pr}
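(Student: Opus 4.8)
The plan is to read off the asserted presentation from Theorem~\ref{Thm:PresGpAct}, applied to the pair $(B, D)$, and then to reduce the resulting (large) presentation to the stated one by Tietze transformations; all the genuinely new computations have in fact already been carried out in the paragraphs preceding the statement. First I would invoke Theorem~\ref{Thm:PresGpAct} with $G = B$, $H = D$, and section $\si(\alpha) = S_\alpha$. By Lemma~\ref{Lem:DBD} the double coset space $D\sm B/D$ is the polygroup $\mathcal{P}_B(\overline{\Lam}_+)$; since its inversion is the identity, the orientation is all of $\overline{\Lam}_+$ and the free basis is $\{S_\alpha : \alpha\in\Lam_+\}$. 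The partial automorphisms $\iota_\alpha$, the normal form (\ref{Eq:BNormalForm}), and the products $S_\alpha R(a) S_\beta$ have already been computed, so the theorem yields a presentation $\mathcal{P}_0$ of $B$ on the generators $R(a), Z(a)$ ($a\in\O^\times\sm\{1\}$) and $S_\alpha$ ($\alpha\in\Lam_+$), subject to (\ref{Eq:DRel1})--(\ref{Eq:DRel3}) together with (\ref{Eq:RelI}), (\ref{Eq:RelII1}), (\ref{Eq:RelII2}), (\ref{Eq:RelII3}).

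Next I would trim $\mathcal{P}_0$. The relations (\ref{Eq:RelII3}) are already known to be consequences of (\ref{Eq:DRel1}), (\ref{Eq:RelII1}), (\ref{Eq:RelII2}), and so may be discarded. Among the instances of (\ref{Eq:RelII2}), those with $\alpha = 0$ and $a = 1 - t_\beta$ ($\beta > 0$) are exactly the relations (\ref{Eq:DefRel}), which express each generator $S_\beta$ ($\beta > 0$) as the word $[S_0, R(1-t_\beta)]\,R(-1)$ in $S := S_0$ and the diagonal generators. I would use these as Tietze eliminations, deleting every $S_\beta$ with $\beta>0$ together with its defining relation and substituting the corresponding word throughout.

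It then remains to verify that the relations surviving this substitution are equivalent, modulo (\ref{Eq:DRel1})--(\ref{Eq:DRel3}), to the four families (\ref{Eq:SI})--(\ref{Eq:SII3}). The organising device is the elementary identity $[S, R(c)] = E(1-c)$ ($c\in\O^\times$), together with the scaling rule $R(a)E(x)R(a)^{-1} = E(ax)$ and the additivity $E(x)E(y) = E(x+y)$ of the unipotent matrices $E(x)$. Granting these, the surviving relations dispose of themselves as follows. Family (\ref{Eq:RelI}): the case $\alpha=0$ is (\ref{Eq:SI}), while for $\alpha>0$ the word replacing $S_\alpha$ involves only $S$ and diagonal generators, all of which commute with $Z(a)$ by (\ref{Eq:SI}) and (\ref{Eq:DRel3}), so these are redundant. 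Family (\ref{Eq:RelII1}): the case $\alpha=0$ is (\ref{Eq:SII1}), while for $\alpha>0$ one has $S_\alpha = E(t_\alpha)R(-1)$, whence $S_\alpha^2 = E(t_\alpha)\,R(-1)E(t_\alpha)R(-1) = E(t_\alpha)E(-t_\alpha) = 1$ by scaling and additivity. Family (\ref{Eq:RelII2}), split according to $v(a-1)$: for $\alpha=0$ and $v(a-1)=\beta>0$ the substituted relation reads $E(st_\beta) = R(s)E(t_\beta)R(s)^{-1}$ with $s = (1-a)/t_\beta$, i.e.\ it is precisely the scaling relation (\ref{Eq:SII2}); for $\alpha=0$ and $v(a-1)=0$ it reads $S R(a) S = R(1-a)\,S\,R\big(a/(a-1)\big)$, which, using $S^2=1$ and the $v(a-1)>0$ family, need only be imposed for one representative $a$ per residue class in $k^\times\sm\{1\}$, giving (\ref{Eq:SII3}); and the instances with $\alpha>0$ reduce, after replacing $S_\alpha$ and $S_\gamma$ by their words, to consequences of scaling and additivity, as a direct substitution confirms.

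The main obstacle is the input underlying the previous paragraph: one must show that the short list (\ref{Eq:SI})--(\ref{Eq:SII3}) already forces the full \emph{unipotent calculus}, namely that inside the presented group the elements $E(y) := [S, R(1-y)]$ (for $y\in\O\sm(1+\fm)$, extended to all of $\O$ by products) constitute an abelian subgroup isomorphic to $(\O, +)$ on which $D$ acts through $R(a)\colon E(x)\mapsto E(ax)$. The scaling on $\fm$ is exactly (\ref{Eq:SII2}); the content that must be extracted from (\ref{Eq:SII3}) is additivity together with scaling by units, and the delicate bookkeeping is to see that imposing (\ref{Eq:SII3}) for a single representative per residue class in $k^\times\sm\{1\}$ suffices to recover these for all admissible $a$. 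Once this calculus is in place, every redundancy above is a one-line substitution, and the proof is complete.
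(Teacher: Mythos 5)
Your outline reproduces the paper's architecture exactly: Theorem~\ref{Thm:PresGpAct} applied to the pair $(B,D)$ with section $\alpha\mapsto S_\alpha$, discarding (\ref{Eq:RelII3}) as redundant, using the instances of (\ref{Eq:RelII2}) with $\alpha=0$, $a=1-t_\beta$ --- which are indeed (\ref{Eq:DefRel}) --- to eliminate the generators $S_\beta$, $\beta>0$, and then checking that the two relation lists are equivalent modulo (\ref{Eq:DRel1})--(\ref{Eq:DRel3}). The easy direction (short list from long list) is fine. But the proposal stops exactly where the paper's proof begins: your final paragraph concedes that ``one must show'' the short list (\ref{Eq:SI})--(\ref{Eq:SII3}) forces the unipotent calculus --- additivity $E(x)E(y)=E(x+y)$ and scaling by arbitrary units --- and that the ``delicate bookkeeping'' is to see that imposing (\ref{Eq:SII3}) for one representative per residue class suffices. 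That calculus is not an auxiliary tool; it is logically equivalent to the family (\ref{Eq:RelII2}) you are trying to derive, so using it to ``dispose'' of the surviving relations (e.g.\ your derivations of $S_\alpha^2=1$ and of the $\alpha>0$ instances of (\ref{Eq:RelII2}) via ``scaling and additivity'') is circular until it is established inside the presented group. The gap is therefore genuine: the entire content of the paper's proof is the pair of computations you defer.

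Concretely, the paper closes this gap as follows. For $\gamma:=\alpha+v(a-1)>0$ the verification of (\ref{Eq:RelII2}) is \emph{not} a one-line substitution when $\alpha>0$: one must rewrite $\mathcal{R}$ as $[S,R(1-t_\alpha(1-a))]\cdot R(a)$ and then perform a telescoping conjugation, applying (\ref{Eq:SII2}) a second time to $[S,R(1-\frac{a}{t_\alpha-1}t_\alpha)]$ conjugated by $R(1-t_\alpha)$, together with (\ref{Eq:DefRel}) and (\ref{Eq:SII1}). For $\gamma=0$ (so $\alpha=0$ and $a\bmod\fm\in k^\times\setminus\{1\}$), the single-representative problem you name is solved by the factorization $R(a)=R(b)\cdot R(ab^{-1})$ with $b\in\mathfrak{k}$ the representative of the residue class of $a$: then $\beta:=v(a-b)>0$ places $SR(ab^{-1})S$ in the already-settled case $\gamma>0$, and $SR(a)S=(SR(b)S)(SR(ab^{-1})S)$ is recombined using (\ref{Eq:SII3}) for $b$, (\ref{Eq:DRel1}), and (\ref{Eq:SII1}) to yield the relation for all $a$ with $v(a-1)=0$. (Note also that the paper obtains $S_\alpha^2=1$ directly from the single substitution $a=(t_\alpha-1)^{-1}$ in (\ref{Eq:SII2}), avoiding any appeal to additivity.) Supplying these two computations is the proof; without them your text is a correct plan, not an argument.
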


\bp
In the presence of (\ref{Eq:DRel1}), (\ref{Eq:DRel2}), and (\ref{Eq:DRel3}), the relations (\ref{Eq:SI}), (\ref{Eq:SII1}),
(\ref{Eq:SII2}), and (\ref{Eq:SII3}) are obvious consequences of the relations (\ref{Eq:RelI}), (\ref{Eq:RelII1}), and
(\ref{Eq:RelII2}). 

Conversely, for any $\alpha \in \Lam_+ \sm \{0\}, a \in \O^\times \sm \{1\}$, we use the defining relation (\ref{Eq:DefRel}) for $S_\alpha$ 
to deduce (\ref{Eq:RelI}) from (\ref{Eq:DRel3}) and (\ref{Eq:SI}), while (\ref{Eq:RelII1}) is a consequence of 
(\ref{Eq:SII1}) and (\ref{Eq:SII2}) taking $a = (t_\alpha - 1)^{- 1}$. 

It remains to deduce (\ref{Eq:RelII2}). Let $\alpha \in \Lam_+, a \in \O^\times \sm \{1\}$. We denote by $\cL, \cR$ 
the left and right side of the identity (\ref{Eq:RelII2}), respectively. We distinguish the following two cases.

(i) $\gam := \alpha + v(a - 1) > 0$. By (\ref{Eq:DefRel}) and (\ref{Eq:SII2}) we obtain
\[
\cR = R\big(\frac{t_\alpha (1 - a)}{t_\gam}\big) \cdot [S, R(1 - t_\gam)] \cdot R\big(\frac{t_\gam a}{t_\alpha (1 - a)}\big) =
[S, R(1 - t_\alpha (1 - a)] \cdot R(a). 
\]
If $\alpha = 0$, so $t_\alpha = 1$, it follows by (\ref{Eq:SII1}) that $\cR = \cL$ as desired. Assuming $\alpha > 0$, we obtain again by 
(\ref{Eq:DefRel}), (\ref{Eq:SII1}), and (\ref{Eq:SII2})
\begin{align*}
\cR & = [S, R(1 - t_\alpha)] \cdot \big(R(1 - t_\alpha) \cdot [S, 1 - \frac{a}{t_\alpha - 1} t_\alpha] \cdot R(1 - t_\alpha)^{- 1}\big)
\cdot R(a)\\[1mm]
& = S_\alpha \cdot R(t_\alpha - 1) \cdot \big(R\big(\frac{a}{t_\alpha - 1}\big) \cdot [S, 1 - t_\alpha] \cdot R\big(\frac{a}{t_\alpha - 1}\big)^{- 1}\big)
\cdot R\big(\frac{a}{1 - t_\alpha}\big)\\[1mm]
& = S_\alpha \cdot R(a) \cdot S_\alpha = \cL
\end{align*}
as desired. 

(ii) $\gam = 0$, that is, $\alpha = v(a - 1) = 0$, so $a\,{\rm mod}\,\fm \in k^\times \sm \{1\}$. Let $b \in \mathfrak{k}$ 
be such that $\beta := v(a - b) > 0$. By (i), (\ref{Eq:DRel1}), (\ref{Eq:SII1}) and (\ref{Eq:SII3}), we obtain
\begin{align*}
S \cdot R(a) \cdot S & = (S \cdot R(b) \cdot S) \cdot (S \cdot R(a b^{- 1}) \cdot S)\\[1mm]
& = \big(R(1 - b) \cdot S \cdot R\big(\frac{b}{b - 1}\big)\big) \cdot \big(R\big(\frac{b - a}{b t_\beta}\big) \cdot S_\beta \cdot R\big(\frac{a t_\beta}{a - b}\big)\big)\\[1mm]
& = R(1 - b) \cdot S \cdot \big(R\big(\frac{1 - \frac{1 - a}{1 - b}}{t_\beta}\big) \cdot S_\beta \cdot R\big(\frac{\frac{1 - a}{1 - b} t_\beta}{\frac{1 - a}{1 - b} - 1}\big)\big) \cdot R\big(\frac{a}{a - 1}\big)\\[1mm]
& = R(1 - b) \cdot S \cdot \big(S \cdot R\big(\frac{1 - a}{1 - b}\big) \cdot S\big) \cdot R\big(\frac{a}{a - 1}\big) \\[1mm]
& = R(1 - a) \cdot S \cdot R\big(\frac{a}{a - 1}\big),
\end{align*}
as required. 
\ep

In particular, if the valuation $v$ is trivial, that is, $\O = K = k$, we obtain the following.

\begin{co}
\label{Cor:PresB(K)}
Let $K$ be an arbitrary field.Then the group $B_2(K)$ of  invertible upper-triangular matrices with entries in $K$ is generated by the matrices
$R(a), Z(a)$ with $a \in K^\times \sm \{1\})$ and the matrix $S,$ subject to the relations {\em (\ref{Eq:DRel1}), (\ref{Eq:DRel2}), (\ref{Eq:DRel3}),}
where $a, b \in K^\times \sm \{1\},$ {\em (\ref{Eq:SI})} with $a \in K^\times \sm \{1\},$ {\em (\ref{Eq:SII1}),} and {\em (\ref{Eq:SII3})}
with $a \in K^\times \sm \{1\}$. 
\end{co}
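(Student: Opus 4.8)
The plan is to obtain Corollary~\ref{Cor:PresB(K)} as a direct specialisation of Proposition~\ref{Pr:PresB(O)} to the trivially valued field, where $\O = K = k$. First I would record the two degeneracies that drive everything: under the trivial valuation every nonzero element has valuation $0$, so the value group is $\Lam = v(K^\times) = \{0\}$ and hence $\Lam_+ = \{0\}$, while the maximal ideal collapses to $\fm = \{0\}$.

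Next I would pass through the relations of Proposition~\ref{Pr:PresB(O)} one family at a time. The torus relations (\ref{Eq:DRel1}), (\ref{Eq:DRel2}), (\ref{Eq:DRel3}), the centrality relation (\ref{Eq:SI}), and the involution relation (\ref{Eq:SII1}) survive unchanged, now with $a, b$ ranging over $K^\times \sm \{1\} = \O^\times \sm \{1\}$. The two valuation-sensitive families behave as follows. Relation (\ref{Eq:SII2}) is indexed by $0 \neq \alpha \in \Lam_+$; since $\Lam_+ = \{0\}$ this index set is empty, so (\ref{Eq:SII2}) disappears entirely. In relation (\ref{Eq:SII3}) the indexing set $\mathfrak{k} \sse \O^\times \sm (1 + \fm)$ is a set of representatives modulo $\fm$ for $k^\times \sm \{1\}$; as $\fm = \{0\}$ forces $1 + \fm = \{1\}$ and $k = K$, the reduction map is the identity, whence $\mathfrak{k} = K^\times \sm \{1\}$ and (\ref{Eq:SII3}) ranges over all of $K^\times \sm \{1\}$, exactly as in the corollary.

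Assembling these observations yields precisely the stated presentation, so the proof is essentially a bookkeeping exercise. The only step requiring any care is the correct handling of the two index-set degenerations above; I do not expect a substantive obstacle. As a sanity check I would verify the extreme case $K \cong GF(2)$: there $K^\times \sm \{1\} = \es$ makes the generators $R(a), Z(a)$ and relation (\ref{Eq:SII3}) all vacuous, so the presentation collapses to $\langle S \mid S^2 = 1\rangle$, matching $B_2(GF(2)) \cong \Z/2\Z$.
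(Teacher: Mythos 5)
Your proposal is correct and takes essentially the same route as the paper, which obtains Corollary~\ref{Cor:PresB(K)} precisely as the trivial-valuation specialisation of Proposition~\ref{Pr:PresB(O)}, leaving implicit the bookkeeping you spell out (the family (\ref{Eq:SII2}) vanishing because $\Lam_+ = \{0\}$ empties its index set, and $\mathfrak{k} = K^\times \sm \{1\}$ because $\fm = \{0\}$ makes the residue map the identity). Your $GF(2)$ sanity check also agrees with Remarks~\ref{Rem:B(O)}(i).
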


{\footnotesize 
\begin{rems}
 \label{Rem:B(O)} \em
(i) If the residue field $k$ is isomorphic to $GF(2),$ then $\O^\times = 1 + \fm, \mathfrak{k} = \es$, therefore the 
relation (\ref{Eq:SII3}) does not occur in the presentation of $B_2(\O)$. In particular, 
$B_2(GF(2)) \cong \la S\,|\,S^2 = 1 \ra \cong \Z/2 \Z$, and the polygroup with support $D \sm B/D$ is a group 
(isomorphic to $\Z/2 \Z$) if, and only if, $K \cong GF(2)$.

(ii) Assume that the valuation $v$ is discrete, that is, the totally ordered abelian group $\Lam = v(K^\times)$ has a
smallest positive element $1$. Thus, the ordered group $\Z$ of integers is identified with the smallest proper convex
subgroup of $\Lam$; in particular, $\Lam \cong \Z$ provided the discrete valuation $v$ is of rank $1$. Setting $t:= t_1$,
so $v(t) = 1$, we may replace (\ref{Eq:SII2}) with  
\begin{equation}
\label{Eq:SII2'}
[S, R(1 - at)] = R(a) [S, R(1 - t)] R(a)^{- 1}\quad(a \in O^\times - \{1\}).
\end{equation}
Indeed, using (\ref{Eq:DRel1}) and (\ref{Eq:SII2'}), we obtain for all $1 < \alpha \in \Lam, a \in O^\times$,
\begin{align*}
[S, R(1 - a t_\alpha)] &= [S, R(1 - a t) R(1 - \frac{a(\frac{t_\alpha}{t} - 1)}{1 - a t} t)] \\
&= [S, R(1 - at)] R(1 - at) [S, R(1 - \frac{a (\frac{t_\alpha}{t} - 1)}{1 - a t}t)] R(1 - a t)^{- 1}\\
&= R(a) ( [S, R(1 - t)] R(\frac{t_\alpha}{t} - 1)
[S, R(1 - t)] R(\frac{t_\alpha}{t} - 1)^{- 1}) R(a)^{- 1}\\
&= R(a) [S, R(1 - t_\alpha)] R(a)^{- 1},
\end{align*}
as desired.

(iii) An alternative presentation of $B := B_2(\O)$ can be obtained using the natural semi-direct product decomposition  
$B \cong \O^+ \ltimes (\O^\times \times \O^\times)$ which holds more generally over an arbitrary commutative ring $\O$ with identity element $1$.
It follows that $B$ is generated by the matrices $R(a), Z(a)\, (a \in \O^\times \sm \{1\})$ and $E(a)\,(a \in \O \sm \{0\})$
subject to the relations (\ref{Eq:DRel1}), (\ref{Eq:DRel2}), (\ref{Eq:DRel3}) from the presentation of $D \cong \O^\times
\times \O^\times$ together with the relations 
\[
E(a) \cdot E(b) = E(a + b)\q (a, b \in \O \sm \{0\}); E(0) := 1,
\]
induced by the additive structure of $\O$, and the relations describing the action of $\O^\times \times \O^\times$ on $\O^+$
\[
[Z(a), E(b)] = 1,\, R(a) \cdot E(b) \cdot R(a)^{- 1} = E(a b)\q (a \in \O^\times \sm \{1\}, b \in \O \sm \{0\}). 
\]
The presentation above and the presentation provided by Proposition ~\ref{Pr:PresB(O)} are related through the substitutions
\begin{align*}
S \mapsto & E(1) \cdot R(- 1);\\
E(a) \mapsto & R(a) \cdot S \cdot R(- a)^{- 1}\q (a \in \O^\times),\\
E(a) \mapsto & R(a t_\alpha^{- 1}) \cdot [S, R(1 - t_\alpha)] \cdot R(a^{- 1} t_\alpha)\q (a \in \fm \sm \{0\}, \alpha := v(a)).
\end{align*}

(iv) The relations (\ref{Eq:SII2}) and (\ref{Eq:SII3}) can be written in the form
\[
[S, R(a)] = E(1 - a)\q (a \in \O^\times),
\]
whence the following cohomological interpretation.

The map $[S, R(-)] : \O^\times \lra B, a \mapsto [S, R(a)]$ is a $1$-cocycle
with respect to the action of the multiplicative group $\O^\times$ on the
non-commutative group $B$
\[
(a, A) \in \O^\times \times B \mapsto R(a) A R(a)^{- 1}.
\]
On the other hand, $\O^\times$ acts canonically on the abelian group
$(\O, +), \O^\times \times \O\ni(a, b) \mapsto a b$. Consequently, the relations above express the following two facts.

(a) the map $E : (\O, +) \lra B$ given by $a \mapsto E(a)$ is a monomorphism, and

(b) the $1$-cocycle $[S, R(-)] : \O^\times \lra B$ is obtained by composing 
the $1$-coboundary\\ $\O^\times \lra \O^+$ given by $a \mapsto 1 - a$ with the monomorphism $E : \O^+ \lra B$.
\end{rems}}

\subsection{A presentation for $GL_2(\O)$}
\label{Subsec:GL_2}

The group $G:= GL_2(\O)$ acts transitively from the left on the projective line $\P^1(K) = K \cup \{\infty\}$
over $K$, the fraction field of the valuation ring $\O$, according to the rule
\begin{equation}
\label{Eq:ActG}
A \cdot z := \frac{a_{1, 1} z + a_{1, 2}}{a_{2, 1} z + a_{2, 2}},\quad
(A \in G,\, z \in K \cup \{\infty\}).
\end{equation}
The kernel of the action is the center 
\[
Z = Z_2(\O) = \big\{aI_2:\,a\in\O^\times\big\} \cong \O^\times, 
\]
while $G_\infty = B = B_2(\O)$ is the stabilizer of $\infty$.
In order to be able to apply Theorem~\ref{Thm:PresGpAct} to the pair $(G, B)$, we need to describe explicitely the 
polygroup structure on the space of double
cosets $B \sm G/B$, which is in canonical bijection with the orbit space 
$B \sm \P^1(K) = \bigsqcup_{\alpha \in \overline{\Lam}_+} B \cdot t_\alpha^{- 1}$, where 
\[
B \cdot t_\alpha^{- 1} = \begin{cases} 
                          \O,\,&{\rm if}\,\alpha = 0,\\[1mm]
                          v^{- 1}(- \alpha) = t_\alpha^{- 1} \O^\times,\,&{\rm if}\,\alpha \in \Lam_+ \sm \{0\},\\[1mm]
                         \{\infty\},\,&{\rm if}\,\alpha = v(0) = \infty.
\end{cases}
\]
To this end, set $J_\infty=I_2$, and define
\[
J_\alpha = \begin{cases} {\begin{pmatrix} 0 & - 1\\
                            -1 & 0
                           \end{pmatrix}}\,&{\rm if}\,\alpha = 0,\\
                          {\begin{pmatrix} 1 & 0\\
                             t_\alpha & - 1
 \end{pmatrix}}\,&{\rm if}\,\alpha \in \Lam_+ \sm \{0\},
\end{cases}
\]
noting that the $J_\alpha$ are involutions. Moreover, define a second polygroup structure on the set $\overline{\Lambda}_+$ via
\[
\alpha \circ \beta = \begin{cases} \{{\rm min}\,(\alpha, \beta)\},& \alpha \neq \beta,\\[1mm]
                        [\alpha, \infty] = \{\gam \in \overline{\Lam}_+:\,\alpha \leq \gam\},& \alpha = \beta.
                       \end{cases}
\]
The resulting polygroup $\mathcal{P}_G(\overline{\Lambda}_+)$ is again commutative, has neutral element (scalar identity) $\infty$, and satisfies $\alpha^{-1} = \alpha$ for all $\alpha\in\overline{\Lambda}_+$. With these preliminaries, we can now state the following.

\begin{lem}
\label{Lem:BGB}
\ben
\item[(i)] The surjective map $G \lra \overline{\Lam}_+$ given by $A \mapsto v(a_{2, 1})$ induces a bijection
$\varphi_G: B \sm G/B \lra \overline{\Lam}_+,$ whose inverse sends $\alpha \in \overline{\Lam}_+$ to the double coset $B J_\alpha B$.

\vspace{2mm}
  
\item[(ii)] The bijection $\varphi_G$ of Part~{\em (i)}  yields an isomorphism between the polygroup on the space of double cosets $B \sm G/B$ and the commutative polygroup
 $\mathcal{P}_G(\overline{\Lambda}_+)$. 

\vspace{2mm}

\item[(iii)] We have $0^2=0 \circ 0 = \overline{\Lam}_+,$ so that the double coset $B J_0 B$ is the unique element of the space $B \sm G/B$ which generates
its associated hypergroup.
\een
\end{lem}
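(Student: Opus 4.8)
The plan is to follow the proof of Lemma~\ref{Lem:DBD} almost verbatim, replacing the upper-right entry $a_{1,2}$ (the relevant coordinate for $B$) by the lower-left entry $a_{2,1}$, and the involutions $S_\alpha$ by the involutions $J_\alpha$. \emph{For part (i)}, I would first check that $A\mapsto v(a_{2,1})$ is constant on double cosets: for $U,V\in B$ with diagonal entries in $\O^\times$, a one-line computation gives $(UAV)_{2,1}=u_{2,2}\,a_{2,1}\,v_{1,1}$, whence $v((UAV)_{2,1})=v(a_{2,1})$ and $\varphi_G$ is well defined. Since $(J_\alpha)_{2,1}$ equals $0,\,-1,\,t_\alpha$ for $\alpha=\infty,\,0,\,\alpha\in\Lam_+\sm\{0\}$ respectively, we get $\varphi_G(BJ_\alpha B)=\alpha$, so $\varphi_G$ is onto and $\alpha\mapsto BJ_\alpha B$ is a right inverse. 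Injectivity amounts to showing that every $A$ with $v(a_{2,1})=\alpha$ lies in $BJ_\alpha B$; for $\alpha=\infty$ this is just $A\in B$, and for finite $\alpha$ I would exhibit an explicit factorisation $A=\rho_A\,J_\alpha\,\lam_A$ with $\rho_A,\lam_A\in B$, in the spirit of the normal form~(\ref{Eq:BNormalForm}). The key preliminary remark is that when $\alpha>0$ the relation $\det A\in\O^\times$ together with $a_{2,1}\in\fm$ forces $a_{1,1},a_{2,2}\in\O^\times$; this is exactly what guarantees that the triangular factors $\rho_A,\lam_A$ have entries in $\O$ and units on the diagonal.

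\emph{For part (ii)}, it is immediate that $\varphi_G$ sends the identity double coset $B=BJ_\infty B$ to the scalar identity $\infty$ and is compatible with inversion, since $(A^{-1})_{2,1}=-a_{2,1}/\det A$ gives $v((A^{-1})_{2,1})=v(a_{2,1})$; thus the substance of (ii) is that $\varphi_G$ carries the double-coset hyperoperation to $\circ$. As the double-coset product is representative-independent, I would compute, for $h=\begin{pmatrix}p&q\\0&s\end{pmatrix}\in B$ (so $p,s\in\O^\times$, $q\in\O$), the attained valuations $v\big((J_\alpha h J_\beta)_{2,1}\big)$, since these index precisely the double cosets occurring in $BJ_\alpha B\cdot BJ_\beta B$. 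One has to keep in mind that $J_0$ and $J_\infty$ have a different shape from the $J_\alpha$ with $\alpha>0$, so the computation splits into a few cases. Whenever $\alpha\neq\beta$ the relevant entry is dominated by a single term of valuation $\min(\alpha,\beta)$, giving the singleton $\{\min(\alpha,\beta)\}$ irrespective of $h$ (the subcases involving $\infty$ being the scalar-identity law). For $\alpha=\beta>0$ the entry factors as $t_\alpha\,(p-s+t_\alpha q)$, so the attained set is $\alpha+v(p-s+t_\alpha q)$, while for $\alpha=\beta=0$ it is simply $v(q)$.

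The main obstacle is precisely this equal-argument case. Showing that every sufficiently large value is attained is routine; the delicate point is the smallest attained value, i.e.\ whether the endpoint $\gamma=\alpha$ is reached, and here the residue field enters through the achievable valuations of $p-s$ with $p,s\in\O^\times$ --- exactly the phenomenon already visible in~(\ref{Eq:alpha^n})--(\ref{Eq:0^2}) for the group $B$. The one case that is completely uniform is $\alpha=\beta=0$, where $(J_0 h J_0)_{2,1}=q$ ranges over all of $\O$, so that $0\circ 0=\{v(q):q\in\O\}=\overline{\Lam}_+$ for every residue field; this is the content of part (iii). Finally, for part (iii) I would observe that $0\circ 0=\overline{\Lam}_+$ makes $BJ_0 B$ a generator, whereas for $\alpha>0$ every self- and cross-product of elements $\gamma\geq\alpha$ again has valuation $\geq\alpha$, so the sub-hypergroup generated by $BJ_\alpha B$ stays inside $[\alpha,\infty]$ and never reaches the smaller double cosets; hence $BJ_0 B$ is the unique generator.
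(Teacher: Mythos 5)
Your handling of parts (i) and (iii) is correct and is essentially the paper's own argument: for (i) the paper writes down exactly the factorisation you envisage (the normal form (\ref{Eq:GNormalForm}), whose shape for $\alpha>0$ rests on precisely your preliminary remark that $\det A\in\O^\times$ together with $a_{2,1}\in\fm$ forces $a_{1,1},a_{2,2}\in\O^\times$), and for (iii) the computation $(J_0hJ_0)_{2,1}=q$ with $q$ ranging over all of $\O$, combined with the bound $\gamma\circ\delta\subseteq[\min(\gamma,\delta),\infty]$ for $\gamma,\delta\geq\alpha$, is the intended argument. For (ii), your computation of $v\big((J_\alpha hJ_\beta)_{2,1}\big)$ is also the paper's computation (there phrased as $a''_{2,1}=a_{2,1}a'_{1,1}+a_{2,2}a'_{2,1}$ for a product of two general group elements, here parametrised by the middle element $h$), and your analysis of the cases $\alpha\neq\beta$ is complete. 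But you stop exactly where the proof still has to be finished: you flag the equal-argument case $\alpha=\beta>0$ as ``delicate'' because ``the residue field enters'', and you never resolve it. As written, part (ii) is therefore not proved.

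Your hesitation is, however, well founded, and resolving it exposes a defect in the statement itself. For $\alpha=\beta>0$ the attained set is $\alpha+v(p-s+t_\alpha q)$ with $p,s\in\O^\times$ and $q\in\O$. If $k\cong GF(2)$, then $\O^\times=1+\fm$, so $p-s\in\fm$ and $t_\alpha q\in\fm$, whence $v(p-s+t_\alpha q)>0$ \emph{always}: the endpoint is never attained, and the true double-coset product is $\alpha\circ\alpha=(\alpha,\infty]$, not $[\alpha,\infty]$. (Equivalently: when $v(a_{2,1})=v(a'_{2,1})=\alpha>0$ all four diagonal entries are units, so over $GF(2)$ the residues in $a''_{2,1}/t_\alpha=a'_{1,1}(a_{2,1}/t_\alpha)+a_{2,2}(a'_{2,1}/t_\alpha)$ sum to $1+1=0$ in $k$.) Thus part (ii) of Lemma~\ref{Lem:BGB}, the definition of $\mathcal{P}_G(\overline{\Lambda}_+)$ preceding it, and the subsequent remark that this polygroup agrees with $\mathcal{P}_B(\overline{\Lambda}_+)$ for ${\rm card}(k)\geq3$ with no $GF(2)$ proviso, all fail when $k\cong GF(2)$ and the valuation is non-trivial (e.g.\ $\O=\Z_2$): the $GF(2)$ dichotomy of Lemma~\ref{Lem:DBD} persists for every $\alpha>0$, and genuinely disappears only at $\alpha=0$, where the vanishing diagonal of $J_0$ means no unit is forced into the critical entry. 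The paper's own proof, which merely declares (ii) ``similar to the proof of Lemma~\ref{Lem:DBD}(ii)'', glosses over exactly this point, since a faithful transcription of that proof reproduces the case distinction. A correct completion of your argument should therefore either insert the case split (for $\alpha>0$: $\alpha\circ\alpha=[\alpha,\infty]$ if ${\rm card}(k)\geq3$, and $\alpha\circ\alpha=(\alpha,\infty]$ if $k\cong GF(2)$) or assume ${\rm card}(k)\geq3$. Fortunately nothing downstream depends on the flawed endpoint claim: part (iii), i.e.\ $0\circ0=\overline{\Lam}_+$ and the fact that $BJ_0B$ generates the hypergroup, is uniform in $k$, and the type (II) relations of Section~\ref{Subsec:GL_2} for $\alpha=\beta$ involve $\gamma=\alpha+v(a-1)$, which over $GF(2)$ is automatically $>\alpha$, in agreement with the corrected product.
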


\bp
(i) As in the proof of Lemma ~\ref{Lem:DBD}, it suffices to note that every matrix $A \in G \sm B$ admits the normal form
\begin{equation}
\label{Eq:GNormalForm}
A = \rho_A \cdot \eps(A) \cdot \lam_A
\end{equation}
with
\begin{align}
 \label{NormalForm}
\rho_A = &\begin{cases} E_{1, 2}(\frac{a_{1, 1}}{a_{2, 1}})\,&{\rm if}\,\alpha = 0,\\[1mm]
             R(\frac{a_{1, 1} t_\alpha}{a_{2, 1}})\,&{\rm if}\,\alpha > 0;
            \end{cases}\\[3mm]
\eps(A) = &J_\alpha;\\[3mm]
\lam_A = &\begin{cases} 
\begin{pmatrix} - a_{2, 1} & - a_{2, 2}\\
  0 & \frac{\Delta}{a_{2, 1}}
 \end{pmatrix}\,&{\rm if}\,\alpha = 0,\\[6mm]
\begin{pmatrix} \frac{a_{2, 1}}{t_\alpha} & \frac{a_{1, 2} a_{2, 1}}{a_{1, 1} t_\alpha}\\
  0 & - \frac{\Delta}{a_{1, 1}}
 \end{pmatrix}\,&{\rm if}\,\alpha > 0,
\end{cases}
\end{align}
where $\alpha = v(a_{2, 1}) \in \Lam_+$ and $\Delta = \,{\rm det}\,(A)$. 

The proof of (ii) is similar to the proof of Lemma ~\ref{Lem:DBD}(ii), while (iii) is immediate from the definition of the hypergroup operation. 
\ep

{\footnotesize 
\begin{rem} \em
The group $G$ is a group extension  of the commutative polygroup $\mathcal{P}_G(\overline{\Lambda}_+)$ by the group $B$. Note that this polygroup is the same as the polygroup $\mathcal{P}_B(\overline{\Lambda}_+)$ from Lemma ~\ref{Lem:DBD}
corresponding to the case where ${\rm card}(k) \geq 3$. 
\end{rem}}

It remains to apply Theorem~\ref{Thm:PresGpAct} to obtain a presentation for the group $G = GL_2(\O)$ in terms of the 
stabiliser $G_\infty=B$ and the chosen representatives $J_\alpha \in G \sm B\, (\alpha \in \Lam_+)$ of the non-trivial 
double cosets modulo $B$. 

To get the relations of type (I), we look at the associated partial automorphisms $\iota_\alpha$ of $B$, 
$A \mapsto J_\alpha \cdot A \cdot J_\alpha^{- 1}\,(\alpha \in \Lam_+)$ inducing automorphisms of
the subgroups $B_\alpha := B \cap J_\alpha B J_\alpha^{- 1}\,(\alpha \in \Lam_+)$, respectively.

For $\alpha = 0$, we obtain $B_0 = D \cong \O^\times \times \O^\times$, and the automorphism $\iota_0$ of $D$ is
defined by $\iota_0(Z(a)) = Z(a), \iota_0(R(a)) = R(a)^{- 1} \cdot Z(a)$, for $a \in \O^\times$. Consequently, we
obtain the relations
\begin{equation}
\label{Eq:GRelI}
(J_0 \cdot R(a))^2 = Z(a)\q(a \in \O^\times).
\end{equation}
Note that the relation $J_0^2 = 1$ is included in (\ref{Eq:GRelI}) for $a = 1$. Note also that the relations
$(J_0 \cdot R(a))^2 = (R(a) \cdot J_0)^2\,(a \in \O^\times)$, and hence the commuting relations $[J_0, Z(a)] = 1\,(a \in \O^\times)$,
are consequences of (\ref{Eq:GRelI}).

For $\alpha > 0$, we obtain $B_\alpha \cong Z \times R_\alpha \cong \O^\times \times (1 + t_\alpha \O)$, where $R_\alpha$
is the abelian subgroup of $B$ consisting of the matrices
\[
R_\alpha(u) := \begin{pmatrix} u & \frac{1 - u}{t_\alpha}\\[1mm]
                0 & 1
               \end{pmatrix}\q (v(u - 1) \geq \alpha > 0), 
\]
generated by matrices $R_\alpha(u)$ with $v(u - 1) = \alpha$.
The automorphism $\iota_\alpha$ of $B_\alpha$ is defined by 
\[
\iota_\alpha(Z(a)) = Z(a)\,(a \in \O^\times),\, \iota_\alpha(R_\alpha(u)) = R_\alpha(u)^{- 1} \cdot Z(u)\q (v(u - 1) \geq \alpha > 0).   
\]
Consequently, we add to (\ref{Eq:GRelI}) the relations of type (I) for $\alpha > 0$
\begin{equation}
\label{Eq:GRelI'} 
(J_\alpha \cdot R_\alpha(u))^2 = Z(u)\q (v(u - 1) = \alpha > 0),
\end{equation}
and
\begin{equation}
\label{Eq:GRelI''}
[J_\alpha, Z(u)] = 1\q (u \in \O^\times, v(u - 1) < \alpha).
\end{equation}
Note that, as in the case $\alpha = 0$, the relations $J_\alpha^2 = 1$ and $[J_\alpha, Z(u)] = 1$ for $v(u - 1) \geq \alpha > 0$
are consequences of (\ref{Eq:GRelI'}).

To obtain the relations of type (II), we have to choose representatives $M \in B$ for the double cosets of 
$B_\gam \sm B /B_\delta\, (\gam, \delta \in \Lam_+)$, and use the normal form (\ref{Eq:GNormalForm}) for matrices of
the form $J_\gam \cdot M \cdot J_\delta$. First assume $\gam = \delta = 0$, so $B_0 = D$, and put $J := J_0$. It follows by 
Lemma ~\ref{Lem:DBD} that the matrices $S_\alpha\,(\alpha \in \overline{\Lam}_+)$ are representatives for the
double cosets in $D \sm B /D$. As $S_\infty = I_2$, we obtain again the relation $J^2 = 1$, while for $\alpha = 0$, 
setting $S := S_0$, we obtain the relation of type (II) : $J \cdot S \cdot J = S \cdot J \cdot S$. Since $S$ and $J$ are
involutions, it follows that
\begin{equation}
\label{Eq:GRelII}
(J \cdot S)^3 = 1. 
\end{equation}

Next, we show that, in agreement with the fact that the double coset $B \cdot J \cdot B$ generates the hypergroup
on the space $B \sm G / B$ (cf.\ Lemma ~\ref{Lem:BGB}), (\ref{Eq:GRelII}) is essentially the only relation of type (II). 
First note that, for $\alpha > 0$, we get the defining relation for $J_\alpha$
\begin{equation}
\label{Eq:Defrel}
J_\alpha = J \cdot S_\alpha \cdot J\q (\alpha \in \Lam_+ \sm \{0\}). 
\end{equation}
In particular, (\ref{Eq:GRelI''}) is a consequence of (\ref{Eq:RelI}) and (\ref{Eq:GRelI}).

Next note that, for all $\alpha \in \Lam_+ \sm \{0\}$, the pair 
\[
(D/Z \cong \O^\times, B_\alpha/Z \cong R_\alpha
\cong (1 + t_\alpha \O)) 
\]
of subgroups of $B/Z \cong \O \ltimes \O^\times$ is a factorization of $B/Z$; that is,
$(D/Z) \cap (B_\alpha/Z) = 1$, and $B/Z = (D/Z) \cdot (B_\alpha/Z)$. Indeed, for all $a \in \O^\times, b \in \O$, 
the matrix
\[
M_{b, a} := \begin{pmatrix} a & b\\
             0 & 1
            \end{pmatrix}
\]
admits the canonical decompositions
\[
M_{b, a} = R_\alpha(1 - b t_\alpha) \cdot R\big(\frac{a}{1 - b t_\alpha}\big) = R(a + b t_\alpha) \cdot R_\alpha\big(1 - \frac{b t_\alpha}{a + b t_\alpha}\big).
\]
Consequently, the space $B_0 \sm B / B_\alpha$ is a singleton for all $\alpha \in \Lam_+ \sm \{0\}$, with $I_2$ as
representative, whence we obtain the relation $J \cdot J_\alpha = S_\alpha \cdot J$, which is equivalent to (\ref{Eq:Defrel}).
On the other hand, for $\alpha, \beta \in \Lam_+ \sm \{0\}$, the map $\O^\times \lra B_\alpha \sm B / B_\beta$ given by $a \mapsto
B_\alpha \cdot R(a) \cdot B_\beta$ is surjective, inducing a bijection of the space of double cosets $B_\alpha \sm B / B_\beta$
onto the orbit space of the set $\O^\times$ under the action 
\[
(1 + t_\alpha \O) \times \O^\times \lra \O^\times, (u, a) \mapsto u a + \frac{1 - u}{t_\alpha} \cdot t_\beta. 
\]
The action above is free for $\alpha \neq \beta$, while, for $\alpha = \beta$, $1$ is the unique fixed point, and
the action on $\O^\times \sm \{1\}$ is also free.

Let $Q_{\alpha, \beta} \sse \O^\times$ be such that $\{R(a):\,a \in Q_{\alpha, \beta}\}$ is a set of representatives
for the double cosets in $B_\alpha \sm B / B_\beta$ for $\alpha, \beta \in \Lam_+ \sm \{0\}$. The corresponding
relations of type (II)
\begin{equation}
J_\alpha \cdot R(a) \cdot J_\beta = R\big(\frac{a t_\gam}{a t_\alpha - t_\beta}\big) \cdot J_\gam \cdot R\big(\frac{t_\beta - a t_\alpha}
{t_\gam}\big) \cdot Z(- 1),
\end{equation}
where 
\[
\alpha, \beta \in \Lam_+ \sm \{0\}, a \in Q_{\alpha, \beta}, a \neq 1\,{\rm if}\,\alpha = \beta,\,{\rm and}\,
\gam = \begin{cases} \alpha + v(a - 1) & {\rm if}\,\alpha = \beta\\
        {\rm min}\,(\alpha, \beta) & {\rm if}\,\alpha \neq \beta,
       \end{cases}
\]
are consequences of the relations above, in particular of (\ref{Eq:Defrel}), (\ref{Eq:GRelI}), (\ref{Eq:RelII2}),
(\ref{Eq:RelII3}). Indeed, we obtain
\begin{align*}
J_\alpha \cdot R(a) \cdot J_\beta &= (J \cdot S_\alpha \cdot J) \cdot R(a) \cdot (J \cdot S_\beta \cdot J) =
J \cdot S_\alpha \cdot (J \cdot R(a) \cdot J) \cdot S_\beta \cdot J\\[1mm]
&= J \cdot S_\alpha \cdot (R(a)^{- 1} \cdot Z(a)) \cdot S_\beta \cdot J =
J \cdot (S_\alpha \cdot R(a^{- 1}) \cdot S_\beta) \cdot J \cdot Z(a)\\[1mm]
&= J \cdot \big(R\big(\frac{t_\alpha - a^{- 1} t_\beta}{t_\gam}\big) 
\cdot S_\gam \cdot R\big(\frac{a^{- 1} t_\gam}{a^{- 1} t_\beta - t_\alpha}\big)\big) \cdot J \cdot Z(a)\\[1mm]
&= \big(J \cdot R\big(\frac{a t_\alpha - t_\beta}{a t_\gam}\big) \cdot J\big) \cdot (J \cdot S_\gam \cdot J) \cdot \big(J \cdot R\big(\frac{t_\gam}
{t_\beta - a t_\alpha}\big) \cdot J\big) \cdot Z(a)\\[1mm]
&= \big(R\big(\frac{a t_\gam}{a t_\alpha - t_\beta}\big) \cdot Z\big(\frac{a t_\alpha - t_\beta}{a t_\gam}\big)\big) \cdot J_\gam \cdot 
\big(R\big(\frac{t_\beta - a t_\alpha}{t_\gam}\big) \cdot Z\big(\frac{t_\gam}{t_\beta - a t_\alpha}\big)\big) \cdot Z(a)\\[1mm]
&= R\big(\frac{a t_\gam}{a t_\alpha - t_\beta}\big) \cdot J_\gam \cdot R\big(\frac{t_\beta - a t_\alpha}{t_\gam}\big) \cdot Z(- 1),
\end{align*}
as desired.

Moreover, the relations (\ref{Eq:GRelI'}) of type (I) for $\alpha > 0$ are also consequences of the relations 
(\ref{Eq:DRel1}), (\ref{Eq:DRel2}), (\ref{Eq:DRel3}), (\ref{Eq:RelI}), (\ref{Eq:RelII2}), (\ref{Eq:RelII3}), 
(\ref{Eq:GRelI}), (\ref{Eq:GRelII}). Indeed, let $\alpha \in \Lam_+ \sm \{0\},\,u \in \O^\times$ be such that 
$v(u - 1) = \alpha$. 
Then we obtain
\begin{align*}
&J_\alpha R_\alpha(u) J_\alpha = (J S_\alpha) \cdot J \cdot \big(R\big(\frac{1 - u}{t_\alpha}\big) S R\big(\frac{t_\alpha u}{u - 1}\big)\big) \cdot
J \cdot (S_\alpha J) \\[1mm]
&= J \cdot \big(S_\alpha R\big(\frac{t_\alpha}{1 - u}\big) S\big) \cdot J \cdot \big(S R\big(\frac{u - 1}{t_\alpha u}\big) S_\alpha\big) \cdot J 
\cdot Z(- u)\\[1mm]
&= J \cdot \big(R\big(\frac{t_\alpha u}{u - 1}\big) S R(u)^{- 1}\big) \cdot J \cdot \big(R(u)^{- 1} S R\big(\frac{1 - u}{t_\alpha}\big)\big) \cdot J
\cdot Z(- u)\\[1mm]
&= J \cdot R\big(\frac{t_\alpha u}{u - 1}\big) \cdot (S \cdot (R(u)^{- 1} J R(u)^{- 1}) \cdot S) \cdot R\big(\frac{1 - u}{t_\alpha}\big) 
\cdot J \cdot Z(- u)\\[1mm]
&= J \cdot R\big(\frac{t_\alpha u}{u - 1}\big) \cdot (J S J) \cdot R\big(\frac{1 - u}{t_\alpha}\big) \cdot J \cdot Z(- u)\\[1mm]
&= \big(R\big(\frac{u - 1}{t_\alpha u}\big) S R\big(\frac{t_\alpha}{1 - u}\big)\big) \cdot Z(u) = R_\alpha(u)^{- 1} Z(u), 
\end{align*}
whence $(J_\alpha R_\alpha(u))^2 = Z(u)$ as desired.

Summarising the discussion above, we obtain the following presentation for the group $G:= GL_2(\O)$ over a valuation 
ring $\O$.

\begin{te}
\label{Th:PresGL_2(O)}
Let $\O$ be a valuation ring. Then the group $GL_2(\O)$ is generated by the matrices
\[
R(a) = \begin{pmatrix} a & 0\\
0 & 1   
       \end{pmatrix}\,\,(a \in \O^\times \sm \{1\}),\,\,\,S = S_0 = \begin{pmatrix} - 1 & 1\\
0 & 1
\end{pmatrix},\mbox{ and }\,J = J_0 = \begin{pmatrix} 0 & - 1\\
- 1 & 0
\end{pmatrix},
\]
subject to the relations {\em (\ref{Eq:DRel1}), (\ref{Eq:DRel2}), (\ref{Eq:DRel3}), (\ref{Eq:SI}), (\ref{Eq:SII1}),
(\ref{Eq:SII2}), (\ref{Eq:SII3}),} where 
\[
Z(a):= (J \cdot R(a))^2\q (a \in \O^\times \sm \{1\}), 
\]
together with the relations $J^2 = 1$ and $(J \cdot S)^3 = 1$. 
\end{te}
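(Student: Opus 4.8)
The plan is to apply Theorem~\ref{Thm:PresGpAct} to the pair $(G,B)$, where $G=GL_2(\O)$ and $B=B_2(\O)=G_\infty$ is the stabiliser of $\infty$ under the action~(\ref{Eq:ActG}) on $\P^1(K)$, and then to streamline the resulting presentation by Tietze transformations. By Lemma~\ref{Lem:BGB} the assignment $A\mapsto v(a_{2,1})$ identifies $B\backslash G/B$ with the commutative polygroup $\mathcal{P}_G(\overline{\Lambda}_+)$, a section being furnished by the involutions $J_\alpha$, $\alpha\in\overline{\Lambda}_+$. Since inversion in $\mathcal{P}_G(\overline{\Lambda}_+)$ is the identity, the orientation is all of $E$, and each $\theta_\alpha=\si(\alpha)^2=J_\alpha^2$ equals $1$; hence the self-inverse instance of the type~(II) relations is exactly $J_\alpha^2=1$. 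Feeding the data $B_\alpha=B\cap J_\alpha B J_\alpha^{-1}$ and the automorphisms $\iota_\alpha$ computed in the discussion above into Theorem~\ref{Thm:PresGpAct} presents $G$ on the generating set $B\sqcup\{J_\alpha:\alpha\in\Lam_+\}$, subject to the type~(I) relations (\ref{Eq:GRelI}), (\ref{Eq:GRelI'}), (\ref{Eq:GRelI''}), and the type~(II) relations, namely $J_\alpha^2=1$, the transposition identity $(J\cdot S)^3=1$ of~(\ref{Eq:GRelII}), and the relations $J_\alpha\cdot M\cdot J_\beta=\rho\cdot J_\gamma\cdot\lambda$ arising from representatives $M$ of the double cosets $B_\alpha\backslash B/B_\beta$.

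First I would replace the abstract free factor $B$ by the explicit presentation of Proposition~\ref{Pr:PresB(O)}, which is legitimate because Theorem~\ref{Thm:PresGpAct} realises $G$ as a quotient of the free product $B\ast F$; this reexpresses $B$ on the generators $R(a),Z(a),S$ with relations (\ref{Eq:DRel1})--(\ref{Eq:DRel3}) and (\ref{Eq:SI})--(\ref{Eq:SII3}). Next I would eliminate two families of redundant generators. Relation~(\ref{Eq:GRelI}) reads $Z(a)=(J\cdot R(a))^2$, so I use it to delete every $Z(a)$ in favour of that word, whereupon (\ref{Eq:DRel2}), (\ref{Eq:DRel3}) and (\ref{Eq:SI}) become the relations asserted in the statement. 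Similarly, the defining relation~(\ref{Eq:Defrel}) $J_\alpha=J\cdot S_\alpha\cdot J$, combined with~(\ref{Eq:DefRel}) which exhibits $S_\alpha$ as a word in $S$ and the $R(\cdot)$, lets me delete every $J_\alpha$ with $\alpha>0$ (reflecting Lemma~\ref{Lem:BGB}(iii), that $BJ_0B$ generates $\mathcal{P}_G(\overline{\Lambda}_+)$). Only the generators $R(a)$, $S$, $J$ then remain; the relation $J^2=1$, the self-inverse instance for $\alpha=0$, must be listed separately because the substitution for $Z$ covers only $a\neq1$.

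It then remains to check that, modulo (\ref{Eq:DRel1})--(\ref{Eq:DRel3}), (\ref{Eq:SI})--(\ref{Eq:SII3}), $J^2=1$ and $(J\cdot S)^3=1$, all the discarded type~(I) and type~(II) relations are consequences, which is precisely what the computations preceding the theorem achieve: (\ref{Eq:GRelI''}) follows from (\ref{Eq:RelI}) and (\ref{Eq:GRelI}); the relations (\ref{Eq:GRelI'}) for $\alpha>0$ are recovered by the displayed manipulation ending in $(J_\alpha R_\alpha(u))^2=Z(u)$; and the general type~(II) relations $J_\alpha\cdot R(a)\cdot J_\beta=\cdots$ reduce---via the factorisation $B/Z=(D/Z)\cdot(B_\alpha/Z)$ and the analysis of the double cosets $B_\alpha\backslash B/B_\beta$---to (\ref{Eq:Defrel}), (\ref{Eq:GRelI}), (\ref{Eq:RelII2}) and (\ref{Eq:RelII3}). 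Assembling these reductions exhibits the presentation of the statement as Tietze-equivalent to that supplied by Theorem~\ref{Thm:PresGpAct}, proving the theorem.

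The hard part is the completeness bookkeeping of the last paragraph rather than any single calculation: one must confirm that the chosen representatives (the $J_\alpha$ for $B\backslash G/B$, and the $R(a)$ for $B_\alpha\backslash B/B_\beta$) are exactly those entering Theorem~\ref{Thm:PresGpAct}, so that no type~(II) relation is silently omitted, and that eliminating the $Z(a)$ and the $J_\alpha$ ($\alpha>0$) introduces no hidden constraint. The pivotal structural input is the factorisation $B/Z=(D/Z)\cdot(B_\alpha/Z)$, which forces $B_0\backslash B/B_\alpha$ to be a singleton and thereby collapses the would-be relations for mixed indices into the single defining relation~(\ref{Eq:Defrel}); once this is in hand, the remaining verifications, including the soundness confirmation that $(J\cdot R(a))^2=a I_2$ and $(J\cdot S)^3=I_2$, are routine $2\times2$ matrix computations.
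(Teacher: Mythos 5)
Your proposal is correct and follows essentially the same route as the paper's own argument in Section~\ref{Subsec:GL_2}: apply Theorem~\ref{Thm:PresGpAct} to the pair $(G,B)$ with the polygroup structure of Lemma~\ref{Lem:BGB} and the involutive section $\alpha\mapsto J_\alpha$, then Tietze-eliminate the $Z(a)$ via $Z(a)=(J\,R(a))^2$ and the $J_\alpha$ ($\alpha>0$) via $J_\alpha=J S_\alpha J$ with $S_\alpha=[S,R(1-t_\alpha)]R(-1)$, using the factorisation $B/Z=(D/Z)\cdot(B_\alpha/Z)$ and the displayed computations to show the discarded type~(I) and type~(II) relations are redundant. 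Your bookkeeping of which relations survive (including listing $J^2=1$ separately since the definition $Z(a):=(J\,R(a))^2$ only covers $a\neq 1$) matches the paper's summarising discussion exactly.
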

In particular, if the valuation $v$ is discrete, with $v(t) = 1$, then, according to Remarks~\ref{Rem:B(O)}(ii), 
(\ref{Eq:SII2}) can be replaced with (\ref{Eq:SII2'}), while if $v$ is trivial, that is, $\O = K = k$, we obtain

\begin{co}
\label{Cor:GL_2(K)}
Let $K$ be a field. Then the group $GL_2(K)$ is generated by the matrices $R(a)\,\,(a \in K^\times \sm \{1\}),\,\, S,$
and $J,$ subject to the relations {\em (\ref{Eq:DRel1}), (\ref{Eq:DRel2}), (\ref{Eq:DRel3}),} where $a, b \in K^\times \sm \{1\},$
{\em (\ref{Eq:SI})} with $a \in K^\times \sm \{1\},$ {\em (\ref{Eq:SII1}), (\ref{Eq:SII3})} with $a \in K^\times \sm \{1\},$
where $Z(a):= (J \cdot R(a))^2\, (a \in K^\times \sm \{1\}),$ together with the relations 
\begin{equation}
\label{Eq:J,JSOrder}
J^2 = (J \cdot S)^3 = 1.
\end{equation}
\end{co}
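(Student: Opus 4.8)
The plan is to obtain Corollary~\ref{Cor:GL_2(K)} as the specialization of Theorem~\ref{Th:PresGL_2(O)} to the case of a trivial valuation, where $\O = K = k$ and the value group $\Lam$ collapses to the trivial group, so that $\Lam_+ = \{0\}$ and $\overline{\Lam}_+ = \{0, \infty\}$. First I would record what the relation set of Theorem~\ref{Th:PresGL_2(O)} becomes under this collapse. Since $\Lam_+ \sm \{0\} = \es$, there are no elements $t_\alpha$ with $\alpha > 0$ to consider, so every family of relations indexed by a nonzero $\alpha \in \Lam_+$ becomes vacuous. Concretely, the relations $(J_\alpha R_\alpha(u))^2 = Z(u)$ and $[J_\alpha, Z(u)]=1$ of type (I) for $\alpha > 0$ disappear, as do the type (II) relations $J_\alpha R(a) J_\beta = \cdots$ for $\alpha, \beta > 0$; likewise in the $B$-presentation the relation (\ref{Eq:SII2}), indexed by $0 \neq \alpha \in \Lam_+$, is empty. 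What survives from Proposition~\ref{Pr:PresB(O)} is therefore exactly (\ref{Eq:DRel1})--(\ref{Eq:DRel3}), (\ref{Eq:SI}), (\ref{Eq:SII1}), and (\ref{Eq:SII3}) with $a$ ranging over $k^\times \sm \{1\} = K^\times \sm \{1\}$ (in the trivial-valuation case $\O^\times \sm (1+\fm) = K^\times \sm \{1\}$, so the representative set $\mathfrak{k}$ may be taken to be all of $K^\times \sm \{1\}$). This is precisely the content of Corollary~\ref{Cor:PresB(K)}, which I would invoke directly.

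Next I would add the two relations governing $J$. From Theorem~\ref{Th:PresGL_2(O)} the generator $J = J_0$ contributes the type~(I) relations (\ref{Eq:GRelI}), namely $(J R(a))^2 = Z(a)$ for $a \in \O^\times$, which in the field case reads $Z(a) := (J R(a))^2$ for $a \in K^\times \sm \{1\}$ — so $Z(a)$ is a defined word rather than an independent generator — together with $J^2 = 1$ (the $a=1$ instance of (\ref{Eq:GRelI})) and the single surviving type~(II) relation (\ref{Eq:GRelII}), $(JS)^3 = 1$. Since for trivial valuation the only nonzero double-coset representative among the $J_\alpha$ is $J_0$ (as $\overline{\Lam}_+ = \{0, \infty\}$ has $BJ_0B$ and $BJ_\infty B = B$ as its only classes), there are no further $J_\alpha$-relations to account for. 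Collecting these, $J^2 = (JS)^3 = 1$ is exactly display (\ref{Eq:J,JSOrder}), and the full relation set matches the statement of the corollary.

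There is essentially no obstacle here beyond bookkeeping: the corollary is a direct specialization, and every step amounts to verifying that the index sets of the various relation families become singletons or empty when $\Lam$ is trivial. The one point requiring a line of care is confirming that the relations shown in Theorem~\ref{Th:PresGL_2(O)} to be \emph{consequences} of the retained ones — in particular (\ref{Eq:GRelI'}), (\ref{Eq:GRelI''}), (\ref{Eq:RelII3}), and the general type~(II) relations $J_\alpha R(a) J_\beta = \cdots$ — are automatically absent (not merely redundant) in the trivial-valuation setting, so that no hidden relation is silently dropped; but this is immediate since all of those are indexed by some $\alpha$ or $\beta$ in $\Lam_+ \sm \{0\} = \es$. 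I would close by noting that this is why the corollary follows at once from Theorem~\ref{Th:PresGL_2(O)}, with the upper-triangular part supplied by Corollary~\ref{Cor:PresB(K)}.
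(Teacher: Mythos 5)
Your proposal is correct and coincides with the paper's own route: Corollary~\ref{Cor:GL_2(K)} is obtained there precisely as the trivial-valuation specialization of Theorem~\ref{Th:PresGL_2(O)} (with the $B$-part handled as in Corollary~\ref{Cor:PresB(K)}), all relation families indexed by $\Lam_+ \sm \{0\} = \es$ becoming vacuous and $\mathfrak{k}$ becoming $K^\times \sm \{1\}$ since $\fm = 0$. Your extra check that the relations proved redundant in the theorem are genuinely absent, rather than silently dropped, is exactly the right bookkeeping point and nothing more is needed.
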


In particular, if $K = GF(q)$ is a finite field of cardinality $q = p^f,\,p$ a prime number, $f \geq 1$, we obtain
the following presentation of $GL_2(q)$.

\begin{co}
\label{Co:GL_2(q)}
Let $\zeta$ be a generator of the cyclic multiplicative group $K^\times$ of order $q - 1$. Then
$GL_2(q)$ is generated by the matrices
\[r:= R(\zeta) = \begin{pmatrix} \zeta & 0\\
                    0 & 1
                   \end{pmatrix},\, S:= \begin{pmatrix} - 1 & 1\\
0 & 1 \end{pmatrix},\, J:= \begin{pmatrix} 0 & - 1\\
- 1 & 0 \end{pmatrix}, 
\]
subject to the relations
\begin{align}
r^{q - 1} = S^2 = J^2 = (J \cdot S)^3 = 1,\\
z^{q - 1} = [r, z] = [S, z] = 1,\,{\rm where}\,\,z:= (J \cdot r)^2,\\
S \cdot r^i \cdot S = r^{\gam(i)} \cdot S \cdot r^{- \gam(q - i - 1)}\q (i = 1, \dots, q - 2),
\end{align}
with $\gam(i) \in \{1, \dots, q - 2\}$ defined by $\zeta^{\gam(i)} = 1 - \zeta^i\,\,(i = 1, \dots, q - 2)$.
\end{co}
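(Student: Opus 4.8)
The plan is to derive this finite presentation from Corollary~\ref{Cor:GL_2(K)} (applied with $K = GF(q)$) by a sequence of Tietze transformations, exploiting the fact that $K^\times = \langle \zeta\rangle$ is cyclic of order $q-1$. Corollary~\ref{Cor:GL_2(K)} presents $GL_2(q)$ on the generators $R(a)$ ($a\in K^\times\sm\{1\}$), $S$, $J$, with $Z(a):=(J\cdot R(a))^2$, subject to (\ref{Eq:DRel1})--(\ref{Eq:DRel3}), (\ref{Eq:SI}), (\ref{Eq:SII1}), (\ref{Eq:SII3}), and (\ref{Eq:J,JSOrder}). Since every element of $K^\times$ is a power of $\zeta$, relation (\ref{Eq:DRel1}) lets us eliminate all the generators $R(a)$ except $r:=R(\zeta)$, setting $R(\zeta^i)=r^i$; the surviving content of (\ref{Eq:DRel1}) is then the single relation $r^{q-1}=1$ (coming from $\zeta^{q-1}=1$, i.e.\ $R(\zeta^{q-1})=R(1)=1$). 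After this elimination the generating set is $\{r,S,J\}$ and $z:=(J\cdot r)^2 = Z(\zeta)$.

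Next I would translate the remaining relation families. Relation (\ref{Eq:SII1}) is literally $S^2=1$ and (\ref{Eq:J,JSOrder}) is literally $J^2=(J\cdot S)^3=1$. For the Weyl-type relations I evaluate (\ref{Eq:SII3}) at $a=\zeta^i$ with $i=1,\dots,q-2$. By the definition of $\gamma$ we have $1-\zeta^i = \zeta^{\gamma(i)}$, so $R(1-a)=r^{\gamma(i)}$; and since $\zeta^{q-1}=1$ gives $\tfrac{\zeta^i}{\zeta^i-1}=\bigl(1-\zeta^{-i}\bigr)^{-1}=\zeta^{-\gamma(q-1-i)}$, we get $R\bigl(\tfrac{a}{a-1}\bigr)=r^{-\gamma(q-i-1)}$. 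Thus (\ref{Eq:SII3}) becomes exactly $S\,r^i\,S = r^{\gamma(i)}\,S\,r^{-\gamma(q-i-1)}$, the third family of relations in the statement. It remains to recover the relations involving $z$, namely (\ref{Eq:DRel2}), (\ref{Eq:DRel3}), (\ref{Eq:SI}), from the three stated relations $z^{q-1}=[r,z]=[S,z]=1$.

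This last point is where the real work lies, and I expect it to be the main (though not severe) obstacle. The key is to show that the finite relations force $z$ to be central and $(J\cdot r^i)^2 = z^i$ for all $i$. First, from $z=(Jr)^2=JrJr$ one computes $r z r^{-1}=(rJ)^2$, so $[r,z]=1$ is equivalent to $(rJ)^2=(Jr)^2=z$; using $J^2=1$ this yields $JzJ=(rJ)^2=z$, hence $[J,z]=1$. Together with the assumed $[S,z]=1$ this makes $z$ commute with all three generators, so $z$ is central. Consequently $JrJ = zr^{-1}$, whence $Jr^iJ=(JrJ)^i = z^i r^{-i}$ and therefore $(Jr^i)^2 = (Jr^iJ)\,r^i = z^i$; in particular $Z(\zeta^i)=z^i$. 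Centrality then gives (\ref{Eq:DRel3}) and (\ref{Eq:SI}) at once (as $[r^i,z^j]=[S,z^j]=1$), while $Z(\zeta^i)=z^i$ turns (\ref{Eq:DRel2}) into $z^i z^j = z^{i+j}$, which holds consistently modulo $q-1$ precisely because of the retained relation $z^{q-1}=1$. Running these equivalences in the reverse direction -- each stated relation being an immediate specialization or consequence of its counterpart in Corollary~\ref{Cor:GL_2(K)} -- shows the two presentations define the same group, completing the proof.
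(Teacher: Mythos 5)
Your proposal is correct and follows exactly the route the paper intends: Corollary~\ref{Co:GL_2(q)} is stated there as an immediate specialization of Corollary~\ref{Cor:GL_2(K)} to $K=GF(q)$ with no written proof, and your Tietze-transformation argument (eliminating the $R(a)$ via $R(\zeta^i)=r^i$, translating (\ref{Eq:SII3}) into the $\gamma$-relations, and recovering (\ref{Eq:DRel2}), (\ref{Eq:DRel3}), (\ref{Eq:SI}) by showing $z$ is central and $(Jr^i)^2=z^i$) supplies precisely the details the paper leaves implicit. The only cosmetic remark is that $z^{q-1}=1$ is in fact redundant, since $Jr^iJ=z^ir^{-i}$ at $i=q-1$ gives $z^{q-1}=1$ from $r^{q-1}=J^2=[r,z]=1$, but retaining it as stated is of course harmless.
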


Using the semi-direct product $B_2(K) \cong K^+ \ltimes (K^\times \times K^\times)$, we obtain the following
alternative presentation of $GL_2(q)$, which can be also deduced from a more general result concerning the
deformations of $GL_2(q)$.

\begin{co}
\label{Co:GL_2(q)'}
With the notation above, let 
\[
P(X):= X^f - \sum_{0 \leq i \leq f - 1}\,a_i X^i \in GF(p)[X]
\] 
be the minimal polynomial of the primitive element $\zeta$ over the prime field $GF(p)$. Then the group $GL_2(q)$ is 
generated by the matrices
\[
r = \begin{pmatrix} \zeta & 0\\
         0 & 1
        \end{pmatrix},\,\, e:= E_{1, 2}(1) = \begin{pmatrix} 1 & 1\\
0 & 1 \end{pmatrix},\,\, J = \begin{pmatrix} 0 & - 1\\
- 1 & 0                          
                          \end{pmatrix},
\]
subject to the relations
\begin{align}
r^{q - 1} = e^p = J^2 = 1,\\
(J \cdot S)^3 = 1,\,{\rm where}\,\,S:= \begin{cases}
e & {\rm if}\, p = 2,\\
e \cdot r^{\frac{q - 1}{2}} & {\rm if}\, p \neq 2,
\end{cases}\\
z^{q - 1} = [r, z] = [e, z] = 1,\,\,{\rm where}\,\,z:= (J \cdot r)^2,\\
[e, e_i] = 1\,\,(i = 1, \dots, f - 1),\,e_f = \prod_{0 \leq i \leq f - 1}\,e_i^{a_i},
\end{align}
where $e_i:= r^i \cdot e \cdot r^{- i}\,\,(i = 0, \dots, f)$.
\end{co}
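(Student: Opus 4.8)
The plan is to deduce this presentation from the one already obtained in Corollary~\ref{Co:GL_2(q)}, by carrying out the change of generator dictated by the semidirect product decomposition $B_2(K)\cong K^+\ltimes(K^\times\times K^\times)$ recorded in Remarks~\ref{Rem:B(O)}(iii). The pivotal substitution is $S = e\,R(-1)$, equivalently $e = S\,R(-1)^{-1}$, where $e = E_{1,2}(1)$; since $-1 = \zeta^{(q-1)/2}$ for odd $p$ and $-1 = 1$ for $p=2$, this reads $S = e\,r^{(q-1)/2}$ (resp.\ $S = e$) in terms of $r = R(\zeta)$, matching the definition of $S$ in the statement. First I would write $\Gam_N$ for the group defined by the relations of the corollary and $\Gam_O\cong GL_2(q)$ for the group of Corollary~\ref{Co:GL_2(q)}, and then set up a pair of homomorphisms $\Gam_N\to\Gam_O$ and $\Gam_O\to\Gam_N$ determined on generators by $r\mapsto r$, $J\mapsto J$, $e\mapsto S\,r^{-(q-1)/2}$ and by $r\mapsto r$, $J\mapsto J$, $S\mapsto e\,r^{(q-1)/2}$, respectively, with the aim of showing these are mutually inverse.

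The direction $\Gam_O\to\Gam_N$ is routine: since $\Gam_O\cong GL_2(q)$, it suffices to verify that the matrices $r,e,J$ satisfy the new relations, and all of these are immediate matrix identities. Indeed $r^{q-1} = J^2 = 1$ are clear, $e^p = E_{1,2}(p) = I_2$ because $\mathrm{char}\,K = p$, the relation $(JS)^3 = 1$ is just (\ref{Eq:GRelII}) with $S = S_0 = e\,R(-1)$, while $z := (Jr)^2 = Z(\zeta)$ is central, giving $z^{q-1} = [r,z] = [e,z] = 1$. The relations in (N4) hold because $e_i := r^i e r^{-i} = E_{1,2}(\zeta^i)$, so $[e,e_i] = 1$ reflects commutativity of the unipotent group $K^+$, and $e_f = \prod_i e_i^{a_i}$ is exactly the monomorphism $E_{1,2}$ applied to the defining relation $\zeta^f = \sum_i a_i\zeta^i$ of $\zeta$ over $GF(p)$.

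The substantive work lies in the opposite direction, i.e.\ in deriving the old relations (O1)--(O3) inside $\Gam_N$ after putting $S = e\,r^{(q-1)/2}$. The carrier of this step is the fact that the relations (N4), together with the tautological conjugation action $r\,e_i\,r^{-1} = e_{i+1}$ and $e_i^p = 1$, present the image of $E_{1,2}$ as the additive group of $K$ viewed as an $\F_p$-vector space with basis $1,\zeta,\dots,\zeta^{f-1}$ and structure relation $\zeta^f = \sum_i a_i\zeta^i$; this lets me rewrite any unipotent word $\prod_j e_{k_j}^{\pm1}$ as $E_{1,2}(c)$ for the corresponding $c\in K$, matching group multiplication with addition in $K$. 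Granting this dictionary, the relation $S^2=1$ follows from $S^2 = e\,(R(-1)\,e\,R(-1)^{-1})\,R(-1)^2 = e\cdot e^{-1}\cdot 1 = 1$, since $R(-1)\,e\,R(-1)^{-1} = e_{(q-1)/2} = E_{1,2}(-1) = e^{-1}$ (as $\zeta^{(q-1)/2} = -1$) and $R(-1)^2 = 1$; the relation $[S,z] = 1$ reduces to $[e,z]=1$ because $r^{(q-1)/2}$ commutes with $z$; and the remaining braid relation (O3) follows from the single commutator identity $[S,r^i] = E_{1,2}(1-\zeta^i)$ of Remarks~\ref{Rem:B(O)}(iv). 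This identity I would verify in $\Gam_N$ by the computation $S\,r^i\,S^{-1} = e\,R(-1)\,r^i\,R(-1)^{-1}e^{-1} = e\,r^i\,e^{-1} = e\,e_i^{-1}\,r^i = E_{1,2}(1-\zeta^i)\,r^i$, using only commutativity of diagonal matrices and the relation $r^i e^{-1} r^{-i} = e_i^{-1}$; writing $1-\zeta^i = \zeta^{\gam(i)}$ and expanding $E_{1,2}(\zeta^{\gam(i)}) = r^{\gam(i)} e\,r^{-\gam(i)}$ then reproduces (O3) after collecting the diagonal factors.

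Finally I would check that the two homomorphisms agree with the identity on generators, so that $\Gam_N\cong\Gam_O\cong GL_2(q)$. The main obstacle is precisely the bookkeeping in the penultimate step: one must be certain that the \emph{finite} list of relations (N4) genuinely forces every $\F_p$-linear relation among the $e_i$ that holds in $K^+$, so that no collapsing or over-identification occurs when long unipotent words are reduced. This is exactly where the injectivity of $E_{1,2}$ and the minimality of $P(X)$ enter, and it is the part that must be carried out with care rather than settled by a direct matrix check.
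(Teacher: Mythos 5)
Your proposal is correct and follows exactly the route the paper indicates (but does not write out): transporting the presentation of Corollary~\ref{Co:GL_2(q)} through the substitutions $S = e\,R(-1)$ and $E_{1,2}(a) = R(a)\,S\,R(-a)^{-1}$ of Remarks~\ref{Rem:B(O)}(iii)--(iv), with your relations (N4) encoding the unipotent group $K^+$ as an $\mathbb{F}_p$-space with basis $1,\zeta,\dots,\zeta^{f-1}$. One simplification of your flagged final step: for the substantive direction you only need \emph{well-definedness} of the equivariant surjection $K^+ \twoheadrightarrow \langle e_0,\dots,e_{f-1}\rangle$ (pairwise commutation of all $e_k$ and the rewriting rule follow from (N4) via the $r$-conjugated recurrence $e_k = \prod_i e_{k-f+i}^{a_i}$ and induction, using $r^{q-1}=1$ to handle negative indices), not that (N4) presents $K^+$ faithfully --- injectivity of $E_{1,2}$ is then automatic from composing with the homomorphism $\Gamma_N \to GL_2(q)$ furnished by your easy direction.
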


\subsection{Presentations for $PGL_2(\O)$ and $PGL_2(K)$}

The action (\ref{Eq:ActG}) of $GL_2(\O)$ induces a transitive faithful action of $PGL_2(\O) = GL_2(\O)/Z_2(\O)$ on the
projective line $\P^1(K) = K \cup \{\infty\}$ over $K$, the fraction field of the valuation ring $\O$.
Using the presentation of $GL_2(\O)$ provided by Theorem ~\ref{Th:PresGL_2(O)}, we obtain the following presentation
of $PGL_2(\O)$. 

\begin{co}
\label{Co:PresPGL_2(O)} 
The group $PGL_2(\O)$ is generated by the transformations 
\[
\wt{R}(a): z \mapsto a z\,\,(a \in \O^\times \sm \{1\});\,\,\wt{S}: z \mapsto 1 - z;\,\,\wt{J}: z \mapsto \frac{1}{z} 
\]
subject to the relations
\begin{align}
\wt{R}(a b) = \wt{R}(a) \circ \wt{R}(b)\,\, (a, b \in \O^\times \sm \{1\};\,\, \wt{R}(1) := 1,\label{1}\\[1mm]
\wt{S}^2 = \wt{J}^2 = (\wt{J} \circ \wt{S})^3 = 1,\label{2}\\[1mm]
[\wt{S}, \wt{R}(1 - a t_\alpha)] = \wt{R}(a) \circ \wt{S}_\alpha \circ \wt{R}(- a)^{- 1}\,\,
(\alpha \in \Lam_+ \sm \{0\},\, a \in \O^\times \sm \{1\}),\\[1mm]
\wt{S} \circ \wt{R}(a) \circ \wt{S} = \wt{R}(1 - a) \circ \wt{S} \circ \wt{R}\big(\frac{a}{a - 1}\big)\,\,
(a \in \mathfrak{k}),\label{3}\\[1mm]
(\wt{J} \circ \wt{R}(a))^2 = 1\,\,(a \in \O^\times \sm \{1\})\label{4},
\end{align}
where $\mathfrak{k} \sse \O^\times \sm (1 + \fm)$ is a set of representatives modulo the maximal ideal $\fm$ 
for the elements in $k^\times \sm \{1\},$ and $\wt{S}_\alpha:= [\wt{S}, \wt{R}(1 - t_\alpha)] \circ \wt{R}(- 1)$. 
\end{co}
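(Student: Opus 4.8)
The plan is to read off the presentation of $PGL_2(\O) = GL_2(\O)/Z$ from the presentation of $GL_2(\O)$ supplied by Theorem~\ref{Th:PresGL_2(O)}, by killing the central subgroup $Z = Z_2(\O) = \{Z(a) : a \in \O^\times\}$, where $Z(a) = (J \cdot R(a))^2$. Since $Z$ is generated by these elements as a subgroup and, being central, coincides with its own normal closure, the standard description of a presentation of a quotient group shows that a presentation of $PGL_2(\O)$ is obtained from that of $GL_2(\O)$ by adjoining the relations $Z(a) = 1$, that is $(J \cdot R(a))^2 = 1$, for all $a \in \O^\times$. Under the projection the generators $R(a), S, J$ descend to the transformations $\wt{R}(a): z \mapsto az$, $\wt{S}: z \mapsto 1 - z$, $\wt{J}: z \mapsto 1/z$ of $\P^1(K)$ (these being exactly the images of the corresponding matrices under the faithful induced action), which therefore generate $PGL_2(\O)$.

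It then remains to rewrite the enlarged relation set in the reduced form (\ref{1})--(\ref{4}). First I would note that, once $Z(a) = 1$ is imposed, the relations (\ref{Eq:DRel2}), (\ref{Eq:DRel3}) and (\ref{Eq:SI}) of Theorem~\ref{Th:PresGL_2(O)} involve only the now-trivial central symbols and hence become vacuous, so they may be discarded. The surviving relation (\ref{Eq:DRel1}) gives (\ref{1}); the relations $S^2 = 1$, $J^2 = 1$, $(J \cdot S)^3 = 1$ combine into (\ref{2}); relation (\ref{Eq:SII3}) gives (\ref{3}); and the newly adjoined relations $(J \cdot R(a))^2 = 1$ give (\ref{4}).

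The only relation requiring genuine manipulation is (\ref{Eq:SII2}). Setting $\wt{S}_\alpha := [\wt{S}, \wt{R}(1 - t_\alpha)] \circ \wt{R}(-1)$ for the image of $S_\alpha$ (cf.\ (\ref{Eq:DefRel})) and using (\ref{1}) to compute $\wt{R}(-1) \circ \wt{R}(-a)^{-1} = \wt{R}(a)^{-1}$, I obtain
\[
\wt{R}(a) \circ \wt{S}_\alpha \circ \wt{R}(-a)^{-1} = \wt{R}(a) \circ [\wt{S}, \wt{R}(1 - t_\alpha)] \circ \wt{R}(a)^{-1},
\]
so that the third relation of the corollary is, modulo (\ref{1}), precisely the image of (\ref{Eq:SII2}). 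The main obstacle is the purely bookkeeping verification that this reduction loses nothing: one must confirm, conversely, that (\ref{1})--(\ref{4}) imply all images of (\ref{Eq:DRel1})--(\ref{Eq:SII3}) together with $J^2 = (J\cdot S)^3 = 1$ in the presence of $Z(a) = 1$. This reversibility is immediate from the displayed identity and from the fact that the discarded relations become trivial once $Z(a) = 1$, which completes the proof.
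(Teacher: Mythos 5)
Your proposal is correct and takes essentially the same route as the paper: Corollary~\ref{Co:PresPGL_2(O)} is obtained from Theorem~\ref{Th:PresGL_2(O)} precisely by adjoining the central relations $(J\cdot R(a))^2=1$ (whose normal closure is the central subgroup $Z$, so von Dyck gives a presentation of the quotient $PGL_2(\O)=GL_2(\O)/Z$), discarding the relations (\ref{Eq:DRel2}), (\ref{Eq:DRel3}), (\ref{Eq:SI}) that become vacuous, and rewriting (\ref{Eq:SII2}) in terms of $\wt{S}_\alpha$ via (\ref{1}). Your verification of the only nontrivial step, the identity $\wt{R}(a)\circ\wt{S}_\alpha\circ\wt{R}(-a)^{-1}=\wt{R}(a)\circ[\wt{S},\wt{R}(1-t_\alpha)]\circ\wt{R}(a)^{-1}$ (using that the $\wt{R}(a)$ generate an abelian subgroup by (\ref{1})), is exactly what the paper's one-line derivation leaves implicit.
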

In particular, if the valuation $v$ is trivial, we obtain the following presentation of $PGL_2(K)$.
\begin{co}
\label{Co:PresPGL_2(K)}
The group $PGL_2(K)$ is generated by $\wt{R}(a)\,(a \in K^\times \sm \{1\},\,\,\wt{S},$ and $\wt{J},$ subject to the
relations {\em (\ref{1})\, ($a, b \in K^\times \sm \{1\}),$ (\ref{2}), and (\ref{3}, \ref{4})} with  
$a \in K^\times \sm \{1\}$.   
\end{co}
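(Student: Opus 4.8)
The plan is to obtain Corollary~\ref{Co:PresPGL_2(K)} as a direct specialization of Corollary~\ref{Co:PresPGL_2(O)} to the case of the trivial valuation, so that no new computation is required; the entire content is a careful bookkeeping of how the index sets degenerate. First I would record the basic data in the trivial case: when $v$ is trivial we have $\O = K = k$, the maximal ideal is $\fm = \{0\}$, hence $\O^\times = K^\times$, and the value group is $\Lam = v(K^\times) = \{0\}$, so that $\Lam_+ = \{0\}$ and $\overline{\Lam}_+ = \{0, \infty\}$. Since a field equipped with the trivial valuation is a valuation ring, Corollary~\ref{Co:PresPGL_2(O)} applies verbatim to it, and $PGL_2(\O) = PGL_2(K)$.

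Next I would run through the presentation of Corollary~\ref{Co:PresPGL_2(O)} term by term. The generators $\wt{R}(a)$ are now indexed by $a \in \O^\times \setminus \{1\} = K^\times \setminus \{1\}$, while $\wt{S}$ and $\wt{J}$ are unchanged; this already matches the generating set claimed in Corollary~\ref{Co:PresPGL_2(K)}. Relation~(\ref{1}) and relation~(\ref{4}) inherit the range $a, b \in K^\times \setminus \{1\}$, and relation~(\ref{2}) carries no index and is unaffected. Thus these three families of relations survive unchanged save for the substitution $\O^\times = K^\times$.

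The two genuine simplifications I would then isolate are as follows. Because $\Lam_+ \setminus \{0\} = \emptyset$, the relation family indexed by $\alpha \in \Lam_+ \setminus \{0\}$ (the one involving the auxiliary symbols $\wt{S}_\alpha$) has empty index set and so disappears entirely, carrying away with it the definition of $\wt{S}_\alpha$; in particular no surviving relation refers to any $\wt{S}_\alpha$. Moreover, since the residue map $\O \to k$ is the identity, the set $\mathfrak{k} \subseteq \O^\times \setminus (1 + \fm)$ of representatives modulo $\fm$ for $k^\times \setminus \{1\}$ coincides with all of $K^\times \setminus \{1\}$, so relation~(\ref{3}) now holds for every $a \in K^\times \setminus \{1\}$. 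Collecting what remains gives precisely~(\ref{1}) for $a, b \in K^\times \setminus \{1\}$, the relations~(\ref{2}), and~(\ref{3}),~(\ref{4}) for $a \in K^\times \setminus \{1\}$, which is the asserted presentation.

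I do not expect a real obstacle here; the only point demanding care is to verify that each deleted relation is \emph{vacuous} (its index set being empty) rather than merely reduced, and that the symbol $\wt{S}_\alpha$ never appears in a surviving relation. Both facts follow at once from $\Lam_+ = \{0\}$, so the argument is complete once the index-set degeneracies have been tabulated.
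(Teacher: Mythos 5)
Your proposal is correct and coincides with the paper's own (implicit) derivation: the paper obtains Corollary~\ref{Co:PresPGL_2(K)} from Corollary~\ref{Co:PresPGL_2(O)} precisely by specializing to the trivial valuation, where $\Lam_+\sm\{0\}=\es$ kills the $\wt{S}_\alpha$-relations and $\fm=\{0\}$ forces $\mathfrak{k}=K^\times\sm\{1\}$. Your careful tabulation of the index-set degeneracies is exactly the content of that specialization, so nothing is missing.
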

Note that $PGL_2(\O)$ and $PGL_2(K)$ are suitable quotients of the free products $\O^\times \ast \mathrm{Sym}_3$ and
$K^\times \ast \mathrm{Sym}_3$, respectively.

We end this subsection with two presentations of the group $PGL_2(q)$ over the finite field $GF(q)$ derived from
Corollaries ~\ref{Co:GL_2(q)} and \ref{Co:GL_2(q)'}.

\begin{co}
\label{PresPGL_2(q)}
The group $PGL_2(q)$ is generated by $\wt{r}:= \wt{R}(\zeta),\, \wt{S},$ and $\wt{J},$ subject to the relations
\begin{align}
\wt{r}^{q - 1} = \wt{S}^2 = \wt{J}^2 = (\wt{J} \circ \wt{S})^3 = (\wt{J} \circ \wt{r})^2 = 1,\\[1mm]
\wt{S} \circ \wt{r}^i \circ \wt{S} = \wt{r}^{\gam(i)} \circ \wt{S} \circ \wt{r}^{- \gam(q - i - 1)}\,\,(i = 1, \dots, q - 2),
\end{align}
with $\gam(i) \in \{1, \dots, q - 2\}$ defined by $\zeta^{\gam(i)} = 1 - \zeta^i\,\,(i = 1, \dots, q - 2)$.
\end{co}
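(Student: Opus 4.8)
The plan is to obtain this presentation by projecting the presentation of $GL_2(q)$ given in Corollary~\ref{Co:GL_2(q)}, that is, by passing to the quotient modulo the centre. Recall that $PGL_2(q) = GL_2(q)/Z_2(q)$, where the centre $Z_2(q) = \{a I_2 : a \in K^\times\}$ consists of the scalar matrices; since $\zeta$ is a primitive element, this centre is the cyclic group of order $q-1$ generated by $z := (J \cdot r)^2 = Z(\zeta)$. Under the canonical projection the generators $r, S, J$ map to the transformations $\wt{r} = \wt{R}(\zeta)$, $\wt{S}$, and $\wt{J}$ of Corollary~\ref{Co:PresPGL_2(O)}.

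First I would invoke the standard principle that, given a presentation $\langle X \mid \mathcal{R}\rangle$ of a group $\Gamma$ and a normal subgroup $N$ equal to the normal closure of a set $W$ of words in $X$, the quotient $\Gamma/N$ is presented by $\langle X \mid \mathcal{R} \cup W\rangle$. Here $N = Z_2(q)$, and I claim it is the normal closure of the single element $z$. This is the one point deserving care, and I expect it to be the main (albeit routine) obstacle: because $z$ is central, its normal closure is just the cyclic subgroup $\langle z\rangle = \{Z(\zeta^i) : i \in \Z\}$, and the primitivity of $\zeta$ guarantees that these powers exhaust all scalar matrices, so $\langle z\rangle = Z_2(q)$ exactly. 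Consequently a presentation of $PGL_2(q)$ is obtained from that of Corollary~\ref{Co:GL_2(q)} by adjoining the lone relation $z = 1$.

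It then remains to substitute $z = 1$ into the relations of Corollary~\ref{Co:GL_2(q)} and simplify. Upon setting $z = 1$, the relations $z^{q-1} = [r, z] = [S, z] = 1$ become vacuous, while the defining abbreviation $z = (J \cdot r)^2$ turns the new relation $z = 1$ into $(\wt{J} \cdot \wt{r})^2 = 1$. The relations $r^{q-1} = S^2 = J^2 = (J \cdot S)^3 = 1$ descend verbatim to their barred counterparts, and likewise the relations $S \cdot r^i \cdot S = r^{\gam(i)} \cdot S \cdot r^{-\gam(q - i - 1)}$ pass to $\wt{S} \cdot \wt{r}^i \cdot \wt{S} = \wt{r}^{\gam(i)} \cdot \wt{S} \cdot \wt{r}^{-\gam(q - i - 1)}$ for $i = 1, \dots, q-2$, with $\gam(i)$ determined as before by $\zeta^{\gam(i)} = 1 - \zeta^i$. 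Collecting these relations yields precisely the presentation asserted in the statement, completing the argument.
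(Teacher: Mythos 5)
Your proposal is correct and takes essentially the same route as the paper: the corollary is presented there as being derived from Corollary~\ref{Co:GL_2(q)} precisely by passing to the quotient modulo the centre, i.e.\ adjoining the single relation $(\wt{J}\circ\wt{r})^2=1$ coming from $z=(J\cdot r)^2$ (compare Corollary~\ref{Co:PresPGL_2(O)}, obtained from Theorem~\ref{Th:PresGL_2(O)} in the same way). You correctly isolate and settle the one point needing care, namely that centrality of $z$ in $GL_2(q)$ together with the primitivity of $\zeta$ makes the normal closure of $z$ equal to the full centre $Z_2(q)$, after which the remaining relations descend verbatim.
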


\begin{co}
\label{Co:PresPGL_2(q)'}
The group $PGL_2(q)$ is generated by $\wt{r},\,\,\wt{e}: z \mapsto z + 1,$ and $\wt{J},$ subject to the relations
\begin{align}
\wt{r}^{q - 1} = \wt{e}^p = \wt{J}^2 = (\wt{J} \circ \wt{r})^2 = (\wt{J} \circ \wt{S})^3 = 1,\\[1mm]
[\wt{e}, \wt{e}_i] = 1\,(i = 1, \dots, q - 2),\, \wt{e}_f = \prod_{0 \leq i \leq f - 1}\,\wt{e}_i^{\,a_i},
\end{align}
where
\[
\wt{e}_i:= \wt{r}^i \circ \wt{e} \circ \wt{r}^{- i}\,\,(i = 0, \dots, f) \mbox{ and }
\wt{S}:= \begin{cases} \wt{e} & {\rm if}\,p = 2,\\
          \wt{e} \circ \wt{r}^{\frac{q - 1}{2}} & {\rm if}\,p \neq 2.
         \end{cases}
\]
\end{co}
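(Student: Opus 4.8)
The plan is to obtain the presentation of $PGL_2(q) = GL_2(q)/Z_2(q)$ by quotienting the presentation of $GL_2(q)$ furnished by Corollary~\ref{Co:GL_2(q)'} by its center. Recall from Theorem~\ref{Th:PresGL_2(O)} that $z := (J \cdot r)^2 = Z(\zeta)$ is the scalar matrix $\zeta I_2$, which generates the center $Z_2(q) = \{a I_2 : a \in GF(q)^\times\}$, a cyclic group of order $q - 1$. Since the transformations $\wt{r}, \wt{e}, \wt{J}$ are precisely the images in $PGL_2(q)$ of the matrices $r, e, J$ under the projection $GL_2(\O) \to PGL_2(\O)$, a presentation of $PGL_2(q)$ should be obtained from that of $GL_2(q)$ by adjoining the single relation $z = 1$.

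The key point justifying this reduction is that $z$ is central in the abstract group defined by the presentation of Corollary~\ref{Co:GL_2(q)'}, so that the normal closure of $z$ coincides with the central cyclic subgroup $\langle z \rangle$ mapping isomorphically onto $Z_2(q)$. That $z$ commutes with $r$ and $e$ is among the defining relations; to see that $[J, z] = 1$ is also a consequence, I would compute $(r J)^2 = r (J r)^2 r^{-1} = r z r^{-1} = z$ (using $J^2 = 1$ and $[r, z] = 1$), whence $(J r)^2 = (r J)^2$; right-multiplying this equality by $J$ then gives $J z = z J$. Granting this, the standard fact that quotienting a group by a central subgroup generated by a single word $w$ amounts to imposing the relation $w = 1$ applies, so that adjoining $z = 1$ yields exactly $PGL_2(q)$.

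It then remains to simplify the augmented relation set. Setting $z = 1$ renders the relations $z^{q - 1} = [r, z] = [e, z] = 1$ vacuous, while the imposed relation $z = 1$ reads $(J \cdot r)^2 = 1$, that is, $(\wt{J} \cdot \wt{r})^2 = 1$ in $PGL_2(q)$. All remaining relations --- namely $\wt{r}^{q - 1} = \wt{e}^p = \wt{J}^2 = 1$, the relation $(\wt{J} \cdot \wt{S})^3 = 1$ with $\wt{S}$ the image of $S$, and the relations $[\wt{e}, \wt{e}_i] = 1$ together with $\wt{e}_f = \prod_{0 \le i \le f - 1} \wt{e}_i^{\,a_i}$ encoding the additive structure --- transfer verbatim from the presentation of $GL_2(q)$, yielding precisely the claimed presentation. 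The main (indeed the only genuine) obstacle is the verification that $z$ lies in the center of the presented group; once this is secured, the passage to the quotient is routine.
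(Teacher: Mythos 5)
Your proposal is correct and follows essentially the route the paper intends: the corollary is obtained from the presentation of $GL_2(q)$ in Corollary~\ref{Co:GL_2(q)'} by killing the center $Z_2(q)=\langle z\rangle$, $z=(J\cdot r)^2=\zeta I_2$, i.e.\ by adjoining $(\wt{J}\circ\wt{r})^2=1$, whereupon the relations $z^{q-1}=[r,z]=[e,z]=1$ become vacuous and the rest transfer verbatim. Your explicit derivation of $[J,z]=1$ from the relations, while a valid and pleasant consistency check, is logically superfluous: since Corollary~\ref{Co:GL_2(q)'} already identifies the presented group with $GL_2(q)$, the normal closure of $z$ corresponds to that of $\zeta I_2$ in $GL_2(q)$, which is $Z_2(q)$ by centrality there, so von Dyck's principle applies directly.
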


\section{$GL_2$ over valued fields}
\label{Sec:GL2/VF}

Let $(K, v)$ be a non-trivial valued field, with $\O \neq K, \fm, k = \O/\fm, \Lam = v(K^\times),$ as in Section~\ref{Sec:GL2}. 
We apply Theorem~\ref{Thm:PresGpAct} to obtain a presentation of the group $GL_2(K)$ using its transitive action on a suitable $\Lam$-tree seen as a deformation of the canonical action of $GL_2(K)$ on the projective line $\P^1(K)$,
induced by the valuation $v$.\footnote{A similar approach also works for the group $SL_2$ over a valued field.}

For any $\alpha \in \Lam$, let $K/\alpha:= K/\O_\alpha$ be the factor $\O$-module of $K$ by its submodule 
$\O_\alpha:= \{z \in K\,:\,v(z) \geq \alpha\}$; in other words, for $z_1, z_2 \in K$, we have 
\[
z_1 \equiv z_2\,\,\,{\rm mod}\,\,\,
\alpha \Llra v(z_1 - z_2) \geq \alpha. 
\]
We denote by $[z]_\alpha$ the elements of $K/\alpha$,  noting that 
$[0]_\alpha = [1]_\alpha$ provided $\alpha \leq 0$. Let $\fX:= \bigsqcup_{\alpha \in \Lam} K/\alpha$ be the disjoint
union of the $\O$-modules $K/\alpha$. 
As a global residue structure of the valued field $(K, v)$, $\fX$ has a rich arithmetic and geometric structure, which
can be interpreted as a deformation of the original structure of the valued field $(K, v)$. In particular, 
$\fX$ becomes a $\Lam$-tree with respect to the distance $d: \fX \times \fX \lra \Lam_+$ given by
\[
d([x]_\alpha, [y]_\beta):= |\alpha - \beta| + 2\,\mbox{max}\,(0,\,\mbox{min}\,(\alpha, \beta) - v(x - y)). 
\]
Note that the above construction of the residue structure $\fX$ works in the more general case when $\O$ is a Pr\" ufer domain, and $K$ is its field of fractions, where the totally ordered abelian group $\Lam$ is replaced by the abelian lattice-ordered group of non-zero fractional ideals of finite type of $K$. Still more generally, it works for the larger category of Pr\" ufer ring extensions; see \cite{A-B}, \cite{Prufer}, \cite{Axiomatic}, and \cite{MS} for more details.

To define the desired transitive action of $GL_2(K)$ on $\fX$, we need the following, obvious but useful, lemma.
\begin{lem}
\label{Lem:Beta}
Let $s = \begin{pmatrix} a&b\\c&d\end{pmatrix} \in GL_2(K),$ $\Delta = {\rm det}\,(s), \alpha \in \Lam, z \in K$.
Then the following assertions hold.
\ben
\item[\rm (i)] $\delta:= {\rm min}\,(\alpha + v(c), v(c z + d)) \in \Lam$ does not depend on the choice of the 
representative $z \in [z]_\alpha$.

\vspace{2mm}

\item[\rm (ii)] $\delta = \begin{cases} v(c z + d) & {\rm if}\, [z]_\alpha \neq [- \frac{d}{c}]_\alpha\,,\\[1mm]
                          \alpha + v(c) & {\rm if}\, [z]_\alpha = [- \frac{d}{c}]_\alpha.
                         \end{cases}$

\vspace{2mm}

In particular, $\delta = v(d)$, provided $c = 0$.

\vspace{2mm}

\item[\rm (iii)] Let $\beta:= v(\Delta) + \alpha - 2 \delta$. Then
\[
\beta = \begin{cases} v(\Delta) + \alpha - 2 v(c z + d), & {\rm if}\,[z]_\alpha \neq [- \frac{d}{c}]_\alpha\,,\\[1mm]
         v(\Delta) - \alpha - 2 v(c) & {\rm if}\,[z]_\alpha = [- \frac{d}{c}]_\alpha\,.
        \end{cases}
\]

\vspace{2mm}

In particular, $\beta = \alpha + v(a) - v(d),$ provided $c = 0$.
\een
\end{lem}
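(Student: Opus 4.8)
The plan is to reduce everything to the two defining properties of the valuation: multiplicativity $v(xy) = v(x) + v(y)$ and the ultrametric inequality $v(x+y) \geq \min(v(x), v(y))$, with equality whenever $v(x) \neq v(y)$. The whole lemma then flows from a single observation, established once, that controls the size of $v(cz+d)$ relative to $\alpha + v(c)$; the three parts are essentially three readings of that observation.

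First I would dispose of the degenerate case $c = 0$. Here $v(c) = \infty$, so $\alpha + v(c) = \infty$ and $\delta = \min(\infty, v(d)) = v(d)$, which manifestly does not involve $z$; this settles (i) and the ``in particular'' clause of (ii) at once. For (iii) in this case one notes $\Delta = ad$, hence $v(\Delta) = v(a) + v(d)$ and $\beta = v(a) + v(d) + \alpha - 2v(d) = \alpha + v(a) - v(d)$, using the conventions $v(0) = \infty$ and $\infty + \alpha = \infty$.

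The core is the case $c \neq 0$, where I would first prove the equivalence
\[
v(cz + d) \geq \alpha + v(c) \iff [z]_\alpha = [-d/c]_\alpha.
\]
This is immediate from $v(cz+d) = v(c) + v(z + d/c)$ together with the definition $[z]_\alpha = [-d/c]_\alpha \Leftrightarrow v(z + d/c) \geq \alpha$. With this in hand, part (ii) is just the evaluation of the minimum: if $[z]_\alpha \neq [-d/c]_\alpha$ then $v(cz+d) < \alpha + v(c)$ and $\delta = v(cz+d)$, whereas if $[z]_\alpha = [-d/c]_\alpha$ then $v(cz+d) \geq \alpha + v(c)$ and $\delta = \alpha + v(c)$. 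For the well-definedness (i), I would replace $z$ by $z' = z + w$ with $v(w) \geq \alpha$; then $cz' + d = (cz+d) + cw$ with $v(cw) \geq \alpha + v(c)$, and a short case split using the ultrametric equality shows $\min(\alpha + v(c), v(cz'+d)) = \min(\alpha + v(c), v(cz+d))$ in both sub-cases. Finally, (iii) is a mechanical substitution of the two values of $\delta$ from (ii) into $\beta = v(\Delta) + \alpha - 2\delta$.

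As the authors themselves flag this as ``obvious but useful'', there is no genuine obstacle. The only points deserving a moment of care are the bookkeeping at the boundary $v(cz+d) = \alpha + v(c)$ — which falls in the class $[z]_\alpha = [-d/c]_\alpha$ and where both candidate expressions for $\delta$ coincide, so the two formulas remain consistent — and the disciplined use of the conventions $v(0) = \infty$ and $\infty + \alpha = \infty$ throughout the $c = 0$ reduction.
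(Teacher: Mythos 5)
Your proof is correct and complete; note that the paper itself states Lemma~\ref{Lem:Beta} without any proof, flagging it as ``obvious but useful,'' and your argument is precisely the standard verification the authors evidently had in mind. The key equivalence $v(cz+d) \geq \alpha + v(c) \Longleftrightarrow [z]_\alpha = [-\frac{d}{c}]_\alpha$ for $c\neq0$, the ultrametric case split for well-definedness, and the separate treatment of $c=0$ under the conventions $v(0)=\infty$ and $\infty+\alpha=\infty$ together cover all cases soundly, including the boundary case $z=-\frac{d}{c}$ where $v(cz+d)=\infty$.
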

Consider the map $GL_2(K) \times \fX \lra \fX,\,(s, [z]_\alpha) \mapsto s \cdot [z]_\alpha$, given by
\begin{equation}
\label{Eq:ActionOnX}
\begin{pmatrix}a&b\\
 c&d
\end{pmatrix}
 \cdot [z]_\alpha:= \begin{cases} [\frac{a z + b}{c z + d}]_\beta\,\mbox{for some (for all)}\,z \in [z]_\alpha, & 
\mbox{if}\,[z]_\alpha \neq [- \frac{d}{c}]_\alpha\,,\\[2mm]
[\frac{a}{c}]_\beta, & \mbox{if}\,[z]_\alpha = [- \frac{d}{c}]_\alpha\,,
                     \end{cases}
\end{equation}
with $\beta \in \Lam$ as defined in Part~(iii) of Lemma~\ref{Lem:Beta}.

{\footnotesize 
\begin{rem} \em
Let $s = \begin{pmatrix}a&b\\
          c&d
         \end{pmatrix} \in GL_2(K), c \neq 0, \Delta:= {\rm det}\,(s), \alpha \in \Lam$. Then
\begin{align*}
A&:= \left\{\frac{a z + b}{c z + d}\,:\,[z]_\alpha = \big[- \frac{d}{c}\big]_\alpha,\,z \neq - \frac{d}{c}\right\}\\[1mm]
&= \left\{\frac{a}{c} - \frac{\Delta}{\lam c^2}\,:\,\lam \in K^\times,\, v(\lam) \geq \alpha\right\}\\[1mm]
&= \left\{x \in K\,:\,v\big(x - \frac{a}{c}\big) \leq \beta:= v(\Delta) - 2 v(c) - \alpha\right\}\,,
\end{align*}
and hence 
\[
A \cap \big[\frac{a}{c}\big]_\beta = \Big\{x \in K\,:\,v\big(x - \frac{a}{c}\big) = \beta\Big\}\, \neq\, \es, 
\]
justifying the
definition $s \cdot [- \frac{d}{c}]_\alpha = [\frac{a}{c}]_\beta$. 
\end{rem}}

For any $s = \begin{pmatrix}a&b\\
              c&d
             \end{pmatrix} \in GL_2(K)$ with $\Delta:= {\rm det}\,(s),$ set $|s|:= v(\Delta) - 2 v(s) \in \Lam_+$, 
where $v(s):= {\rm min}\,(v(a), v(b), v(c), v(d))$.
\begin{pr}
\label{Pr:ActionOnX}
$GL_2(K)$ acts transitively by $\Lam$-isometries on the $\Lam$-tree $\fX,$ according to the rule 
{\em (\ref{Eq:ActionOnX}),} and the following assertions hold.
\ben
\item[\rm (1)] The kernel of the action is $Z:= \{a I_2\,:\,a \in K^\times\} \cong K^\times;$ thus, $PGL_2(K)$ is 
identified with a group of automorphisms of the $\Lam$-tree $\fX$.

\vspace{2mm}

\item[\rm (2)] The stabilizer of the point $[0]_\O = [1]_\O$ is 
\[
H:= \big\{s \in GL_2(K)\,:\,|s| = 0\big\} = GL_2(\O) \cdot Z;
\] 
hence, the surjective map $GL_2(K) \lra \fX,\,s \mapsto 
s \cdot [0]_\O,$ identifies $\fX$ with the quotient of $GL_2(K)$ by the equivalence relation 
\[
s \sim t \Llra |s^{- 1} t| = 0 \Llra s^{- 1} t \in GL_2(\O) \cdot Z,
\]
such that $d(s \cdot [0]_O, t \cdot [0]_O) = |s^{- 1} t|$ for $s, t \in GL_2(K)$.

\vspace{2mm}

\item[\rm (3)] The surjective map $GL_2(K) \lra \Lam_+,\,s \mapsto |s| = d([0]_\O, s \cdot [0]_\O)$ {\em (the composition
of the surjective maps $GL_2(K) \lra \fX,\,s \mapsto s \cdot [0]_\O,$ and $\fX \lra \Lam_+,\,x \mapsto d(x, [0]_\O)$)}
induces a bijection 
\[
PGL_2(\O) \sm PGL_2(K)/ PGL_2(\O) \cong H \sm GL_2(K)/ H \cong H \sm \fX \lra \Lam_+.
\]
\een
\end{pr}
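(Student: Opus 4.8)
The plan is to prove the six assertions bundled into this proposition --- well-definedness of (\ref{Eq:ActionOnX}), the action axioms, the isometry property, transitivity, and then (1)--(3) --- essentially in this order, using Lemma~\ref{Lem:Beta} as the computational engine and the length function $|\cdot|$ as the bridge between the metric geometry of $\fX$ and the double-coset combinatorics. First I would check that (\ref{Eq:ActionOnX}) really is a map $GL_2(K)\times\fX\to\fX$. By Lemma~\ref{Lem:Beta}(i) the quantity $\delta$, hence the target level $\beta$, is independent of the chosen representative $z\in[z]_\alpha$. When $[z]_\alpha\neq[-d/c]_\alpha$ one must still see that the coset $[\frac{az+b}{cz+d}]_\beta$ is independent of $z$: for $z'=z+w$ with $v(w)\geq\alpha$ one computes $\frac{az+b}{cz+d}-\frac{az'+b}{cz'+d}=\frac{-\Delta\,w}{(cz+d)(cz'+d)}$, whose valuation is $\geq v(\Delta)+\alpha-2\delta=\beta$ since $v(cz'+d)=\delta$ away from the pole; in the pole case the value $[\frac{a}{c}]_\beta$ plainly does not involve $z$.

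Next come the action axioms. The identity acts trivially by a one-line check ($c=0$, $\beta=\alpha$, first coordinate fixed). For compatibility $(s_1s_2)\cdot x=s_1\cdot(s_2\cdot x)$ I would separate the $\P^1(K)$-coordinate from the level. On representatives the first coordinate is the ordinary M\"obius action, already a genuine action of $GL_2(K)$ on $\P^1(K)$; so the only real content is that the level shift $j(s,[z]_\alpha):=\beta-\alpha=v(\Delta)-2\delta$ is an additive cocycle, $j(s_1s_2,x)=j(s_1,s_2\cdot x)+j(s_2,x)$. Away from poles $\delta=v(cz+d)$, so $j(s,[z]_\alpha)=v(\Delta)-2v(cz+d)$ is the valuation of the derivative of the M\"obius map, and the cocycle identity is exactly the valuated chain rule; the finitely many pole configurations are then checked directly from Lemma~\ref{Lem:Beta}.

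For isometry I would reduce to a generating set of $GL_2(K)$, say the upper-triangular matrices together with $w=\begin{pmatrix}0&-1\\1&0\end{pmatrix}$. An upper unipotent matrix acts as a translation $[z]_\alpha\mapsto[z+b]_\alpha$, preserving levels and $v(x-y)$, hence $d$; a diagonal matrix $\mathrm{diag}(a,e)$ sends $[z]_\alpha\mapsto[(a/e)z]_{\alpha+v(a/e)}$, and on substituting into the formula for $d$ the common shift $v(a/e)$ cancels, so $d$ is again preserved. The essential case is $w$, with $w\cdot[z]_\alpha=[-1/z]_{\alpha-2v(z)}$ for $v(z)<\alpha$ and $w\cdot[0]_\alpha=[0]_{-\alpha}$; verifying $d(w\cdot x,w\cdot y)=d(x,y)$ requires splitting according to how $x,y$ sit relative to the pole $[0]$. \emph{I expect this inversion step --- together with well-definedness of the action at the poles --- to be the main obstacle}, being the place where the piecewise definition of (\ref{Eq:ActionOnX}) interacts non-trivially with the metric. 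Transitivity is then immediate, since $\begin{pmatrix}t_\alpha&z\\0&1\end{pmatrix}$ carries $[0]_\O$ to $[z]_\alpha$. For (1), scalars act trivially, and conversely an element of the kernel fixes $[z]_\alpha$ for all $\alpha$, forcing the underlying M\"obius map to fix every point of the infinite field $K$, whence it is scalar; thus $PGL_2(K)=GL_2(K)/Z$ acts faithfully by isometries, i.e.\ by automorphisms of $\fX$.

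Finally, the heart of (2) and (3) is the length formula $d([0]_\O,s\cdot[0]_\O)=|s|$. I would prove it directly: after rescaling so that $v(s)=0$ (both sides are scaling-invariant) one has $|s|=v(\Delta)$, and computing $s\cdot[0]_\O$ from (\ref{Eq:ActionOnX}) and inserting into the formula for $d$, using $v(\Delta)\geq 2v(s)$, yields $|s|$ in each of the pole/non-pole cases. Granting this, since $d$ separates the points of the $\Lam$-tree $\fX$, the stabiliser of $[0]_\O$ is exactly $\{s:\,d([0]_\O,s\cdot[0]_\O)=0\}=\{s:\,|s|=0\}$; rescaling to $v(s)=0$ identifies $\{|s|=0\}$ with $\{s:\,\Delta\in\O^\times\}\cdot Z=GL_2(\O)\cdot Z=H$, giving (2), while the distance statement $d(s\cdot[0]_\O,t\cdot[0]_\O)=|s^{-1}t|$ follows from isometry plus the length formula. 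For (3), isometry together with $H=\mathrm{Stab}([0]_\O)$ shows $|\cdot|$ is constant on double cosets $HsH$, so it descends to a map $H\sm GL_2(K)/H\to\Lam_+$; surjectivity is witnessed by $\mathrm{diag}(t_\alpha,1)$ (for which $|s|=\alpha$), and injectivity is the elementary divisor theorem over the valuation ring $\O$: every $s$ equals $k_1\,\mathrm{diag}(t_\alpha,t_\gamma)\,k_2$ with $k_1,k_2\in GL_2(\O)$, and modulo $H$ (permuting the two diagonal entries and rescaling by the centre) the residual invariant is $|\alpha-\gamma|=|s|$. The identifications $H\sm GL_2(K)/H\cong PGL_2(\O)\sm PGL_2(K)/PGL_2(\O)\cong H\sm\fX$ are then formal, using $H/Z=PGL_2(\O)$ and the orbit bijection $\fX\cong GL_2(K)/H$ from (2).
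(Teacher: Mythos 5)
Your proposal is correct in strategy, but note that the paper itself contains no proof of Proposition~\ref{Pr:ActionOnX} to compare against: immediately after the statement it defers to the literature (``For the proof cf.\ \cite{MS}, \cite{A-B}, and \cite{Prufer}''). So what you have written is a self-contained verification where the authors offer only citations. Judged on its merits, the architecture is sound and in fact recovers the standard route of those references: the observation that the level shift $\beta-\alpha=v(\Delta)-2\delta$ is, away from poles, the valuation of the derivative of the M\"obius map, so that associativity is the valuated chain rule, is a clean way to organize the action axiom; reduction to Bruhat generators for the isometry property is efficient (the unipotent and diagonal cases are one-liners, exactly as you say); your normalization $v(s)=0$ does make the length formula $d([0]_\O,s\cdot[0]_\O)=|s|$ a finite case check that goes through; and for Part~(3) the use of Smith normal form over $\O$ (valuation rings are elementary divisor rings) with residual invariant $|\alpha-\gamma|$ is precisely the Cartan-decomposition argument one finds in \cite{A-B} and \cite{MS}.

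One caution about where the suppressed work actually lives. You describe the compatibility check as ``first coordinate is the ordinary M\"obius action'' plus a cocycle, with pole configurations ``checked directly.'' But the footnoted remark following (\ref{Eq:ActionOnX}) shows this is subtler than representative-chasing: if $[z]_\alpha=[-\frac{d}{c}]_\alpha$ and $z\neq-\frac{d}{c}$, then $v\big(s(z)-\frac{a}{c}\big)=v(\Delta)-2v(c)-v\big(z+\frac{d}{c}\big)\leq\beta$, with equality only when $v\big(z+\frac{d}{c}\big)=\alpha$ exactly; so a generic representative of the pole class maps \emph{outside} the assigned class $[\frac{a}{c}]_\beta$. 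Consequently, associativity in the configurations where $[z]_\alpha$ is the pole class of $s_2$, or $s_2\cdot[z]_\alpha$ is the pole class of $s_1$, cannot be read off from representatives and the chain rule; it must be done at the level of classes via Parts~(ii) and (iii) of Lemma~\ref{Lem:Beta}, which is a genuine (if routine) block of case analysis rather than a formality. You flag this, and the same applies to the isometry check for $w=\begin{pmatrix}0&-1\\1&0\end{pmatrix}$, so the plan is complete as stated --- just be aware that these pole cases constitute most of the actual proof. A last trifle: $t_\alpha$ is defined in the paper only for $\alpha\in\Lambda_+$, so for transitivity onto $[z]_\alpha$ with $\alpha<0$ you should pick an arbitrary element of valuation $\alpha$ (which exists since $\Lambda=v(K^\times)$).
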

For the proof cf.\ \cite{MS}, \cite{A-B}, and \cite{Prufer}.

For any $\alpha \in \Lam_+$, choose $t_\alpha \in \O$ with $v(t_\alpha) = \alpha$, with $t_0 = 1$, and put 
$u_{\alpha, \beta}:= \frac{t_\alpha t_\beta}{t_{\alpha + \beta}} \in \O^\times$, so that $u_{\alpha, \beta} = u_{\beta, \alpha}$ and 
$u_{\alpha, 0} = 1$. Set
\[
T_\alpha:= \begin{cases} \begin{pmatrix}0&-1\\
                          -t_\alpha&0
                         \end{pmatrix}, &\mbox{if}\,\,\alpha \in \Lam_+ - \{0\},\\[2mm]
\quad I_2, &\mbox{if}\,\,\alpha = 0.
\end{cases}
\]
Note that $\theta_\alpha:= T_\alpha^2 = Z(t_\alpha) = t_\alpha I_2 \in H$.
Our next lemma, whose proof is left to the reader (being rather similar to previous arguments),  furnishes an explicit description of the polygroup with support $H \sm GL_2(K)/ H$.
\begin{lem}
{\em (i)} Define a hyperoperation $\circ$ on $\Lambda_+$ via
\begin{align*}
\alpha \circ \beta&:= \{\alpha + \beta - 2 \gam\,:\,0 \leq \gam \leq {\rm min}\,(\alpha, \beta)\}\\
&= \alpha + \beta - 2 [0, {\rm min}\,(\alpha, \beta)]\\ 
&= |\alpha - \beta| + 2 [0, {\rm min}\,(\alpha, \beta)].
\end{align*}
Then $\Lambda_+$ becomes a commutative polygroup with scalar identity $0$ and associated map \,$\bar{}$ the identity on $\Lambda_+$. Moreover, the canoncial projection of $\Lambda_+$ onto the quotient group $\Lambda/2\Lambda$ is a morphism of polygroups, and we have
\[
\alpha\circ \beta \subseteq (\alpha+\gamma) \circ (\beta+\gamma) \mbox{ for all } \alpha, \beta, \gamma\in\Lambda_+.
\]
{\em (ii)} The map $\Lam_+ \lra H \sm GL_2(K)/ H$ given by $\alpha \mapsto C(T_\alpha):= H T_\alpha H$ is an isomorphism of polygroups.
\label{Lem:Polygroup}  
\end{lem}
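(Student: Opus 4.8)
The plan is to handle the two parts in turn: part~(i) by a direct order-theoretic study of the hyperoperation, and part~(ii) by transporting the polygroup structure of the double-coset space $H\sm GL_2(K)/H$ (which is a polygroup automatically, by Section~\ref{Subsec:DoubleCosetSpace}) through the bijection supplied by Proposition~\ref{Pr:ActionOnX}(3). For part~(i) I would first note that the three displayed descriptions of $\alpha\circ\beta$ coincide, since $\gamma\mapsto\alpha+\beta-2\gamma$ is an order-reversing bijection of $[0,\min(\alpha,\beta)]$ onto $[|\alpha-\beta|,\alpha+\beta]$; all three then equal the set
\[
\alpha\circ\beta=\bigl\{\delta\in\Lambda_+ : |\alpha-\beta|\le\delta\le\alpha+\beta,\ \delta\equiv\alpha+\beta\!\!\pmod{2\Lambda}\bigr\}.
\]
Commutativity is immediate from the symmetry of $\min$, $|\cdot|$ and $+$, and $\alpha\circ 0=\{\alpha\}$ shows that $0$ is a scalar identity.

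The one point requiring care is associativity, which I would obtain by showing that $(\alpha\circ\beta)\circ\gamma$ is cut out by a predicate symmetric in all four arguments. Unwinding the definitions, $\epsilon\in(\alpha\circ\beta)\circ\gamma$ iff there is $\delta\equiv\alpha+\beta\pmod{2\Lambda}$ with $\max(|\alpha-\beta|,|\gamma-\epsilon|)\le\delta\le\min(\alpha+\beta,\gamma+\epsilon)$; as both endpoints of this interval are $\equiv\alpha+\beta\pmod{2\Lambda}$ once $\epsilon\equiv\alpha+\beta+\gamma$, such a $\delta$ exists (take $\delta$ equal to the lower endpoint) iff the interval is non-empty. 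Non-emptiness unwinds to the parity constraint $\alpha+\beta+\gamma+\epsilon\in2\Lambda$ together with the four inequalities asserting that each of $\alpha,\beta,\gamma,\epsilon$ is at most the sum of the other three. This predicate is symmetric in $\alpha,\beta,\gamma,\epsilon$, hence equally describes $\alpha\circ(\beta\circ\gamma)$, giving associativity. With associativity and the scalar identity in hand, I would verify that $\Lambda_+$ is a polygroup with inverse the identity via the criterion of Remarks~\ref{Rem:Poly}(3) with $\bar\alpha:=\alpha$: if $\alpha\in\beta\circ\gamma$, then $|\beta-\gamma|\le\alpha\le\beta+\gamma$ rearranges at once to $|\alpha-\gamma|\le\beta\le\alpha+\gamma$, i.e.\ $\beta\in\alpha\circ\gamma$, and symmetrically $\gamma\in\beta\circ\alpha$. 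Finally, the two "moreover" claims read off the boxed description: every element of $\alpha\circ\beta$ lies in the class $\alpha+\beta+2\Lambda$, so the projection $\Lambda_+\to\Lambda/2\Lambda$ sends $\alpha\circ\beta$ to the singleton $\{\alpha+\beta+2\Lambda\}$ and is a polygroup morphism; and replacing $(\alpha,\beta)$ by $(\alpha+\gamma,\beta+\gamma)$ leaves the lower bound $|\alpha-\beta|$ and the residue class unchanged while only enlarging the upper bound, whence $\alpha\circ\beta\subseteq(\alpha+\gamma)\circ(\beta+\gamma)$.

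For part~(ii), Proposition~\ref{Pr:ActionOnX}(3) already yields that $\alpha\mapsto C(T_\alpha)$ is a bijection $\Lambda_+\to H\sm GL_2(K)/H$ (note $|T_\alpha|=\alpha$), so only compatibility of the hyperoperations remains, i.e.\ the identity $\{|T_\alpha\,h\,T_\beta| : h\in H\}=\alpha\circ\beta$. Since $|\cdot|$ is invariant under the central factor $Z$ of $H=GL_2(\O)\cdot Z$, I may take $h=\begin{pmatrix} p & q\\ r & s\end{pmatrix}$ with $p,q,r,s\in\O$ and $ps-qr\in\O^\times$. A direct multiplication gives, for $\alpha,\beta>0$,
\[
T_\alpha\,h\,T_\beta=\begin{pmatrix} s\,t_\beta & r\\ t_\alpha t_\beta\,q & t_\alpha\,p\end{pmatrix},
\]
so that $v(\det)=\alpha+\beta$ and $|T_\alpha h T_\beta|=\alpha+\beta-2\mu$ with $\mu=\min\bigl(\beta+v(s),\,v(r),\,\alpha+\beta+v(q),\,\alpha+v(p)\bigr)$; the degenerate cases $\alpha=0$ or $\beta=0$ collapse to the scalar-identity behaviour and agree with $0\circ\beta=\{\beta\}$.

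The crux is then to identify the range of $\mu$, as $h$ varies over $GL_2(\O)$, with $[0,\min(\alpha,\beta)]$, which by the reformulation above is precisely $\alpha\circ\beta$. Clearly $\mu\ge 0$. For the upper bound (say $\alpha\le\beta$): if $v(p)=0$ then $\mu\le\alpha+v(p)=\alpha$, whereas if $v(p)>0$ then $ps\in\fm$ forces $qr\in\O^\times$, so $v(r)=0$ and $\mu\le v(r)=0\le\alpha$; thus $\mu\le\min(\alpha,\beta)$ always. Conversely, given $\gamma\in[0,\min(\alpha,\beta)]$, the matrix $h=\begin{pmatrix} 1 & 0\\ t_\gamma & 1\end{pmatrix}\in GL_2(\O)$ yields $\mu=\min(\beta,\gamma,\alpha)=\gamma$, realising $|T_\alpha h T_\beta|=\alpha+\beta-2\gamma$. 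Hence $\{|T_\alpha h T_\beta|\}=\alpha\circ\beta$ and the bijection is a polygroup isomorphism. I expect this range computation --- in particular the use of the unit-determinant constraint to force $\mu\le\min(\alpha,\beta)$ --- to be the only genuine obstacle, the remaining steps being bookkeeping parallel to the proofs of Lemmas~\ref{Lem:DBD} and~\ref{Lem:BGB}.
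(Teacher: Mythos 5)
Your proposal is correct, and it supplies a proof where the paper deliberately gives none: the lemma is stated with ``proof left to the reader (being rather similar to previous arguments)'', the pointer being to the normal-form/valuation computations of Lemmas~\ref{Lem:DBD} and~\ref{Lem:BGB}. Your part~(ii) is exactly that intended computation --- reduce modulo the central factor $Z$ of $H=GL_2(\O)\cdot Z$, multiply out $T_\alpha h T_\beta$, read off $|T_\alpha h T_\beta|=\alpha+\beta-2\mu$ with $\mu=\min\bigl(\beta+v(s),\,v(r),\,\alpha+\beta+v(q),\,\alpha+v(p)\bigr)$, bound $\mu$ by $\min(\alpha,\beta)$ using $ps-qr\in\O^\times$, and realise every value with a unipotent $h$; together with Proposition~\ref{Pr:ActionOnX}(3) and $|T_\alpha|=\alpha$ this is precisely the argument the authors had in mind. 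Where you genuinely add something is part~(i): the reformulation of $\alpha\circ\beta$ as $\{\delta\in\Lam_+ : |\alpha-\beta|\le\delta\le\alpha+\beta,\ \delta\equiv\alpha+\beta \ (\mathrm{mod}\ 2\Lam)\}$ and the ensuing four-point characterisation of $(\alpha\circ\beta)\circ\gamma$ by a predicate symmetric in all of $\alpha,\beta,\gamma,\epsilon$ (parity of the total sum, plus each variable at most the sum of the other three) settles associativity in one stroke, avoiding the case analysis one would otherwise face; it also makes the two ``moreover'' claims and the polygroup inverse condition via Remarks~\ref{Rem:Poly}(3) immediate, as you note. An alternative the paper's framework offers, which you could mention, is to deduce part~(i) entirely from part~(ii) by transport of structure, since double-coset spaces are polygroups automatically (Section~\ref{Subsec:DoubleCosetSpace}); your direct proof is self-contained and arguably preferable.

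Two micro-points to tidy in a final write-up. First, your upper bound for $\mu$ treats only $\alpha\le\beta$ (via the dichotomy on $v(p)$); for $\beta<\alpha$ either run the same dichotomy on $v(s)$, or observe that $|g|=|g^{-1}|$ and $T_\alpha^{-1}=t_\alpha^{-1}T_\alpha$ make the set $\{|T_\alpha h T_\beta| : h\in H\}$ symmetric in $(\alpha,\beta)$, so the WLOG is legitimate. Second, when you ``take $\delta$ equal to the lower endpoint'', record explicitly that both $|\alpha-\beta|$ and $|\gamma-\epsilon|$ lie in the class $\alpha+\beta+2\Lam$ once $\epsilon\equiv\alpha+\beta+\gamma$ (using $-x\equiv x \ (\mathrm{mod}\ 2\Lam)$), so that their maximum does too; this is the step that converts non-emptiness of the interval into existence of an admissible $\delta$. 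Neither point is a gap, merely a line each.
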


Thus, $GL_2(K)$ (respectively $PGL_2(K)$) is a group extension of the polygroup with support $\Lam_+$, as defined in Part~(i) of 
Lemma~\ref{Lem:Polygroup}, by the group $H = GL_2(\O) \cdot Z$ (respectively $PGL_2(\O)$). It remains to apply 
Theorem~\ref{Thm:PresGpAct} to obtain a presentation for $GL_2(K)$ in terms of its subgroup $H$ and the chosen
representatives $T_\alpha \in GL_2(K) \setminus H$ for $\alpha \in \Lam_+ - \{0\}$ of the non-trivial double cosets modulo $H$.

Let $\alpha \in \Lam_+ - \{0\}$. We have $H_\alpha:= H \cap T_\alpha H T_\alpha^{- 1} = GL_2(\O)_\alpha \cdot Z$,
where 
\[
GL_2(\O)_\alpha:= \left\{\begin{pmatrix}a&b\\
                           c&d
                          \end{pmatrix} \in GL_2(\O)\,:\,v(c) \geq \alpha\right\};
\]
in particular, $B_2(\O) = \bigcap_{\alpha \in \Lam_+} GL_2(\O)_\alpha$.
 
The automorphism $\iota_\alpha$ of $H_\alpha$, given by $s \in H_\alpha \mapsto T_\alpha^{- 1} s T_\alpha,$ is the 
identity on $Z$, and sends $\begin{pmatrix}a&b\\
                                                           c&d
                                                          \end{pmatrix} \in GL_2(\O)_\alpha$ to 
$\begin{pmatrix}d&t_\alpha^{- 1} c\\
  t_\alpha b&a
 \end{pmatrix} \in GL_2(\O)_\alpha.$

\begin{lem}
\label{Lem:Halpha}
Let $\alpha \in \Lam_+ - \{0\}$. Then the subgroup $H_\alpha \sse H$ is generated by the matrices 
$R(a) = \begin{pmatrix}a&0\\
         0&1
        \end{pmatrix}$ and  $Z(a) = a I_2$ for $a \in \O^\times - \{1\},$ together with the matrices $Z(t_\gam) = t_\gam I_2$ for $\gam \in \Lam_+ - \{0\}$ 
and the involutory matrices $S = \begin{pmatrix}- 1&1\\
                                                   0&1
                                                  \end{pmatrix}$ and $J_\alpha = \begin{pmatrix}1&0\\
t_\alpha&- 1
\end{pmatrix}.$
\end{lem}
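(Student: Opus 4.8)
The plan is to establish the two inclusions separately. The easy one is that every listed matrix lies in $H_\alpha = GL_2(\O)_\alpha\cdot Z$: the matrices $R(a),Z(a)$ for $a\in\O^\times$ lie in $D\subseteq B_2(\O)$, which sits inside $GL_2(\O)_\alpha$ since its elements have lower-left entry $0$; the involution $S$ likewise lies in $B_2(\O)$; the matrix $J_\alpha$ has lower-left entry $t_\alpha$ of valuation $\alpha$ and unit determinant, so $J_\alpha\in GL_2(\O)_\alpha$; and the $Z(t_\gam)$ lie in $Z$. For the substantial inclusion I would split the work into (a) showing that $GL_2(\O)_\alpha$ is contained in the subgroup generated by $R(a),Z(a)$ ($a\in\O^\times$), $S$ and $J_\alpha$, and (b) showing that the central subgroup $Z=\{aI_2:a\in K^\times\}$ is generated by the $Z(a)$ ($a\in\O^\times$) together with the $Z(t_\gam)$ ($\gam\in\Lam_+\sm\{0\}$); the lemma then follows from $H_\alpha=GL_2(\O)_\alpha\cdot Z$. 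Throughout I would invoke Proposition~\ref{Pr:PresB(O)}, which guarantees that $B_2(\O)$ is already generated by $R(a),Z(a)$ ($a\in\O^\times$) and $S$, hence lies inside the subgroup under consideration.

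For part (a), the key preliminary observation is that every $M=\begin{pmatrix}a&b\\c&d\end{pmatrix}\in GL_2(\O)_\alpha$ (with $\alpha>0$) automatically satisfies $a,d\in\O^\times$: since $\det M\in\O^\times$ has valuation $0$ while $v(bc)=v(b)+v(c)\geq v(c)\geq\alpha>0$, the ultrametric inequality applied to $ad=(ad-bc)+bc$ forces $v(ad)=0$, and as $a,d\in\O$ this gives $a,d\in\O^\times$. The strategy is then a single step of Gaussian elimination: left-multiplying $M$ by the lower unipotent $L=\begin{pmatrix}1&0\\-c/a&1\end{pmatrix}$ (note $v(-c/a)=v(c)\geq\alpha$) clears the lower-left entry, and since this preserves the determinant and the $\O$-integrality of the entries, the product $LM$ lands in $B_2(\O)$. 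Thus $M=L^{-1}(LM)$ will lie in the generated subgroup as soon as $L$ does.

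It therefore remains to produce, inside the generated subgroup, every lower unipotent $\begin{pmatrix}1&0\\x&1\end{pmatrix}$ with $v(x)\geq\alpha$. The base case is $L_\alpha:=\begin{pmatrix}1&0\\t_\alpha&1\end{pmatrix}=J_\alpha\cdot\mathrm{diag}(1,-1)$, where $\mathrm{diag}(1,-1)\in B_2(\O)$; conjugating $L_\alpha$ by $R(u)$, $u\in\O^\times$, yields $\begin{pmatrix}1&0\\t_\alpha u^{-1}&1\end{pmatrix}$, so all lower-left entries of valuation exactly $\alpha$ are realised. The main obstacle — the only point requiring a genuine idea — is to reach entries $x$ with $v(x)>\alpha$, which are not themselves conjugates of $J_\alpha$. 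Here I would exploit the fact that the lower unipotents form a group isomorphic to $(t_\alpha\O,+)$ and write such an $x$ as the difference $x=t_\alpha u'-t_\alpha$ of two elements of valuation exactly $\alpha$, with $u':=1+x/t_\alpha\in\O^\times$; the corresponding unipotent then factors as a product of two matrices already shown to lie in the subgroup, completing part (a).

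Finally, part (b) is a routine generation statement for $Z\cong K^\times$: any $a\in K^\times$ with $v(a)=\delta\geq0$ factors as $a=(a\,t_\delta^{-1})\,t_\delta$ with $a\,t_\delta^{-1}\in\O^\times$, so $Z(a)$ is a product of $Z(a\,t_\delta^{-1})$ and $Z(t_\delta)$, while the case $\delta<0$ follows by passing to $a^{-1}$ and taking inverses. Combining (a) and (b) with the easy inclusion yields precisely the claimed generating set for $H_\alpha$.
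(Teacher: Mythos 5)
Your proposal is correct, and it takes a genuinely different --- and more self-contained --- route than the paper. The paper handles the hard inclusion by citing the normal form of Lemma~\ref{Lem:BGB}: for $s\in GL_2(\O)_\alpha$ with lower-left entry $c\neq 0$ and $\beta:=v(c)\geq\alpha$, it places $s\in B_2(\O)\,J_\beta\,B_2(\O)$ and then asserts $J_\beta\in J_\alpha B_2(\O) J_\alpha$, declaring the conclusion obvious; the easy inclusion and the generation of $Z$ by the $Z(a)$ and $Z(t_\gam)$ are treated exactly as you treat them. You instead run one step of $LU$-elimination, $M=L^{-1}(LM)$ with $L=L(-c/a)$ and $LM\in B_2(\O)$, and then manufacture every lower unipotent $L(x)=\left(\begin{smallmatrix}1&0\\x&1\end{smallmatrix}\right)$ with $v(x)\geq\alpha$ out of $L(t_\alpha)=J_\alpha\,\mathrm{diag}(1,-1)$ by $R(u)$-conjugation together with the additive splitting $x=t_\alpha u'-t_\alpha$. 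What your longer argument buys is rigour precisely at the point where the paper is loose: as literally stated, $J_\beta\in J_\alpha B_2(\O) J_\alpha$ fails for $\beta>\alpha$, since $J_\alpha J_\beta J_\alpha=\left(\begin{smallmatrix}1&0\\2t_\alpha-t_\beta&-1\end{smallmatrix}\right)$ is not upper triangular (equivalently, $J_\beta$ does not fix the point $1/t_\alpha=J_\alpha\cdot\infty$); the correct containment is $J_\beta\in J_\alpha B_2(\O) J_\alpha B_2(\O)$, which follows from $J_\beta=J_\alpha L(t_\alpha-t_\beta)$ with $v(t_\alpha-t_\beta)=\alpha$ --- exactly the content of your unipotent step. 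So the paper's proof is shorter on the page because it re-uses Lemma~\ref{Lem:BGB}, while yours avoids the normal form, establishes the unit condition $a,d\in\O^\times$ directly via the ultrametric inequality, and in passing supplies the detail needed to repair the published sketch.
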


\bp
Obviously, all the matrices above are contained in $H_\alpha$, and the matrices $Z(a)$ for $a \in \O^\times - \{1\}$ together with the matrices $Z(t_\gam)$ for $\gam \in \Lam_+ - \{0\}$ generate the subgroup $Z \cong K^\times$. To show that the above matrices are enough to 
generate $H_\alpha$, let $s:= \begin{pmatrix}a&b\\
      c&d
     \end{pmatrix} \in GL_2(\O)_\alpha$, whence $v(a) = v(d) = 0, v(c):= \beta \geq \alpha > 0$. If $c = 0$, that is, if $s \in B_2(\O)$, we are done by Proposition~\ref{Pr:PresB(O)}. Assuming $c \neq 0$, it follows by Lemma~\ref{Lem:BGB}
that $s \in B_2(\O) J_\beta B_2(\O)$, while $J_\beta \in J_\alpha B_2(\O) J_\alpha$. The conclusion of the lemma is now obvious.
\ep
Using Lemma~\ref{Lem:Halpha}, we obtain the following relations of type (I):
\begin{equation}
\label{Eq:relI1}
[T_\alpha, Z(a)] = 1,\quad(\alpha >0,\, a \in \O^\times - \{1\}),
\end{equation}
\begin{equation}
\label{Eq:relI2}
[T_\alpha, Z(t_\gam)] = 1,\quad(\alpha, \gam > 0),
\end{equation}
\begin{equation}
\label{Eq:relI3}
R(a)^{T_\alpha} = R(a)^{- 1} Z(a),\quad(\alpha > 0,\, a \in \O^\times - \{1\}),
\end{equation}
\begin{equation}
\label{Eq:relI4}
S^{T_\alpha} = J_\alpha,\quad(\alpha > 0),
\end{equation}
\begin{equation}
\label{Eq:relI5}
J_\alpha^{T_\alpha} = S,\quad(\alpha > 0).
\end{equation}
Next, by Lemma~\ref{Lem:BGB}, (\ref{Eq:GNormalForm}), it follows that, for all $\alpha, \beta \in \Lam_+ - \{0\}$, the map 
\[
GL_2(\O) \lra [0, \mbox{min}(\alpha, \beta)],\,\begin{pmatrix}a&b\\
                                           c&d
                                          \end{pmatrix} \mapsto \mbox{min}(\alpha, \beta, v(c)),
\] 
induces a bijection $GL_2(\O)_\alpha \sm GL_2(\O)/ GL_2(\O)_\beta \lra [0, {\rm min}(\alpha, \beta)],$ and hence
$Q_{\alpha, \beta}:= \{J_\gam\,:\,0 \leq \gam < {\rm min}(\alpha, \beta)\} \cup \{I_2\}$ is a set of pairwise 
inequivalent representatives for the double cosets of the space
\[
H_\alpha \sm H/ H_\beta \cong GL_2(\O)_\alpha \sm GL_2(\O)/ GL_2(\O)_\beta,\quad (\alpha, \beta \in \Lam_+ - \{0\}).
\]
The following relations of type (II), corresponding to the relations $0 \in \alpha \circ \alpha,\,\alpha + \beta \in
\alpha \circ \beta\,(\alpha, \beta >0)$ in the polygroup $(\Lam_+, \circ)$, 
\begin{equation}
\label{Eq:relII1}
T_\alpha^2 = Z(t_\alpha)\quad(\alpha > 0),
\end{equation}
\begin{equation}
\label{Eq:relII2}
T_\alpha J T_\beta = T_{\alpha + \beta} R(u_{\alpha, \beta})\quad(\alpha, \beta  > 0).
\end{equation}
are immediate, while the rest of the relations of type (II) are consequences of the relations 
(\ref{Eq:relI1})--(\ref{Eq:relII2}) modulo the relations satisfied in $H$. Indeed, assuming $0 < \alpha < \beta$, we 
obtain the relation of type (II), corresponding to $\beta - \alpha \in \alpha \circ \beta$,
\begin{equation}
\label{Eq:relII3}
T_\alpha T_\beta = T_\alpha (T_\alpha J T_{\beta - \alpha} R(u_{\alpha, \beta - \alpha})^{- 1}) = J T_{\beta - \alpha}
R(u_{\alpha, \beta - \alpha})^{- 1} Z(t_\alpha)\quad(\alpha < \beta), 
\end{equation}  
as a consequence of (\ref{Eq:relII2}), (\ref{Eq:relII1}), and (\ref{Eq:relI2}). Similarly, assuming $\alpha > \beta$, we
obtain
\begin{equation}
\label{Eq:relII4}
T_\alpha T_\beta = T_{\alpha - \beta} R(u_{\beta, \alpha - \beta})^{- 1} J Z(t_\beta)\quad(\alpha > \beta).
\end{equation}
Assuming $0 < \gam < {\rm min}(\alpha, \beta)$, we deduce the relation of type (II), corresponding to 
$\alpha + \beta - 2 \gam \in (\alpha \circ \beta) \cap ((\alpha - \gam) \circ (\beta - \gam))$, using successively
(\ref{Eq:relI4}), (\ref{Eq:relII1}), (\ref{Eq:relI2}), (\ref{Eq:relII4}), (\ref{Eq:relII3}), (\ref{Eq:GRelII}),
(\ref{Eq:relI3}), (\ref{Eq:relI5}), (\ref{Eq:relII2}), (\ref{Eq:relI1}):
\begin{align*}
T_\alpha J_\gam T_\beta &= (T_\alpha T_\gam) (T_\gam^{- 1} J_\gam T_\gam) (T_\gam^{- 1} T_\beta)\\
&= (T_\alpha T_\gam) S (T_\gam T_\beta) Z(t_\gam)^{- 1}\\
&= (T_{\alpha - \gam} R(u_{\alpha - \gam, \gam})^{- 1}) (J S J) (T_{\beta - \gam} R(u_{\gam, \beta - \gam})^{- 1}) Z(t_\gam)\\
&= (T_{\alpha - \gam} R(u_{\alpha - \gam, \gam})^{- 1}) (S J S) (T_{\beta - \gam} R(u_{\gam, \beta - \gam})^{- 1}) Z(t_\gam)\\ 
&= (R(u_{\alpha - \gam, \gam})^{- 1} S)^{T_{\alpha - \gam}} (T_{\alpha - \gam} J T_{\beta - \gam}) (S^{T_{\beta - \gam}})
(R(u_{\gam, \beta - \gam})^{- 1} Z(t_\gam))\\
&= (R(u_{\alpha - \gam, \gam}) Z(u_{\alpha - \gam, \gam})^{- 1} J_{\alpha - \gam}) (T_{\alpha + \beta - 2 \gam} 
R(u_{\alpha - \gam, \beta - \gam})) J_{\beta - \gam} R(u_{\gam, \beta - \gam})^{- 1} Z(t_\gam)\\
&= (R(u_{\alpha - \gam, \gam}) J_{\alpha - \gam}) T_{\alpha + \beta - 2 \gam} (R(u_{\alpha - \gam, \beta - \gam}) 
J_{\beta - \gam} R(u_{\gam, \beta - \gam})^{- 1} Z(t_\alpha) Z(t_{\alpha - \gam})^{- 1}.
\end{align*}
Summarising the discussion above, and using the presentation of $GL_2(\O)$ provided by Theorem~\ref{Th:PresGL_2(O)}, we
obtain the following presentation of $GL_2(K)$ over the valued field $(K, v)$.
\begin{te}
\label{The:PresGL_2(K)}
The group $GL_2(K)$ is generated by the matrices
\[
R(a) = \begin{pmatrix}a&0\\
        0&1
       \end{pmatrix}\quad(a \in \O^\times - \{1\}),\quad S = \begin{pmatrix}-1&1\\
0&1
\end{pmatrix},\quad J = \begin{pmatrix}0&- 1\\
- 1&0
\end{pmatrix}, 
\]
and $T_\alpha = \begin{pmatrix}0&- 1\\
                 -t_\alpha&0
                \end{pmatrix}\quad(\alpha \in \Lam_+ - \{0\}),$ subject to the relations {\em (\ref{Eq:DRel1})--(\ref{Eq:DRel3}),
(\ref{Eq:SI})--(\ref{Eq:SII3}), (\ref{Eq:J,JSOrder}), (\ref{Eq:relI1})--(\ref{Eq:relI5})}, and {\em (\ref{Eq:relII2})}, 
where
\[
Z(a) = (J R(a))^2\quad (a \in \O^\times - \{1\}),
\] 
\[
Z(t_\alpha) = T_\alpha^2,\,J_\alpha = J S_\alpha J =
\begin{pmatrix}1&- 1\\
 t_\alpha&0
\end{pmatrix}\quad (\alpha \in \Lam_+ - \{0\}),
\]
and $u_{\alpha, \beta} = \frac{t_\alpha t_\beta}{t_{\alpha + \beta}}$ for $\alpha, \beta \in \Lam_+ - \{0\}$.
\end{te}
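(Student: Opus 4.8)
The plan is to apply Theorem~\ref{Thm:PresGpAct} to the transitive action of $G = GL_2(K)$ on the $\Lam$-tree $\fX$ furnished by Proposition~\ref{Pr:ActionOnX}, taking for the point-stabiliser the subgroup $H = GL_2(\O)\cdot Z$. By Proposition~\ref{Pr:ActionOnX}(3) and Lemma~\ref{Lem:Polygroup}, the double-coset polygroup $E = H\sm G/H$ is isomorphic to $(\Lam_+,\circ)$, whose involution is the identity; the natural orientation is therefore $E_+ = E = \Lam_+$, and I take the section $\si(\alpha) = T_\alpha$, so that $\theta_\alpha = \si(\alpha)^2 = T_\alpha^2 = Z(t_\alpha)$, in agreement with the conventions of \ref{Subsec:PresGpAct}. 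The free group $F$ is then free on $\{x_\alpha : \alpha\in\Lam_+\sm\{0\}\}$, which I identify with the matrices $T_\alpha$, and Theorem~\ref{Thm:PresGpAct} presents $G$ as the free product $H\ast F$ modulo the type (I) and type (II) relations.

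First I would record a presentation of the stabiliser $H$. Since $Z = \{aI_2 : a\in K^\times\}$ is the full scalar centre of $G$, every element of $Z$ is central; moreover $GL_2(\O)\cap Z = Z_2(\O)$, so that $H$ is generated by the generators $R(a), S, J$ of $GL_2(\O)$ together with the scalars $Z(t_\gam) = T_\gam^2$ $(\gam\in\Lam_+\sm\{0\})$, which generate $Z$ modulo $Z_2(\O)$ because $\Lam = \langle\Lam_+\rangle$. Thus $H$ is presented by the relations of Theorem~\ref{Th:PresGL_2(O)} --- that is, (\ref{Eq:DRel1})--(\ref{Eq:DRel3}), (\ref{Eq:SI})--(\ref{Eq:SII3}), (\ref{Eq:J,JSOrder}), with $Z(a) = (JR(a))^2$ --- supplemented by the centrality of the $Z(t_\gam)$ and the product rule $Z(t_\gam)Z(t_\del) = Z(u_{\gam,\del})Z(t_{\gam+\del})$ (the $\O^\times$-correction landing in $Z_2(\O)\subseteq GL_2(\O)$).

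Next I would identify the two families of defining relations supplied by Theorem~\ref{Thm:PresGpAct}. For the type (I) relations, Lemma~\ref{Lem:Halpha} exhibits generators of $H_\alpha = GL_2(\O)_\alpha\cdot Z$, and the explicit form of the partial automorphism $\iota_\alpha$ (identity on $Z$, sending $R(a)\mapsto R(a)^{-1}Z(a)$, and interchanging $S$ with $J_\alpha$) yields exactly (\ref{Eq:relI1})--(\ref{Eq:relI5}). For the type (II) relations, the set $Q_{\alpha,\beta} = \{J_\gam : 0\le\gam<\min(\alpha,\beta)\}\cup\{I_2\}$ of double-coset representatives (read off from Lemma~\ref{Lem:BGB} and the normal form (\ref{Eq:GNormalForm})) contributes one relation per $\gam$; the representative $I_2$ records $T_\alpha^2 = Z(t_\alpha)$ and, for $\alpha\neq\beta$, the fusion $T_\alpha J T_\beta = T_{\alpha+\beta}R(u_{\alpha,\beta})$, i.e.\ (\ref{Eq:relII1}) and (\ref{Eq:relII2}). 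The remaining type (II) relations, namely those indexed by $J_\gam$ with $\gam>0$ and the auxiliary identities (\ref{Eq:relII3}), (\ref{Eq:relII4}), are shown in the displayed computations preceding the theorem to follow from (\ref{Eq:relII2}) together with the $GL_2(\O)$-relations and (\ref{Eq:GRelII}); this reduction is precisely what permits the single relation (\ref{Eq:relII2}) to stand in for the whole family.

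The main labour lies in two places. First, I must verify that the auxiliary relations required to present $H$ --- centrality of the scalars $Z(t_\gam)$ and their multiplicativity --- are genuine consequences of the listed relations rather than extra hypotheses; the delicate case is $[Z(t_\gam), J] = 1$, since $J\notin H_\gam$ means this commutation is not a type (I) relation and must instead be extracted from (\ref{Eq:relII2}), (\ref{Eq:relII3}), and the relations (\ref{Eq:relI1})--(\ref{Eq:relI3}) governing how $T_\gam$ acts, using $T_\gam^2 = Z(t_\gam)$. Secondly, I must carry out (or cite from the displayed calculations) the reduction of the full double-coset family of type (II) relations to (\ref{Eq:relII2}), which is by far the longest computation. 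Once both points are settled, assembling the $H$-relations with (\ref{Eq:relI1})--(\ref{Eq:relI5}) and (\ref{Eq:relII2}), and substituting the abbreviations $Z(a) = (JR(a))^2$, $Z(t_\alpha) = T_\alpha^2$, and $J_\alpha = JS_\alpha J$, delivers exactly the presentation claimed.
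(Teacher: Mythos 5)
Your proposal reproduces the paper's own proof essentially step for step: the transitive action on the $\Lam$-tree $\fX$ with stabiliser $H = GL_2(\O)\cdot Z$, the identification of the double-coset polygroup with $(\Lam_+,\circ)$ via Lemma~\ref{Lem:Polygroup}, the section $\si(\alpha) = T_\alpha$ with $\theta_\alpha = T_\alpha^2 = Z(t_\alpha)$, the type (I) relations (\ref{Eq:relI1})--(\ref{Eq:relI5}) read off from Lemma~\ref{Lem:Halpha}, and the reduction of the full family of type (II) relations indexed by $Q_{\alpha,\beta} = \{J_\gam : 0\leq\gam<\min(\alpha,\beta)\}\cup\{I_2\}$ to (\ref{Eq:relII1}) and (\ref{Eq:relII2}) via the displayed computations, finally assembled with the presentation of $GL_2(\O)$ from Theorem~\ref{Th:PresGL_2(O)}. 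The one place where you go beyond the paper --- insisting that the defining relations of $H$ itself, namely centrality of the scalars $Z(t_\gam) = T_\gam^2$ (with $[Z(t_\gam), J] = 1$ the genuinely delicate case, since $J\notin H_\gam$ and so this is covered by neither a type (I) relation nor an obvious manipulation of (\ref{Eq:relII2})) and the product rule $Z(t_\gam)Z(t_\del) = Z(u_{\gam,\del})Z(t_{\gam+\del})$, be exhibited as consequences of the listed relations --- is a verification the paper passes over in silence (its Remark~\ref{Rems:PresGL_2(K)}(i), deriving the presentation from Corollary~\ref{Cor:GL_2(K)} via the substitutions $R(a t_\alpha) = R(a) J T_\alpha$, offers an alternative route around it), so flagging this as outstanding labour is a point of additional care on your part rather than a deviation from, or gap relative to, the paper's argument.
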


{\footnotesize
\begin{rems} 
\label{Rems:PresGL_2(K)} \em
(i) The presentation given by Theorem~\ref{The:PresGL_2(K)} can be also obtained directly from Corollary~\ref{Cor:GL_2(K)}
using the substitutions $R(a t_\alpha) = R(a) J T_\alpha$ for $a \in \O^\times$ and $\alpha \in \Lam_+ - \{0\}$.

(ii) In the presentation provided by Theorem~\ref{The:PresGL_2(K)}, we may replace the relations (\ref{Eq:SII2}) and
(\ref{Eq:relI4}) with 
\begin{equation}
\label{Eq:relI4'}
[S, R(1 - a t_\alpha)]^{R(a)} R(- 1) = S^{T_\alpha J}\quad(\alpha > 0, a \in \O^\times),
\end{equation}
and the relations (\ref{Eq:relI5}) with
\begin{equation}
\label{Eq:relI5'}
[S, T_\alpha^2] = 1\quad(\alpha > 0),
\end{equation}
with the rest of the relations remaining unchanged.
\end{rems}}

The action (\ref{Eq:ActionOnX}) induces a transitive faithful action of $PGL_2(K) = GL_2(K)/Z$ on the 
$\Lam$-tree $\fX$. Using the presentation of $GL_2(K)$ provided by Theorem~\ref{The:PresGL_2(K)} and 
Remarks~\ref{Rems:PresGL_2(K)}(ii), we obtain the 
following presentation of $PGL_2(K)$ over the valued field $(K, v)$.
\begin{co}
\label{Cor:PresPGL2}
The group $PGL_2(K)$ is generated by the $\Lam$-isometries of the $\Lam$-tree $\fX$
\begin{align*}
\wh{R}(a):&\, [z]_\gam \mapsto [a z]_\gam\,\,\,(a \in \O^\times - \{1\}),\\
\wh{S}:&\, [z]_\gam \mapsto [1 - z]_\gam,\\
\wh{J}:&\, [z]_\gam \mapsto \begin{cases}[0]_{- \gam}&\mbox{if}\,\, [z]_\gam = [0]_\gam\\
                          [\frac{1}{z}]_{\gam - 2 v(z)}&\mbox{if}\,\, [z]_\gam \neq [0]_\gam,
                         \end{cases}\\
\wh{T}_\alpha:&\,[z]_\gam \mapsto \begin{cases}[0]_{- \alpha - \gam}&\mbox{if}\,\, [z]_\gam = [0]_\gam\\
                                   [\frac{1}{t_\alpha z}]_{- \alpha + \gam - 2 v(z)}&\mbox{if}\,\,[z]_\gam \neq [0]_\gam
                                  \end{cases}\,\,(\alpha \in \Lam_+ - \{0\}),
\end{align*}
subject to the following relations
\begin{align}
\wh{R}(ab) = \wh{R}(a) \wh{R}(b)\quad(a, b \in \O^\times - \{1\})\,; \wh{R}(1):= 1,\, \label{a}\\
\wh{S}^2 = \wh{J}^2 = (\wh{J} \wh{S})^3 = 1,\\
(\wh{J} \wh{R}(a))^2 = 1\quad(a \in \O^\times - \{1\}),\\
[\wh{S}, \wh{R}(1 - a)] = \wh{R}(a) \wh{S} \wh{R}(- a)^{- 1}\quad(a \in \mathfrak{k}),\, \label{b}\\
[\wh{S}, \wh{R}(1 - a t_\alpha)]^{\wh{R}(a)} \wh{R}(- 1) = \wh{S}^{\wh{T}_\alpha \wh{J}}\quad (\alpha \in 
\Lam_+ - \{0\}, a \in \O^\times),\,\label{c}\\
(\wh{T}_\alpha \wh{R}(a))^2 = 1\quad(\alpha \in \Lam_+ - \{0\}, a \in \O^\times),\,\label{d}\\
\wh{T}_\alpha \wh{J} \wh{T}_\beta = \wh{T}_{\alpha + \beta} \wh{R}(u_{\alpha, \beta})\quad(\alpha, \beta \in 
\Lam_+ - \{0\}),\,\label{e}
\end{align}
where $u_{\alpha, \beta}:= \frac{t_\alpha t_\beta}{t_{\alpha + \beta}}$ for $\alpha, \beta \in \Lam_+ - \{0\}$, and 
$\mathfrak{k} \sse \O^\times - (1 + \fm)$ is a set of representatives modulo the maximal ideal $\fm$ of the valuation 
ring $\O$ for the elements in $k^\times - \{1\}$.                        
\end{co}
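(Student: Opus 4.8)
The plan is to realise $PGL_2(K)$ as the quotient $GL_2(K)/Z$ and to read off its presentation from the presentation of $GL_2(K)$ supplied by Theorem~\ref{The:PresGL_2(K)}, invoking the standard fact that a presentation of a quotient $G/N$ is obtained from one of $G$ by adjoining relators that normally generate $N$. By Proposition~\ref{Pr:ActionOnX}(1) the action (\ref{Eq:ActionOnX}) of $GL_2(K)$ on the $\Lam$-tree $\fX$ is faithful modulo $Z = \{aI_2 : a\in K^\times\}$, so $PGL_2(K)$ is identified with its image in the group of $\Lam$-isometries of $\fX$. First I would verify, by direct substitution into (\ref{Eq:ActionOnX}), that $R(a), S, J, T_\alpha$ act on $\fX$ exactly by the isometries $\wh R(a), \wh S, \wh J, \wh T_\alpha$ displayed in the statement; these computations are routine (for $R(a)$ and $S$ the lower-left entry vanishes, so the index $\gam$ is preserved, while for $J$ and $T_\alpha$ one tracks the index using Lemma~\ref{Lem:Beta}(iii)).

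The second step is to describe $Z$ in terms of the generators of Theorem~\ref{The:PresGL_2(K)}. Since $Z\cong K^\times$, and $K^\times$ is generated by $\O^\times$ together with the elements $t_\alpha$ of value $\alpha$ as $\alpha$ ranges over $\Lam_+ - \{0\}$, the central (hence normal) subgroup $Z$ is generated by $Z(a) = (JR(a))^2$ for $a\in\O^\times - \{1\}$ and $Z(t_\alpha) = T_\alpha^2$ for $\alpha>0$. Thus a presentation of $PGL_2(K)$ is obtained by adjoining the relators $Z(a)=1$ and $Z(t_\alpha)=1$ to the presentation of $GL_2(K)$ and performing Tietze transformations to eliminate the now-redundant symbols $Z(a), Z(t_\alpha)$.

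The third step carries the bulk of the bookkeeping: tracking what each relation of Theorem~\ref{The:PresGL_2(K)} becomes after setting $Z=1$. I would first adopt the variant relations of Remarks~\ref{Rems:PresGL_2(K)}(ii), replacing (\ref{Eq:SII2}), (\ref{Eq:relI4}) by (\ref{Eq:relI4'}) and (\ref{Eq:relI5}) by (\ref{Eq:relI5'}). Then the relations involving a central symbol alone, namely (\ref{Eq:DRel2}), (\ref{Eq:DRel3}), (\ref{Eq:SI}), (\ref{Eq:relI1}), (\ref{Eq:relI2}), and (\ref{Eq:relI5'}), collapse to the trivial relation and are discarded; (\ref{Eq:DRel1}) gives (\ref{a}); (\ref{Eq:SII1}) together with (\ref{Eq:J,JSOrder}) gives $\wh S^2 = \wh J^2 = (\wh J\wh S)^3 = 1$; the defining relation $Z(a)=(JR(a))^2$, once $Z(a)=1$, yields $(\wh J\wh R(a))^2 = 1$; (\ref{Eq:SII3}) rearranges into the commutator form (\ref{b}); the variant (\ref{Eq:relI4'}) passes verbatim to (\ref{c}); and (\ref{Eq:relII2}) gives (\ref{e}).

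The one genuinely non-formal point is relation (\ref{d}). In the quotient the pair consisting of (\ref{Eq:relII1}), which now reads $\wh T_\alpha^2 = 1$, and (\ref{Eq:relI3}), which now reads $\wh R(a)^{\wh T_\alpha} = \wh R(a)^{-1}$, must be shown equivalent to the single family $(\wh T_\alpha\wh R(a))^2 = 1$ with $a\in\O^\times - \{1\}$. The forward implication is immediate. For the converse one recovers $\wh T_\alpha^2 = 1$ by comparing the relations $(\wh T_\alpha\wh R(a))^2 = 1$, $(\wh T_\alpha\wh R(b))^2 = 1$ and $(\wh T_\alpha\wh R(ab))^2 = 1$ and using the multiplicativity (\ref{a}), which forces $\wh T_\alpha^2$ to be trivial, whereupon the conjugation rule follows as well. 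I expect this equivalence, together with the care needed when the residue field is so small that few admissible pairs $a,b$ exist, to be the main obstacle; everything else is the routine verification that the stated isometries satisfy the listed relations and, conversely, that (\ref{a})--(\ref{e}) recover every quotient relation, after which the comparison of presentations completes the proof.
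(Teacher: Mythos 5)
Your proposal is correct and takes essentially the same route as the paper, which obtains Corollary~\ref{Cor:PresPGL2} exactly by passing from the presentation of $GL_2(K)$ in Theorem~\ref{The:PresGL_2(K)} (in the variant form of Remarks~\ref{Rems:PresGL_2(K)}(ii)) to the central quotient $PGL_2(K) = GL_2(K)/Z$ and tracking the relations after setting $Z(a) = Z(t_\alpha) = 1$. One simplification: since relation (\ref{d}) is stated for all $a \in \O^\times$ with the convention $\wh{R}(1) := 1$, the case $a = 1$ already gives $\wh{T}_\alpha^2 = 1$ directly, so the equivalence you single out as the main obstacle is immediate (your derivation via the triples $a$, $b$, $ab$ is nonetheless valid).
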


{\footnotesize 
\begin{rem} \em
The presentation given by Corollary~\ref{Cor:PresPGL2} can be obtained directly from Corollary~\ref{Co:PresPGL_2(K)}
using the substitutions 
\[
\wt{R}(a) \mapsto \wh{R}(a)\,(a \in \O^\times - \{1\}),\, \wh{R}(a t_\alpha) \mapsto \wh{R}(a) \wh{J} \wh{T}_\alpha\,
(a \in \O^\times,\, \alpha \in \Lam_+ - \{0\}),\, \wt{S} \mapsto \wh{S},\, \wt{J} \mapsto \wh{J}.
\]
\end{rem}}

In particular, assuming that the valuation $v$ is discrete of rank $1$, so that $\Lam \cong \Z$, we obtain the following.

\begin{co}
If the valuation $v$ is discrete of rank $1,$ and $t:= t_1$ is a local uniformizer, then the group $PGL_2(K)$ is
generated by the isometries $\wh{R}(a)\,(a \in \O^\times - \{1\}),$ $\wh{S}, \wh{J}, \wh{T}:= \wh{T}_1$ of the $\Z$-tree
$\fX$, subject to the relations {\em (\ref{a})-(\ref{b})} together with the relations
\begin{align}
[\wh{S}, \wh{R}(1 - at)]^{\wh{R}(a)} \wh{R}(- 1) = \wh{S}^{\wh{T} \wh{J}}\quad(a \in \O^\times),\,\label{c'}\\
(\wh{T} \wh{R}(a))^2 = 1\quad(a \in \O^\times)\,\label{d'}.
\end{align} 
\end{co}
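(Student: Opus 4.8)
The plan is to derive this corollary from Corollary~\ref{Cor:PresPGL2} by a chain of Tietze transformations, exploiting that a discrete rank-$1$ value group is cyclic. Throughout I take $\Lam\cong\Z$ and normalise $t_\alpha:=t^\alpha$ for every $\alpha\in\Lam_+=\{0,1,2,\dots\}$, so that $u_{\alpha,\beta}=t^\alpha t^\beta/t^{\alpha+\beta}=1$ and hence $\wh{R}(u_{\alpha,\beta})=1$ throughout. Under this choice relation (\ref{e}) collapses to $\wh{T}_\alpha\wh{J}\wh{T}_\beta=\wh{T}_{\alpha+\beta}$, and its instances with $\beta=1$ read $\wh{T}_{\alpha+1}=\wh{T}_\alpha\wh{J}\wh{T}$. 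These last relations let me eliminate, one after another, the generators $\wh{T}_\alpha$ with $\alpha\geq2$, each being replaced by the word $(\wh{T}\wh{J})^{\alpha-1}\wh{T}$ in the three surviving generators $\wh{T}=\wh{T}_1$, $\wh{J}$, $\wh{S}$; this is a legitimate Tietze move. What then remains is to show that, after this substitution, the full families (\ref{c}), (\ref{d}), (\ref{e}) become consequences of (\ref{a})--(\ref{b}) together with their first instances (\ref{c'})\,(${}=$\,(\ref{c}) for $\alpha=1$) and (\ref{d'})\,(${}=$\,(\ref{d}) for $\alpha=1$), so that they may be deleted.

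The family (\ref{e}) offers no resistance: substituting $\wh{T}_\gam=(\wh{T}\wh{J})^{\gam-1}\wh{T}$ turns $\wh{T}_\alpha\wh{J}\wh{T}_\beta$ into $(\wh{T}\wh{J})^{\alpha+\beta-1}\wh{T}=\wh{T}_{\alpha+\beta}$ as a formal identity in the free group, so every instance holds automatically. The observation driving the other two families is this: the relation $(\wh{J}\wh{R}(a))^2=1$ from (\ref{a})--(\ref{b}) together with $\wh{J}^2=1$ gives $\wh{J}\wh{R}(a)\wh{J}=\wh{R}(a)^{-1}$, and (\ref{d'}) likewise gives $\wh{T}\wh{R}(a)\wh{T}=\wh{R}(a)^{-1}$ (with $\wh{T}^2=1$ the case $a=1$); thus \emph{both} $\wh{J}$ and $\wh{T}$ invert every $\wh{R}(a)$, whence the product $\wh{T}\wh{J}$, and a fortiori $(\wh{T}\wh{J})^\gam$, centralises every $\wh{R}(a)$. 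This at once yields (\ref{d}): a short word computation shows $\wh{T}_\alpha$ is an involution, and then
\[
\wh{T}_\alpha\wh{R}(a)\wh{T}_\alpha=(\wh{T}\wh{J})^{\alpha-1}\,\wh{T}\wh{R}(a)\wh{T}\,(\wh{J}\wh{T})^{\alpha-1}=(\wh{T}\wh{J})^{\alpha-1}\wh{R}(a)^{-1}(\wh{T}\wh{J})^{-(\alpha-1)}=\wh{R}(a)^{-1},
\]
so $(\wh{T}_\alpha\wh{R}(a))^2=1$ for every $\alpha\geq1$.

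The crux is the reduction of (\ref{c}) to (\ref{c'}), which I would carry out by induction on $\alpha$, following the pattern of Remarks~\ref{Rem:B(O)}(ii). On the left-hand side I use the factorisation $1-at^\alpha=(1-at)\bigl(1-\frac{a(t^{\alpha-1}-1)}{1-at}\,t\bigr)$ and the commutator identity $[\wh{S},XY]=[\wh{S},X]\,X[\wh{S},Y]X^{-1}$ with $X=\wh{R}(1-at)$; since (\ref{c'}) expresses every commutator of the shape $[\wh{S},\wh{R}(1-bt)]$ as an $\wh{R}(b)$-conjugate of the fixed element $\wh{S}^{\wh{T}\wh{J}}\wh{R}(-1)$, the multiplicativity (\ref{a}) of $\wh{R}$ then rewrites the left-hand side as an $\wh{R}(a)$-conjugate of a level-$(\alpha-1)$ expression. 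Simultaneously I reduce the right-hand side $\wh{S}^{\wh{T}_\alpha\wh{J}}$ by one level via $\wh{T}_\alpha\wh{J}=(\wh{T}_{\alpha-1}\wh{J})(\wh{T}\wh{J})$, using the fact that conjugation by $\wh{T}\wh{J}$ passes through the $\wh{R}$-factors (it centralises them) and acts on $\wh{S}$ only, where it is governed again by (\ref{c'}).

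The main obstacle is precisely to keep these two simultaneous level reductions consistent, i.e.\ to verify that the level-$(\alpha-1)$ expression produced on the left by the factorisation of $1-at^\alpha$ coincides with the one produced on the right by the recursion for $\wh{T}_\alpha\wh{J}$; this matching is what feeds the inductive step $\alpha-1\rightsquigarrow\alpha$. Once checked---an entirely routine, if tedious, manipulation inside the subgroup $\la\wh{R}(a),\wh{S}\ra$ using only (\ref{a}) and (\ref{c'})---all of (\ref{c}), (\ref{d}), (\ref{e}) are seen to follow from (\ref{a})--(\ref{b}), (\ref{c'}), (\ref{d'}), and may be deleted. The resulting presentation is Tietze-equivalent to the specialisation of Corollary~\ref{Cor:PresPGL2}, and hence presents $PGL_2(K)$, which is the assertion.
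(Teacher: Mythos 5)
Your outline coincides with the paper's proof in all essentials: generation via $\wh{T}_n = (\wh{T}\wh{J})^{n-1}\wh{T}$ extracted from (\ref{e}); the relations (\ref{d}) deduced from the fact that $\wh{J}$ and $\wh{T}$ each invert every $\wh{R}(a)$, so that each $\wh{T}_n$ is an involution and $\wh{T}\wh{J}$ centralises all the $\wh{R}(a)$; and an induction in the spirit of Remarks~\ref{Rem:B(O)}(ii) for (\ref{c}). The one real divergence is at the point you yourself flag as the main obstacle --- keeping your two simultaneous level reductions (of $[\wh{S},\wh{R}(1-at^\alpha)]$ on the left and of $\wh{S}^{\wh{T}_\alpha\wh{J}}$ on the right) consistent --- and this is exactly what the paper avoids by \emph{decoupling} the parameters $a$ and $n$. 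Concretely: comparing (\ref{c'}) for general $a$ with its instance $a=1$ gives the $a$-independence statement $[\wh{S},\wh{R}(1-at)]^{\wh{R}(a)} = [\wh{S},\wh{R}(1-t)]$, and the computation of Remarks~\ref{Rem:B(O)}(ii) (your factorisation of $1-at^n$, carried out once and for all at this level) promotes it to $[\wh{S},\wh{R}(1-at^n)]^{\wh{R}(a)} = [\wh{S},\wh{R}(1-t^n)]$ for all $n\geq1$, so the $a$-dependence is dead before any $\wh{T}$'s enter. What remains is the single-variable identity $[\wh{S},\wh{R}(1-t^n)] = \wh{S}^{(\wh{T}\wh{J})^n}\wh{R}(-1)$, proved by an induction on $n$ whose step is short: writing $1-t^{n+1} = (1-t^n)\bigl(1-\tfrac{t-1}{1-t^n}\,t^n\bigr)$ and applying the $a$-independence with $a = \tfrac{t-1}{1-t^n}\in\O^\times$ yields
\[
[\wh{S},\wh{R}(1-t^{n+1})] = [\wh{S},\wh{R}(1-t^n)]\,\wh{R}(t-1)\,[\wh{S},\wh{R}(1-t^n)]\,\wh{R}(t-1)^{-1} = [\wh{S},\wh{R}(1-t)]^{(\wh{T}\wh{J})^n},
\]
which equals $\wh{S}^{(\wh{T}\wh{J})^{n+1}}\wh{R}(-1)$ by the instance $a=1$ of (\ref{c'}) and the centralisation of the $\wh{R}$'s by $\wh{T}\wh{J}$; here the right-hand side never needs to be unwound level by level at all, since $\wh{T}_n\wh{J} = (\wh{T}\wh{J})^n$.

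So the ``matching'' you defer as an ``entirely routine, if tedious'' check is the only part of your proposal asserted rather than proved, and in your simultaneous formulation the bookkeeping is genuinely heavier than you suggest, because the level-$(\alpha-1)$ datum appearing on the left carries the parameter $a$ while the one on the right does not. The check does close --- the paper's reorganisation shows how --- so your argument is correct modulo executing that step, preferably after first eliminating $a$ as above.
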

\bp
Taking $t_n:= t^n$ for $n \in \N$, we obtain $\wh{T}_n = \wh{T}_{n - 1} \wh{J} \wh{T} = (\wh{T} \wh{J})^{n - 1} \wh{T}$
for $n \geq 2$ by (\ref{e}), thus $PGL_2(K)$ is generated by $\wh{R}(a)\,(a \in \O^\times - \{1\}), \wh{S}, \wh{J},$ 
and $\wh{T}$. It remains to show that the relations (\ref{c}) and (\ref{d}) are consequences of the relations 
(\ref{a})-(\ref{b}), (\ref{c'}), and (\ref{d'}). 

Since $\wh{J}^2 = \wh{T}^2 = 1$, it follows that $\wh{T}_n = (\wh{T} \wh{J})^{n - 1} \wh{T} = \wh{T} (\wh{J} \wh{T})^{n - 1} =
\wh{T}_n^{- 1}$, that is, $\wh{T}_n^2 = 1$ for all $n \geq 1$. As $\wh{J} \wh{R}(a) \wh{J} = \wh{T} \wh{R}(a) \wh{T} =
\wh{R}(a)^{- 1}$, we deduce that $(\wh{T}_n \wh{R}(a))^2 = 1$ for all $n \geq 1, a \in \O^\times$, that is, (\ref{d}) is satisfied.

To check (\ref{c}), note that (\ref{c'}) implies $[\wh{S}, \wh{R}(1 - a t)]^{\wh{R}(a)} = 
[\wh{S}, \wh{R}(1 - t)]\,(a \in \O^\times)$, and hence $[\wh{S}, \wh{R}(1 - a t^n)]^{\wh{R}(a)} = 
[\wh{S}, \wh{R}(1 - t^n)]\,(a \in \O^\times)$ for all $n \geq 1$, according to Part~(ii) of Remarks~\ref{Rem:B(O)}. Thus, it 
remains to prove that the identity 
\[
[\wh{S}, \wh{R}(1 - t^n)] = \wh{S}^{(\wh{T} \wh{J})^n} \wh{R}(- 1)
\] 
holds for all $n \geq 1$. We obtain by induction
\begin{align*}
[\wh{S}, \wh{R}(1 - t^{n + 1})] &= [\wh{S}, \wh{R}(1 - t^n)] \wh{R}(1 - t^n)[\wh{S}, \wh{R}(1 - \frac{t - 1}{1 - t^n} t^n)] \wh{R}(1 - t^n)^{- 1}\\
&= [\wh{S}, \wh{R}(1 - t^n)] \wh{R}(t - 1) [\wh{S}, \wh{R}(1 - t^n)] \wh{R}(t - 1)^{- 1} \\
&= \wh{S}^{(\wh{T} \wh{J})^n} \wh{R}(1 - t) \wh{S}^{(\wh{T} \wh{J})^n} \wh{R}(1 - t)^{- 1}\\
&= [\wh{S}, \wh{R}(1 - t)]^{(\wh{T} \wh{J})^n}\\
&= (\wh{S}^{\wh{T} \wh{J}} \wh{R}(- 1))^{(\wh{T} \wh{J})^n}\\
&= \wh{S}^{(\wh{T} \wh{J})^{n + 1}} \wh{R}(- 1), 
\end{align*}
as desired.
\ep

\section{$SL_3$ over fields}
\label{Sec:SL3}

Let $K$ be a field, and let $G=SL_3(K)$ be the $3$-dimensional special linear group over $K$. We look for a presentation
of $G$ induced by its natural action on the projective plane $\mathbb{P}^2(K)$. 
Let ${\bf 0}:= [0,0,1]^t$ be the origin of the affine plane
\[
\mathbb{A}^2(K)_{x,y} = \big\{[a,b,1]^t:\, a,b\in K\big\}.
\]
The stabiliser $G_{\bf 0}$ consists of all matrices $A=(a_{i,j})\in G$ with $a_{1,3}=a_{2,3}=0$. 
Since the action is $2$-transitive, the space $G_{\bf 0}\backslash G/G_{\bf 0}$ contains exactly one non-trivial double
coset $G_{\bf 0} A G_{\bf 0}$ for some (for all) $A \in G \setminus G_{\bf 0}$. As $G \setminus G_{\bf 0}$ contains 
involutions, we may choose one of them, say
\begin{equation}
\label{Eq:T}
T:= \begin{pmatrix}0&0&-1\\0&-1&0\\-1&0&0\end{pmatrix},
\end{equation}
and use Corollary~\ref{Cor:Main2Trans} to obtain a presentation of $G$ in terms of $G_{\bf 0}$ and the  
extra generator $T$.

The involution $T$ moves the point $\bf 0$ to the origin ${\bf 0'}:= [1, 0, 0]^t$ of the affine plane 
$\mathbb{A}^2(K)_{y,z}$, whose stabilizer $G_{\bf 0'} = T G_{\bf 0} T$ consists of all matrices $A \in G$ with 
$a_{2, 1} = a_{3, 1} = 0$. Consequently, the intersection $G_{\bf 0, 0'} = G_{\bf 0} \cap G_{\bf 0'}$ consists of
those matrices $A \in G$ such that $a_{1, 3} = a_{2, 1} = a_{2, 3} = a_{3, 1} = 0$. 

The isomorphism $G_{\bf 0} \rightarrow G_{\bf 0'},\,A \mapsto T A T$ induces the involutive automorphism $\omega_T$ 
of $G_{\bf 0, 0'}$, defined by $\omega_T(A) = A'$ with $a'_{i, j} = a_{\tau(i), \tau(j)}$, where $\tau$ is the 
transposition $(1, 3)$. Consequently, we obtain the relations of type (I)
\begin{equation}
\label{Eq:G/G0RelI}
T A = \omega_T(A) T,
\end{equation}
where $A$ ranges over a set of generators of $G_{\bf 0, 0'}$.

To obtain the relations of type (II) we have to choose representatives in $G_{\bf 0}$ for the cosets of 
$G_{\bf 0}/G_{\bf 0, 0'}$ and the double cosets of $G_{\bf 0, 0'}\backslash G_{\bf 0}/G_{\bf 0, 0'}$. Finally, to get an explicit presentation of $G = SL_3(K)$, it remains to describe effectively the stabilizer $G_{\bf 0}$ and the double stabilizer $G_{\bf 0, 0'}$, as well as all required additional structures; this task will be achieved step by step in 
the following, using Theorem~\ref{Thm:PresGpAct} and its Corollary~\ref{Cor:Main2Trans} as basic tools . 

\subsection{The structure of the stabilizers $G_{\bf 0}$ and $G_{\bf 0, 0'}$}
\label{SubSubSec:G0}
The underlying abelian group of the $2$-dimensional vector space $\mathfrak{V}:= K e_1 \oplus K e_2$ is identified with 
a normal subgroup $\hat{\mathfrak{V}}$ of $G_{\bf 0}$ via the embedding $\mathfrak{V} \rightarrow G_{\bf 0}$ defined by 
$v = a_1 e_ 1 + a_2 e_2 \mapsto \hat{v} = {\begin{pmatrix}1&0&0\\0&1&0\\a_1&a_2&1\end{pmatrix}}$, while the 
group $GL_2(K)$ is identified with a complement $\widehat{GL_2(K)}$ of $\hat{\mathfrak{V}}$ via the embedding 
$GL_2(K)\rightarrow G_{\bf 0}$ given by 
$A \mapsto \hat{A} = {\begin{pmatrix}{}&{}&0\\{}&A&0\\0&0&\vert A\vert^{-1}\end{pmatrix}}$; in particular,
the matrices $Z(a), R(a)\,(a \in K^\times), S$, and $J$ in $GL_2(K)$, as defined in Section~\ref{Sec:GL2}, are lifted to
\begin{multline*}
\hat{Z}(a) = \begin{pmatrix}a&0&0\\0&a&0\\0&0&a^{-2}\end{pmatrix},\,\, \hat{R}(a) = 
\begin{pmatrix}a&0&0\\0&1&0\\0&0&a^{-1}\end{pmatrix}\,\,\,(a \in K^\times),\\ 
\hat{S} = \begin{pmatrix}-1&1&0\\0&1&0\\0&0&-1\end{pmatrix},\,\mbox{ and }\hat{J} = 
\begin{pmatrix}0&-1&0\\-1&0&0\\0&0&-1\end{pmatrix}.
\end{multline*} 
Thus, the group $G_{\bf 0}$ is identified with the semi-direct product $\tilde{G}_{\bf 0} = \mathfrak{V} \ltimes GL_2(K)$ 
induced by the action from the left of $GL_2(K)$ on the vector space $\mathfrak{V} = Ke_1 \oplus K e_2$ given by 
the $GL_2(K)$-endomorphism
\[
GL_2(K) \ni A \mapsto \vert A\vert^{-2}(J R(-1) A R(-1) J) = \begin{pmatrix} \frac{a_{2,2}}{\vert A\vert^2}& \frac{-a_{2,1}}{\vert A\vert^2}\\[2mm] \frac{-a_{1,2}}{\vert A\vert^2}& \frac{a_{1,1}}{\vert A\vert^2}\end{pmatrix}.
\]
The kernel of the action is $\{\zeta I_2\,:\,\zeta^3 = 1\}$, isomorphic with the cyclic group of order $3$ provided
the field $K$ contains a primitive root of order $3$ of the unity; otherwise, the action is faithfull; this happens, in particular, 
if ${\rm char}\,K = 3$, and also for $K$ formally real, or for $K = GF(q)$ finite with 
$q \equiv\,- 1\,{\rm mod}\,3$. Note also that there are only two $GL_2(K)$-orbits of $\mathfrak{V}$: the singleton 
$\{0\}$ and its complementary set.

The action above induces by restriction an action of the Borel subgroup $B = B_2(K)$ of upper-triangular matrices of $GL_2(K)$ on
the $1$-dimensional subspace $K e_2$ of $\mathfrak{V}$ given by $(A, a e_2) \mapsto \frac{a}{a_{1, 1} a_{2, 2}^2} e_2$. The kernel of
the action, as well as the stabilizer of $e_2$ under the action of $GL_2(K)$, is the normal subgroup $B'$ of $B$ 
consisting of all matrices $A \in B$ such that $a_{1, 1} a_{2, 2}^2 = 1$. Consequently, the double stabilizer 
$G_{\bf 0, 0'} \subseteq G_{\bf 0}$ is identified with the semi-direct product $\tilde{G}_{\bf 0, 0'} = (K e_2)
\ltimes B$ induced by the action above.

\subsection{A presentation for the double stabiliser $G_{\bf 0, 0'}$}
\label{SubSubSec:G00/B}
For $K \cong GF(2)$, we have $G_{\bf 0, 0'} \cong C_2 \times C_2$, the Klein $4$-group; thus, we may
assume that $K \not \cong GF(2)$. Although a presentation of $G_{\bf 0, 0'} \cong \tilde{G}_{\bf 0, 0'}$ 
could easily be obtained using its semi-direct product structure given at the end of Section~\ref{SubSubSec:G0}, we prefer to apply Corollary~\ref{Cor:Main2Trans} 
using the fact that $\tilde{G}_{\bf 0, 0'}$ is a group extension of the commutative polygroup $\fP_2$ by the 
Borel group $B = B_2(K)$, whose presentation is given in Corollary~\ref{Cor:PresB(K)}. Indeed, $B$ is not normal in 
$\tilde{G}_{\bf 0, 0'}$, and the space $B\backslash \tilde{G}_{\bf 0, 0'}/B$ contains only one non-trivial 
double coset $B g B$ for some (for all) $g \in \tilde{G}_{\bf 0, 0'} \setminus B$. As 
$\tilde{G}_{\bf 0, 0'} \setminus B$ contains involutions, we may choose one of them, say 
$W:= (e_2) \cdot R(- 1) = R(- 1) \cdot (- e_2)$; note that $W = e_2$ provided ${\rm char}\,K = 2$. Thus $G_{\bf 0, 0'}$
is generated by its subgroup $\hat{B} \cong B$ plus one extra generator 
\[
\hat{W}:= \begin{pmatrix}-1&0&0\\0&1&0\\0&1&-1\end{pmatrix},
\]
the image of $W$ via the isomorphism $\tilde{G}_{\bf 0, 0'} \rightarrow G_{\bf 0, 0'}$. It remains to deduce the
relations of type (I) and (II).

It follows that $B \cap W B W = B'$, while the  automorphism (of order $2$) $\omega_W$ of $B'$, $A \mapsto W A W$, sends
a matrix $A \in B'$ to the matrix $A' \in B'$ with $a'_{i, i} = a_{i, i}$ for $i = 1, 2$, and $a'_{1, 2} = - a_{1, 2}$. 
To obtain the type (I) relations, we have to choose a set of generators of $B'$. Note that the 
monomorphisms $K^+ \rightarrow B'$, $a\mapsto E_{1,2}(a)=${\tiny $\begin{pmatrix}1&a\\0&1\end{pmatrix}$}, and 
$K^\times \rightarrow B'$, $a\mapsto R(a)^3Z(a)^{-1}=${\tiny $\begin{pmatrix}a^2&0\\0&a^{-1}\end{pmatrix}$}, induce 
an isomorphism from the semi-direct product $K^+\ltimes K^\times$ onto $B'$, where the action of $K^\times$ on 
$K^+$ is given by $(a,b)\mapsto a^3b$; in particular, the matrices $\hat{R}(a)^3 \hat{Z}(a)^{-1}$ for 
$a\in K^\times\setminus\{1\}$ together with the elementary matrices 
$\hat{E}_{1,2}(a) = \hat{R}(a) \hat{S} \hat{R}(-a)^{-1}$ for $a\in K^\times/(K^\times)^3$ form a set of 
generators for $\hat{B'}\subseteq \hat{B}$, now thought of as embedded into $SL_3(K)$. Consequently, the relations 
of type (I) take the form
\begin{align}
[\hat{W}, \hat{R}(a)^3 \hat{Z}(a)^{-1}] &= 1\quad(a\in K^\times\backslash\{1\}),\label{Eq:G00/BTI1}\\
(\hat{W} \hat{E}_{1, 2}(a))^2 &= 1\quad(a\in \cR),\label{Eq:G00/BTI2}
\end{align}
where $\cR \subseteq K^\times$ is a fixed set of representatives for the quotient group $K^\times/(K^\times)^3$ with $1\in\cR$.

Next we note that the epimorphism $B\rightarrow K^\times$, $A\mapsto a_{1,1}a_{2,2}^2$, with kernel $B'$, admits 
the homomorphic section $K^\times \rightarrow B$, $a\mapsto R(a)$; hence, the subgroup 
$\{R(a): a\in K^\times\} \cong K^\times$ is a complement to $B'$ in $B$, whence a system of representatives for
the cosets (double cosets) in $B/B' = B'\backslash B/B'$. Moreover, it follows that $B$ is isomorphic to the 
semi-direct product $B' \ltimes K^\times \cong (K^+\ltimes K^\times)\ltimes K^\times$ induced by the action
\[
K^\times\times(K^+\ltimes K^\times) \rightarrow K^+\ltimes K^\times,\, (a,(b,c))\mapsto (ab,c).
\]
Consequently, any element $g \in \tilde{G}_{\bf 0, 0'} \setminus B$ can be uniquely written in the form $g = R(a) W A$ with
$a \in K^\times$ and  $A \in B$; in particular, $a e_2 = R(a)^{- 1} W R(- a)$ for all $a \in K^\times$.

We thus obtain the type (II) relations 
in the following form, similar to that of the relations of type (II) (\ref{Eq:SII1}) and (\ref{Eq:SII3}) 
in the presentation of $B_2(K)$ in Corollary ~\ref{Cor:PresB(K)}.

\begin{align}
\hat{W}^2 &= 1,\label{Eq:G00/BTII1}\\[1mm]
\hat{W} \hat{R}(a) \hat{W} &= \hat{R}\big(\frac{a}{a-1}\big) \hat{W} \hat{R}(1-a),\quad (a\in K^\times\backslash\{1\}).\label{Eq:G00/BTII2}
\end{align}
Summarising the preceding discussion, and implementing the presentation for $B_2(K)$ given in Corollary~\ref{Cor:PresB(K)}, 
we thus obtain the following.

\begin{pr}
\label{Prop:G00Pres}
The group $G_{\bf 0,0'}$ is generated by the matrices $\hat{Z}(a)$, $\hat{R}(a)$ for $a\in K^\times\backslash\{1\},$ 
together with the matrices $\hat{S},$ and $\hat{W}$, subject to the relations
\begin{align}
\hat{Z}(a) \hat{Z}(b) &= \hat{Z}(ab),\label{Eq:G00Rel1}\\
\hat{R}(a) \hat{R}(b) &= \hat{R}(ab),\label{Eq:G00Rel2}\\
[\hat{Z}(a), \hat{R}(b)] &= 1,\label{Eq:G00Rel3}\\ 
[\hat{S}, \hat{Z}(a)] &= 1,\label{Eq:G00Rel4}\\
\hat{S}^2 &= 1,\label{Eq:G00Rel5}\\
[\hat{S}, \hat{R}(a)] &= \hat{E}_{1, 2}(1-a),\label{Eq:G00Rel6}
\end{align} 
where $a,b\in K^\times\backslash\{1\}, \hat{E}_{1, 2}(a):= \hat{R}(a) \hat{S} \hat{R}(- a)^{- 1}$, 
together with the relations {\em (\ref{Eq:G00/BTI1})--(\ref{Eq:G00/BTII2})}.
\end{pr}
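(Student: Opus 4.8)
The plan is to realize the asserted presentation as a direct instance of Corollary~\ref{Cor:Main2Trans}, applied to $\tilde{G}_{\bf 0,0'} = (Ke_2)\ltimes B$ regarded, as in the preceding discussion, as a group extension of the smallest non-trivial polygroup $\fP_2$ by the Borel subgroup $\hat{B} \cong B = B_2(K)$. First I would take $H = \hat{B}$ and the single extra generator $x = \hat{W}$, noting that $\hat{W}$ is the distinguished involution of $\tilde{G}_{\bf 0,0'}\setminus\hat{B}$, so that $\theta = \hat{W}^2 = 1$. Since $B\backslash\tilde{G}_{\bf 0,0'}/B$ contains exactly one non-trivial double coset, the structural hypothesis of Corollary~\ref{Cor:Main2Trans} is satisfied, and the corollary yields three families of relations over $H$, which I would then match one by one with the relations derived above.

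For the type~(I) relations $h_i x = x\,\iota(h_i)$, the index set $\{h_i\}$ runs over a generating set of $H_1 = \hat{B}\cap\hat{W}\hat{B}\hat{W} = \hat{B}'$, with $\iota = \omega_W$. Feeding in the generating set $\{\hat{R}(a)^3\hat{Z}(a)^{-1},\ \hat{E}_{1,2}(a)\}$ of $\hat{B}'$ and the explicit description of $\omega_W$ (identity on the diagonal, sign change on the $(1,2)$-entry), the diagonal generators, being fixed by $\omega_W$, yield the commuting relation (\ref{Eq:G00/BTI1}), while the elementary generators, being inverted by $\omega_W$, yield (\ref{Eq:G00/BTI2}). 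The relation $x^2 = \theta$ collapses to $\hat{W}^2 = 1$, that is, (\ref{Eq:G00/BTII1}).

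For the type~(II) relations $xqx = \rho'_q x\lam'_q$, I would use that $\{R(a): a\in K^\times\}$ is simultaneously a complement to $B'$ in $B$ and a full set of representatives for the double cosets $B'\backslash B/B' \cong K^\times$; thus $q$ ranges over $\hat{R}(a)$ with $a\in K^\times\setminus\{1\}$, and tracing the normal-form computation of $\rho'_q$ and $\lam'_q$ for $q = \hat{R}(a)$ produces exactly (\ref{Eq:G00/BTII2}). Finally I would adjoin the presentation of $\hat{B}\cong B_2(K)$ supplied by Corollary~\ref{Cor:PresB(K)}; lifted into $SL_3(K)$ this furnishes the relations (\ref{Eq:G00Rel1})--(\ref{Eq:G00Rel6}), where (\ref{Eq:G00Rel6}) is the commutator reformulation of (\ref{Eq:SII3}) recorded in Remarks~\ref{Rem:B(O)}(iv). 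Combining the three families above with this presentation of $\hat{B}$ gives the asserted presentation.

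The main obstacle is purely computational bookkeeping rather than any conceptual difficulty: one must verify that the abstract type~(I) and type~(II) relations of Corollary~\ref{Cor:Main2Trans}, after the specific choices of generators for $\hat{B}'$ and of the transversal $\{R(a)\}$, coincide with the concrete relations (\ref{Eq:G00/BTI1})--(\ref{Eq:G00/BTII2}). Concretely this means confirming the action of $\omega_W$ on each chosen generator of $\hat{B}'$ and carrying out the normal-form computation for $\hat{W}\hat{R}(a)\hat{W}$ --- both of which have already been sketched in the paragraphs preceding the statement, so that the proof amounts to assembling those pieces.
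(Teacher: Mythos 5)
Your proposal is correct and takes essentially the same route as the paper: its proof of Proposition~\ref{Prop:G00Pres} is exactly the discussion preceding the statement, namely an application of Corollary~\ref{Cor:Main2Trans} to $\tilde{G}_{\bf 0,0'}$ viewed as an extension of $\fP_2$ by $\hat{B}$ with the involution $\hat{W}$, where the type (I) relations come from the generators $\hat{R}(a)^3\hat{Z}(a)^{-1}$ and $\hat{E}_{1,2}(a)$ (with $a\in\mathcal{R}$, reflecting the cube-class orbits of the $K^\times$-action on $K^+$) of $\hat{B}' = \hat{B}\cap \hat{W}\hat{B}\hat{W}$ under $\omega_W$, the type (II) relations from the transversal $\{R(a):a\in K^\times\}$ of $B'$ in $B$, and the base presentation of $\hat{B}$ from Corollary~\ref{Cor:PresB(K)}. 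Your identification of (\ref{Eq:G00Rel6}) as the commutator reformulation of (\ref{Eq:SII3}), as recorded in Remarks~\ref{Rem:B(O)}(iv), likewise agrees with the paper.
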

Note that the trivial case where $K \cong GF(2)$ is also included.

{\footnotesize 
\begin{rem} \em
Assuming $K \not \cong GF(2)$, the group $G_{\bf 0, 0'} \cong \tilde{G}_{\bf 0, 0'}$  is a {\em double} extension
of the polygroup $\fP_2$ by the torus $D \cong K^\times \times K^\times$. Indeed, $B$ is an extension of $\fP_2$ by $D$,
and $G_{\bf 0, 0'}$ is an extension of $\fP_2$ by $B$. The presentation of $G_{\bf 0, 0'}$ given by 
Proposition~\ref{Prop:G00Pres} could be seen as an application in two successive steps of Corollary~\ref{Cor:Main2Trans}
involving the polygroup $\fP_2$. As an alternative approach, we could see $G_{\bf 0, 0'}$ as a group extension of
the polygroup with support $D\backslash \tilde{G}_{\bf 0, 0'}/D$ by $D$, applying only once Theorem~\ref{Thm:PresGpAct}
to obtain a presentation of $G_{\bf 0, 0'}$; this approach is left as exercies to the interested reader.
\end{rem}}

\subsection{A presentation for the stabilizer $G_{\bf 0}$}
\label{SubSubSec:G0/GL2K}
As shown in \ref{SubSubSec:G0}, $G_{\bf 0} \cong \tilde{G}_{\bf 0} = \mathfrak{V} \ltimes GL_2(K)$, where
$\mathfrak{V} = K e_1 \oplus K e_2$; hence, we could use the semi-direct product structure to obtain a presentation 
for $G_{\bf 0}$. We prefer however the alternative of applying Theorem~\ref{Thm:PresGpAct} or its 
Corollary~\ref{Cor:Main2Trans} to a suitable pair $(\tilde{G}_{\bf 0}, U)$, where $U$ is one of the subgroups of 
$\tilde{G}_{\bf 0}$ for which we already have a presentation; for instance, we may take $U$ as  
$\tilde{G}_{\bf 0, 0'} = (K e_2) \ltimes B$, or as $GL_2(K)$. In view of the complexity of the polygroup with support 
$U\backslash \tilde{G}_{\bf 0}/U$, we choose the latter case because of its simplicity, leaving the first case to the interested reader as an exercise.

Indeed, since $GL_2(K)$ acts transitively on the set $\mathfrak{V} \setminus \{0\}$, it follows that there exists 
exactly one non-trivial double coset $GL_2(K) g GL_2(K)$ for some (for all) $g \in \tilde{G}_{\bf 0} \setminus GL_2(K)$. 
As $GL_2(K)$ is not normal in $\tilde{G}_{\bf 0}$, $\tilde{G}_{\bf 0}$ is a group extension of the polygroup 
$\mathfrak{P}_2$ by $GL_2(K)$. Since the involution $W \in \tilde{G}_{\bf 0, 0'} \subseteq \tilde{G}_{\bf 0}$, as defined in 
\ref{SubSubSec:G00/B}, does not belong to $GL_2(K)$, we may use Corollary~\ref{Cor:Main2Trans} to obtain a presentation 
of $G_{\bf 0} \cong \tilde{G}_{\bf 0}$ in terms of $\widehat{GL_2(K)}$, the lifting of $GL_2(K)$ to $SL_3(K)$,
plus one involutive extra generator $\hat{W}$, the lifting of $W$ to $SL_3(K)$. 

Note that 
\[
GL_2(K) \cap W GL_2(K) W^{- 1} = B \cap W B W = B' = \big\{A \in B\,|\,a_{1, 1} a_{2, 2}^2 = 1\big\}, 
\]
the stabilizer of the point $e_2 \in \mathfrak{V}$ under the action of $GL_2(K)$, while the induced automorphism 
$\omega_W \in {\rm Aut}\,(B')$ is already defined in \ref{SubSubSec:G00/B}.
Consequently, the type (I) relations for $G_{\bf 0} \cong \tilde{G}_{\bf 0}$, the same as for $G_{\bf 0, 0'}$, are 
given by (\ref{Eq:G00/BTI1}) and (\ref{Eq:G00/BTI2}).

In order to establish the relations of type (II), it remains to provide systems of representatives for the cosets of
the space $GL_2(K)/B'$ and the double cosets of the space $B'\backslash GL_2(K)/B'$. Thus we have to suitably extend 
the set $\{R(a)\,:\,a \in K^\times\}$  of representatives for the elements of the common subspace
$B/B' = B'\backslash B/B'$. Using the Bruhat decomposition $GL_2(K) = B \sqcup B J B$ and the normal
form of the elements of $GL_2(K)$ according to Lemma~\ref{Lem:BGB} with $\alpha = 0$, we obtain the following.
\begin{lem}
\label{Lem:GL_2/B'DC's}
$(1)$ The matrices 
\[
R(a)\,\,(a \in K^\times)\,\,\mbox{and}\,\,E_{1, 2}(a) J R(b) = \begin{pmatrix}-ab&-1\\
-b&0\end{pmatrix}\,\,(a \in K, b \in K^\times)
\] 
form a set of (pairwise inequivalent) representatives for the cosets of the space $GL_2(K)/B'$.

$(2)$ The matrices $R(a)$ for $a \in K^\times$ together with the matrices 
$J R(b) = {\tiny \begin{pmatrix}0&-1\\-b&0\end{pmatrix}},$ with $b$ ranging over a set $\cR \subseteq K^\times$
of representatives for the quotient group $K^\times/(K^\times)^3,$ form a set of (pairwise inequivalent) 
representatives for the double cosets of the space $B'\backslash GL_2(K)/B'$.   
\end{lem}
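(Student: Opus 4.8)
The plan is to exploit the Bruhat decomposition $GL_2(K) = B \sqcup BJB$ together with the normal form of Lemma~\ref{Lem:BGB} (specialised to $\alpha = 0$, the only nontrivial value over the trivially valued field $K$), and the fact that $B'$ is normal in $B$ with $B/B' \cong K^\times$ realised by the section $a \mapsto R(a)$. Throughout I will use that $A \in GL_2(K)$ lies in $B$ precisely when $a_{2,1} = 0$, while the big cell $BJB$ consists of the matrices with $a_{2,1} \neq 0$, for which Lemma~\ref{Lem:BGB} gives $A = E_{1,2}(a_{1,1}/a_{2,1})\, J\, \lambda_A$ with $\lambda_A = \left(\begin{smallmatrix} -a_{2,1} & -a_{2,2} \\ 0 & \Delta/a_{2,1}\end{smallmatrix}\right)$ and $\Delta = \det A$.

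For Part~$(1)$, surjectivity of the representation is immediate: if $A \in B$ then $AB' = R(a_{1,1}a_{2,2}^2)B'$, while if $a_{2,1} \neq 0$, writing $\lambda_A B' = R(b)B'$ with $b = -\Delta^2/a_{2,1}$ (the image of $\lambda_A$ under $B \to K^\times$, $A \mapsto a_{1,1}a_{2,2}^2$), left multiplication by $J$ yields $AB' = E_{1,2}(a_{1,1}/a_{2,1})\,J\,R(b)\,B'$. Hence every coset has a representative of the claimed form. For pairwise inequivalence I argue directly on entries: the $R(a)$ lie in $B$, whereas $E_{1,2}(a)JR(b) = \left(\begin{smallmatrix} -ab & -1 \\ -b & 0 \end{smallmatrix}\right)$ has nonzero $(2,1)$-entry, and since $B$ is a union of $B'$-cosets a coset $gB'$ meets $B$ only when $g \in B$; so the two families represent disjoint sets of cosets. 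Within the first family $R(a)B' = R(a')B'$ forces $R(a/a') \in B'$, i.e.\ $a = a'$; within the second, solving $\left(\begin{smallmatrix} -a'b' & -1 \\ -b' & 0\end{smallmatrix}\right)\beta = \left(\begin{smallmatrix} -ab & -1 \\ -b & 0\end{smallmatrix}\right)$ for $\beta = \left(\begin{smallmatrix} p & q \\ 0 & r\end{smallmatrix}\right) \in B'$ (so $pr^2 = 1$) reads off $q = 0$ from the $(2,2)$-entry, then $r = 1$ from the $(1,2)$-entry, hence $p = 1$, and finally $b = b'$, $a = a'$.

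For Part~$(2)$ I would first collapse the elementary part. As $E_{1,2}(K) \subseteq B'$, left multiplication by $E_{1,2}(-c)$ sends $E_{1,2}(c)JR(b)$ to $JR(b)$, so every big-cell double coset equals $B'\,JR(b)\,B'$ for some $b \in K^\times$, while every double coset inside $B$ equals $B'R(a)B' = R(a)B'$ by normality. The crux is to decide when $B'JR(b)B' = B'JR(b')B'$. Expanding $JR(b') = \beta_1\,JR(b)\,\beta_2$ with $\beta_i = \left(\begin{smallmatrix} p_i & q_i \\ 0 & r_i\end{smallmatrix}\right)$, $p_ir_i^2 = 1$, the vanishing of the $(1,1)$- and $(2,2)$-entries forces $q_1 = q_2 = 0$, the $(1,2)$-entry gives $p_1r_2 = 1$, and the $(2,1)$-entry gives $b' = r_1 b\, p_2$; combining $p_ir_i^2 = 1$ with $p_1r_2 = 1$ yields $p_2 = r_1^{-4}$, whence $b' = b\, r_1^{-3}$, and conversely any cube arises by choosing $r_1$ freely. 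Thus the big-cell double cosets are parametrised by $b \bmod (K^\times)^3$, represented by $JR(b)$ with $b \in \cR$, and these are disjoint from the $R(a)$ by the same $(2,1)$-entry argument.

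The one genuinely delicate point, and where the structure of $B'$ enters decisively, is the emergence of the exponent $3$: it comes from the diagonal part of $B'$ being the twisted torus $\{R(d)^3Z(d)^{-1}\} = \{\mathrm{diag}(d^2, d^{-1})\}$ identified in Section~\ref{SubSubSec:G00/B}, so that the constraint $p_ir_i^2 = 1$, when pushed through the product, produces exactly $r_1^{-3}$ rather than a square or an arbitrary scalar. Surjectivity in both parts is routine once Bruhat and the normal form are available; the main bookkeeping obstacle is keeping these diagonal exponents straight so as to pin the invariant down to $K^\times/(K^\times)^3$ precisely.
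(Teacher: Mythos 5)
Your proof is correct and follows essentially the same route the paper intends: Lemma~\ref{Lem:GL_2/B'DC's} is stated there without a written proof, as a direct consequence of the Bruhat decomposition $GL_2(K) = B \sqcup BJB$ and the normal form of Lemma~\ref{Lem:BGB} specialised to $\alpha = 0$, which is exactly your starting point. Your entry-by-entry verifications (including the identification $b = -\Delta^2/a_{2,1}$ for the coset of $\lambda_A$, and the computation $b' = b\,r_1^{-3}$ from the constraints $p_i r_i^2 = 1$, $p_1 r_2 = 1$, which correctly produces the invariant in $K^\times/(K^\times)^3$) merely fill in the details the paper leaves implicit.
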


For any representative $g$ of a double coset in $B'\backslash GL_2(K)/B'$, we have to compute 
the uniquely determined matrices $\rho_g, \lam_g \in GL_2(K)$ such that $\rho_g$ belongs to the set of
representatives for the cosets of the space $GL_2(K)/B'$, and the identity $W g W = \rho_g W \lam_g$ holds in 
$\tilde{G}_{\bf{0}}$. Doing the computations, we obtain as relations of type (II) for $G_{\bf 0} \cong \tilde{G}_{\bf 0}$
the relations (\ref{Eq:G00/BTII1}), (\ref{Eq:G00/BTII2}), the same as for $G_{\bf 0, 0'}$, together with the more 
intricate relations
\begin{equation}
\hat{W} \hat{J} \hat{R}(a) \hat{W} = \hat{\rho_a} \hat{W} \hat{\lambda_a},\quad a\in \cR,\label{Eq:G0TII3}
\end{equation}
where 
\begin{align*}
\hat{\rho_a}:=\,& \hat{E}_{1,2}(-a) \hat{J}\hat{R}(a) = {\tiny \begin{pmatrix}a^2&-1&0\\-a&0&0\\0&0&-a^{-1}\end{pmatrix}},\\
\hat{\lambda_a}:=\,& \hat{E}_{1,2}(-a^{-2}) \hat{J} \hat{E}_{1,2}(a^2) \hat{R}(a)^4 \hat{Z}(a)^{-2} = 
{\tiny \begin{pmatrix} 1&0&0\\-a^2&-1&0\\0&0&-1\end{pmatrix}}.
\end{align*}
In particular, setting $a = 1$ in (\ref{Eq:G0TII3}), we obtain a relation involving the involutionary matrices 
$\hat{S}, \hat{W}, \hat{J}$, and $\hat{R}(- 1)$, namely 
\begin{equation}
(\hat{S} \hat{R}(- 1) \hat{W} \hat{J})^3 = 1.\quad\label{Eq:SRWJOrder}
\end{equation}
 
{\footnotesize 
\begin{rem} \em
\label{Rem:G0JR(a)}
If $K^\times = (K^\times)^3$, which is the case, in particular, if $K$ is algebraically closed, or real closed, or a perfect field 
of characteristic $3$, or a finite field $GF(q)$ with $q \equiv - 1\,{\rm mod}\,3$, then (\ref{Eq:G0TII3}) reduces 
to the single relation (\ref{Eq:SRWJOrder}).
\end{rem}}

The identity (\ref{Eq:G0TII3}) holds for all $a \in K^\times$. Indeed, for $a, b \in K^\times$, we deduce
the identity (\ref{Eq:G0TII3}) for $a b^3$ from that for $a$, using the relations (\ref{Eq:G00Rel1})--(\ref{Eq:G00Rel3}), 
(\ref{Eq:GRelI}) (lifted to $SL_3(K)$), (\ref{Eq:G00/BTI1}) and (\ref{Eq:G00/BTI2}).

Moreover, we observe that (\ref{Eq:G0TII3}) is strong enough to imply (\ref{Eq:G00/BTII2}). Indeed, let 
$a \in K^\times \setminus \{1\}$. Then, using (\ref{Eq:G0TII3}), we obtain
$W R(a) W = (W J W) (W J R(a) W) = \rho_1 W \lam_1 \rho_a W \lam_a$. Since
\[
\lam_1 \rho_a = \begin{matrix}
\left ( \matrix
a^2 & - 1\\
a(1 - a) & 1
\endmatrix \right )
\end{matrix} = E_{1, 2}(\frac{a}{1 - a}) (J R(\frac{a}{a - 1})) 
E_{1, 2}(\frac{(a - 1)^2}{a}) (R(1 - a)^3 Z(1 - a)^{- 1})
\]
belongs to $B' (J R(\frac{a}{a - 1})) B'$, it follows by (\ref{Eq:G00/BTI1}), (\ref{Eq:G00/BTI2}), and 
(\ref{Eq:G0TII3}) that
\[
W \lam_1 \rho_a W = E_{1, 2}(\frac{a}{a - 1}) \rho_{\frac{a}{a - 1}} W 
\lam_{\frac{a}{a - 1}} E_{1, 2}(- \frac{(a - 1)^2}{a}) R(1 - a)^3 Z(1 - a)^{- 1},
\]
hence $W R(a) W = R(\frac{a}{a - 1}) W R(1 - a)$ as desired, since, again by (\ref{Eq:G00/BTI2}) and (\ref{Eq:G0TII3}) 
\[
\rho_1 E_{1, 2}(\frac{a}{a - 1}) \rho_{\frac{a}{a - 1}} W = R(\frac{a}{a - 1})
E_{1, 2}(\frac{1 - a}{a}) W = R(\frac{a}{a - 1}) W E_{1, 2}(\frac{a - 1}{a}).
\]

Summarising the preceding discussion, and implementing the presentation for $GL_2(K)$ given in 
Corollary~\ref{Cor:GL_2(K)}, we have thus obtained the following result.

\begin{pr}
\label{Prop:G_0Pres}
The group $G_{\bf 0}$ has a presentation with generators $\hat{R}(a)$ with $a\in K^\times\setminus\{1\},$ together 
with the matrices $\hat{S}, \hat{J},$ and $\hat{W},$ subject to the relations {\em (\ref{Eq:G00Rel1})--(\ref{Eq:G00Rel6}),}
and {\em (\ref{Eq:J,JSOrder})} lifted to $SL_3(K),$ that is, 
\begin{align}
\hat{J}^2 &= (\hat{J} \hat{S})^3 = 1,\label{Eq:LiftedJ,JSOrder}
\end{align}
plus the relations {\em (\ref{Eq:G00/BTI1}), (\ref{Eq:G00/BTI2}), 
(\ref{Eq:G00/BTII1}),} and {\em (\ref{Eq:G0TII3}),} where $\hat{Z}(a):= (\hat{J} \hat{R}(a))^2$ and 
$\hat{E}_{1, 2}(a):= \hat{R}(a) \hat{S} \hat{R}(- a)^{- 1}$ for $a\in K^\times$.
\end{pr}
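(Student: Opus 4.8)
The plan is to present $G_{\bf 0}$ by applying Corollary~\ref{Cor:Main2Trans} to the pair $\big(\tilde{G}_{\bf 0},\,\widehat{GL_2(K)}\big)$. First I would invoke the identification $G_{\bf 0}\cong\tilde{G}_{\bf 0}=\mathfrak{V}\ltimes GL_2(K)$ established in \ref{SubSubSec:G0}, so that presenting $G_{\bf 0}$ reduces to presenting $\tilde{G}_{\bf 0}$. Since $GL_2(K)$ acts transitively on $\mathfrak{V}\setminus\{0\}$, the double coset space $GL_2(K)\backslash\tilde{G}_{\bf 0}/GL_2(K)$ carries exactly one non-trivial class, i.e. it is the polygroup $\mathfrak{P}_2$; hence $\tilde{G}_{\bf 0}$ is a group extension of $\mathfrak{P}_2$ by $GL_2(K)$ and Corollary~\ref{Cor:Main2Trans} applies, with $H=\widehat{GL_2(K)}$ (whose presentation is supplied by Corollary~\ref{Cor:GL_2(K)}, lifted to $SL_3(K)$) and with the involution $\tau=\hat{W}$ interchanging $0$ and $e_2$.

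Next I would assemble the data the corollary demands. As $\hat{W}$ is an involution, $\theta=\hat{W}^2=1$, giving relation (\ref{Eq:G00/BTII1}); moreover $H_1=GL_2(K)\cap W\,GL_2(K)\,W=B'$ is the stabiliser of $e_2$, with $\iota=\omega_W$ the involutive automorphism of $B'$ described in \ref{SubSubSec:G00/B}. Using the decomposition $B'\cong K^+\ltimes K^\times$ to exhibit the explicit generators $\hat{R}(a)^3\hat{Z}(a)^{-1}$ and $\hat{E}_{1,2}(a)$ of $B'$, the relations $h_i\hat{W}=\hat{W}\iota(h_i)$ translate into the type (I) relations (\ref{Eq:G00/BTI1}) and (\ref{Eq:G00/BTI2}).

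For the type (II) relations I would use Lemma~\ref{Lem:GL_2/B'DC's}(2), which furnishes a transversal for $B'\backslash GL_2(K)/B'$ consisting of the $R(a)$ with $a\in K^\times$ together with the $JR(b)$ with $b$ ranging over a set $\cR$ of representatives of $K^\times/(K^\times)^3$. For each non-trivial representative $q$ I would compute, via the $GL_2$-normal form of Lemma~\ref{Lem:BGB}, the unique factorisation $\hat{W}q\hat{W}=\hat{\rho}_q\,\hat{W}\,\hat{\lambda}_q$ with $\hat{\rho}_q$ drawn from the coset transversal of Lemma~\ref{Lem:GL_2/B'DC's}(1); the family $q=R(a)$ yields (\ref{Eq:G00/BTII2}) and the family $q=JR(b)$ yields (\ref{Eq:G0TII3}). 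Appending the lifted $GL_2(K)$-presentation accounts for (\ref{Eq:G00Rel1})--(\ref{Eq:G00Rel6}) and (\ref{Eq:LiftedJ,JSOrder}), and collecting everything gives the stated presentation.

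The main obstacle is the final tidying-up, which is where the genuine (though still computational) work lies. Having produced (\ref{Eq:G00/BTII2}) for every $a$ and (\ref{Eq:G0TII3}) for $a\in\cR$, I must show that the latter family alone already generates all needed relations, so that (\ref{Eq:G00/BTII2}) can be omitted from the final list. Concretely I would: (a) pin down the matrices $\hat{\rho}_a,\hat{\lambda}_a$ explicitly and check $\hat{\rho}_a$ against Lemma~\ref{Lem:GL_2/B'DC's}(1); (b) deduce (\ref{Eq:G0TII3}) for $ab^3$ from that for $a$ using the type (I) relations (\ref{Eq:G00/BTI1}), (\ref{Eq:G00/BTI2}) and the lifted $GL_2$-relations, thereby extending it from $\cR$ to all of $K^\times$; and (c) verify that (\ref{Eq:G0TII3}) implies (\ref{Eq:G00/BTII2}), by expanding $\hat{W}\hat{R}(a)\hat{W}=(\hat{W}\hat{J}\hat{W})(\hat{W}\hat{J}\hat{R}(a)\hat{W})$ and recognising that $\hat{\lambda}_1\hat{\rho}_a$ lies in the double coset $B'\,(JR(\tfrac{a}{a-1}))\,B'$. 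Step (c), the cocycle-style commutator manipulation, is the delicate part; it is precisely the redundancy argument that makes the clean, economical presentation of Proposition~\ref{Prop:G_0Pres} possible.
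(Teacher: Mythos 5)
Your proposal reproduces the paper's own proof essentially step for step: it applies Corollary~\ref{Cor:Main2Trans} to the pair $(\tilde{G}_{\bf 0},\,GL_2(K))$ with the involution $\hat{W}$, obtains the type (I) relations (\ref{Eq:G00/BTI1})--(\ref{Eq:G00/BTI2}) from the generators of $B'$ and the type (II) relations from the transversals of Lemma~\ref{Lem:GL_2/B'DC's}, and then carries out exactly the two redundancy reductions in the paper, namely extending (\ref{Eq:G0TII3}) from $\cR$ to all of $K^\times$ via the type (I) relations and deriving (\ref{Eq:G00/BTII2}) from (\ref{Eq:G0TII3}) by expanding $\hat{W}\hat{R}(a)\hat{W} = (\hat{W}\hat{J}\hat{W})(\hat{W}\hat{J}\hat{R}(a)\hat{W})$ and locating $\lambda_1\rho_a$ in the double coset $B'\,\big(JR\big(\tfrac{a}{a-1}\big)\big)\,B'$. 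The outline is correct, and your identification of step (c) as the delicate point matches where the paper's computational work actually lies.
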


{\footnotesize 
\begin{rem} \em 
The natural semidirect product structure of $\tilde{G}_{\bf 0} = \mathfrak{V} \ltimes GL_2(K)$
provides a presentation with generators $R(a), Z(a) (a \in K^\times), S, J$ (for $GL_2(K)$),
and $a e_i, i = 1, 2 (a \in K^\times)$ (for $(\mathfrak{V} = K e_1 + K e_2, +)$), subject to the relations 
for $GL_2(K)$ (cf.\ Corollary~\ref{Cor:GL_2(K)}), together with the relations
\[
(\mathfrak{V}, +)\, (a e_i) \cdot (b e_i) = (a + b)e_i, i= 1, 2;\,
(a e_1) \cdot (b e_2) = (b e_2) \cdot (a e_1) (a, b \in K^\ast),
\]
with $(0 e_i) = 1$, and the relations describing the action as defined in \ref{SubSubSec:G0}
\begin{align*}
({\rm Act}\,)\,R(a) (b e_1) R(a)^{- 1} &= a^{- 2} b e_1,\, R(a) (b e_2) R(a)^{- 1} = 
a^{- 1} b e_2,\\
Z(a) (b e_i) Z(a)^{- 1} &= a^{- 3} b e_i, i = 1, 2\quad (a, b \in K^\times),\\
S (b e_1) S &= (b e_1) \cdot (- b e_2),\\
S (b e_2) S &= - b e_2,\\ 
J (b e_1) J &= b e_2\quad (b \in K^\times).
\end{align*}
The substitutions 
\begin{align*}
W &\mapsto e_2 \cdot R(- 1),\,{\rm and}\\
a e_1 &\mapsto J R(a)^{- 1} W R(- a) J, \\
a e_2 &\mapsto R(a)^{- 1} W R(- a) (a \in K^\times)
\end{align*}
are compatible with the corresponding relations, inducing  inverse isomorphisms between the presentations.
\end{rem}}

\subsection{The space of double cosets $G_{\bf 0, 0'}\backslash G_{\bf 0}/G_{\bf 0, 0'}$}

As explained at the beginning of Section~\ref{Sec:SL3}, we need a description of the space of double cosets 
$G_{\bf 0, 0'}\backslash G_{\bf 0}/G_{\bf 0, 0'}$ in order to obtain the desired presentation of the group $G = SL_3(K)$.
Note that the $2$-transitive action of $G$ on  $\P^2(K)$ induces by restriction a transitive action of 
$G_{\bf 0}$ on $\P^2(K) \setminus \{\bf 0\}$, while $G_{\bf 0, 0'}$ is the stabilizer of the point $\bf 0'$ with 
respect to the induced action. Although the polygroup with support $G_{\bf 0, 0'}\backslash G_{\bf 0}/G_{\bf 0, 0'}$
is not isomorphic to the polygroup $\fP_2$ (as the induced transitive action is not $2$-transitive), it is quite
small as we shall see next.

Let us denote by ${\fP_3} = (\{0, 1, 2\}, \oplus)$ the commutative polygroup of cardinality $3$, with $0$ as scalar identity, and $1 \oplus 1 = \{0, 1\}, 2 \oplus 2 = \{0, 1, 2\}, 1 \oplus 2 = 2 \oplus 1 = 2$. Note that 
the polygroup $\fP_2$ is identified with the subpolygroup $(\{0, 1\}, \oplus)$ of $\fP_3$.

\begin{lem}
\label{Lem:G00G0G00}
$(1)$ The space $X:= G_{\bf 0, 0'}\backslash G_{\bf 0}/G_{\bf 0, 0'}$ contains exactly two distinct non-trivial 
double cosets, which may be represented by the involutions 
$\hat{J} = {\tiny \begin{pmatrix}0&-1&0\\-1&0&0\\0&0&-1\end{pmatrix}}$ and 
$\hat{V}:= \hat{J} \hat{W} \hat{J} = {\tiny \begin{pmatrix}1&0&0\\0&-1&0\\1&0&-1\end{pmatrix}}$.

$(2)$ The map 
\[
\{0, 1, 2\} \rightarrow X \mbox{ given by }0 \mapsto G_{\bf 0, 0'},\, 1 \mapsto G_{\bf 0, 0'} \hat{V} G_{\bf 0, 0'},\, 
2 \mapsto G_{\bf 0, 0'} \hat{J} G_{\bf 0, 0'}
\] 
maps the polygroup $\fP_3$ isomorphically onto the polygroup with support $X$.
\end{lem}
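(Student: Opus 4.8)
The plan is to deduce both parts from the geometry of the induced $G_{\bf 0}$-action on $\Omega := \P^2(K)\setminus\{{\bf 0}\}$. As recalled immediately before the lemma, $G_{\bf 0}$ acts transitively on $\Omega$ and $G_{\bf 0,0'}$ is the stabiliser of ${\bf 0'}$; hence the double-coset/orbit correspondence of Section~\ref{Subsec:PresGpAct}, applied to the pair $(G_{\bf 0}, G_{\bf 0,0'})$ with base point ${\bf 0'}$, yields an isomorphism of polygroups $X = G_{\bf 0,0'}\backslash G_{\bf 0}/G_{\bf 0,0'} \cong G_{\bf 0,0'}\backslash\Omega$ under which the double coset $C(g)$ corresponds to the $G_{\bf 0,0'}$-orbit of $g\cdot{\bf 0'}$. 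So everything reduces to computing the $G_{\bf 0,0'}$-orbits on $\Omega$ together with the induced hyperoperation.

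First I would use the description of $G_{\bf 0,0'}$ given above (matrices $A$ with $a_{1,3}=a_{2,1}=a_{2,3}=a_{3,1}=0$ and $a_{1,1}a_{2,2}a_{3,3}=1$), which sends $[x:y:z]^t$ to $[a_{1,1}x+a_{1,2}y : a_{2,2}y : a_{3,2}y+a_{3,3}z]^t$. A short inspection shows that the condition $y=0$ is preserved and that there are exactly three orbits on $\Omega$: $O_0 = \{{\bf 0'}\} = \{[1:0:0]\}$, $O_1 = \{[x:0:z] : xz\neq 0\}$, and $O_2 = \{[x:y:z] : y\neq 0\}$. (For $O_2$ one moves $[x:1:z]$ to $[0:1:0]$ using the free entries $a_{1,2},a_{3,2}$; for $O_1$ the ratio $z/x$ is rescaled freely via $a_{3,3}/a_{1,1}$.) Computing $\hat{V}\cdot{\bf 0'}=[1:0:1]\in O_1$ and $\hat{J}\cdot{\bf 0'}=[0:1:0]\in O_2$ (both involutions, as $\hat{J}^2=I_3$ and $\hat{V}^2=\hat{J}\hat{W}^2\hat{J}=I_3$) identifies $O_1,O_2$ with the two non-trivial double cosets, settling Part~(1) and the bijection of Part~(2).

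It remains to verify that this bijection respects the hyperoperation, i.e.\ matches $\fP_3$. Using the formula $H\omega_1\circ H\omega_2 = \{H\cdot gh\omega_2 : h\in H\}$ from Section~\ref{Subsec:PresGpAct} (with $\omega_1=g{\bf 0'}$), the product of two classes equals the set of $G_{\bf 0,0'}$-orbits of $g_i\eta$ as $\eta$ ranges over $O_j$, where $g_0=I_3$, $g_1=\hat{V}$, $g_2=\hat{J}$. A direct computation gives $\hat{V}\cdot[x:0:z]=[x:0:x-z]$, lying in $O_0$ when $z=x$ and in $O_1$ otherwise, so $1\circ1=\{0,1\}$; next $\hat{J}\cdot[x:1:z]=[1:x:z]$, lying in $O_2$, $O_0$, $O_1$ according as $x\neq0$, $x=z=0$, or $x=0\neq z$, so $2\circ2=\{0,1,2\}$; finally $\hat{V}\cdot[x:1:z]=[x:-1:x-z]$ and $\hat{J}\cdot[x:0:z]=[0:x:z]$ both have non-zero middle coordinate, so $1\circ2=2\circ1=\{2\}$. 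Since $0$ is the scalar identity and each generator is its own inverse, this is precisely the multiplication table of $\fP_3$, completing Part~(2). The only genuine work is the orbit analysis and these three products; all of it is elementary matrix manipulation, so I expect no real obstacle beyond bookkeeping — the one point demanding care is to check that each product realises \emph{every} claimed class and no others.
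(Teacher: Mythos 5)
Your proof is correct in substance but takes a genuinely different route from the paper's. The paper argues algebraically inside the semidirect-product models: it identifies $G_{\bf 0}\cong\mathfrak{V}\ltimes GL_2(K)$ and $G_{\bf 0,0'}\cong(Ke_2)\ltimes B$, introduces the intermediate subgroup $H=\mathfrak{V}\ltimes B$, and shows by explicit reductions that $H\setminus\tilde{G}_{\bf 0,0'}=C(V)$ (via the identity $ae_1=(R(a)Z(a)^{-1})\cdot V\cdot(R(-a)^{-1}Z(-a))$) and $\tilde{G}_{\bf 0}\setminus H=C(J)$ (via the Bruhat normal form of $GL_2(K)$ from Lemma~\ref{Lem:BGB}); the three product formulas in Part~(2) are then merely asserted. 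You instead transfer the whole problem to the orbit polygroup $G_{\bf 0,0'}\backslash\bigl(\mathbb{P}^2(K)\setminus\{{\bf 0}\}\bigr)$ via the double-coset/orbit isomorphism of Section~\ref{Subsec:PresGpAct} and compute everything projectively: the three orbits $O_0,O_1,O_2$, the membership $\hat{V}\cdot{\bf 0'}\in O_1$, $\hat{J}\cdot{\bf 0'}\in O_2$, and the three products. Your route is more elementary and, notably, actually supplies the verification of the multiplication table that the paper suppresses ("as desired"); the paper's route buys a structural explanation (the filtration $G_{\bf 0,0'}<H<\tilde{G}_{\bf 0}$ accounts for why there are exactly two nontrivial cosets) and reuses machinery already established for $GL_2$.

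One point you flagged but did not resolve deserves to be made explicit: realizing the class $1$ inside $1\circ 1$ requires a point $[x:0:z]\in O_1$ with $z\neq x$, which exists if and only if $\mathrm{card}(K)\geq 3$. Over $K\cong GF(2)$ the orbit $O_1$ is the singleton $\{[1:0:1]\}$, $\hat{V}$ carries it to ${\bf 0'}$, and so $1\circ 1=\{0\}$; the polygroup is then \emph{not} $\fP_3$, and the lemma as stated fails. This is not a defect of your argument relative to the paper's — the paper's Part~(2) has the same degeneration and tacitly relies on the assumption $K\not\cong GF(2)$ introduced in Section~\ref{SubSubSec:G00/B} — but since your write-up claims $1\circ 1=\{0,1\}$ unconditionally, you should state that hypothesis (or insert the one-line check that $z\neq x$ is achievable) at that step.
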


\bp
(1) Recall from \ref{SubSubSec:G0} that $G_{\bf 0} \cong \tilde{G}_{\bf 0} = \mathfrak{V} \ltimes GL_2(K)$, where 
$\mathfrak{V} = K e_1 \oplus K e_2$, and $G_{\bf 0, 0'} \cong \tilde{G}_{\bf 0, 0'} = (K e_2) \ltimes B$; hence, we
have to describe the space $\tilde{X}:= \tilde{G}_{\bf 0, 0'}\backslash \tilde{G}_{\bf 0}/ \tilde{G}_{\bf 0, 0'}$.
Set $C(g):= \tilde{G}_{\bf 0, 0'} g \tilde{G}_{\bf 0, 0'}$ for $g \in \tilde{G}_{\bf 0}$.

First let $H:= \mathfrak{V} \ltimes B$, a subgroup of $\tilde{G}_{\bf 0}$ lying over $\tilde{G}_{\bf 0, 0'}$. Note that
the involution
\[
V := e_1 \cdot R(- 1) Z(- 1) = R(- 1) Z(- 1) \cdot (- e_1)
\]
belongs to $H \setminus \tilde{G}_{\bf 0, 0'}$. Note also that $V = J W J$ since $e_1 = J e_2 J = J W R(- 1) J$
and $J R(- 1) J = R(- 1) Z(- 1)$.

Next let $g \in \tilde{G}_{\bf 0} \setminus \tilde{G}_{\bf 0, 0'}$.
We distinguish the following two cases.

(i) $g \in H \setminus \tilde{G}_{\bf 0, 0'}$. Then $g = (a_1 e_1 + a_2 e_2) \cdot A$ with 
$a_1 \in K \setminus \{0\}, a_2 \in K, A \in B$, so $g = (a_2 e_2) \cdot (a_1 e_1) \cdot A \in C(a_1 e_1)$; hence, 
we may assume without loss that $g = a e_1$ with $a \in K \setminus \{0\}$. We obtain 
\[
g = a e_1 = (R(a) Z(a)^{- 1}) \cdot V \cdot (R(- a)^{- 1} Z(- a)) \in C(V),
\] 
so $H \setminus \tilde{G}_{\bf 0, 0'} = C(V)$.

(ii) $g \in \tilde{G}_{\bf 0} \setminus H$. Then $g = (a_1 e_1 + a_2 e_2) \cdot A$ with 
$a_i \in K, A \in GL_2(K) \setminus B$. As $g \in \tilde{G}_{\bf 0, 0'} ((a_1 e_1) \cdot A)$, we may assume without 
loss that $g = (a e_1) \cdot A$ with $a \in K, A \in GL_2(K) \setminus B$. Using the normal form for matrices in 
$GL_2(K)$ (cf.\ Lemma~\ref{Lem:BGB} for $\alpha = 0$), we obtain $A = E_{1, 2}(b) J A'$ for some $b \in K, A' \in B$, 
and hence we may assume that $g = (a e_1) \cdot E_{1, 2}(b) \cdot J$. It follows that
\begin{align*}
g &= E_{1, 2}(b) \cdot (E_{1, 2}(- b) \cdot (a e_1) \cdot E_{1, 2}(b)) \cdot J \\
&= E_{1, 2}(b) \cdot (a e_1 + a b e_2) \cdot J \\
&= (E_{1, 2}(b) \cdot (a b e_2)) \cdot J \cdot (a e_2),
\end{align*}
therefore $g \in C(J)$. Thus $\tilde{G}_{\bf 0} \setminus H = C(J)$, and the statement (1) is proved.

(2) We obtain 
\begin{align*}
C(V) \cdot C(V) &= C(1) \sqcup C(V),\\
C(J) \cdot C(J) &= C(1) \sqcup C(V) \sqcup C(J),\\ 
C(V) \cdot C(J) &= C(J) \cdot C(V) = C(J),
\end{align*} 
as desired.
\ep

\subsection{A presentation of the group $SL_3$ over a field} 
\label{SubSubSec:SL3Pres}
We have now collected together all the necessary ingredients to establish an explicit presentation for the group $G = SL_3(K)$ in terms
of the group $G_{\bf 0}$, with presentation given by Proposition~\ref{Prop:G_0Pres}, and the involutionary extra
generator $T$, defined by (\ref{Eq:T}) at the beginning of Section~\ref{Sec:SL3}.

Replacing the matrix $A$ in (\ref{Eq:G/G0RelI}) with the generators of the group $G_{\bf 0, 0'}$, whose presentation 
is given by Proposition~\ref{Prop:G00Pres}, the type (I) relations take the form
\begin{align}
[\hat{Z}(a), T] &= \hat{R}(a)^3,\label{Eq:SL3/G0RelI1}\\[1mm]
(T \hat{R}(a))^2 &= 1,\label{Eq:SL3/G0RelI2}\\[1mm]
T \hat{S} T &= \hat{W},\label{Eq:SL3/G0RelI3}
\end{align}
where $a \in K^\times \setminus \{1\}$.

Next, using the description of the space of double cosets $G_{\bf 0, 0'}\backslash G_{\bf 0}/G_{\bf 0, 0'}$ given in 
Lemma~\ref{Lem:G00G0G00}, we obtain the type (II) relations 
\begin{align}
T^2 &= 1,\label{Eq:SL3/G0RelII1}\\[1mm]
(T \hat{J})^3 &= 1,\label{Eq:SL3/G0RelII2}\\[1mm]
(T \hat{V})^3 &= 1,\label{Eq:SL3/G0RelII3}
\end{align}
where $\hat{V} = \hat{J} \hat{W} \hat{J}$.

Using the relations (\ref{Eq:SL3/G0RelI1})--(\ref{Eq:SL3/G0RelI3}), (\ref{Eq:SL3/G0RelII1})--(\ref{Eq:SL3/G0RelII3}) 
above, and implementing the presentation of $G_{\bf 0}$ given by Proposition~\ref{Prop:G_0Pres}, we finally obtain
the following presentation for the group $SL_3(K)$ over an arbitrary field $K$.

\begin{te}
\label{Thm:SL3Pres}
$SL_3(K)$ is generated by the matrices
\begin{multline*}
\hat{R}(a) = \begin{pmatrix}a&0&0\\0&1&0\\0&0&a^{-1}\end{pmatrix}\,\,\,(a \in K^\times \sm \{1\}),\, 
\hat{S} = \begin{pmatrix}-1&1&0\\0&1&0\\0&0&-1\end{pmatrix},\\
\hat{J} = \begin{pmatrix}0&-1&0\\-1&0&0\\0&0&-1\end{pmatrix}.\,\mbox{ and }\,
T = \begin{pmatrix}0&0&-1\\0&-1&0\\-1&0&0\end{pmatrix},
\end{multline*}
subject to the following relations: 
\begin{enumerate}
\item[(i)] $\hat{S}^2 = \hat{J}^2 = T^2 = 1,$
\vspace{1,5mm}
\item[(ii)] $\hat{R}(a) \hat{R}(b) = \hat{R}(ab),\quad(a,b\in K^\times\backslash\{1\}),$ where 
$\hat{R}(1):=1,$
\vspace{1.5mm}
\item[(iii)] $\hat{Z}(a) \hat{Z}(b) = \hat{Z}(ab),\quad (a, b\in K^\times\backslash\{1\}),$ where 
$\hat{Z}(a):= (\hat{J}\hat{R}(a))^2$\\ for $a\in K^\times\backslash\{1\},$
\vspace{1.5mm}
\item[(iv)] $[\hat{R}(a), \hat{Z}(b)] = 1,\quad(a, b\in K^\times\backslash\{1\}),$
\vspace{1.5mm}
\item[(v)] $[\hat{S}, \hat{Z}(a)] = 1,\quad(a\in K^\times\backslash\{1\}),$
\vspace{1.5mm}
\item[(vi)] $[\hat{S}, \hat{R}(a)] = \hat{E}_{1, 2}(1-a),\quad(a\in K^\times\backslash\{1\}),$ where 
$\hat{E}_{1, 2}(a):=\hat{R}(a) \hat{S} \hat{R}(-a)^{-1}$ for $a\in K^\times\backslash\{1\},$
\vspace{1.5mm}
\item[(vii)] $(\hat{J}\hat{S})^3 = (T\hat{J})^3 = 1,$
\vspace{1.5mm}
\item[(viii)] $(T \hat{R}(a))^2 = 1,\quad (a\in K^\times\backslash\{1\}),$
\vspace{1.5mm}
\item[(ix)] $[\hat{Z}(a), T] = \hat{R}(a)^3,\quad (a\in K^\times\backslash\{1\}),$
\vspace{1.5mm}
\item[(x)] $(\hat{W}\hat{E}_{1, 2}(a))^2 = 1,\quad(a\in\mathcal{R}),$ where 
$\hat{W}:= T \hat{S} T,$ and $1 \in \mathcal{R}\subseteq K^\times$ is a set of representatives for the quotient group 
$K^\times/(K^\times)^3,$
\vspace{1.5mm}
\item[(xi)] $\hat{W} \hat{J}\hat{R}(a) \hat{W} = \rho_a \hat{W} \lambda_a,\quad 
(a\in\mathcal{R}),$ where $\rho_a:= \hat{E}_{1, 2}(-a) \hat{J} \hat{R}(a)$ and 
$\lambda_a:= \hat{E}_{1, 2}(-a^{-2})\hat{J}\hat{E}_{1, 2}(a^2) \hat{R}(a)^4 \hat{Z}(a)^{-2}$.
\end{enumerate}
\end{te}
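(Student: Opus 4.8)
The plan is to obtain the presentation as a direct application of Corollary~\ref{Cor:Main2Trans} to the $2$-transitive action of $G = SL_3(K)$ on the points of $\mathbb{P}^2(K)$, feeding in the presentations and double-coset data assembled earlier in Section~\ref{Sec:SL3}. Concretely, I would take $\omega_0 = {\bf 0}$, $\omega_1 = {\bf 0'}$, and the involution $\tau := T$ of~(\ref{Eq:T}) interchanging them, so that $H := G_{\bf 0}$ is the point stabiliser, $H_1 = H \cap THT^{-1} = G_{\bf 0,0'}$, and $\theta := T^2 = 1$. Corollary~\ref{Cor:Main2Trans} then presents $SL_3(K)$ on the generators of $H = G_{\bf 0}$ together with the single extra involution $T$, subject to the type~(I) relations $h_i T = T\,\omega_T(h_i)$ (with $h_i$ running over a generating set of $G_{\bf 0,0'}$), the relation $T^2 = 1$, and one type~(II) relation $T q T = \rho'_q\,T\,\lambda'_q$ for each nontrivial double coset $q$ in $G_{\bf 0,0'}\backslash G_{\bf 0}/G_{\bf 0,0'}$.

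Next I would substitute the concrete input. For $H = G_{\bf 0}$ I insert the presentation of Proposition~\ref{Prop:G_0Pres}, on generators $\hat{R}(a), \hat{S}, \hat{J}, \hat{W}$. For the type~(I) relations I take the generators $\hat{Z}(a), \hat{R}(a), \hat{S}, \hat{W}$ of $G_{\bf 0,0'}$ from Proposition~\ref{Prop:G00Pres} and evaluate the conjugation $\omega_T$, which acts on matrix indices through the transposition $(1,3)$; this produces exactly (\ref{Eq:SL3/G0RelI1})--(\ref{Eq:SL3/G0RelI3}), namely $[\hat{Z}(a),T] = \hat{R}(a)^3$, $(T\hat{R}(a))^2 = 1$, and $T\hat{S}T = \hat{W}$ (the first derived using the second). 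The delicate point here is that, although the ambient polygroup $G_{\bf 0}\backslash G/G_{\bf 0}$ is the two-element $\mathfrak{P}_2$, the restricted action of $G_{\bf 0}$ on $\mathbb{P}^2(K)\setminus\{{\bf 0}\}$ is merely transitive, so by Lemma~\ref{Lem:G00G0G00} the relevant double-coset space $G_{\bf 0,0'}\backslash G_{\bf 0}/G_{\bf 0,0'}$ is the three-element polygroup $\mathfrak{P}_3$, with nontrivial representatives $\hat{J}$ and $\hat{V} = \hat{J}\hat{W}\hat{J}$. Computing the two normal forms $\rho', \lambda'$ then yields the braid-type type~(II) relations $(T\hat{J})^3 = 1$ and $(T\hat{V})^3 = 1$.

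The remaining work is a Tietze cleanup to the generating set $\hat{R}(a), \hat{S}, \hat{J}, T$ of the theorem. Using $T\hat{S}T = \hat{W}$ I eliminate $\hat{W}$ by the abbreviation $\hat{W} := T\hat{S}T$; under this substitution the $GL_2(K)$-part of Proposition~\ref{Prop:G_0Pres} supplies (i)--(vii), while the two $G_{\bf 0}$-relations (\ref{Eq:G00/BTI2}) and (\ref{Eq:G0TII3}) involving $\hat{W}$ become (x) and (xi). It then remains to discard the relations that have become redundant: $\hat{W}^2 = 1$ (relation (\ref{Eq:G00/BTII1})) is immediate from $T^2 = \hat{S}^2 = 1$; the centralising relation (\ref{Eq:G00/BTI1}), that $\hat{W}$ commutes with the torus element $\hat{R}(a)^3\hat{Z}(a)^{-1}$ of $B'$, follows by conjugating through $T$ with (viii) and (ix) and then applying (iv), (v); and the second type~(II) relation $(T\hat{V})^3 = 1$ collapses after rewriting $T\hat{V} = \hat{J}(T\hat{J}\hat{S}T)\hat{J}$ via the braid relation $(T\hat{J})^3 = 1$, since then $(T\hat{V})^3 = \hat{J}\,(T\hat{J}\hat{S}T)^3\,\hat{J} = \hat{J}\,T(\hat{J}\hat{S})^3T\,\hat{J}$, which is $1$ by $(\hat{J}\hat{S})^3 = 1$. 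This leaves precisely the relations (i)--(xi).

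The genuine content of the argument has, in effect, already been discharged in the preparatory results --- the $GL_2(K)$ presentation of Corollary~\ref{Cor:GL_2(K)} lifted to $SL_3(K)$, the semidirect-product analysis of the stabilisers in Sections~\ref{SubSubSec:G0}--\ref{SubSubSec:G0/GL2K}, and the double-coset computation of Lemma~\ref{Lem:G00G0G00} --- so the proof of the theorem is essentially an assembly. I therefore expect the main obstacle to be not any single hard computation but the bookkeeping required to confirm completeness of the reduction: verifying that every relation produced by Corollary~\ref{Cor:Main2Trans} and Proposition~\ref{Prop:G_0Pres} is either listed among (i)--(xi) or is a consequence of them (the three redundancies above), and, in the converse direction, checking by direct matrix multiplication that the displayed matrices $\hat{R}(a), \hat{S}, \hat{J}, T$ do satisfy (i)--(xi) in $SL_3(K)$. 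The subtlest of these checks is the passage from the raw type~(II) data on $\mathfrak{P}_3$ to the economical relation set, where one must be sure that suppressing $(T\hat{V})^3 = 1$ loses no information.
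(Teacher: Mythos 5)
Your proposal is correct and follows essentially the same route as the paper's own proof: an application of Corollary~\ref{Cor:Main2Trans} with $\tau = T$ on top of Proposition~\ref{Prop:G_0Pres} and Lemma~\ref{Lem:G00G0G00}, elimination of $\hat{W}$ via the definition $\hat{W} := T\hat{S}T$, and removal of exactly the three redundant relations (\ref{Eq:G00/BTI1}), $\hat{W}^2 = 1$, and $(T\hat{V})^3 = 1$. Even your redundancy arguments match the paper's, including the observation that the braid relation $T\hat{J}T = \hat{J}\hspace{0.5pt}T\hat{J}$ exhibits $T\hat{V}$ as a conjugate of $\hat{J}\hat{S}$, so that $(T\hat{V})^3 = 1$ follows from $(\hat{J}\hat{S})^3 = 1$.
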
 

\begin{proof}
Taking (\ref{Eq:SL3/G0RelI3}) as definition for $\hat{W}$, it follows by Corollary~\ref{Cor:Main2Trans}
that $SL_3(K)$ is generated by $\hat{R}(a)\,(a \in K^\times \setminus \{1\})$, $\hat{S}, \hat{J}$, and $T$
subject to the relations (\ref{Eq:G00Rel1})--(\ref{Eq:G00Rel6}), (\ref{Eq:LiftedJ,JSOrder}), 
(\ref{Eq:G00/BTI1})--(\ref{Eq:G00/BTII1}), and (\ref{Eq:G0TII3}) from the presentation of $G_{\bf 0}$  
plus the relations (\ref{Eq:SL3/G0RelI1}), (\ref{Eq:SL3/G0RelI2}), (\ref{Eq:SL3/G0RelII1})--(\ref{Eq:SL3/G0RelII3})
involving the extra generator $T$. Note that the set of relations just specified consists of the relations (i)--(xi) plus the relations (\ref{Eq:G00/BTI1}), (\ref{Eq:G00/BTII1}), and (\ref{Eq:SL3/G0RelII3}); hence, 
in order to complete the proof, it remains to check that the last three relations are consequences of relations (i)--(xi). The first two relations follow easily by conjugating the relations (\ref{Eq:G00Rel4}) and (\ref{Eq:G00Rel5})
respectively by the involution $T$. To deduce (\ref{Eq:SL3/G0RelII3}), we use the identity 
$\hat{J} T \hat{J} = T \hat{J} T$ relating the involutions $\hat{J}$ and $T$, cf.\ (\ref{Eq:SL3/G0RelII2}), 
and the definitions of the involutions $\hat{W}$ and $\hat{V}$ to obtain
\[
T \hat{V} = T \hat{J} \hat{W} \hat{J} = T \hat{J} T \hat{S} T \hat{J} = (\hat{J} T) (\hat{J} \hat{S}) (\hat{J} T)^{- 1}.
\]
Thus, $T \hat{V}$ is a conjugate of $\hat{J} \hat{S}$, and hence $(T \hat{V})^3 = 1$ by (\ref{Eq:LiftedJ,JSOrder})
as desired.  
\end{proof}

\begin{co}
\label{Cor:SL3(q)Pres}
Let $\zeta$ be a generator of the cyclic multiplicative group $K^\times$ of order $q-1$. Then $SL_3(q)$ is generated by the matrices $\hat{R}:=\hat{R}(\zeta),$ $\hat{S},$ $\hat{J},$ and $T$ (the last three as in Theorem~{\em {\ref{Thm:SL3Pres}}),} subject to the relations
\begin{enumerate}
\item[(I)] $\hat{R}^{q-1} = \hat{Z}^{q-1} = \hat{S}^2 = \hat{J}^2 = T^2 = (\hat{J}\hat{S})^3 = (T\hat{J})^3 = (T\hat{R})^2 = 1,$\\ where  $\hat{Z}:= (\hat{J}\hat{R})^2;$

\vspace{2mm}

\item[(II)] $[\hat{R}, \hat{Z}] = [\hat{S}, \hat{Z}] = 1,\,\, [\hat{Z}, T] = \hat{R}^3,\, \mbox{ and }\, [\hat{S}, \hat{R}^i] = \hat{E}_{1,2}(\zeta^{\gamma(i)}),$ where
\[
\hat{E}_{1,2}(\zeta^j) = \begin{cases}
\hat{R}^j \hat{S} \hat{R}^{-j},& q\equiv 0\mod{2},\\
\hat{R}^j \hat{S} \hat{R}^{-j-\frac{q-1}{2}},& q\equiv 1\mod{2}\end{cases}
\] 
and
\[
1 - \zeta^i = \zeta^{\gamma(i)},\quad 1\leq i\leq q-2;
\]
\vspace{2mm}
\item[(III)] $(\hat{W} \hat{E}_{1,2}(a))^2 = 1$ for $a\in\mathcal{R},$ where $\hat{W}:= T\hat{S}T,$ and where $\mathcal{R}\subseteq K^\times$ is a set of representatives for the elements of the quotient group $K^\times/(K^\times)^3$ with $1\in\mathcal{R};$ 

\vspace{2mm}

\item[(IV)] $\hat{W} \hat{J} \hat{R}^i \hat{W} = \rho_i \hat{W} \lambda_i$ for $a=\zeta^i\in\mathcal{R},$ where $\rho_i:= \hat{E}_{1,2}(-a) \hat{J} \hat{R}^i$ and $\lambda_i:= \hat{E}_{1,2}(-a^{-2}) \hat{J} \hat{E}_{1,2}(\zeta^{2i}) \hat{R}^{4i} \hat{Z}^{-2i}$.
\end{enumerate}
\end{co}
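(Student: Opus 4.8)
The plan is to derive the presentation of $SL_3(q)$ directly from the presentation of $SL_3(K)$ given in Theorem~\ref{Thm:SL3Pres}, specialised to $K = GF(q)$, by a sequence of Tietze transformations exploiting the cyclic structure of $K^\times = \langle\zeta\rangle$. Since every element of $K^\times\setminus\{1\}$ is a power $\zeta^i$ with $1\leq i\leq q-2$, relation (ii) of Theorem~\ref{Thm:SL3Pres} asserts precisely that $a\mapsto \hat{R}(a)$ is a homomorphism from the cyclic group $K^\times$; hence the subgroup generated by the $\hat{R}(a)$ is already generated by $\hat{R}:=\hat{R}(\zeta)$, and setting $\hat{R}(\zeta^i):=\hat{R}^i$ I may eliminate every generator $\hat{R}(a)$ with $a\neq\zeta$ in favour of $\hat{R}$, the whole family (ii) collapsing to the single order relation $\hat{R}^{q-1}=1$ (note $\hat{R}^{q-1}=\hat{R}(\zeta^{q-1})=\hat{R}(1)=1$). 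The same reduction applied to relation (iii) collapses it to $\hat{Z}^{q-1}=1$, where $\hat{Z}:=(\hat{J}\hat{R})^2=\hat{Z}(\zeta)$. Together with relations (i) and (vii), which transcribe verbatim, this accounts for the block (I).

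Next I would reduce the families still indexed over all of $K^\times$ to the relations in (II). For the commutator families (iv) and (v), once $\hat{R}(\zeta^i)=\hat{R}^i$ and $\hat{Z}(\zeta^j)=\hat{Z}^j$, it suffices to record $[\hat{R},\hat{Z}]=1$ and $[\hat{S},\hat{Z}]=1$, since commuting elements generate commuting cyclic subgroups. The family (viii) reduces to $(T\hat{R})^2=1$: this gives $T\hat{R}T=\hat{R}^{-1}$, whence $T\hat{R}^iT=\hat{R}^{-i}$ and $(T\hat{R}^i)^2=1$ for all $i$. Likewise, (ix) reduces to $[\hat{Z},T]=\hat{R}^3$: writing $\phi(x)=TxT$ (so $\phi^2=1$ by $T^2=1$), this relation reads $\phi(\hat{Z})=\hat{R}^{-3}\hat{Z}$, and then, using $[\hat{R},\hat{Z}]=1$,
\[
[\hat{Z}^i,T]=\hat{Z}^i\,\phi(\hat{Z})^{-i}=\hat{Z}^i(\hat{R}^{-3}\hat{Z})^{-i}=\hat{Z}^i\hat{R}^{3i}\hat{Z}^{-i}=\hat{R}^{3i}=\hat{R}(\zeta^i)^3,
\]
recovering the full family (ix). Finally, family (vi) becomes $[\hat{S},\hat{R}^i]=\hat{E}_{1,2}(\zeta^{\gamma(i)})$ for $i=1,\dots,q-2$, where $1-\zeta^i=\zeta^{\gamma(i)}$; here $1-\zeta^i\in K^\times\setminus\{1\}$ for such $i$, so $\gamma(i)$ is well defined.

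The one point requiring genuine care is the explicit form of $\hat{E}_{1,2}(\zeta^j)$ recorded in (II). Since $\hat{E}_{1,2}(a)=\hat{R}(a)\hat{S}\hat{R}(-a)^{-1}$, I must express $\hat{R}(-\zeta^j)$ as a power of $\hat{R}$. When $q$ is even, $-1=1$, giving $\hat{R}(-\zeta^j)=\hat{R}^j$ and $\hat{E}_{1,2}(\zeta^j)=\hat{R}^j\hat{S}\hat{R}^{-j}$; when $q$ is odd, $-1=\zeta^{(q-1)/2}$, so $-\zeta^j=\zeta^{\,j+(q-1)/2}$ and $\hat{E}_{1,2}(\zeta^j)=\hat{R}^j\hat{S}\hat{R}^{-j-(q-1)/2}$, exactly the stated case distinction. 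This is the main bookkeeping obstacle; everything else is routine transcription. Relations (x) and (xi) of Theorem~\ref{Thm:SL3Pres} are already indexed over a set $\mathcal{R}$ of representatives for $K^\times/(K^\times)^3$ and carry over unchanged (writing $a=\zeta^i$) as (III) and (IV). Assembling the blocks yields precisely (I)--(IV); the degenerate case $q=2$, where $K^\times$ is trivial and $\hat{R}=1$, is verified directly. This completes the plan.
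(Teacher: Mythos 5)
Your proposal is correct and follows the same route the paper (implicitly) takes: the corollary is stated there without separate proof as the immediate specialization of Theorem~\ref{Thm:SL3Pres} to $K=GF(q)$ with $K^\times=\langle\zeta\rangle$, which is precisely your Tietze-transformation argument, including the essential bookkeeping $-1=\zeta^{(q-1)/2}$ for $q$ odd that produces the stated case distinction in the rewriting rule for $\hat{E}_{1,2}(\zeta^j)$. The only step you leave tacit is that the identification $\hat{Z}(\zeta^i)=(\hat{J}\hat{R}^i)^2=\hat{Z}^i$ is not a free substitution (since $\hat{Z}(a)$ is a defined word, not an independent generator) but a one-line consequence of $\hat{J}^2=1$ and $[\hat{R},\hat{Z}]=1$, namely $\hat{J}\hat{R}^i\hat{J}=(\hat{Z}\hat{R}^{-1})^i=\hat{Z}^i\hat{R}^{-i}$; once this is recorded, families (iii)--(v) and (ix) collapse exactly as you describe.
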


\begin{co}
\label{Cor:PSL3Pres}
Let $K$ be a field. Then the projective linear group $PSL_3(K)$ has a presentation with generators 
$\hat{R}(a)$ for $a\in K^\times\backslash\{1\}$, $\hat{S}$, $\hat{J}$, and $T$, subject to the relations 
{\em (i)--(xi)} from Theorem~{\em \ref{Thm:SL3Pres},} plus one extra relation 
$(\hat{J}\,\hat{R}(\omega))^2=1$ in case that $K$ contains a primitive third root of unity $\omega$. 
\end{co}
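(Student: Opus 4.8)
The plan is to realize $PSL_3(K)$ as the quotient of $SL_3(K)$ by its center and then read off the quotient presentation from Theorem~\ref{Thm:SL3Pres} by a routine application of von Dyck's theorem. First I would recall that $PSL_3(K) = SL_3(K)/Z$, where $Z = Z(SL_3(K))$ is the group of scalar matrices of determinant $1$, that is, $Z = \{\omega I_3 : \omega \in K,\ \omega^3 = 1\}$. Hence $Z$ is trivial precisely when $K$ contains no primitive third root of unity (in particular whenever $\mathrm{char}\,K = 3$), and is cyclic of order $3$, generated by $\omega I_3$, exactly when such an $\omega$ exists.

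In the first case $Z = 1$, so the canonical projection $SL_3(K)\to PSL_3(K)$ is an isomorphism, and the presentation of $PSL_3(K)$ coincides verbatim with the presentation of $SL_3(K)$ furnished by Theorem~\ref{Thm:SL3Pres}. No extra relation is required, in agreement with the statement.

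In the second case the key step is to express the generator $\omega I_3$ of $Z$ as a word in the generators $\hat{R}(a),\hat{S},\hat{J},T$. Using the definition $\hat{Z}(a) := (\hat{J}\hat{R}(a))^2$ from item~(iii) of Theorem~\ref{Thm:SL3Pres}, a direct matrix computation gives
\[
\hat{Z}(a) = (\hat{J}\hat{R}(a))^2 = \begin{pmatrix}a&0&0\\0&a&0\\0&0&a^{-2}\end{pmatrix}.
\]
Since $\omega^3 = 1$ forces $\omega^{-2} = \omega$, this yields $\hat{Z}(\omega) = \omega I_3$, the distinguished generator of $Z$. Because $\omega \neq 1$, the symbol $\hat{R}(\omega)$ is a genuine generator of the presentation, and the word $(\hat{J}\hat{R}(\omega))^2 = \hat{Z}(\omega)$ represents precisely the central element $\omega I_3$.

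Finally I would invoke the standard fact (von Dyck) that if $G = \langle X \mid \mathcal{R}\rangle$ and $N$ is the normal closure in $G$ of a set of words $w_1,\dots,w_k$ in $X$, then $G/N = \langle X \mid \mathcal{R},\ w_1,\dots,w_k\rangle$. Applying this with $G = SL_3(K)$ presented as in Theorem~\ref{Thm:SL3Pres}, with the single word $w = (\hat{J}\hat{R}(\omega))^2$: since $Z$ is \emph{central}, the normal closure of $\{\omega I_3\}$ equals the cyclic subgroup $\langle \omega I_3\rangle = Z$ it generates, so adjoining the one relation $(\hat{J}\hat{R}(\omega))^2 = 1$ to the relations (i)--(xi) produces exactly $SL_3(K)/Z = PSL_3(K)$. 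I do not expect any serious obstacle here; the only point requiring care is the correct identification of the center and the verification, via the displayed computation, that the adjoined relator $\hat{Z}(\omega)$ is precisely the generator $\omega I_3$ of $Z$ and generates all of $Z$ as a normal subgroup, so that a single extra relation indeed suffices.
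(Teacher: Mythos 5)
Your proof is correct and is precisely the argument the paper leaves implicit for this corollary: since $(\hat{J}\hat{R}(\omega))^2 = \hat{Z}(\omega) = \omega I_3$ generates the centre $Z(SL_3(K)) = \{\omega I_3 : \omega^3 = 1\}$ (trivial exactly when $K$ has no primitive third root of unity, in particular in characteristic $3$), adjoining the single relation $(\hat{J}\hat{R}(\omega))^2 = 1$ to the presentation of Theorem~\ref{Thm:SL3Pres} yields $SL_3(K)/Z \cong PSL_3(K)$ by von Dyck's theorem. Your computation $(\hat{J}\hat{R}(a))^2 = \hat{Z}(a)$ and the observation that centrality makes the normal closure of $\omega I_3$ coincide with $\langle \omega I_3\rangle = Z$ are exactly the points needing verification, and both are handled correctly.
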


In preparation for our work on the large Mathieu groups in Section~\ref{Subsec:LargeMathieu}, we derive from Corollary~\ref{Cor:PSL3Pres} an explicit presentation for the group $PSL_3(4)$.
\begin{co}
\label{Cor:PSL34Pres}
The group $PSL_3(4)$ is generated by symbols $\s,$ $\mathfrak{j},$ $\mathfrak{t},$ $\mathfrak{r}$ induced, respectively, by the $SL_3(4)$-matrices $\hat{S}={\tiny \begin{pmatrix}1&1&0\\0&1&0\\0&0&1\end{pmatrix}}$,\, $\hat{J} = {\tiny \begin{pmatrix}0&1&0\\1&0&0\\0&0&1\end{pmatrix}}$, \, $T = {\tiny \begin{pmatrix}0&0&1\\0&1&0\\1&0&0\end{pmatrix}},$ and $\hat{R}(\zeta) = {\tiny \begin{pmatrix}\zeta&0&0\\0&1&0\\0&0&\zeta^{-1}\end{pmatrix}},$ where $\zeta$ is a primitive element of the field $K=GF(4),$ with defining relations
\begin{multline*}
\s^2 = \mathfrak{j}^2 = \mathfrak{t}^2 = \mathfrak{r}^3 = (\mathfrak{j}\mathfrak{r})^2 = (\mathfrak{t}\mathfrak{r})^2 = (\s\mathfrak{r})^3 = (\mathfrak{j}\s)^3 = (\mathfrak{t}\mathfrak{j})^3 = (\mathfrak{t}\s\mathfrak{t}\mathfrak{j}\s)^3 = 1,\\[1mm] 
\mathfrak{t}\s\mathfrak{t}\cdot\mathfrak{j}\mathfrak{r}\cdot \mathfrak{t}\s\mathfrak{t} = \mathfrak{r}\s\mathfrak{r}^{-1}\mathfrak{j}\mathfrak{r} \cdot \mathfrak{t}\s\mathfrak{t}\cdot \mathfrak{r}\s\mathfrak{r}^{-1}\mathfrak{j}\mathfrak{r}^{-1}\s\mathfrak{r}^{-1}.
\end{multline*} 
\end{co}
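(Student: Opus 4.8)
The plan is to specialise the presentation of $PSL_3(K)$ furnished by Corollary~\ref{Cor:PSL3Pres} to the field $K = GF(4)$ and then to simplify the relations (i)--(xi) of Theorem~\ref{Thm:SL3Pres} (together with the extra relation of Corollary~\ref{Cor:PSL3Pres}) by Tietze transformations until the list collapses to the eleven relations claimed. Three features of $GF(4)$ drive the reduction. First, $K^\times = \langle\zeta\rangle$ is cyclic of order $3$, so $a\mapsto \hat{R}(a)$ is determined by $\mathfrak{r} := \hat{R}(\zeta)$ with $\mathfrak{r}^3 = 1$; this eliminates $\hat{R}(\zeta^2) = \mathfrak{r}^{-1}$ as a generator and reduces relation (ii) to $\mathfrak{r}^3 = 1$. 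Second, $GF(4)$ contains the primitive cube roots of unity, so the extra relation applies with $\omega = \zeta$ and reads $(\mathfrak{j}\mathfrak{r})^2 = 1$; since $\hat{Z}(\zeta) = (\hat{J}\hat{R}(\zeta))^2 = \zeta I_3$ is central in $SL_3(4)$, it is trivial in $PSL_3(4)$, whence $\hat{Z}(a) = 1$ for all $a$ and relations (iii), (iv), (v), (ix) become vacuous (with (ix) degenerating to $\mathfrak{r}^3 = 1$). Third, $\mathrm{char}\,K = 2$ kills all signs, and the field identities $1 + \zeta = \zeta^2$, $1 + \zeta^2 = \zeta$ let me rewrite every occurrence of $\hat{E}_{1,2}(1-a)$, $\hat{E}_{1,2}(a^2)$, $\rho_a$, $\lambda_a$ as explicit words in $\s$ and $\mathfrak{r}$.

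With these substitutions I would match the surviving relations one by one. Relation (i) gives $\s^2 = \mathfrak{j}^2 = \mathfrak{t}^2 = 1$; the extra relation gives $(\mathfrak{j}\mathfrak{r})^2 = 1$; (viii) at $a = \zeta$ gives $(\mathfrak{t}\mathfrak{r})^2 = 1$; (vii) gives $(\mathfrak{j}\s)^3 = (\mathfrak{t}\mathfrak{j})^3 = 1$; and a short computation shows that (vi), in the presence of $\mathfrak{r}^3 = \s^2 = 1$ and the arithmetic $1+\zeta = \zeta^2$, is equivalent to $(\s\mathfrak{r})^3 = 1$. The two intricate relations emerge from relation (xi). Its special case $a = 1$ is precisely equation~(\ref{Eq:SRWJOrder}), which in characteristic $2$ becomes $(\s\,\hat{W}\mathfrak{j})^3 = 1$; after cyclic conjugation and writing $\hat{W} = \mathfrak{t}\s\mathfrak{t}$ this is exactly $(\mathfrak{t}\s\mathfrak{t}\mathfrak{j}\s)^3 = 1$. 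Its case $a = \zeta$, after substituting $\rho_\zeta = \mathfrak{r}\s\mathfrak{r}^{-1}\mathfrak{j}\mathfrak{r}$ and $\lambda_\zeta = \mathfrak{r}\s\mathfrak{r}^{-1}\mathfrak{j}\mathfrak{r}^{-1}\s\mathfrak{r}^{-1}$ (using $\hat{R}(\zeta)^4 = \mathfrak{r}$ and $\hat{Z}(\zeta) = 1$), yields verbatim the last displayed relation of the statement.

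It then remains to show that the relations I have not yet used --- namely relation (x) for $a \in \{1, \zeta, \zeta^2\}$ and the case $a = \zeta^2$ of relation (xi) --- follow from the eleven relations retained. The key observation is that in characteristic $2$ relation (x) asserts that the involution $\hat{W} = \mathfrak{t}\s\mathfrak{t}$ commutes with each conjugate $\hat{E}_{1,2}(\zeta^i) = \mathfrak{r}^i\s\mathfrak{r}^{-i}$; by additivity of $\hat{E}_{1,2}$ the case $a = \zeta^2$ follows from $a = 1$ and $a = \zeta$, so the real content is $(\mathfrak{t}\s)^4 = 1$ together with one $\mathfrak{r}$-conjugate. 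Deducing these commuting relations, and the residual instance of (xi), from relations 1--11 by pure word manipulation --- working through the interplay of $(\s\mathfrak{r})^3 = 1$, $(\mathfrak{t}\mathfrak{r})^2 = 1$ and the two order-$3$ relations --- is the main obstacle, since they are not among the retained relations.

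For this reason I expect it to be cleanest to finish by a counting argument rather than by forcing the redundancy combinatorially. The eleven retained relations manifestly hold for the displayed matrices (by the matching above), so the group $\Gamma$ they present admits a surjection onto $PSL_3(4)$. A Todd--Coxeter coset enumeration of $\Gamma$ over the subgroup corresponding to the image of $G_{\bf 0}$ (or over a conveniently chosen small subgroup) should then show $|\Gamma| \leq |PSL_3(4)| = 20160$, forcing the surjection to be an isomorphism and thereby validating the presentation; as a byproduct this confirms the redundancy of relation (x) and of the $a = \zeta^2$ case of (xi).
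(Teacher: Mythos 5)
Your proposal is correct and follows essentially the same route as the paper: specialise the general $SL_3/PSL_3$ presentation to $GF(4)$, use $\hat{Z}=1$ together with the characteristic-$2$ arithmetic $1+\zeta=\zeta^2$ to match relations (i)--(xi) (plus the $PSL$-relation $(\mathfrak{j}\mathfrak{r})^2=1$) against the eleven claimed relations, and dispose of the leftover relations by a machine computation. The paper's proof likewise ends by checking in GAP that $(\mathfrak{t}\s)^4=1$, the two commutator instances of relation (x), and the $a=\zeta^2$ instance of relation (xi) are consequences of the retained relations --- the same verification your surjection-plus-Todd--Coxeter order count performs.
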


\begin{proof}
The group $PSL_3(4)$ is generated by symbols $\s, \mathfrak{j}, \mathfrak{t},$ $\mathfrak{r}$ (the images of the matrices $\hat{S}$, $\hat{J}$, $T$, and $\hat{R}(\zeta)$ under the canonical projection $SL_3(4)\rightarrow PSL_3(4)$) modulo the $SL_3$-relations (I)--(IV) from Corollary~\ref{Cor:SL3(q)Pres} plus the $PSL$-relation $(\mathfrak{j}\mathfrak{r})^2=1$ from Corollary~\ref{Cor:PSL3Pres}. 

Relations (I) from Corollary~\ref{Cor:SL3(q)Pres} together with the $PSL$-relation $(\mathfrak{j}\mathfrak{r})^2=1$ lead to the relations
\begin{equation}
\label{Eq:PSL3(4)Pres1}
\mathfrak{r}^3 = \s^2 = \mathfrak{j}^2 = \mathfrak{t}^2 = (\mathfrak{j}\s)^3 = (\mathfrak{t}\mathfrak{j})^3 = (\mathfrak{t}\mathfrak{r})^2 = (\mathfrak{j}\mathfrak{r})^2 = 1.
\end{equation}
Second, since $\hat{Z}=1$, $1-\zeta=\zeta^2$, and $1-\zeta^2=\zeta$ in our situation, (II) boils down to the two relations $(\s\mathfrak{r})^3=1$ and $(\s\mathfrak{r}^{-1})^3=1$, with the second one following from the first plus the relations $\mathfrak{r}^3 = \s^2=1$:
\begin{equation*}
(\s\mathfrak{r}^{-1})^3 = \s\mathfrak{r}\s \cdot \s\mathfrak{r}\s \cdot \mathfrak{r}^{-1}\s \mathfrak{r}^{-1} = \mathfrak{r}^{-1} \s \mathfrak{r}^{-1} \cdot \mathfrak{r}^{-1} \s \mathfrak{r}^{-1}\cdot \mathfrak{r}^{-1} \s \mathfrak{r}^{-1} = \mathfrak{r}^{-1} \s \mathfrak{r} \s \mathfrak{r} \cdot \s\mathfrak{r}^{-1} = \mathfrak{r} \cdot \s \cdot \s\mathfrak{r}^{-1} = 1.
\end{equation*}
Hence, we obtain the one new relation 
\begin{equation}
\label{Eq:PSL3(4)Pres2}
(\s\mathfrak{r})^3=1.
\end{equation}
Third, since $\mathcal{R} = \{1, \zeta, \zeta^2\}$ in our present situation, Relations (III) take the form 
\begin{align}
(\mathfrak{t} \s)^4 & = 1 \mbox{ (for $a=1$), }\label{Eq:PSL3(4)Pres3}\\ 
[\mathfrak{t}\s\mathfrak{t}, \mathfrak{r}\s\mathfrak{r}^{-1}] &= 1 \mbox{ (for $a=\zeta$), }\label{Eq:PSL3(4)Pres4} \\
[\mathfrak{t}\s\mathfrak{t}, \mathfrak{r}^{-1} \s \mathfrak{r}] & = 1 \mbox{ (for $a=\zeta^2$).}\label{Eq:PSL3(4)Pres5}
\end{align} 
From (IV), we obtain three relations of the form
\[
\mathfrak{t}\s\mathfrak{t}\cdot \mathfrak{j} \cdot \mathfrak{r}^i\cdot \mathfrak{t}\s\mathfrak{t} = \rho_i \cdot \mathfrak{t}\s\mathfrak{t}\cdot \lambda_i,\quad(a=\zeta^i\in\mathcal{R}).
\]
Computing the quantities $\rho_i, \lambda_i$ occurring in these equations, we obtain the explicit relations
\begin{align}
\mathfrak{t}\s\mathfrak{t} \cdot \mathfrak{j} \cdot \mathfrak{t}\s\mathfrak{t} & = \s\mathfrak{j} \cdot \mathfrak{t}\s\mathfrak{t} \cdot \s\mathfrak{j}\s \mbox{ (for $a = 1$)},\label{Eq:PSL3(4)Pres6}\\
\mathfrak{t} \s \mathfrak{t} \cdot \mathfrak{j} \mathfrak{r} \cdot \mathfrak{t}\s\mathfrak{t} &= \mathfrak{r} \s \mathfrak{r}^{-1} \mathfrak{j} \mathfrak{r} \cdot \mathfrak{t}\s\mathfrak{t} \cdot \mathfrak{r}\s\mathfrak{r}^{-1} \mathfrak{j} \mathfrak{r}^{-1} \s \mathfrak{r}^{-1} \mbox{ (for $a = \zeta$)},\label{Eq:PSL3(4)Pres7}\\
\mathfrak{t} \s \mathfrak{t} \cdot \mathfrak{j}\mathfrak{r}^{-1} \cdot \mathfrak{t}\s\mathfrak{t} & = \mathfrak{r}^{-1} \s \mathfrak{r} \mathfrak{j} \mathfrak{r}^{-1} \cdot \mathfrak{t}\s\mathfrak{t} \cdot \mathfrak{r}^{-1}\s\mathfrak{r}\mathfrak{j}\mathfrak{r}\s\mathfrak{r} \mbox{ (for $a = \zeta^2$)}. \label{Eq:PSL3(4)Pres8}  
\end{align} 
Next, we note that, on the basis of the earlier relations 
\[
\s^2 = \mathfrak{j}^2 = \mathfrak{t}^2 = (\mathfrak{t}\s)^4 = 1, 
\]
Relation~(\ref{Eq:PSL3(4)Pres6}) is equivalent to 
\begin{equation}
\label{Eq:PSL3(4)Pres9}
(\mathfrak{t}\s\mathfrak{t}\mathfrak{j}\s)^3 = 1.
\end{equation}
Indeed, we have
\[
1 = \mathfrak{t}\s\mathfrak{t}\mathfrak{j}\mathfrak{t}\s\mathfrak{t}\s\mathfrak{j}\s\mathfrak{t}\s\mathfrak{t}\mathfrak{j}\s = \mathfrak{t}\s\mathfrak{t}\mathfrak{j}\s \cdot \s\mathfrak{t}\s\mathfrak{t}\s \cdot \mathfrak{j}\s\mathfrak{t}\s\mathfrak{t}\mathfrak{j}\s = \mathfrak{t}\s\mathfrak{t}\mathfrak{j}\s \cdot \mathfrak{t}\s\mathfrak{t}\mathfrak{j}\s \cdot \mathfrak{t}\s\mathfrak{t}\mathfrak{j}\s = (\mathfrak{t}\s\mathfrak{t}\mathfrak{j}\s)^3. 
\]
We have thus obtained a presentation for the group $PSL_3(4)$ with generators $\s, \mathfrak{j}, \mathfrak{t}, \mathfrak{r}$ and defining relations (\ref{Eq:PSL3(4)Pres1})--(\ref{Eq:PSL3(4)Pres5}) and (\ref{Eq:PSL3(4)Pres7})--(\ref{Eq:PSL3(4)Pres9}). Finally, one checks, for instance using the computer algebra system GAP \cite{GAP}, that the relations (\ref{Eq:PSL3(4)Pres3})--(\ref{Eq:PSL3(4)Pres5}), as well as (\ref{Eq:PSL3(4)Pres8}), are consequences of the remaining relations. We thus arrive at the presentation for $PSL_3(4)$ given in Corollary~\ref{Cor:PSL34Pres}.
\end{proof}

\section{Deformations of groups and presentations}
\label{Sec:Deform}

The machinery of group deformations was originally conceived by the first author as a useful tool for studying some specific profinite group actions arising in the framework of co-Galois theory, and was subsequently developed further in its own right by the authors; cf.\ \cite{CGA}, \cite{GFrame}, and \cite{BM1}.\footnote{Co-Galois theory, which has its roots in classical work by Mordell \cite{Mo}, Siegel \cite{CLS}, 
Kneser \cite{K}, and Schinzel \cite{Sch}, investigates  radical field extensions, being to some extent dual to classical Galois theory. For further developments of the theory see the monograph \cite{Albu} and the references therein, as well as the more recent papers \cite{AB}, \cite{KHK}, \cite{CGA}, \cite{GFrame} on abstract co-Galois theory.} 

The true power of deformation theory is best revealed in a topological setting; our aim in this section however is somewhat more modest: to describe some of the more elementary (discrete) aspects of deformation theory, and to demonstrate by means of an 
example how the technique of group deformation interacts with (and thereby enhances) the 
presentation method of Theorem~\ref{Thm:PresGpAct}. 

\subsection{Deformations of a group induced by actions on itself}
Let $G$ be a group, and let $\varphi: G\rightarrow \mathrm{Aut}(G)$ be an action by automorphisms of $G$ on itself. With the pair $(G,\varphi)$ we associate a binary operation $\circ_\varphi$ on the underlying set of $G$, given by
\[
g_1\circ_\varphi g_2:= g_1\cdot \varphi(g_1)^{-1}(g_2),\quad(g_1, g_2\in G).
\]
One checks that the identity element $1$ of $G$ is also a two-sided identity element with respect to the operation $\circ_\varphi$, and that the element $I(g):= \varphi(g)(g)^{-1}$ is a right inverse of $g$ with respect to $\circ_\varphi$. However, the binary operation $\circ_\varphi$ on $G$ need not, in general, be associative. The map
\[
\Phi: \mathcal{A}(G):= \mathrm{Hom}(G, \mathrm{Aut}(G)) \longrightarrow G^{G\times G},\quad \varphi\mapsto \circ_\varphi
\]
is clearly injective. If $\varphi$ is trivial, that is, $\varphi(g)=1_G$ for all $g\in G$, then $\circ_\varphi$ is the original group operation $\cdot$ on $G$ while, setting $\varphi(g) = \iota_g$, where $\iota_g: h\mapsto g\cdot h\cdot g^{-1}$ is the inner automorphism of $G$ induced by $g$, the operation $\circ_\varphi$ is the dual of the original group operation of $G$. In what follows, we shall denote the trivial action and the conjugation action of $G$ on itself by $\varphi_0$ and $\varphi_1$, respectively. The automorphism group $\mathrm{Aut}(G)$ of $G$ acts naturally from the left on the set $\mathcal{A}(G)$ via
\[
({}^\theta\varphi)(g):= \theta\circ \varphi(\theta^{-1}(g))\circ \theta^{-1},\quad (\theta\in\mathrm{Aut}(G),\, \varphi\in\mathcal{A}(G),\, g\in G),
\]
and this action is compatible with the map $\Phi$ introduced above in the sense that
\begin{equation}
\label{Eq:AdStable}
\theta(g_1\circ_\varphi g_2) = \theta(g_1) \circ_{{}^\theta\varphi} \theta(g_2),\quad (\theta\in\mathrm{Aut}(G),\, \varphi\in\mathcal{A}(G),\, g_1, g_2\in G).
\end{equation}
For $\varphi\in\mathcal{A}(G)$, we denote by $\mathrm{Stab}_\varphi$ the stabilizer of $\varphi$ in $\mathrm{Aut}(G)$; that is,
\[
\mathrm{Stab}_\varphi = \big\{\theta\in\mathrm{Aut}(G):\, {}^\theta\varphi = \varphi\big\}.  
\]
By Formula (\ref{Eq:AdStable}) plus injectivity of the map $\Phi$, we find that
\begin{equation}
\label{Eq:StabForm}
\mathrm{Stab}_\varphi = \big\{\theta\in\mathrm{Aut}(G):\,\forall\,g_1, g_2\in G,\, \theta(g_1\circ_\varphi g_2) = \theta(g_1) \circ_\varphi \theta(g_2)\big\}.
\end{equation}

Our first result states criteria for the algebraic system $G_\varphi:= (G, \circ_\varphi)$ associated with an action $\varphi$ by automorphisms of $G$ on itself to be a group.
\begin{lem}
\label{Lem:AssCrit}
Let $\varphi\in\mathcal{A}(G)$. Then the following assertions are equivalent:
\vspace{-2mm}

\begin{enumerate}
\item[(i)] $G_\varphi$ is a group, with neutral element $1,$ and with $I(g)$ as inverse of $g\in G_\varphi;$
\vspace{2mm}

\item[(ii)] the binary operation $\circ_\varphi$ is associative;
\vspace{2mm}

\item[(iii)]  the identity  
$\varphi(\varphi(g^{-1})(h)) = \varphi(g^{-1}hg)$ 
holds for all $g, h\in G;$
\vspace{2mm}

\item[(iv)] we have $\varphi(G) \subseteq \mathrm{Stab}_\varphi;$
\vspace{2mm}

\item[(v)] $\varphi(G)$ is a normal subgroup of $\mathrm{Stab}_\varphi$.
\end{enumerate}
\end{lem}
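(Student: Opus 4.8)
The plan is to establish the cycle of equivalences $\text{(i)} \Leftrightarrow \text{(ii)} \Leftrightarrow \text{(iii)} \Leftrightarrow \text{(iv)} \Leftrightarrow \text{(v)}$, writing $\varphi_g := \varphi(g)$ throughout and using freely that $\varphi$ is a homomorphism, so that $\varphi_{gh} = \varphi_g\varphi_h$, $\varphi_1 = \mathrm{id}$, and $\varphi_{g^{-1}} = \varphi_g^{-1}$. At the outset I would isolate one reformulation of the stabiliser condition, derived directly from the definition of the action: unwinding ${}^\theta\varphi = \varphi$ and then substituting $\theta(y)$ for the argument $g$ gives
\[
\theta\in\mathrm{Stab}_\varphi \iff \varphi_{\theta(y)} = \theta\,\varphi_y\,\theta^{-1}\ \text{ for all } y\in G,
\]
a statement I will refer to as $(\ast)$. (One could alternatively invoke Formula~(\ref{Eq:StabForm}), but $(\ast)$ follows at once from the definition and is exactly what the remaining steps need.)

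For $\text{(i)} \Leftrightarrow \text{(ii)}$ I would argue purely magma-theoretically. We already know that $1$ is a two-sided identity for $\circ_\varphi$ and that $I(g)$ is a right inverse of $g$. Hence, if $\circ_\varphi$ is associative, then $G_\varphi$ is an associative unital system in which every element has a right inverse, and the standard upgrade applies: if $a\circ_\varphi b = 1$ and $b\circ_\varphi c = 1$, then
\[
b\circ_\varphi a = b\circ_\varphi a\circ_\varphi(b\circ_\varphi c) = b\circ_\varphi(a\circ_\varphi b)\circ_\varphi c = b\circ_\varphi c = 1,
\]
so each right inverse is two-sided and $G_\varphi$ is a group with $I(g)$ the inverse of $g$. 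The converse is trivial, since a group operation is associative.

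The computational heart is $\text{(ii)} \Leftrightarrow \text{(iii)}$. Expanding both associants, using $\varphi_{g_1\varphi_{g_1}^{-1}(g_2)} = \varphi_{g_1}\varphi_{\varphi_{g_1}^{-1}(g_2)}$, I would obtain
\[
(g_1\circ_\varphi g_2)\circ_\varphi g_3 = g_1\varphi_{g_1}^{-1}(g_2)\cdot \varphi_{\varphi_{g_1}^{-1}(g_2)}^{-1}\varphi_{g_1}^{-1}(g_3),\qquad g_1\circ_\varphi(g_2\circ_\varphi g_3) = g_1\varphi_{g_1}^{-1}(g_2)\cdot \varphi_{g_1}^{-1}\varphi_{g_2}^{-1}(g_3).
\]
Cancelling the common prefix $g_1\varphi_{g_1}^{-1}(g_2)$ and letting $g_3$ range over $G$, associativity becomes the identity of automorphisms $\varphi_{\varphi_{g_1}^{-1}(g_2)} = \varphi_{g_1}^{-1}\varphi_{g_2}\varphi_{g_1}$ for all $g_1,g_2$. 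Since $\varphi_{g^{-1}} = \varphi_g^{-1}$ and $\varphi_{g^{-1}hg} = \varphi_g^{-1}\varphi_h\varphi_g$, this is verbatim condition~(iii). I expect the only genuine difficulty here to be bookkeeping, namely keeping straight the twisted subscript $\varphi_{g_1}^{-1}(g_2)$ while applying the homomorphism property to the product inside it.

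Finally, $\text{(iii)} \Leftrightarrow \text{(iv)}$ follows by reindexing: replacing $g$ by $g^{-1}$ turns (iii) into $\varphi_{\varphi_g(h)} = \varphi_g\varphi_h\varphi_g^{-1}$ for all $g,h$, which by $(\ast)$ with $\theta = \varphi_g$ and $y = h$ says precisely that $\varphi_g\in\mathrm{Stab}_\varphi$ for every $g$, i.e.\ $\varphi(G)\subseteq\mathrm{Stab}_\varphi$. For $\text{(iv)} \Leftrightarrow \text{(v)}$, the implication from (v) to (iv) is immediate. Conversely, $\varphi(G)$ is always a subgroup of $\mathrm{Aut}(G)$, being the image of the homomorphism $\varphi$, so under (iv) it is a subgroup of $\mathrm{Stab}_\varphi$; and for any $\theta\in\mathrm{Stab}_\varphi$, identity $(\ast)$ yields $\theta\varphi_g\theta^{-1} = \varphi_{\theta(g)}\in\varphi(G)$, establishing that $\varphi(G)$ is normal in $\mathrm{Stab}_\varphi$. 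This closes the chain and completes the proof.
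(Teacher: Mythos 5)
Your proof is correct and follows essentially the same route as the paper's (sketched) argument: the paper likewise treats (i)\,$\Leftrightarrow$\,(ii) and (iv)\,$\Leftrightarrow$\,(v) as elementary, derives (iii)\,$\Leftrightarrow$\,(iv) from the definition of the $\mathrm{Aut}(G)$-action on $\mathcal{A}(G)$ (your identity $(\ast)$), and settles (ii)\,$\Leftrightarrow$\,(iii) by the direct computation you carry out. You have simply supplied in full the details the paper leaves to the reader, and all of your computations check out.
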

\textit{Proof} (sketch). The equivalences (i) $\Leftrightarrow$ (ii) and (iv) $\Leftrightarrow$ (v) are clear, while the equivalence (iii) $\Leftrightarrow$ (iv) follows from the fact that, by definition of the action of $\mathrm{Aut}(G)$ on the complex $\mathcal{A}(G)$, ${}^{\varphi(g)}\varphi = \varphi$ for fixed $g\in G$ is equivalent to $\varphi(\varphi(g^{-1})(h)) = \varphi(g^{-1}hg)$ for all $h\in G$. Thus, it remains to establish the equivalence of Conditions (ii) and (iii), which follows by direct computation.
\hfill $\Box$\par

We call a homomorphism $\varphi: G\rightarrow \mathrm{Aut}(G)$ \textit{admissible}, if it satisfies the equivalent conditions of Lemma~\ref{Lem:AssCrit}. The (non-empty) subset of $\mathcal{A}(G)$ consisting of all admissible  actions of $G$ on itself is denoted by $\mathcal{A}_{\mathrm{ad}}(G)$. By (\ref{Eq:AdStable}), $\mathcal{A}_{\mathrm{ad}}(G)$ is stable under the action of $\mathrm{Aut}(G)$. The set of global fixed points of the action of $\mathrm{Aut}(G)$ on $\mathcal{A}(G)$,
\[
\mathcal{A}(G)^0 = \big\{\varphi\in \mathcal{A}(G):\, \mathrm{Stab}_\varphi = \mathrm{Aut}(G)\big\} \subseteq \mathcal{A}_{\mathrm{ad}}(G)
\]
is non-empty, since it contains $\varphi_0$ and $\varphi_1$. Moreover, for every fixed point $\varphi\in\mathcal{A}(G)^0$, $\varphi(G)$ is a normal subgroup of $\mathrm{Aut}(G)$, and $\mathrm{Aut}(G)$ is a subgroup of $\mathrm{Aut}(G_\varphi)$.

\subsection{Admissible group actions}
Let $\varphi: G\rightarrow \mathrm{Aut}(G)$ be an admissible action of $G$ on itself. By Lemma~\ref{Lem:AssCrit} and Formula~(\ref{Eq:StabForm}), the image $\varphi(G)$ acts naturally by automorphisms on both $G$ and $G_\varphi$, so we can form the semi-direct products $G\ltimes \varphi(G)$ and $G_\varphi\ltimes \varphi(G)$ with respect to these actions. In both cases, the underlying set is the cartesian product $G\times \varphi(G)$, with multiplication in $G\ltimes \varphi(G)$  given by the rule
\[
(g_1, \theta_1) (g_2, \theta_2) = (g_1\cdot \theta_1(g_2), \theta_1\circ \theta_2),
\]
and by
\[
(g_1,\theta_1) (g_2, \theta_2) = (g_1\circ_\varphi \theta_1(g_2), \theta_1\circ \theta_2) = (g_1\cdot \varphi(g_1^{-1})(\theta_1(g_2)), \theta_1 \circ  \theta_2)
\]
in $G_\varphi \ltimes \varphi(G)$. Our next result, whose proof is  straightforward, states some further properties of admissible actions.

\begin{lem}
\label{Lem:AdeqProps}
Let $\varphi: G\rightarrow \mathrm{Aut}(G)$ be  admissible. Then the following assertions hold.

\vspace{-2mm}
\begin{enumerate}
\item[(i)] The map $\alpha: G\ltimes \varphi(G) \rightarrow G_\varphi\ltimes \varphi(G)$ given by $(g,\theta)\mapsto (g, \varphi(g)\circ \theta)$ is an isomorphism.

\vspace{2mm}
\item[(ii)] The map $\beta: G_\varphi \rightarrow \varphi(G)$ given by $g\mapsto \varphi(g)^{-1}$ is a morphism of $\varphi(G)$-groups, where $\varphi(G)$ acts on itself by conjugation.

\vspace{2mm}
\item[(iii)] The map $G\rightarrow G_\varphi$ sending $g$ to $g^{-1}$ induces an isomorphism of $\varphi(G)$-groups
\[
\gamma: G/\mathrm{ker}(\varphi) \cong G_\varphi/\mathrm{ker}(\varphi).
\]
Consequently, $G_\varphi$ is an extension of $\mathrm{ker}(\varphi)$ by $\varphi(G);$ its associated $\varphi(G)$-kernel $\kappa: \varphi(G)\rightarrow \mathrm{Out}(\mathrm{ker}(\varphi))$ being induced by the map $G \rightarrow \mathrm{Aut}(\mathrm{ker}(\varphi)),$ $g\mapsto F_g,$ where $F_g(h):= g^{-1}\cdot \varphi(g)(h)\cdot g$ for $h\in\mathrm{ker}(\varphi)$.

\vspace{2mm}
\item[(iv)] The map $G\times G_\varphi\rightarrow G_\varphi$ given by $(g,x)\mapsto {}^gx:= \varphi(g)(x)$ is a left action of $G$ on the group $G_\varphi,$ and the identity map $1: G\rightarrow G_\varphi,$ $g\mapsto g,$ is a $1$-cocycle with respect to this action; that is, we have
\begin{equation}
\label{Eq:Id1Cocycle}
g\cdot h = g\circ_\varphi {}^gh\quad(g,h\in G).
\end{equation}
\end{enumerate}
\end{lem}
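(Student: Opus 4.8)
The plan is to reduce all four assertions to a single consequence of admissibility. Condition (iii) of Lemma~\ref{Lem:AssCrit}, after replacing $g$ by $g^{-1}$ and using that $\varphi$ is a homomorphism, reads
\begin{equation*}
\varphi(\varphi(g)(h)) = \varphi(g)\,\varphi(h)\,\varphi(g)^{-1}\qquad(g,h\in G);\tag{$\ast$}
\end{equation*}
that is, $\varphi$ intertwines the action of $\varphi(G)$ on $G$ with conjugation inside $\varphi(G)$. Admissibility also gives $\varphi(G)\subseteq\mathrm{Stab}_\varphi$ (Lemma~\ref{Lem:AssCrit}(iv)), so by (\ref{Eq:AdStable}) each $\varphi(g)$ is an automorphism not only of $G$ but of $G_\varphi$, i.e.\ $\varphi(g)(x\circ_\varphi y)=\varphi(g)(x)\circ_\varphi\varphi(g)(y)$. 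I would record these two facts first; every statement below is a formal consequence.

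Assertion (iv) then comes for free: the action $(g,x)\mapsto\varphi(g)(x)$ is by automorphisms of $G_\varphi$ by the remark just made, and the cocycle identity (\ref{Eq:Id1Cocycle}) is merely the definition of $\circ_\varphi$ unwound, since $g\circ_\varphi{}^{g}h=g\cdot\varphi(g)^{-1}(\varphi(g)(h))=g\cdot h$. For (ii) I would check $\beta(g_1\circ_\varphi g_2)=\beta(g_1)\beta(g_2)$ directly: expanding $\varphi\big(g_1\cdot\varphi(g_1)^{-1}(g_2)\big)$ by the homomorphism property and applying $(\ast)$ to the second factor collapses it to $\varphi(g_2)\varphi(g_1)$, whose inverse is $\beta(g_1)\beta(g_2)$; the $\varphi(G)$-equivariance $\beta(\varphi(g)(x))=\varphi(g)\beta(x)\varphi(g)^{-1}$ is then again precisely $(\ast)$.

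For (i), bijectivity of $\alpha$ is clear (its inverse sends $(g,\psi)$ to $(g,\varphi(g)^{-1}\psi)$), so only the homomorphism property needs checking. Comparing first coordinates reduces, as in (iv), to the definition of $\circ_\varphi$, while comparing second coordinates reduces exactly to $\varphi(\theta_1(g_2))=\theta_1\varphi(g_2)\theta_1^{-1}$ for $\theta_1\in\varphi(G)$, which is $(\ast)$. Turning to (iii), I would note that $\beta$ from (ii) is surjective onto $\varphi(G)$ with kernel $\{g:\varphi(g)=1\}=\mathrm{ker}(\varphi)$; since $\circ_\varphi$ agrees with the original product on $\mathrm{ker}(\varphi)$ (there $\varphi$ is trivial), $\mathrm{ker}(\varphi)$ is literally the same group inside $G$ and inside $G_\varphi$, and $\beta$ exhibits the extension $1\to\mathrm{ker}(\varphi)\to G_\varphi\to\varphi(G)\to 1$. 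The map $\nu\colon g\mapsto g^{-1}$ satisfies $\beta\circ\nu=\varphi$, so it descends to an isomorphism $\gamma$ making the evident square commute; and $\nu$ is $\varphi(G)$-equivariant because automorphisms preserve inverses, whence $\gamma$ is an isomorphism of $\varphi(G)$-groups.

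The one computation deserving care — and the only place I expect to slow down — is the identification of the abstract kernel $\kappa$. Here I would lift $\varphi(g)\in\varphi(G)$ to the element $g^{-1}\in G_\varphi$ (legitimate since $\beta(g^{-1})=\varphi(g)$) and compute the conjugate $g^{-1}\circ_\varphi h\circ_\varphi I(g^{-1})$ for $h\in\mathrm{ker}(\varphi)$, where $I(g^{-1})=\varphi(g)^{-1}(g)$. The key simplification is $\varphi\big(g^{-1}\cdot\varphi(g)(h)\big)=\varphi(g)^{-1}$, which follows from $(\ast)$ together with $\varphi(h)=1$; feeding this into the definition of $\circ_\varphi$ telescopes the triple product to $g^{-1}\varphi(g)(h)\,g=F_g(h)$. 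Thus conjugation by the chosen lift realises $F_g$ on $\mathrm{ker}(\varphi)$, so $\kappa$ is induced by $g\mapsto F_g$, as claimed. All remaining verifications are routine bookkeeping.
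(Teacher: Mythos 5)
Your proof is correct; in fact the paper offers no proof of this lemma at all (it is stated "whose proof is straightforward"), so there is no argument of the authors' to diverge from. Your verifications — organizing everything around the identity $\varphi(\varphi(g)(h))=\varphi(g)\,\varphi(h)\,\varphi(g)^{-1}$, which is just Part~(iii) of Lemma~\ref{Lem:AssCrit} rewritten, together with the fact (recorded in the paper immediately before the lemma) that $\varphi(G)\subseteq\mathrm{Stab}_\varphi$ acts by automorphisms of $G_\varphi$ — are exactly the intended routine computations, and your identification of the abstract kernel via the lift $g^{-1}$ of $\varphi(g)$, using $\beta(g^{-1})=\varphi(g)$ and the collapse $\varphi\bigl(g^{-1}\varphi(g)(h)\bigr)=\varphi(g)^{-1}$ for $h\in\mathrm{ker}(\varphi)$, checks out.
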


\begin{co}
\label{Cor:EqImpIso}
Let $\varphi$ and $\psi$ be admissible $G$-actions, equivalent under the action of the group $\mathrm{Aut}(G)$. Then $G_\varphi \cong G_\psi$ and $\varphi(G) \cong \psi(G)$.
\end{co}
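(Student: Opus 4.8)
The plan is to produce both isomorphisms explicitly from a single element of $\mathrm{Aut}(G)$ witnessing the equivalence, using the compatibility formula (\ref{Eq:AdStable}) for one claim and the definition of the $\mathrm{Aut}(G)$-action for the other. Since $\varphi$ and $\psi$ are equivalent under $\mathrm{Aut}(G)$, I would first fix some $\theta\in\mathrm{Aut}(G)$ with ${}^\theta\varphi=\psi$, and then show that $\theta$ itself, read on the appropriate underlying sets, already implements both isomorphisms.

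For the isomorphism $G_\varphi\cong G_\psi$, the key observation is that (\ref{Eq:AdStable}) specialises, upon substituting ${}^\theta\varphi=\psi$, to
\[
\theta(g_1\circ_\varphi g_2)=\theta(g_1)\circ_\psi\theta(g_2)\quad(g_1,g_2\in G).
\]
This says precisely that the set-bijection $\theta$ is multiplicative when regarded as a map from $(G,\circ_\varphi)$ to $(G,\circ_\psi)$. Because $\varphi$ and $\psi$ are admissible, Lemma~\ref{Lem:AssCrit} guarantees that $G_\varphi$ and $G_\psi$ are genuine groups (on the common underlying set $G$), so a multiplicative bijection between them is automatically a group isomorphism. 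Hence $\theta\colon G_\varphi\to G_\psi$ is the desired isomorphism.

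For the isomorphism $\varphi(G)\cong\psi(G)$, I would unwind the definition of the $\mathrm{Aut}(G)$-action. For $g\in G$ we have $\psi(g)=({}^\theta\varphi)(g)=\theta\circ\varphi(\theta^{-1}(g))\circ\theta^{-1}$; letting $g$ range over $G$ and using that $\theta^{-1}$ is a bijection of $G$, this yields $\psi(G)=\theta\,\varphi(G)\,\theta^{-1}$, the conjugate of $\varphi(G)$ by $\theta$ inside $\mathrm{Aut}(G)$. Conjugation by $\theta$ is an (inner) automorphism $c_\theta\colon\sigma\mapsto\theta\sigma\theta^{-1}$ of the group $\mathrm{Aut}(G)$, so its restriction gives a group isomorphism $\varphi(G)\xrightarrow{\ \sim\ }\psi(G)$.

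I do not expect a genuine obstacle here: the statement is essentially a packaging of the two ways in which the equivalence $\theta$ acts. The only point requiring care is recognising that the relevant map for the first claim must be checked for the \emph{deformed} operations $\circ_\varphi,\circ_\psi$ rather than the original group law on $G$ — and this is exactly what (\ref{Eq:AdStable}) delivers — together with invoking admissibility (via Lemma~\ref{Lem:AssCrit}) to upgrade ``multiplicative bijection'' to ``group isomorphism.''
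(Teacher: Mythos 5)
Your proof is correct. For the first assertion you argue exactly as the paper does: fix $\theta$ with ${}^\theta\varphi=\psi$ and read (\ref{Eq:AdStable}) as saying that $\theta$ is multiplicative from $(G,\circ_\varphi)$ to $(G,\circ_\psi)$, hence an isomorphism $G_\varphi\cong G_\psi$ once admissibility (Lemma~\ref{Lem:AssCrit}) guarantees both are groups. For the second assertion, however, you take a genuinely different and more economical route. The paper first observes that $\theta$ carries $\mathrm{ker}(\varphi)$ onto $\mathrm{ker}(\psi)$ and then invokes Part~(iii) of Lemma~\ref{Lem:AdeqProps} to pass through the quotient isomorphisms $\varphi(G)\cong G_\varphi/\mathrm{ker}(\varphi)$ and $G_\psi/\mathrm{ker}(\psi)\cong\psi(G)$; your argument instead unwinds the definition of the $\mathrm{Aut}(G)$-action to get the identity $\psi(G)=\theta\,\varphi(G)\,\theta^{-1}$ inside $\mathrm{Aut}(G)$, so that conjugation by $\theta$ restricts to the required isomorphism $\varphi(G)\cong\psi(G)$. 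Your version is more elementary and in fact more general: the conjugation computation uses only ${}^\theta\varphi=\psi$ and nothing about admissibility, so the conclusion $\varphi(G)\cong\psi(G)$ holds for arbitrary equivalent elements of $\mathcal{A}(G)$, not just admissible ones. What the paper's route buys in exchange is the explicit extra fact that $\theta$ restricts to an isomorphism $\mathrm{ker}(\varphi)\cong\mathrm{ker}(\psi)$ and exhibits the isomorphism of images as the map induced by $\theta$ on the deformation quotients, tying the statement to the extension structure of $G_\varphi$ described in Lemma~\ref{Lem:AdeqProps}(iii).
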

\begin{proof}
Let $\theta\in\mathrm{Aut}(G)$ be such that ${}^\theta\varphi = \psi$. By (\ref{Eq:AdStable}), $\theta$ is an isomorphism of $G_\varphi$ onto $G_\psi$. Also, for $g\in G$, we have, by definition of the action of $\mathrm{Aut}(G)$, that $g\in \mathrm{ker}(\varphi)$ if, and only if, $\theta(g)\in \mathrm{ker}(\psi)$, so that $\theta$ restricts to an isomorphism $\mathrm{ker}(\varphi) \cong \mathrm{ker}(\psi)$. Using Part~(iii) of Lemma~\ref{Lem:AdeqProps}, we obtain an isomorphism
\[
\varphi(G) \cong G_\varphi/\mathrm{ker}(\varphi) \overset{\theta}{\longrightarrow} G_\psi/\mathrm{ker}(\psi) \cong \psi(G),
\]
as required.
\end{proof}

{\footnotesize 
\begin{rem} \em
Part~(ii) of Lemma~\ref{Lem:AdeqProps} provides (weak) obstructions for a deformation to possess certain properties. More precisely, we have the following. 
\textit{Let $G$ be a group, let $\varphi: G\rightarrow \mathrm{Aut}(G)$ be an admissible action, and let $\mathfrak{P}$ be a group property inherited by quotients. Then, if $G_\varphi$ has property $\mathfrak{P},$ so does $\varphi(G)$.}
\end{rem}}
 
{\footnotesize 
\begin{prob} \em 
\label{Prob:IsoDeforms}
Obtain a characterisation of the equivalence relation $\sim$ on the set $\mathcal{A}_\mathrm{ad}(G)$ given by $\varphi \sim \psi :\Longleftrightarrow G_\varphi \cong G_\psi;$ in other words, find a necessary and sufficient condition for two admissible actions by automorphisms of $G$ on itself to lead to isomorphic deformations.
\end{prob}}

\subsection{Deformation pairs}

Given groups $G$ and $H$, we say that $(G,H)$ is a \textit{deformation pair}, if there exists $\varphi\in\mathcal{A}_{\mathrm{ad}}(G)$ such that $G_\varphi \cong H$. Trivially, if the groups $G$ and $H$ are isomorphic, then $(G,H)$ and $(H,G)$ are deformation pairs. Note also that the groups $G$ and $H$ have the same cardinality, provided $(G,H)$ is a deformation pair.

We record some important characterisations of deformation pairs.

\begin{pr}
\label{Prop:Sym} 
Let $G$ and $H$ be groups. Then the following assertions are equivalent.

\vspace{-3mm}

\begin{enumerate}
\item[(i)] $(G, H)$ is a deformation pair.

\vspace{2mm}

\item[(ii)] $(H, G)$ is a deformation pair.

\vspace{2mm}

\item[(iii)] There exist actions by automorphisms
\[
G \times H \rightarrow H, (g, h) \mapsto {}^g h,\,\,\,H \times G \rightarrow G, (h, g) \mapsto {}^h g,
\]
and a bijective $1$-cocycle $\eta \in Z^1(G, H),$ such that its inverse $\eta^{- 1} \in Z^1(H, G)$.

\vspace{2mm}

\item[(iv)] There exist an action by automorphisms $G \times H \rightarrow H, (g, h) \mapsto {}^g h$
and a bijective cocycle $\eta \in Z^1(G, H),$ such that 
\[
\eta^{- 1}({}^{g_1} \eta(g_2)) \cdot g_1 \cdot g_2^{- 1} \cdot g_1^{- 1} \in \Delta,\quad(g_1, g_2\in G),
\]
where $\Delta$ is the kernel of the action of $G$ on $H$.

\vspace{2mm}

\item[(v)] There exist an action by automorphisms $H \times G \rightarrow G, (h, g) \mapsto {}^h g$ and
a bijective cocycle $\zeta \in Z^1(H, G),$ such that 
\[
\zeta^{- 1}({}^{h_1} \zeta(h_2)) \cdot h_1 \cdot h_2^{- 1} \cdot h_1^{- 1} \in \Delta',\quad(h_1, h_2\in H),
\]
where $\Delta'$ is the kernel of the action of $H$ on $G$.
\end{enumerate}
\end{pr}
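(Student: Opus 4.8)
The plan is to establish the circle (i) $\Rightarrow$ (iii) $\Rightarrow$ (i) and the pair (i) $\Leftrightarrow$ (iv), and then to obtain the two assertions phrased in terms of $H$ — namely (ii) and (v) — essentially for free, by exploiting the manifest symmetry of condition (iii) under interchanging $(G,\eta)$ with $(H,\eta^{-1})$, together with the symmetry of the constructions used. In this way the whole proposition reduces to analysing a single admissible deformation together with its associated bijective cocycle. Throughout, the central object is the canonical cocycle supplied by Lemma~\ref{Lem:AdeqProps}(iv): when $\varphi\in\mathcal{A}_{\mathrm{ad}}(G)$, the identity map $1\colon G\to G_\varphi$ is a bijective $1$-cocycle for the action ${}^g x=\varphi(g)(x)$, that is, $g\cdot h=g\circ_\varphi{}^g h$.

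For the forward directions I would fix an isomorphism $\psi\colon G_\varphi\to H$ (which exists by (i)) and transport structure along $\psi$. Setting $\eta:=\psi$ as a map of underlying sets $G\to H$, and letting $G$ act on $H$ by ${}^g h:=\psi\!\big(\varphi(g)(\psi^{-1}(h))\big)$, Lemma~\ref{Lem:AssCrit}(iv) gives $\varphi(g)\in\mathrm{Stab}_\varphi$, hence $\varphi(g)$ is an automorphism of $G_\varphi$ by (\ref{Eq:StabForm}); thus this is an action by automorphisms, and the cocycle identity (\ref{Eq:Id1Cocycle}) transported through $\psi$ makes $\eta$ a bijective cocycle. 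This already yields (iv), once one observes that its displayed condition is nothing but the admissibility identity of Lemma~\ref{Lem:AssCrit}(iii) rewritten through $\Delta=\ker\varphi$. To upgrade the data to (iii) I would produce the reverse action: the map $\rho\colon G_\varphi\to\mathrm{Aut}(G)$, $x\mapsto\varphi(x^{-1})$ (inverse taken in $G$), is a homomorphism — this is the one genuine computation, and it is exactly here that admissibility is consumed. Transporting $\rho$ along $\psi$ gives an action of $H$ on $G$, and since $x\circ_\varphi y=x\cdot\varphi(x^{-1})(y)$, the set-identity $G_\varphi\to G$ is a $1$-cocycle for $\rho$; its transport is precisely $\eta^{-1}$, so $\eta^{-1}\in Z^1(H,G)$ as required.

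For the reconstruction directions (iv) $\Rightarrow$ (i) and (iii) $\Rightarrow$ (i) I would, conversely, start from a bijective cocycle $\eta\colon G\to H$ and define $\varphi(a)(b):=\eta^{-1}\!\big({}^a\eta(b)\big)$. A direct manipulation shows that $\varphi$ is always a homomorphism into the symmetric group $\mathrm{Sym}(G)$, and that the transported operation $a\ast b:=\eta^{-1}(\eta(a)\eta(b))$ coincides with $\circ_\varphi$; hence $(G,\circ_\varphi)\cong H$ is automatically a group. The only point requiring the hypotheses is that each $\varphi(a)$ lie in $\mathrm{Aut}(G)$ rather than merely in $\mathrm{Sym}(G)$: under (iv) this is forced by the displayed $\Delta$-condition, which is exactly what makes $\varphi(a)$ multiplicative; under (iii) it is immediate, because the two cocycle conditions combine into the identity $\eta^{-1}({}^{g_1}\eta(g_2))={}^{\eta(g_1)^{-1}}(g_2)$, exhibiting $\varphi(a)={}^{\eta(a)^{-1}}(\,\cdot\,)$ as an automorphism by hypothesis. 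Once $\varphi\colon G\to\mathrm{Aut}(G)$ is established and $\circ_\varphi$ is a group operation, associativity of $\circ_\varphi$ yields admissibility via Lemma~\ref{Lem:AssCrit}, whence $G_\varphi\cong H$ and (i) holds.

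The main obstacle, and the place to be most careful, is the bookkeeping of precisely which hypothesis guarantees the automorphism property of the candidate $\varphi(a)$ and, dually, the homomorphism property of $\rho$ — everything else is formal transport. The clean fact to isolate is the identity $\eta^{-1}({}^{g_1}\eta(g_2))={}^{\eta(g_1)^{-1}}(g_2)$, valid whenever both $\eta$ and $\eta^{-1}$ are cocycles; granting it, each remaining verification is a short computation. Finally, with (i) $\Leftrightarrow$ (iii) $\Leftrightarrow$ (iv) in hand, applying these equivalences to the pair $(H,G)$ — legitimate since (iii) is symmetric and the reconstruction of $\varphi$ was symmetric in the two groups — gives (ii) $\Leftrightarrow$ (iii) $\Leftrightarrow$ (v), closing the cycle.
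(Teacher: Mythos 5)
Your proposal is correct, but it organizes the equivalences genuinely differently from the paper. The paper runs the wheel (i) $\Rightarrow$ (iv) $\Rightarrow$ (iii) $\Rightarrow$ (i) (together with its mirror through (ii) and (v)), and the middle implication (iv) $\Rightarrow$ (iii) is delegated to the technical Lemma~\ref{Lem:SymExtend}, which is stated without proof. You instead prove (iv) $\Rightarrow$ (i) directly and then manufacture the reverse action needed for (iii) out of the deformation itself, via $\rho(x)=\varphi(x^{-1})=\varphi(x)^{-1}$ --- which is precisely the map $\beta$ of Lemma~\ref{Lem:AdeqProps}(ii), so you could simply cite it --- thereby bypassing Lemma~\ref{Lem:SymExtend} altogether and making the argument self-contained modulo Lemmas~\ref{Lem:AssCrit} and \ref{Lem:AdeqProps}. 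Your reconstruction step is also organized more transparently: defining $\varphi(a)(b)=\eta^{-1}({}^{a}\eta(b))$, you correctly isolate that $\varphi$ is \emph{automatically} a homomorphism into $\mathrm{Sym}(G)$ and that the transported operation $a\ast b=\eta^{-1}(\eta(a)\eta(b))$ coincides with $\circ_\varphi$ (so $(G,\circ_\varphi)\cong H$ is a group for free), with the hypotheses of (iii)/(iv) consumed only in forcing each $\varphi(a)$ to be multiplicative; indeed, the $\Delta$-condition in (iv) is, as you say, exactly equivalent to multiplicativity of $\varphi(g_1)$, and on the other side it is the admissibility identity of Lemma~\ref{Lem:AssCrit}(iii) transported through the isomorphism. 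The paper, by contrast, defines $\varphi(g_1)(g_2)={}^{\eta(g_1^{-1})}g_2$ and verifies the homomorphism property by a longer chain of substitutions. What the paper's arrangement buys is a reusable technical lemma (Lemma~\ref{Lem:SymExtend} is invoked again in the proof of Corollary~\ref{Co:Lattiso}); what yours buys is economy and a cleaner accounting of exactly where admissibility enters (once in showing $\rho$ is a homomorphism on $G_\varphi$, once in the $\Delta$-condition). Your closing symmetry argument for (ii) and (v) is legitimate, since (iii) is manifestly invariant under $(G,H,\eta)\mapsto(H,G,\eta^{-1})$ and (v) is (iv) stated for the pair $(H,G)$.

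One small correction to the ``clean fact'' you isolate: the identity that follows directly from the two cocycle conditions in (iii) is $\eta^{-1}({}^{g_1}\eta(g_2))={}^{\eta(g_1^{-1})}g_2$, with the inverse taken \emph{inside} the argument of $\eta$, not ${}^{\eta(g_1)^{-1}}g_2$ as you wrote. Since $\eta$ is a cocycle rather than a homomorphism, $\eta(g_1^{-1})$ and $\eta(g_1)^{-1}$ are in general distinct elements of $H$ (one has $\eta(g_1^{-1})={}^{g_1^{-1}}\bigl(\eta(g_1)^{-1}\bigr)$), and they agree as operators on $G$ only a posteriori, once $\varphi$ is known to be a homomorphism. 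The slip is harmless for your purpose --- either element of $H$ acts on $G$ by an automorphism, which is all you use --- but the bookkeeping you rightly flag as the delicate point should be carried out with $\eta(g_1^{-1})$.
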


In order to establish Proposition~\ref{Prop:Sym}, as well as Corollary~\ref{Co:Lattiso} below, we need a technical result, which we state without proof.

\begin{lem}
\label{Lem:SymExtend}
Let $G$ and $H$ be groups, let $\omega_G: G\rightarrow \mathrm{Aut}(H)$ be an action by automorphisms of $G$ on $H,$ and let $\eta: G\rightarrow H$ be a bijective $1$-cocycle with respect to this action. For $g\in G$ and $h\in H,$ set ${}^g h:= \omega_G(g)(h)$. Then the following assertions are equivalent.

\vspace{-3mm}

\begin{enumerate}
\item[(i)] There exists an action by automorphisms $H\times G\rightarrow G,\,\,(h,g)\mapsto {}^h g$ of $H$ on $G,$ such that $\eta^{-1}\in Z^1(H,G)$ with respect to the  latter action.

\vspace{2mm}

\item[(ii)] The pair $(\omega_G, \eta)$ satisfies the identity
\begin{equation}
\label{Eq:SymExtendCond}
\omega_G(\eta^{-1}({}^{g_1}\eta(g_2))) = \omega_G(g_1g_2g_1^{-1}),\quad (g_1, g_2\in G).
\end{equation}
\end{enumerate} 
\end{lem}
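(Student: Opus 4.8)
The plan is to show that if an $H$-action on $G$ with the required property exists at all, then it is completely determined by $\eta$, and that the resulting candidate automatically turns $\eta^{-1}$ into a cocycle; the whole content of the lemma then collapses onto the single closed identity (ii). First I would record the cocycle relation for $\eta$, namely $\eta(g_1 g_2) = \eta(g_1)\cdot {}^{g_1}\eta(g_2)$ with $\eta(1)=1$. If (i) holds, then writing the cocycle condition $\eta^{-1}(h_1 h_2) = \eta^{-1}(h_1)\cdot {}^{h_1}\eta^{-1}(h_2)$ and using that $\eta$ is surjective forces the action to be given by
\[
{}^h g = \eta^{-1}(h)^{-1}\cdot \eta^{-1}\big(h\cdot \eta(g)\big),\quad h\in H,\ g\in G.
\]
Conversely, a one-line substitution (using $\eta(\eta^{-1}(h_2)) = h_2$) shows that this explicit formula makes $\eta^{-1}$ a $1$-cocycle for \emph{any} $\omega_G$ and $\eta$. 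Hence (i) is equivalent to the assertion that the map $\mu\colon H\times G\to G$, $(h,g)\mapsto {}^h g$ just written down, is an action of $H$ on $G$ by group automorphisms.

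Second, I would reduce ``action by automorphisms'' to the single requirement that each $\mu_h$ be multiplicative. Putting $\Lambda_h := \eta^{-1}\circ L_h\circ \eta$, where $L_h$ is left translation by $h$ on $H$, one has $\Lambda_{h_1 h_2} = \Lambda_{h_1}\Lambda_{h_2}$ and ${}^h g = \Lambda_h(1)^{-1}\Lambda_h(g)$, i.e.\ $\Lambda_h(g) = \eta^{-1}(h)\,\mu_h(g)$. From these identities, together with the cocycle relation for $\eta^{-1}$ established above, a short calculation shows that if every $\mu_h$ is multiplicative then $\mu_{h_1 h_2} = \mu_{h_1}\mu_{h_2}$ holds automatically, whence $\mu_h\mu_{h^{-1}} = \mu_1 = \mathrm{id}$ gives bijectivity and each $\mu_h$ is an automorphism. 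Thus (i) is equivalent to the homomorphism condition $\mu_h(gg') = \mu_h(g)\,\mu_h(g')$ for all $h\in H$ and $g,g'\in G$.

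Third, and this is the crux, I would expand the homomorphism condition. Inserting $\eta(gg') = \eta(g)\cdot {}^g\eta(g')$ and again invoking the cocycle identity for $\eta^{-1}$, the condition $\mu_h(gg') = \mu_h(g)\,\mu_h(g')$ simplifies to
\[
\mu_{h\cdot \eta(g)}\big(\eta^{-1}({}^g\eta(g'))\big) = \mu_h(g'),\quad h\in H,\ g,g'\in G,
\]
where the special case $h=1$ turns out to be an identity carrying no information. The genuine $h$-dependence is what must be matched to (ii). I expect that, after transporting the ambient group structure of $G$ onto $H$ through $\eta$ (so that $H$ carries the two operations $h_1 h_2$ and $h_1 * h_2 := h_1\cdot {}^{\eta^{-1}(h_1)}h_2$, the latter making $\eta\colon G\to (H,*)$ an isomorphism), the displayed condition becomes exactly the equivariance relation $\omega_G(\eta^{-1}({}^{g_1}\eta(g_2))) = \omega_G(g_1 g_2 g_1^{-1})$, that is, (ii). The main obstacle lies precisely in this last equivalence: translating the $\mu$-identity, which is an identity of automorphisms acting on $G$, into the $\omega_G$-identity (ii), which is an identity of automorphisms acting on $H$, requires careful simultaneous use of the cocycle property of both $\eta$ and $\eta^{-1}$ and of the fact that both actions are by automorphisms. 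Conceptually this is the matched-pair compatibility between the two actions, and it is the exact two-group analogue of the equivalence (ii)$\,\Leftrightarrow\,$(iii) in Lemma~\ref{Lem:AssCrit}.

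Finally, for the converse direction I would run the same chain backwards: assuming (ii), the equivariance relation yields $\mu_h(gg') = \mu_h(g)\,\mu_h(g')$ by direct substitution, and the reductions of the first two steps then upgrade this to a bona fide action of $H$ on $G$ by automorphisms for which $\eta^{-1}$ is a cocycle, giving (i). It is worth noting that the forced formula for the action, and the automatic cocycle check in Step~1, make the entire argument symmetric in the roles of $G$ and $H$, which is the reason the same machinery feeds directly into Proposition~\ref{Prop:Sym}.
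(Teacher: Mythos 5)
The paper states this lemma explicitly \emph{without} proof, so there is no in-paper argument to compare yours against; judged on its own merits, your plan is sound and your displayed identities all check out. The forced formula ${}^h g=\eta^{-1}(h)^{-1}\cdot\eta^{-1}(h\,\eta(g))$ is correct, the observation that it makes $\eta^{-1}$ a cocycle for free is right, and your Step~2 reduction works: from $\Lambda_h=\eta^{-1}\circ L_h\circ\eta$ and $\Lambda_h=\Lambda_h(1)\,\mu_h$ one gets $\mu_{h_1h_2}(g)=\mu_{h_1}(\Lambda_{h_2}(1))^{-1}\,\mu_{h_1}(\Lambda_{h_2}(g))$, which equals $\mu_{h_1}(\mu_{h_2}(g))$ as soon as $\mu_{h_1}$ is multiplicative; together with $\mu_1=\mathrm{id}$ this upgrades pointwise multiplicativity to an action by automorphisms. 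Your Step~3 display $\mu_{h\eta(g)}\bigl(\eta^{-1}({}^g\eta(g'))\bigr)=\mu_h(g')$ is also the correct reduced condition, and its $h=1$ case is indeed vacuous.

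The one point where you write ``I expect'' --- the equivalence of this reduced condition with identity (ii) --- is a genuine proof obligation rather than a formality, but it does go through, and more cheaply than your transport-of-structure sketch suggests. Set $c(k,g):=\eta^{-1}({}^{k}\eta(g))$. The cocycle rule gives $\eta(g_1)\,\eta(g)=\eta\bigl(g_1\,c(g_1^{-1},g)\bigr)$, whence the clean identity $\mu_{\eta(g_1)}=c(g_1^{-1},\cdot\,)$; since $\eta$ is surjective, multiplicativity of all $\mu_h$ is therefore the same as multiplicativity of all the maps $c(k,\cdot\,)$. Now compare
\[
\eta\bigl(c(k,g)\,c(k,g')\bigr)={}^{k}\eta(g)\cdot{}^{c(k,g)\,k}\eta(g')
\quad\text{with}\quad
\eta\bigl(c(k,gg')\bigr)={}^{k}\eta(g)\cdot{}^{kg}\eta(g').
\]
If (ii) holds, then $\omega_G(c(k,g)\,k)=\omega_G(kg)$, the right-hand sides agree, and injectivity of $\eta$ gives $c(k,gg')=c(k,g)\,c(k,g')$; conversely, if $c(k,\cdot\,)$ is multiplicative, then ${}^{c(k,g)\,k}\eta(g')={}^{kg}\eta(g')$ for all $g'$, and surjectivity of $\eta$ promotes this to the equality $\omega_G(c(k,g)\,k)=\omega_G(kg)$ in $\mathrm{Aut}(H)$, which is exactly (ii). With these few lines inserted, your argument is a complete proof, and your closing remark is accurate: the identity $\mu_{\eta(g_1)}=c(g_1^{-1},\cdot\,)$ is what makes the roles of $G$ and $H$ symmetric and feeds directly into Proposition~\ref{Prop:Sym}.
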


\begin{proofof}{Proposition~{\em \ref{Prop:Sym}.}}
(i) $\Rightarrow$ (iv) and (ii) $\Rightarrow$ (v). By symmetry, it suffices to prove only one implication, say (i) $\Rightarrow$ (iv). By assumption there exist $\varphi \in \cA_{\mathrm{ad}}(G)$ and an isomorphism $\theta: G_\varphi \rightarrow H$.
Since $\varphi(G) \subseteq \mathrm{Stab}_\varphi \subseteq  \mathrm{Aut}(G_\varphi)$, it follows that the map
\[
\omega: G \longrightarrow \mathrm{Aut}(H),\, \omega(g) := \theta \circ \varphi(g) \circ \theta^{- 1}
\]
is well defined, and thus a homomorphism, defining an action by automorphisms
\[
G \times H \rightarrow H,\,\, (g, h) \mapsto {}^g h:= \omega(g)(h).
\]
With respect to this action, the map $\theta: G \rightarrow H$ becomes a bijective $1$-cocycle: for $g_1, g_2 \in G$, we have 
\[
\theta(g_1 \cdot g_2) = \theta(g_1 \circ_\varphi \varphi(g_1)(g_2)) = \theta(g_1) \cdot \theta(\varphi(g_1)(g_2)) = \theta(g_1) \cdot {}^{g_1} \theta(g_2),
\]
as required. It remains to show that $\omega(\theta^{- 1}({}^{g_1} \theta(g_2))) = \omega(g_1 g_2 g_1^{- 1})$ for $g_1, g_2 \in G$. For $g_1, g_2 \in G$, we have 
\[
\theta^{- 1}({}^{g_1} \theta(g_2)) = \theta^{- 1}(\omega(g_1)(\theta(g_2))) = \theta^{-1}((\theta\circ\varphi(g_1)\circ\theta^{-1})(\theta(g_2))) = \varphi(g_1) (g_2),
\]
and thus 
\begin{multline*}
\omega(\theta^{- 1}({}^{g_1} \theta(g_2))) = \omega(\varphi(g_1)(g_2))
= \theta \circ \varphi(\varphi(g_1)(g_2)) \circ \theta^{- 1}
= \theta \circ \varphi(g_1 g_2 g_1^{- 1}) \circ \theta^{- 1}\\ 
= \omega(g_1 g_2 g_1^{- 1}),
\end{multline*}
as desired, where we have used Part~(iii) of Lemma~\ref{Lem:AssCrit} in the third step.

(iv) $\Rightarrow$ (iii) and (v) $\Rightarrow$ (iii). Again, by symmetry, it suffices to prove one of these implications, say (iv) $\Rightarrow$ (iii); this assertion however is immediate from the implication (ii) $\Rightarrow$ (i) of Lemma~\ref{Lem:SymExtend}.

(iii) $\Rightarrow$ (i) and (iii) $\Rightarrow$ (ii). In view of  symmetry, it again suffices to prove only one implication, say
(iii) $\Rightarrow$ (i).

By hypothesis there exist actions by automorphisms
\[
G \times H \rightarrow H,\,\, (g, h) \mapsto {}^g h \,\,\mbox{ and }\,\, H \times G \rightarrow G,\,\, (h, g) \mapsto {}^h g,
\]
and a bijective $1$-cocycle $\eta : G \rightarrow H$, such that $\eta^{- 1} \in Z^1(H, G)$.
For all $h_1, h_2 \in H$, we have 
\[
h_1 \cdot h_2 = \eta(\eta^{- 1}(h_1 \cdot h_2)) 
= \eta(\eta^{- 1}(h_1) \cdot {}^{h_1} \eta^{- 1}(h_2))
= h_1 \cdot {}^{\eta^{- 1}(h_1)} \eta({}^{h_1} \eta^{- 1}(h_2)),
\]
thus ${}^{\eta^{- 1}(h_1)^{- 1}} h_2 = \eta({}^{h_1} \eta^{- 1}(h_2))$, and hence 
\begin{equation}
\label{Eq:ActCond}
\eta^{- 1}({}^{\eta^{- 1}(h_1)^{- 1}} h_2) = {}^{h_1} \eta^{- 1}(h_2).
\end{equation}
Setting $h_1 = \eta(g_1^{- 1})$ and $h_2 = \eta(g_2)$ with  elements $g_1, g_2 \in G$, it follows
that the map
\[
\varphi: G \rightarrow \mathrm{Aut}(G),\,\, \varphi(g_1)(g_2):= {}^{\eta(g_1^{- 1})} g_2
\]
is a homomorphism, defining an action by automorphisms of $G$ on itself. Indeed, under these substitutions, (\ref{Eq:ActCond}) becomes
\[
\eta^{-1}({}^{g_1}\eta(g_2)) = {}^{\eta(g_1^{-1})} g_2,\quad g_1, g_2\in G,
\]
and repeated use of this last identity yields that, for $g_1, g_1', g_2\in G$, 
\begin{multline*}
\varphi(g_1 g_1')(g_2) = \eta^{-1}({}^{(g_1 g_1')}\eta(g_2))
= \eta^{-1}({}^{g_1}({}^{g_1'}\eta(g_2)))
= \eta^{-1}({}^{g_1}\eta({}^{\eta(g_1'^{-1})} g_2)) =\\
 \eta^{-1}({}^{g_1}\eta(\varphi(g_1')(g_2)))
= \eta^{-1}(\eta({}^{\eta(g_1^{-1})}\varphi(g_1')(g_2)))
= \varphi(g_1)(\varphi(g_1')(g_2))
= (\varphi(g_1)\circ\varphi(g_1'))(g_2),
\end{multline*}
as required. On the other hand, we deduce that, for $g_1, g_2\in G$,
\[
\eta(g_1 \circ_\varphi g_2) = \eta(g_1 \cdot \varphi(g_1^{- 1})(g_2))
= \eta(g_1\cdot {}^{\eta(g_1)} g_2)
= \eta(g_1 \cdot \eta^{- 1}({}^{g_1^{- 1}} \eta(g_2))) 
= \eta(g_1) \cdot \eta(g_2),
\]
hence $\varphi \in \cA_{\mathrm{ad}}(G)$ and
$\eta: G_\varphi \rightarrow H$ is an isomorphism, so that $(G,H)$ is a deformation pair, as claimed.
\hfill $\Box$\par
\end{proofof}

\begin{co}
\label{Co:Lattiso}
Let $(G, H)$ be a deformation pair, let $G \times H \rightarrow H$ and  $H \times G \rightarrow G$ be actions by automorphisms, and let $\eta \in Z^1(G, H)$ be a bijective cocycle, such that $\eta^{- 1} \in Z^1(H, G)$. Denote by $\mathcal{L}(G)$\,(respectively $\mathcal{L}(H)$) the lattice of all subgroups of $G$\,(respectively $H$), which are stable under the action of $H$\,(respectively $G$). 
\begin{enumerate}
\item[(a)]
The maps
\[
\mathcal{L}(G) \rightarrow \mathcal{L}(H),\, U \mapsto \eta(U)\, \mbox{ and }\, \mathcal{L}(H) \rightarrow \mathcal{L}(G),\, K \mapsto \eta^{- 1}(K)
\]
are well-defined lattice isomorphisms, inverse to each other.
\vspace{2mm}

\item[(b)]
Let $\omega_G: G\rightarrow \mathrm{Aut}(H)$ be the homomorphism defining the action of $G$ on $H$. Then $\mathrm{ker}(\omega_G)\in\mathcal{L}(G)$.
\end{enumerate}
\end{co}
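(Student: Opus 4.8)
The plan is to treat parts (a) and (b) separately, in both cases exploiting the two cocycle relations together with the compatibility between the actions encoded in the bijection $\eta$. First I would record a ``transport identity'' relating the two actions: starting from the identity (\ref{Eq:ActCond}) derived inside the proof of Proposition~\ref{Prop:Sym}, the substitution $h_1 = \eta(g_1^{-1})$, $h_2 = \eta(g_2)$ yields
\[
{}^{g_1}\eta(g_2) = \eta({}^{\eta(g_1^{-1})}g_2),\quad (g_1, g_2\in G),
\]
and, by the symmetric computation with the roles of $G$ and $H$ interchanged, the analogous identity ${}^{h_1}\eta^{-1}(h_2) = \eta^{-1}({}^{\eta^{-1}(h_1^{-1})}h_2)$ for $h_1, h_2\in H$. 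These are the bridge that lets $H$-stability inside $G$ be carried over to $G$-stability inside $H$ and back.

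For part (a), fix $U\in\mathcal{L}(G)$; the crucial (and, I expect, the only nonroutine) point is that $\eta(U)$ is again a \emph{subgroup} of $H$, even though $\eta$ is merely a cocycle. To see closure under multiplication, for $u_1, u_2\in U$ I would compute, using $\eta^{-1}\in Z^1(H,G)$,
\[
\eta^{-1}(\eta(u_1)\,\eta(u_2)) = \eta^{-1}(\eta(u_1))\cdot {}^{\eta(u_1)}\eta^{-1}(\eta(u_2)) = u_1\cdot{}^{\eta(u_1)}u_2,
\]
which lies in $U$ because $U$ is stable under the $H$-action on $G$; hence $\eta(u_1)\eta(u_2)\in\eta(U)$. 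Closure under inversion follows in the same way from $1 = \eta^{-1}(\eta(u)\eta(u)^{-1}) = u\cdot {}^{\eta(u)}\eta^{-1}(\eta(u)^{-1})$, which gives $\eta^{-1}(\eta(u)^{-1}) = {}^{\eta(u)^{-1}}(u^{-1})\in U$. Finally, $G$-stability of $\eta(U)$ is immediate from the transport identity, since ${}^g\eta(u) = \eta({}^{\eta(g^{-1})}u)\in\eta(U)$. This shows $U\mapsto\eta(U)$ sends $\mathcal{L}(G)$ into $\mathcal{L}(H)$, and the symmetric argument does the same for $K\mapsto\eta^{-1}(K)$. Because $\eta$ is a bijection of underlying sets, these two maps are mutually inverse and inclusion-preserving in both directions, hence order isomorphisms and therefore lattice isomorphisms.

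For part (b), I would invoke Proposition~\ref{Prop:Sym} to recover an admissible action $\varphi\in\mathcal{A}_{\mathrm{ad}}(G)$ with $\varphi(g)(g') = {}^{\eta(g^{-1})}g'$, for which $\eta: G_\varphi\to H$ is an isomorphism. The transport identity then reads $\omega_G(g)\circ\eta = \eta\circ\varphi(g)$, i.e.\ $\omega_G(g) = \eta\circ\varphi(g)\circ\eta^{-1}$, so that $\mathrm{ker}(\omega_G) = \mathrm{ker}(\varphi)$, a normal subgroup of $G$. Writing $\omega_H: H\to\mathrm{Aut}(G)$ for the homomorphism defining the $H$-action on $G$, the reconstruction gives $\omega_H(h) = \varphi(\eta^{-1}(h)^{-1})\in\varphi(G)$; thus for each $h\in H$ there is $g_1\in G$ with ${}^h g_0 = \varphi(g_1)(g_0)$ for all $g_0\in G$. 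For $g_0\in\mathrm{ker}(\varphi)$, the admissibility criterion of Lemma~\ref{Lem:AssCrit}(iii) yields $\varphi(\varphi(g_1)(g_0)) = \varphi(g_1 g_0 g_1^{-1}) = 1$, since $g_1 g_0 g_1^{-1}\in\mathrm{ker}(\varphi)$ by normality. Hence ${}^h g_0\in\mathrm{ker}(\varphi) = \mathrm{ker}(\omega_G)$, which is the required stability, so $\mathrm{ker}(\omega_G)\in\mathcal{L}(G)$.

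The main obstacle is the subgroup closure in part (a): because $\eta$ is a cocycle rather than a homomorphism, closure of $\eta(U)$ is not formal and must be extracted from the precise interplay between the cocycle identity for $\eta^{-1}$ and the $H$-stability of $U$. Once this is isolated, everything else—order-preservation in (a), and the reconstruction of $\varphi$ together with the admissibility computation in (b)—is routine.
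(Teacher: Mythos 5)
Your proposal is correct; for part (a) it is essentially the paper's own argument, and for part (b) it takes a different (though ultimately equivalent) route. In (a), the paper fuses your two closure steps into the single computation $\eta^{-1}\bigl(\eta(u_1)\eta(u_2)^{-1}\bigr) = u_1\cdot {}^{\eta(u_1)\eta(u_2)^{-1}}(u_2^{-1})\in U$, and it checks $G$-stability by computing $\eta^{-1}({}^{g}\eta(u)) = {}^{\eta(g)^{-1}}u$ directly from the two cocycle identities rather than quoting your transport identity $\eta^{-1}({}^{g_1}\eta(g_2)) = {}^{\eta(g_1^{-1})}g_2$; note that $\eta(g^{-1})$ and $\eta(g)^{-1}$ differ in general, but since $U$ is stable under the whole $H$-action either version works, so this is only a cosmetic divergence. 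In (b), the paper sets $\Delta = \ker(\omega_G)$, observes that $\eta\vert_\Delta$ is a homomorphism (so $\eta(\Delta)\leq H$), establishes $G$-stability of $\eta(\Delta)$ from identity (\ref{Eq:SymExtendCond}) of Lemma~\ref{Lem:SymExtend} via $\omega_G\bigl(\eta^{-1}({}^{g}\eta(\delta))\bigr) = \omega_G(g\delta g^{-1}) = 1_H$, and then pulls $\Delta$ back into $\mathcal{L}(G)$ using part (a); you instead remain inside $G$, reconstruct the admissible action $\varphi(g) = {}^{\eta(g^{-1})}(-)$ from the proof of Proposition~\ref{Prop:Sym}, identify $\ker(\omega_G) = \ker(\varphi)$ through the conjugation relation $\omega_G(g) = \eta\circ\varphi(g)\circ\eta^{-1}$, and obtain $H$-stability from normality of $\ker(\varphi)$ together with Part~(iii) of Lemma~\ref{Lem:AssCrit}. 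The two arguments are in fact conjugate to one another: identity (\ref{Eq:SymExtendCond}) is exactly the admissibility identity of Lemma~\ref{Lem:AssCrit} transported through $\eta$. What your version buys is logical independence of part (a) and an explicit display of where admissibility enters; what the paper's version buys is economy, since it never reconstructs $\varphi$ and instead reuses (a) as the transport principle it was designed to be.
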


\begin{proof}
(a) It suffices to show that $\eta(U) \in \cL(H)$, provided $U \in \cL(G)$. Suppose that $U \in \cL(G)$, and let $u_1, u_2 \in U$ be arbitrary elements. Using the fact that $U$ is stable under the action of $H$, plus the cocycle condition for $\eta^{-1}$, we see that
\[
\eta^{- 1}(\eta(u_1) \cdot \eta(u_2)^{- 1}) = u_1\cdot {}^{\eta(u_1)}\eta^{-1}(\eta(u_2)^{-1}) = u_1 \cdot {}^{\eta(u_1) \eta(u_2)^{- 1}} (u_2^{- 1}) \in U,
\]
so that $\eta(U)$ is a subgroup of $H$. Moreover, let $g \in G$ and $u \in U$. Using the cocycle conditions for $\eta$ and $\eta^{-1}$, we obtain
\begin{multline*}
\eta^{- 1}({}^g \eta(u)) = \eta^{- 1}(\eta(g)^{- 1} \cdot \eta(g u)) 
= \eta^{-1}(\eta(g)^{-1})\cdot {}^{\eta(g)^{-1}}(gu) 
=\\ 
{}^{\eta(g)^{-1}}(g^{-1})\cdot {}^{\eta(g)^{-1}}(gu) 
= {}^{\eta(g)^{- 1}} u \in U,
\end{multline*}
thus $\eta(U)$ is stable under the action of $G$, so $\eta(U)\in \mathcal{L}(H)$, as desired.

(b) Let $\Delta:= \mathrm{ker}(\omega_G)$. First note that $\eta(\Delta)$ is a subgroup of $H$, since the restriction map $\eta\vert_\Delta: \Delta\rightarrow H$ is a homomorphism. Thus, it remains to check that $\eta(\Delta)$ is stable under the action of $G$. However, for $\delta\in\Delta$ and $g\in G$, we have
\[
\omega_G(\eta^{-1}({}^g\eta(\delta))) = \omega_G(g \delta g^{-1}) = 1_H,
\] 
since the pair $(\omega_G, \eta)$ satisfies Identity (\ref{Eq:SymExtendCond}) by our hypothesis plus the implication (i) $\Rightarrow$ (ii) of Lemma~\ref{Lem:SymExtend}. It follows that $\eta(\Delta)\in \mathcal{L}(H)$, so $\Delta\in\mathcal{L}(G)$ by Part~(a), as claimed. 
\end{proof}

For any cardinal number $\kappa \geq 1,$ let $\mathcal{G}_\kappa$ be the set of isomorphism classes $\widehat{G}$ of groups $G$ of cardinality $\kappa$. By Proposition~\ref{Prop:Sym}, the binary relation on the class of all groups 
\[
G\,\mathrm{def}\, H :\Longleftrightarrow (G,H) \mbox{ is a deformation pair}
\] 
induces a reflexive and symmetric relation on the set $\mathcal{G}_\kappa$. The associated graph $\Gamma_\kappa$ has $\mathcal{G}_\kappa$ as set of vertices, while the geometric edges are pairs $(\widehat{G}, \widehat{H})$
with $\widehat{G} \neq \widehat{H}$ and $\widehat{G}\, \mathrm{def} \,\widehat{H}$. For instance, $\Gamma_4$ and $\Gamma_6$ each consist of a segment, that is, two vertices connected by an edge. A more interesting example is afforded by the graph $\Gamma_8$: we claim that $\Gamma_8$ consists of a triangle having as vertices the isomorphism classes of the cyclic group $C_8$, the dihedral group
$\mathbb{D}_8 \cong C_4 \ltimes C_2 \cong (C_2 \times C_2) \ltimes C_2$, and the group $\mathfrak{Q}$ of quaternions, 
and a segment having as vertices the
isomorphism classes of the abelian groups $C_4 \times C_2$ and $C_2 \times C_2 \times C_2$.

To see that $(\mathbb{D}_8, \mathfrak{Q})$ is a deformation pair, we let 
\[
\mathbb{D}_8 = \big\langle x, y\,\big\vert\, x^2 = y^4 = (x y)^2 = 1 \big\rangle,
\]
consider the admissible homomorphism
$\varphi: \mathbb{D}_8 \rightarrow \mathrm{Aut}(\mathbb{D}_8)$ given by $\varphi(x)(x) = y^2 x$, $\varphi(x)(y) = y$, and $\varphi(y)=1_{\mathbb{D}_8}$,
and check that $(\mathbb{D}_8)_\varphi \cong \mathfrak{Q}$.

In order to see that $(C_4 \times C_2, C_2 \times C_2 \times C_2)$ is a deformation pair, we let 
\[
G:= \big\langle \sigma, \tau\,\big\vert\,\sigma^4 = \tau^2 = [\sigma, \tau] = 1 \big\rangle\, \mbox{ and }\, H:= \bigoplus_{1 \leq i \leq 3} \mathrm{GF(2)} e_i,
\]
consider the action by automorphisms of $G$ on $H$ given by the homomorphism
\[
\omega: G \rightarrow \mathrm{Aut}(H),\,\, \sigma \mapsto (e_1, e_2),\, \tau \mapsto 1_H,
\]
with $\mathrm{Ker}(\omega) = \langle \sigma^2, \tau \rangle \cong C_2 \times C_2$, and check that the bijective $1$-cocycle
\[
\eta: G \rightarrow H,\,\, \sigma \mapsto e_1,\, \tau \mapsto e_3
\]
satisfies Condition (iv) of Proposition~\ref{Prop:Sym}.

{\footnotesize
\begin{prob} \em
\label{Pr:Equiv}
Is the deformation relation $\mathrm{def}$ transitive, that is, an equivalence relation? If not, describe the set of those natural numbers $n$ for which the deformation relation is an equivalence relation on $\mathcal{G}_n$.
\end{prob}}

A group $G$ is termed {\em rigid}, if $G \cong H$ provided $(G, H)$ is a deformation pair;
equivalently, $G_\varphi \cong G$ for all $\varphi \in \mathcal{A}_{\mathrm{ad}}(G)$. It can be shown that the cyclic groups $C_{p^n}$, where $n\in \mathbb{N}$ and $p$ is an odd prime number, are rigid; by contrast, the cyclic groups $C_{2^n}$ are not rigid if $n\geq2$. It is not hard to see that all simple groups (finite or infinite) are rigid.

{\footnotesize 
\begin{prob} \em
\label{Pr:Rigid}
Provide a characterisation of rigid groups.
\end{prob}}

{\footnotesize
\begin{rem} \em
\label{Re:TopFrame}
The concepts and results discussed above naturally extend to topological groups by adding suitable continuity 
conditions for actions and maps. In fact, they were first considered in the framework of abstract co-Galois 
theory. In particular, certain remarkable deformation pairs of profinite groups are naturally induced by strongly 
co-Galois actions; see \cite[Sec.~5]{CGA} and \cite[Sec~4.4]{BM1}, while other applications of the deformation machinery are given in \cite{GFrame}.
\end{rem}}

\subsection{The Zassenhaus group $\mathrm{M}(q^2)$ as deformation of $PGL_2(q^2)$}

For an odd prime $p$ and an integer $m\geq1$, let $q=p^m$, and let $K=GF(q^2)$ be the finite field of order $q^2$. 
Furthermore, let $\alpha$ be the automorphism of order $2$ of the field $K$, that is, the automorphism of $K$ given by 
$\alpha(\omega) = \omega^{q}$; $\alpha$ is the $m$-th power of the Frobenius automorphism of $K$, and the generator 
of the Galois group of the quadratic field extension $K\, \vert\, GF(q)$. By slight abuse of notation, we also denote 
by $\alpha$ the automorphism of the multiplicative groups $K^\times \cong C_{q^2-1}$ and $GL_2(K)$ induced by the 
automorphism $\alpha$ of the field $K$, and by $\bar{\alpha}$ the corresponding automorphism induced on $PGL_2(K)$. 
Moreover, the field automorphism $\alpha$ is extended to an involution of the projective line $\mathbb{P}^1(K)$ over $K$
by setting $\alpha(\infty) = \infty$. By definition, the Zassenhaus group  $\mathrm{M}(q^2)$ is the sharply $3$-transitive 
permutation group on $\mathbb{P}^1(K)$ consisting of the transformations $h$ given by
\footnote{See \cite[Chap.~XI, \S~1]{HuBl} for more information on the groups $\mathrm{M}(q^2)$.}
\[
h(\omega) = \frac{a\beta(\omega) + b}{c\beta(\omega) + d},\quad \omega\in\mathbb{P}^1(K),
\]
where $a,b,c,d\in K$, $\Delta := ad-bc\neq0$, and 
\[
\beta = \begin{cases}
1_{\mathbb{P}^1(K)},& \Delta\in (K^\times)^2,\\
\alpha,& \Delta\not\in (K^\times)^2.
\end{cases}
\] 
Let $\varphi: GL_2(K) \rightarrow \mathrm{Aut}(GL_2(K))$ 
be the homomorphism obtained by composing the surjective homomorphism $GL_2(K) \rightarrow K^\times/(K^\times)^2$ given by $A\mapsto \mathrm{det}(A) (K^\times)^2$ with the embedding of the cyclic group $K^\times/(K^\times)^2 \cong C_2$ into $\mathrm{Aut}(GL_2(K))$, with image $\langle \alpha\rangle$. Explicitly, $\varphi$ is given by
\[
\varphi(A) = \left.\begin{cases}
             1_{GL_2(K)},&\mathrm{det}(A)\in (K^\times)^2\\[2mm]
             \alpha,& \mathrm{det}(A)\not\in (K^\times)^2
             \end{cases}\right\}\quad(A\in GL_2(K)).
\]
Similarly, composing the well-defined map $PGL_2(K) \cong GL_2(K)/K^\times  \rightarrow K^\times/(K^\times)^2$ given by $AK^\times\mapsto \mathrm{det}(A) (K^\times)^2$ with the embedding of $K^\times/(K^\times)^2$ into $\mathrm{Aut}(PGL_2(K))$ with image $\langle \bar{\alpha}\rangle$, we obtain a homomorphism 
$\bar{\varphi}: PGL_2(K) \rightarrow \mathrm{Aut}(PGL_2(K))$ 
given explicitly by
\[
\bar{\varphi}\Big(\omega\mapsto\frac{a\omega+b}{c\omega+d}\Big) = \left.\begin{cases}
             1_{PGL_2(K)},&\mathrm{det}({\begin{pmatrix} a&b\\c&d\end{pmatrix}})\in (K^\times)^2\\[5mm]
             \bar{\alpha},& \mathrm{det}({\begin{pmatrix} a&b\\c&d\end{pmatrix}})\not\in (K^\times)^2
             \end{cases}\right\}\quad({\begin{pmatrix} a&b\\c&d\end{pmatrix}}\in GL_2(K)).
\]
It is easily checked that, for $A\in GL_2(K)$,  
$({}^\alpha\varphi)(A) = \varphi(A)$. 
Hence, by Lemma~\ref{Lem:AssCrit}, we have  $\varphi\in\mathcal{A}_{\mathrm{ad}}(GL_2(K))$, and a similar computation shows that $\bar{\varphi}\in \mathcal{A}_{\mathrm{ad}}(PGL_2(K))$. Consequently, the actions $\varphi: GL_2(K) \rightarrow \mathrm{Aut}(GL_2(K))$ and $\bar{\varphi}: PGL_2(K) \rightarrow$\linebreak  $\mathrm{Aut}(PGL_2(K))$ give rise to deformations $G_\varphi:=(GL_2(K))_\varphi$, $\bar{G}_{\bar{\varphi}}:=(PGL_2(K))_{\bar{\varphi}}$ of $GL_2(K)$ and $PGL_2(K)$, respectively. The facts summarized in the following lemma are established by straightforward computation. 
\begin{lem}
\label{Lem:GvarphiActetc}
\begin{enumerate}
\item[(a)] The canonical projection
\[
\pi: \mathrm{GL}_2(K) \rightarrow \mathrm{PGL}_2(K),\quad {\begin{pmatrix}a&b\\c&d\end{pmatrix}}\, \mapsto\, \Big(\omega \mapsto \frac{a\omega + b}{c\omega+d}\Big),
\]
viewed as a map from the deformation $G_\varphi$ onto $\bar{G}_{\bar{\varphi}}$ is a homomorphism with kernel $\zeta_1(\mathrm{GL}_2(K)) \cong K^\times$.
\vspace{2mm}

\item[(b)] The map   
$\bar{\Phi}: (PGL_2(q^2))_{\bar{\varphi}} \longrightarrow \mathrm{M}(q^2)$ given by
\[
\Big(\omega \mapsto \frac{a\omega+b}{c\omega+d}\Big)\,\, \longmapsto\,\, \Big(\omega \mapsto \frac{a \beta(\omega) + b}{c\beta(\omega)+d}\Big)
\]
is an isomorphism.
\vspace{2mm}

\item[(c)] Setting
\[
A\cdot \omega:= \frac{a\beta(\omega)+b}{c\beta(\omega)+d},\quad (A={\begin{pmatrix}a&b\\c&d\end{pmatrix}}\in G_\varphi,\, \omega\in\mathbb{P}^1(K))
\]
defines a $3$-transitive action from the left of the group $G_\varphi$ on the projective line $\mathbb{P}^1(K)$.
\end{enumerate}
\end{lem}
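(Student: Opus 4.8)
The plan is to exploit the very simple shape of $\varphi$. Since $\mathrm{im}(\varphi) = \{1_{GL_2(K)}, \alpha\}$ and $\alpha$ is an involution, we have $\varphi(A)^{-1} = \varphi(A)$, so the deformed product takes the explicit form
\[
A \circ_\varphi B = A\cdot \varphi(A)(B),\qquad \varphi(A)(B) = \begin{cases} B, & \det(A)\in (K^\times)^2,\\ \alpha(B), & \det(A)\notin (K^\times)^2,\end{cases}
\]
where $\alpha(B)$ denotes the entrywise Frobenius, and analogously for $\bar{\varphi}$. Before treating (a)--(c) I would record two \emph{square-class invariance} facts used throughout: (1) $\det(\lambda A) = \lambda^2\det(A)$, so the square class of the determinant is constant on the fibres of $\pi$ and the label $\beta$ descends to $PGL_2(K)$; and (2) since $K^\times$ is cyclic of \emph{even} order $q^2-1$ and $q$ is odd, raising to the $q$-th power preserves the parity of the discrete logarithm, whence $\det(B)$ and $\alpha(\det(B)) = \det(B)^q$ lie in the same square class. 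Fact~(2) is exactly where the hypothesis that $p$ is odd enters.

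For part (a) the heart of the matter is the intertwining relation $\pi\circ\varphi(A) = \bar{\varphi}(\pi(A))\circ\pi$. This splits into $\pi(\alpha(B)) = \bar{\alpha}(\pi(B))$, immediate from the entrywise definition, together with the observation from Fact~(1) that the square-class test gives the same answer for $A$ and for $\pi(A)$; hence $\varphi(A)$ and $\bar{\varphi}(\pi(A))$ agree after projection. Granting this, $\pi(A\circ_\varphi B) = \pi(A)\cdot\pi(\varphi(A)(B)) = \pi(A)\cdot\bar{\varphi}(\pi(A))(\pi(B)) = \pi(A)\circ_{\bar{\varphi}}\pi(B)$ is a one-line check, so $\pi\colon G_\varphi\to\bar{G}_{\bar{\varphi}}$ is a homomorphism. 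Its kernel is computed on underlying sets, where $\pi$ is the ordinary projection and the identity of $\bar{G}_{\bar{\varphi}}$ is the trivial coset; thus $\ker\pi$ is the group of scalar matrices $\zeta_1(GL_2(K))\cong K^\times$ (note scalars have square determinant $\lambda^2$, so $\circ_\varphi$ restricts to the ordinary product there, consistent with its being a subgroup).

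For part (b) I would first check that $\bar{\Phi}$ is well defined (Fact~(1): rescaling $(a,b,c,d)$ by $\lambda$ leaves both the linear-fractional quotient and $\beta$ unchanged). Bijectivity follows by a separation-plus-counting argument: $\bar{\Phi}$ sends a class of square determinant to the honest Möbius map $M_A$ and a class of nonsquare determinant to $M_A\circ\alpha$ as a permutation. The $\beta=1$ images form the subgroup $PSL_2(K)$ of Möbius maps of square determinant, while the $\beta=\alpha$ images fill the complementary coset; these families are disjoint because $\alpha$ fixes the more than two points of $GF(q)\cup\{\infty\}$ and hence is no Möbius transformation, so $\bar{\Phi}$ is injective, and $|PGL_2(q^2)| = |\mathrm{M}(q^2)| = (q^2+1)q^2(q^2-1)$ forces surjectivity. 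The homomorphism property is the main computation and the principal obstacle; I would split it by the square class of $\det(A)$. When $\det(A)$ is a square, $\circ_{\bar{\varphi}}$ is the ordinary product $[AB]$, one has $\beta_{AB}=\beta_B$ by Fact~(1), and the claim reduces to the standard functoriality of the linear-fractional action. When $\det(A)$ is a nonsquare, $[A]\circ_{\bar{\varphi}}[B] = [A\cdot\alpha(B)]$, and the delicate point is that the automorphism labels compose correctly, $\beta_{A\alpha(B)} = \beta_A\circ\beta_B = \alpha\circ\beta_B$, which is precisely Fact~(2). Given this bookkeeping, the identity
\[
\alpha(h_B(\omega)) = \frac{\alpha(a')\,\beta_{A\alpha(B)}(\omega)+\alpha(b')}{\alpha(c')\,\beta_{A\alpha(B)}(\omega)+\alpha(d')}
\]
realizes $\alpha\circ h_B$ as the transformation attached to the matrix $\alpha(B)$ with the composed label, after which the same matrix-multiplication identity as in the square case yields $h_A\circ h_B = \bar{\Phi}([A\alpha(B)])$; hence $\bar{\Phi}$ is an isomorphism.

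Finally, part (c) requires no new work. By construction the prescription $A\cdot\omega = \frac{a\beta(\omega)+b}{c\beta(\omega)+d}$ is exactly the permutation $(\bar{\Phi}\circ\pi)(A)$ evaluated at $\omega$, so the assignment sending $A$ to the map $\omega\mapsto A\cdot\omega$ is the composite of the group homomorphism $\bar{\Phi}\circ\pi\colon G_\varphi\to \mathrm{M}(q^2)$ with the inclusion $\mathrm{M}(q^2)\hookrightarrow \mathrm{Sym}(\mathbb{P}^1(K))$; this is therefore a left action (in particular the identity matrix acts trivially). Since $\pi$ is surjective and $\bar{\Phi}$ an isomorphism, $\bar{\Phi}\circ\pi$ is onto, so the $3$-transitivity of the $G_\varphi$-action is inherited directly from the sharp $3$-transitivity of $\mathrm{M}(q^2)$.
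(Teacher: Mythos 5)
Your proof is correct, and it is precisely the direct verification that the paper leaves to the reader (the paper's own ``proof'' consists of the single remark that the lemma is established by straightforward computation). The two genuinely delicate points --- that $\det(B)$ and $\alpha(\det(B)) = \det(B)^q$ lie in the same square class because $q$ is odd, which is exactly what makes the labels $\beta$ compose correctly in the nonsquare case of part (b), and the separation-plus-counting argument for bijectivity of $\bar{\Phi}$ (using that $\alpha$ fixes $q+1 > 2$ points and so is not a M\"obius transformation, together with $|PGL_2(q^2)| = |\mathrm{M}(q^2)| = (q^2+1)q^2(q^2-1)$) --- are both handled correctly, so your write-up supplies in full the computation the authors omit.
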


\subsection{Presentations for the groups $G_\varphi$ and $\mathrm{M}(q^2)$}
There are various ways of obtaining a presentation for the group $\mathrm{M}(q^2)$; for instance, one may start with the, rather obvious, presentation
\begin{equation}
\label{Eq:RPres}
\mathrm{M}(q^2)_{0,\infty} = \Big\langle a, b\,\big\vert\,b^{\frac{q^2-1}{2}} = 1,\, a^2 = b^{-\frac{q+1}{2}},\, a^{b} = b^{q-1} a\Big\rangle
\end{equation}
for the double stabiliser $\mathrm{M}(q^2)_{0,\infty}$, where $a(\omega) = \zeta^{-1} \omega^q$ and $b(\omega) = \zeta^2\omega$ for some primitive root $\zeta$ of $K$, and then use 
Corollary~\ref{Cor:Main2Trans} twice to obtain presentations for the stabiliser $\mathrm{M}(q^2)_\infty$ and for $\mathrm{M}(q^2)$ itself.
Alternatively, one may observe that $\mathrm{M}(q^2) \cong (\mathrm{PGL}_2(K))_{\bar{\varphi}}$ is an extension of $\mathrm{ker}(\bar{\varphi}) = \mathrm{PSL}_2(K)$ by $\bar{\varphi}(\mathrm{PGL}_2(K)) \cong C_2$, and lift a presentation for $\mathrm{PSL}_2(K)$ to a presentation for $\mathrm{M}(q^2)$ by means of a $2$-cocycle describing the extension class. In what follows, we shall apply Corollary~\ref{Cor:Main2Trans} directly to the action of $G_\varphi$ on $\mathbb{P}^1(K)$, described in Part~(c) of Lemma~\ref{Lem:GvarphiActetc}, to obtain a suitable presentation of the group $G_\varphi = (\mathrm{GL}_2(K))_\varphi$, and then deduce a presentation for $\mathrm{M}(q^2)$ itself by means of Parts (a) and (b) of that lemma. The special case where $q=3$ will play a role in the next section, when deriving presentations for the small Mathieu groups $M_{11}$ and $M_{12}$. 

\subsubsection{A presentation for $D_\varphi$}

Let $D_\varphi = (G_\varphi)_{0,\infty}$ be the subgroup of $G_\varphi$ consisting of the diagonal matrices. Setting
$Z(\omega):= \omega I_2 \mbox{ and } R(\omega):= \begin{pmatrix}\omega &0\\0&1\end{pmatrix}\, \mbox{ for }\,\omega\in K^\times$,
we have
\begin{multline*}
Z(a_{2,2}) \circ_\varphi R(a_{1,1} a_{2,2}^{-1}) = Z(a_{2,2})\cdot\varphi(Z(a_{2,2}))^{-1}(R(a_{1,1}a_{2,2}^{-1}))\\ 
= Z(a_{2,2})\cdot R(a_{1,1}a_{2,2}^{-1}) = \begin{pmatrix}a_{1,1}&0\\[1mm]0&a_{2,2}\end{pmatrix},
\end{multline*}
so that $D_\varphi$ is generated by the matrices $Z(\omega)$ and $R(\omega)$ for $\omega\in K^\times\backslash\{1\}$. The action of $G_\varphi$ on $\mathbb{P}^1(K)$ induces by restriction a transitive action of the group $D_\varphi$ on the set $\mathbb{P}^1(K)\backslash\{0,\infty\} = K^\times$ given by 
\[
\begin{pmatrix}a_{1,1}&0\\[1mm]0&a_{2,2}\end{pmatrix}\cdot \omega = a_{2,2}^{-1} a_{1,1} \beta(\omega),\quad\omega\in K^\times,
\]
where
\[
\beta = \begin{cases} 1_{K^\times},&a_{1,1} a_{2,2}\in (K^\times)^2\\[1mm]
\alpha\vert_{K^\times},&a_{1,1}a_{2,2}\not\in (K^\times)^2. \end{cases}
\]
In the notation of Theorem~\ref{Thm:PresGpAct}, let $\omega_0=1$, so that 
\[
H = (D_\varphi)_{\omega_0} = Z = \big\{Z(\omega):\,\omega\in K^\times\big\} \cong K^\times.
\]
We have
\[
E = Z\backslash D_\varphi/Z \cong Z\backslash K^\times = \{e\} \cup \big\{\{\omega\}:\,\omega\in K^\times\backslash\{1\}\big\},
\]
where $e:= \{\omega_0\} = \{1\}$. Moreover, identifying a singleton set $\{\omega\}$ with $\omega$ for $\omega\in K^\times$, we have
\[
\overline{\omega} = \left.\begin{cases}
\omega^{-1},& \omega\in (K^\times)^2\\[1mm]
\omega^{-q},& \omega\not\in (K^\times)^2\end{cases}\right\}\quad(\omega\in K^\times).
\]
Thus, since $R_\omega\cdot \omega_0 = \omega$, the set of matrices $\{R(\omega): \omega\in K^\times\}$ is a system of pairwise inequivalent representatives for the double cosets in $E = Z\backslash D_\varphi/Z$, and we may set
\[
\sigma(C_\omega) = R(\omega),\quad \omega\in K^\times
\]
in accordance with the requirements of Section~\ref{Subsec:PresGpAct}. We note that $H_\omega = Z\,(\omega\in K^\times)$ and that 
\[
\iota_{\omega_1}(Z(\omega_2)) = \left.\begin{cases}
Z(\omega_2),&\omega_1\in (K^\times)^2,\\
Z(\omega_2^q),& \omega_1\not\in (K^\times)^2\end{cases}\right\}\quad(\omega_1, \omega_2\in K^\times).
\]
As orientation of $E$, we may, for instance, choose any set of the form
\[
E_+ = \Big\{\zeta^{2i}:\, 0\leq i\leq \frac{q^2-1}{4}\Big\} \cup N,
\]
where $\zeta$ is some fixed primitive element of the field $K$, and $N\subseteq K^\times$ is a set of non-squares of size $\vert N\vert = \frac{q^2-1}{4}$ such that $N \cap \alpha(N^{-1}) = \emptyset$. Applying Theorem~\ref{Thm:PresGpAct}, we find that $D_\varphi$ is generated by the subgroup
\[
Z = \big\langle Z_0\,\vert\,Z_0^{q^2-1} = 1\big\rangle \cong C_{q^2-1},\,\, Z_0:= \zeta I_2
\]
together with extra generators $R(\omega)$ for $\omega \in E_+ - \{1\}$, subject to the relations of type (I)
\begin{equation}
\label{Eq:DphiRel1}
R(\omega_1)^{-1} Z(\omega) R(\omega_1) = \left.\begin{cases}
Z(\omega),&\omega_1\in(K^\times)^2\\[1mm]
Z(\omega^{q}),& \omega_1\not\in (K^\times)^2\end{cases}\right\}\quad(\omega_1\in E_+ - \{1\},\,\omega\in K^\times\backslash\{1\}) 
\end{equation}
plus the type (II) relations
\begin{equation}
\label{Eq:DphiRel2}
R(\omega_1) R(\omega_2) = \left.\begin{cases} R(\omega_1\omega_2),&\omega_1\in (K^\times)^2\\[1mm]
R(\omega_1\omega_2^{q}),& \omega_1\not\in (K^\times)^2\end{cases}\right\}\quad(\omega_1, \omega_2\in K^\times\backslash\{1\}),
\end{equation}
where $R(1)=1$ and $R(\omega)$ is to be interpreted as $R(\overline{\omega})^{-1}$ for $\omega\not\in E_+$.

Set $a:=R(\zeta^{-1})$ and $b:= R(\zeta^2)$, and let
\[
\mathcal{R} = \big\{R(\omega):\,\omega\in K^\times\big\}\leq D_\varphi.
\]
Then, using (\ref{Eq:DphiRel2}), we see that 
\[
R(\zeta^\mu) = \left.\begin{cases}
b^{\frac{\mu}{2}},& \mbox{$\mu$ even}\\[1mm]
b^{\frac{\mu+1}{2}} a,& \mbox{$\mu$ odd}\end{cases}\right\}\quad(0<\mu<q^2), 
\]
so that $\mathcal{R}$ is generated by the elements $a$ and $b$. Moreover, from the relations (\ref{Eq:DphiRel2}) we also  deduce that
\begin{equation}
\label{Eq:RRels}
b^{\frac{q^2-1}{2}} = 1,\, a^2 = R(\zeta^{-(q+1)}) = b^{-\frac{q+1}{2}},\mbox{ and }a^b = b^{q-1}a.
\end{equation}
Conversely, the abstract group generated by $a, b$ modulo the relations (\ref{Eq:RRels}) clearly has order at most $q^2-1$, so that $\mathcal{R} \cong \mathrm{M}(q^2)_{0,\infty}$ with presentation given by (\ref{Eq:RPres}). 
Furthermore, by (\ref{Eq:DphiRel1}), the subgroup $Z$ is normal in $D_\varphi$, with complement $\mathcal{R}$, and the action of $\mathcal{R}$ on $Z$ (that is, the values of $Z_0^a$ and $Z_0^b$) can be read off from Relations~(\ref{Eq:DphiRel1}). We thus obtain the following.
\begin{lem}
\label{Lem:DphiPres}
The deformation $D_\varphi$ of the group $D\cong C_{q^2-1} \times C_{q^2-1}$ is metabelian. It may be generated by the elements $a=R(\zeta^{-1}),$ $b=R(\zeta^2),$  and $Z_0=\zeta I_2,$ where   $\zeta$ is a primitive root of the field $K=GF(q^2),$ subject to the relations
\begin{equation*}
(D_\varphi) \quad Z_0^{q^2-1} = b^{\frac{q^2-1}{2}} = 1,\,a^2 = b^{-\frac{q+1}{2}},\, a^b = b^{q-1}a,\, Z_0^a=Z_0^{q},\, Z_0^b=Z_0.
\end{equation*}
\end{lem}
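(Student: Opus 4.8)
The plan is to verify the three assertions of the lemma in turn --- that $D_\varphi$ is generated by $a$, $b$, $Z_0$; that the listed relations $(D_\varphi)$ both hold and suffice; and that $D_\varphi$ is metabelian --- assembling pieces that are for the most part already in place from the discussion preceding the lemma. For generation, I would recall that $D_\varphi$ is generated by the matrices $Z(\omega)$ and $R(\omega)$ for $\omega\in K^\times$, that $Z=\langle Z_0\rangle$, and that the subgroup $\mathcal{R}=\{R(\omega):\omega\in K^\times\}$ is generated by $a=R(\zeta^{-1})$ and $b=R(\zeta^2)$ by virtue of the type~(II) relations~(\ref{Eq:DphiRel2}); hence $D_\varphi=\langle a,b,Z_0\rangle$.

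For the relations, the identity $Z_0^{q^2-1}=1$ simply records the order of the primitive root $\zeta$, while $b^{\frac{q^2-1}{2}}=1$, $a^2=b^{-\frac{q+1}{2}}$ and $a^b=b^{q-1}a$ are exactly the relations~(\ref{Eq:RRels}) already derived inside $\mathcal{R}$. The two action relations $Z_0^a=Z_0^{q}$ and $Z_0^b=Z_0$ are read off from the type~(I) relations~(\ref{Eq:DphiRel1}), using that $\zeta^{-1}$ is a non-square in $K^\times$ (being a generator of a cyclic group of even order, as $q$ is odd), whereas $\zeta^2$ is a square.

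To see that these relations suffice, I would invoke the structural fact, immediate from~(\ref{Eq:DphiRel1}), that $Z\cong C_{q^2-1}$ is a normal subgroup of $D_\varphi$ with complement $\mathcal{R}\cong \mathrm{M}(q^2)_{0,\infty}$, so that $D_\varphi = Z\rtimes\mathcal{R}$. The presentation $(D_\varphi)$ is then the standard presentation of this semidirect product: one combines the defining relation $Z_0^{q^2-1}=1$ of $Z$, the presentation~(\ref{Eq:RPres}) of $\mathcal{R}$, and the two action relations encoding the homomorphism $\mathcal{R}\to\mathrm{Aut}(Z)$. The only point needing care is that these two relations determine the homomorphism completely, which holds because $a$ and $b$ generate $\mathcal{R}$, the assignment being automatically consistent since it is read off from the genuine group $D_\varphi$.

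Finally, for metabelianness I would exhibit $A:=\langle Z_0,b\rangle$ as an abelian normal subgroup with abelian quotient. It is abelian because $Z_0$ and $b$ commute (relation $Z_0^b=Z_0$), $\langle b\rangle$ is cyclic, and $Z\cap\langle b\rangle=1$ since scalar and non-scalar diagonal matrices meet trivially; it is normalized by $a$ because $Z_0^a=Z_0^{q}\in Z$ and $b^a\in\langle b\rangle$ by~(\ref{Eq:RRels}); and $D_\varphi/A$ is generated by the image of $a$ with $a^2=b^{-\frac{q+1}{2}}\in A$, hence cyclic of order at most $2$. Thus the derived subgroup $D_\varphi'\le A$ is abelian. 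The main obstacle is the completeness argument of the third paragraph: everything else is routine bookkeeping, but there one must be careful that the square/non-square dichotomy governing the deformed multiplication $\circ_\varphi$ is correctly tracked, and that the previously established completeness of the presentation~(\ref{Eq:RPres}) of $\mathcal{R}$ (obtained from the order bound $\le q^2-1$) is genuinely available.
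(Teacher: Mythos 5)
Your proposal is correct and follows essentially the same route as the paper: the presentation $(D_\varphi)$ is assembled from the semidirect product decomposition $D_\varphi = Z \rtimes \mathcal{R}$ established in the preceding discussion, with $\mathcal{R}$ presented by the relations (\ref{Eq:RRels}) (via the order bound $\leq q^2-1$) and the action relations read off from (\ref{Eq:DphiRel1}) using the square/non-square dichotomy for $\zeta^2$ and $\zeta^{-1}$. The only, immaterial, difference lies in the final step: the paper verifies metabelianness by computing $[D_\varphi, D_\varphi] = \langle Z_0^{q-1}, b^{q-1}\rangle \cong C_{q+1}\times C_{\frac{q+1}{2}}$ directly from the presentation, whereas you exhibit the abelian normal subgroup $\langle Z_0, b\rangle$ of index at most $2$ containing the derived subgroup --- both arguments are routine and valid.
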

\begin{proof}
In view of the above observations, the presentation for $D_\varphi$ given in the lemma is immediate. Moreover, it is easy to see from this presentation that
\[
[D_\varphi, D_\varphi] = \big\langle Z_0^{q-1}, b^{q-1}\big\rangle \cong C_{q+1} \times C_{\frac{q+1}{2}},
\]
whence the fact that $D_\varphi$ is metabelian.
\end{proof}

{\footnotesize 
\begin{rem}
\label{Rem:DphiAltPres}
{\em Setting $x:=a,$ $y:=b^{-1},$ and $V:= Z_0^{-1}b,$ we obtain the alternative presentation}
\[
D_\varphi \cong \Big\langle x, y, V\,\Big\vert\, x^2 = y^{\frac{q+1}{2}},\, y^{\frac{q^2-1}{2}} = [y, V] = 1,\,xyx^{-1} = y^q,\, xVx^{-1} = V^q\Big\rangle,
\]
{\em where, as before, $q=p^m$.}
\end{rem}}

\subsubsection{A presentation for the Borel subgroup $B_\varphi$}

The group $B_\varphi = (G_\varphi)_\infty$ of upper triangular matrices acts on the affine line $\mathbb{A}^1(K) = K$ via
\[
\begin{pmatrix} a&b\\0&d\end{pmatrix}\cdot \omega = d^{-1}(a\beta(\omega)+b),\quad \omega\in K,
\]
where 
\[
\beta = \begin{cases} 1_K,& ad\in (K^\times)^2\\[1mm]
\alpha,& ad\not\in (K^\times)^2\end{cases}. 
\]
Let $\omega_0=0$, so that $(B_\varphi)_{\omega_0}=D_\varphi$. Since the action of $B_\varphi$ on $K$ is $2$-transitive, we may apply Corollary~\ref{Cor:Main2Trans}. We let $\omega_1=1$, choose $\tau\in B_\varphi$ as the involution 
$\tau = {\begin{pmatrix} 1&-1\\0&-1\end{pmatrix}}$, and set 
\[
P = Q = \big\{R(\omega): \omega\in K^\times\big\}
\]
all in accordance with the requirements of the corollary. Since the matrix
\[
\tau\circ_\varphi R(\omega) = \tau \cdot R(\omega) = \begin{pmatrix}\omega&-1\\0&-1\end{pmatrix}
\]
sends the point $\omega_1$ to the point $1-\omega = R(1-\omega)\cdot \omega_1$, we have
\[
\rho_\omega' = R(1-\omega),\quad \omega\in K^\times\setminus\{1\},
\]
and a straightforward computation gives that
\begin{equation}
\label{Eq:lambdaprime}
\lambda_\omega' = (R(1-\omega)^{-1} \tau R(\omega))^\tau = \begin{cases} 
Z(-1) R(\frac{\omega}{\omega-1}),&1-\omega\in (K^\times)^2,\\[1mm]
Z(-1) R((\frac{\omega}{\omega-1})^q), & 1-\omega\not\in (K^\times)^2.\end{cases}
\end{equation}
Applying Corollary~\ref{Cor:Main2Trans}, we find that the Borel subgroup $B_\varphi$ of $G_\varphi$ is generated by its subgroup $(B_\varphi)_{\omega_0} = D_\varphi$ plus one extra generator $T$ (which may be identified with the matrix ${\tiny \begin{pmatrix}1&-1\\0&-1\end{pmatrix}}$), subject to the relations $T^2=1$, $Z_0^T=Z_0$, as well as the relations
\begin{equation}
\label{Eq:BphiLongRel}
T R(\omega) T = R(1-\omega) T \lambda_\omega',\quad \omega\in K^\times\setminus\{1\},
\end{equation}
with $\lambda_\omega'$ given by (\ref{Eq:lambdaprime}).

For an integer $j$ with $1\leq j<q^2-1$, let $k_j$ be the integer satisfying
$1- \zeta^j = \zeta^{k_j}$ and $1\leq k_j<q^2-1$. 
Combining our findings with Lemma~\ref{Lem:DphiPres}, we obtain the following.

\begin{lem}
\label{Lem:BphiPres}
The group $B_\varphi$ has a presentation with generators $a=R(\zeta^{-1}),$ $b=R(\zeta^2),$ $Z_0=\zeta I_2,$ and 
$T={\tiny \begin{pmatrix}1&-1\\0&-1\end{pmatrix}},$ subject to the relations $(D_\varphi)$ plus the relations
\begin{align*}
&(B_\varphi)_1\qquad T^2 = [Z_0, T] = 1,\\[1mm]
&(B_\varphi)_{2a}\qquad T R(\zeta^j) T = R(\zeta^{k_j}) T Z_0^{\frac{q^2-1}{2}} R(\zeta^{\frac{q^2-1}{2} - k_j+j}),\quad (1\leq j < q^2-1,\, 2\mid k_j),\\[1mm]
&(B_\varphi)_{2b}\qquad T R(\zeta^j) t = R(\zeta^{k_j}) T Z_0^{\frac{q^2-1}{2}} R(\zeta^{\frac{q^2-1}{2} - qk_j + qj}),\quad (1\leq j < q^2-1, \,2\nmid k_j),
\end{align*}
where $R(\zeta^k)$ has to be rewritten as
\[
R(\zeta^k) = b^{\lceil k/2\rceil}\,a^{k-2\lfloor k/2\rfloor},\quad 1\leq k<q^2-1.
\]
\end{lem}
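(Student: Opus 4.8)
The plan is to assemble the presentation from two ingredients that the preceding discussion has already prepared: the abstract presentation furnished by Corollary~\ref{Cor:Main2Trans} for the $2$-transitive action of $B_\varphi$ on $\mathbb{A}^1(K)=K$, and the presentation of the point stabiliser $D_\varphi=(B_\varphi)_{\omega_0}$ supplied by Lemma~\ref{Lem:DphiPres}. First I would record that the hypotheses of Corollary~\ref{Cor:Main2Trans} are met: the chosen $\tau$ is an involution of $G_\varphi$ interchanging $\omega_0=0$ and $\omega_1=1$ (its determinant $-1$ is a square in $K$, since $q$ is odd forces $8\mid q^2-1$, so $\varphi(\tau)=1$ and $\tau\circ_\varphi\tau=\tau^2=I_2$, whence $\theta=1$), while $H_1=D_\varphi\cap\tau D_\varphi\tau^{-1}$ is exactly the pointwise stabiliser $(B_\varphi)_{0,1}$. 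A diagonal matrix fixes both $0$ and $1$ only when it is scalar, so $H_1=Z=\langle Z_0\rangle\cong C_{q^2-1}$. Since $H_1$ is cyclic, the corollary produces a single relation of type~(\ref{Eq:Gen2TransRel1}), namely $[Z_0,T]=1$, together with $T^2=1$ and the family~(\ref{Eq:BphiLongRel}); this is precisely the reduction already carried out in the paragraph preceding the lemma.

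Next I would substitute the presentation $(D_\varphi)$ of Lemma~\ref{Lem:DphiPres} for the subgroup $D_\varphi$, replacing it by the generators $a,b,Z_0$ and the relations $(D_\varphi)$, so that $T^2=[Z_0,T]=1$ become $(B_\varphi)_1$. It then remains only to make the family~(\ref{Eq:BphiLongRel}) explicit as the relations $(B_\varphi)_{2a}$ and $(B_\varphi)_{2b}$. Writing $\omega=\zeta^j$ with $1\leq j<q^2-1$ and $1-\zeta^j=\zeta^{k_j}$, one has $\rho_\omega'=R(\zeta^{k_j})$, and I would expand $\lambda_\omega'$ via~(\ref{Eq:lambdaprime}): from $\zeta^j-1=-\zeta^{k_j}$ and $-1=\zeta^{(q^2-1)/2}$ one obtains $\tfrac{\omega}{\omega-1}=\zeta^{(q^2-1)/2-k_j+j}$, so that $Z(-1)=Z_0^{(q^2-1)/2}$ remains as a separate scalar factor. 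The element $1-\omega=\zeta^{k_j}$ is a square in $K^\times$ exactly when $k_j$ is even, which is precisely the dichotomy governing $\beta$ in~(\ref{Eq:lambdaprime}); the square case gives directly the exponent of $(B_\varphi)_{2a}$, while in the non-square case the Frobenius twist $\,\cdot\,^q$ multiplies the exponent by $q$ and, using $q\cdot\tfrac{q^2-1}{2}\equiv\tfrac{q^2-1}{2}\pmod{q^2-1}$ (valid because $q$ is odd, which also keeps the factor $Z_0^{(q^2-1)/2}$ unchanged), one recovers the exponent $\tfrac{q^2-1}{2}-qk_j+qj$ of $(B_\varphi)_{2b}$. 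Finally I would rewrite every $R(\zeta^k)$ in terms of $a=R(\zeta^{-1})$ and $b=R(\zeta^2)$ through $R(\zeta^k)=b^{\lceil k/2\rceil}a^{\,k-2\lfloor k/2\rfloor}$, itself an immediate consequence of~(\ref{Eq:DphiRel2}).

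The one point demanding genuine care — and the place where the deformation really intervenes — is the bookkeeping of the square/non-square condition on the determinant. It is this condition that forces the case split on the parity of $k_j$ and that inserts the exponent $q$ into $(B_\varphi)_{2b}$; one must verify both that the scalar factor $Z(-1)$ stays consistent across the two cases (via the congruence above) and that the value $\lambda_\omega'$ computed in~(\ref{Eq:lambdaprime}) is correctly read off in each case. Everything else is routine substitution, and no further relations can arise, precisely because $H_1$ is cyclic and hence contributes only the single commuting relation $[Z_0,T]=1$.
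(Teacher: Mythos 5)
Your proposal is correct and follows essentially the same route as the paper: the lemma there is likewise obtained by applying Corollary~\ref{Cor:Main2Trans} to the $2$-transitive action of $B_\varphi$ on $\mathbb{A}^1(K)$ with $\omega_0=0$, $\omega_1=1$, $\tau=\left(\begin{smallmatrix}1&-1\\0&-1\end{smallmatrix}\right)$, and $P=Q=\{R(\omega):\omega\in K^\times\}$, computing $\rho'_\omega=R(1-\omega)$ and $\lambda'_\omega$ as in~(\ref{Eq:lambdaprime}), and then combining with Lemma~\ref{Lem:DphiPres}. Your explicit verifications — that $\det\tau=-1$ is a square so $\varphi(\tau)=1$ and $\theta=1$, that $H_1=Z=\langle Z_0\rangle$ yields the single type~(I) relation $[Z_0,T]=1$, the exponent computation $\frac{\omega}{\omega-1}=\zeta^{\frac{q^2-1}{2}-k_j+j}$, and the congruence $q\cdot\frac{q^2-1}{2}\equiv\frac{q^2-1}{2}\pmod{q^2-1}$ governing the case $2\nmid k_j$ — correctly fill in the bookkeeping the paper leaves implicit.
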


\subsubsection{A presentation for $G_\varphi$}
Since the action of $G_\varphi$ on $\mathbb{P}^1(K)$ described in Part (c) of Lemma~\ref{Lem:GvarphiActetc} is $3$-transitive, we may apply Corollary~\ref{Cor:Main2Trans} in its most convenient form. We let $\omega_0=\infty$ and $\omega_1=0$, so that $H = (G_\varphi)_{\omega_0} = B_\varphi$ and $H_1=(G_\varphi)_{\omega_0} \cap (G_\varphi)_{\omega_1} = D_\varphi$. Moreover, in accordance with the requirements of Corollary~\ref{Cor:Main2Trans}, we let $\tau = {\tiny \begin{pmatrix}0&-1\\-1&0\end{pmatrix}}$, noting that $\tau^2=1$, and we set
\[
P = \big\{1\big\}  \cup \big\{\kappa_\omega: \omega\in K^\times\big\},\quad \kappa_\omega:=\begin{pmatrix}1&-1\\0&-\omega\end{pmatrix};
\] 
in particular, $\kappa_1 = T$. As representative for the non-trivial double coset in $D_\varphi\backslash B_\varphi/D_\varphi$ we may take $q=T$. The element $\rho_T'\in P$ has to map the point $0$ to $1$, thus $\rho_T'=T$, and one finds that $\lambda_T' = Z(-1) T$. Finally, one computes that the automorphism $\iota: D_\varphi\rightarrow D_\varphi$ given by $h\mapsto \tau^{-1} h \tau$ satisfies $\iota(Z_0) = Z_0$, $\iota(a) = Z_0^{-1} b a$, and $\iota(b) = Z_0^2 b^{-1}$. Applying Corollary~\ref{Cor:Main2Trans}, and implementing  Lemma~\ref{Lem:BphiPres}, we obtain the following.

\begin{pr}
\label{Prop:GphiPres}
The group $G_\varphi$ is generated by the elements $a=R(\zeta^{-1}),$ $b = R(\zeta^2),$ $Z_0=\zeta I_2,$ $T={\begin{pmatrix}1&-1\\0&-1\end{pmatrix}},$ and 
$J={\begin{pmatrix} 0&-1\\-1&0\end{pmatrix}},$ subject to the relations $(D_\varphi),$ $(B_\varphi)_1,$ $(B_\varphi)_{2a},$ $(B_\varphi)_{2b},$ plus the relations
\begin{align*}
&(G_\varphi)_1\qquad J^2 = [J, Z_0] = 1,\\[1mm]
&(G_\varphi)_2\qquad [J, a] = Z_0^{-1} b,\,\, (J b)^2 = Z_0^2,\,\, (J T)^3 = Z_0^{\frac{q^2-1}{2}}.
\end{align*} 
\end{pr}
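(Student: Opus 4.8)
The plan is to apply Corollary~\ref{Cor:Main2Trans} directly to the $3$-transitive action of $G_\varphi$ on $\mathbb{P}^1(K)$ furnished by Part~(c) of Lemma~\ref{Lem:GvarphiActetc}, and then to merge the resulting presentation with the presentation of the point stabiliser $B_\varphi$ already obtained in Lemma~\ref{Lem:BphiPres}. Concretely, I would take $\omega_0 = \infty$ and $\omega_1 = 0$, so that $H = (G_\varphi)_{\omega_0} = B_\varphi$ and $H_1 = (G_\varphi)_{\omega_0} \cap (G_\varphi)_{\omega_1} = D_\varphi$, and choose the interchanging element $\tau = J = \begin{pmatrix} 0 & -1 \\ -1 & 0 \end{pmatrix}$. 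Since $q$ is odd we have $q^2 \equiv 1 \pmod 4$, hence $-1 = \det(J) \in (K^\times)^2$, so that $\varphi(J) = 1_{GL_2(K)}$ and $\tau \circ_\varphi \tau = J^2 = I_2$; thus the element $\theta$ of Corollary~\ref{Cor:Main2Trans} is trivial, accounting for the relation $J^2 = 1$ in $(G_\varphi)_1$. For the generating system of $H_1 = D_\varphi$ I would use the generators $a = R(\zeta^{-1})$, $b = R(\zeta^2)$ and $Z_0 = \zeta I_2$ supplied by Lemma~\ref{Lem:DphiPres}.

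The first block of relations comes from the type~(I) relations $h_i \circ_\varphi \tau = \tau \circ_\varphi \iota(h_i)$ of the corollary, where $\iota\colon D_\varphi \to D_\varphi$, $h \mapsto \tau^{-1} \circ_\varphi h \circ_\varphi \tau$. Here the one genuine computation is that of $\iota$: recalling that $x \circ_\varphi y = x\cdot \varphi(x)^{-1}(y)$ and that $\varphi$ takes the value $1_{GL_2(K)}$ or $\alpha$ according as $\det$ is or is not a square, one evaluates $\iota$ on the three generators, tracking at each multiplication whether the determinant of the partial product is a square. This yields $\iota(Z_0) = Z_0$, $\iota(a) = Z_0^{-1} b a$ and $\iota(b) = Z_0^2 b^{-1}$. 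Substituting these into $h_i \circ_\varphi J = J \circ_\varphi \iota(h_i)$ and using $J^{-1} = J$ then gives, respectively, $[J, Z_0] = 1$ (completing $(G_\varphi)_1$) together with $[J, a] = Z_0^{-1} b$ and $(J b)^2 = Z_0^2$ (the first two relations of $(G_\varphi)_2$).

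The remaining relation is the single type~(II) relation. Because the action is $3$-transitive, the space $D_\varphi \backslash B_\varphi / D_\varphi$ has exactly one non-trivial double coset, which I would represent by $T$. The invariants $\rho_T'$ and $\lambda_T'$ of Corollary~\ref{Cor:Main2Trans} are then computed from the action on $\mathbb{P}^1(K)$: $\rho_T'$ is the element of $P$ having the same effect on $\omega_1 = 0$ as $\tau \circ_\varphi T$ (which carries $0$ to $1$), namely $\rho_T' = T$, and a short computation gives $\lambda_T' = Z(-1)\, T = Z_0^{(q^2-1)/2} T$, using that $-1 = \zeta^{(q^2-1)/2}$. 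The corollary's relation $J \circ_\varphi T \circ_\varphi J = \rho_T' \circ_\varphi J \circ_\varphi \lambda_T'$ thus reads $J T J = T J\, Z_0^{(q^2-1)/2}\, T$. Since $Z_0^{(q^2-1)/2}$ commutes with both $J$ and $T$ (by $[J, Z_0] = 1$ and the relation $[Z_0, T] = 1$ of $(B_\varphi)_1$), and since $J^2 = T^2 = 1$, this last identity is equivalent to $(J T)^3 = Z_0^{(q^2-1)/2}$, the final relation of $(G_\varphi)_2$.

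Assembling these with the presentation of $H = B_\varphi$ from Lemma~\ref{Lem:BphiPres}---whose relations are exactly $(D_\varphi)$, $(B_\varphi)_1$, $(B_\varphi)_{2a}$, $(B_\varphi)_{2b}$---Corollary~\ref{Cor:Main2Trans} yields the asserted presentation of $G_\varphi$ on the generators $a, b, Z_0, T, J$. I expect the main obstacle to be purely computational rather than conceptual: every product above must be formed with the twisted operation $\circ_\varphi$ of the deformation, so the delicate point throughout is to determine correctly, at each step of the computation of $\iota$ and of $\lambda_T'$, whether the relevant determinant lies in $(K^\times)^2$---equivalently, to control the parities of the occurring exponents of $\zeta$, for which the congruence $q^2 \equiv 1 \pmod 8$ (valid since $q$ is odd) is the key arithmetic input.
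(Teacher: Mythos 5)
Your proposal is correct and follows essentially the same route as the paper's own proof: Corollary~\ref{Cor:Main2Trans} applied to the $3$-transitive action of $G_\varphi$ on $\mathbb{P}^1(K)$ with $\omega_0=\infty$, $\omega_1=0$, $\tau=J$, double coset representative $T$, yielding exactly the paper's values $\iota(Z_0)=Z_0$, $\iota(a)=Z_0^{-1}ba$, $\iota(b)=Z_0^2b^{-1}$, $\rho_T'=T$, $\lambda_T'=Z(-1)T=Z_0^{(q^2-1)/2}T$, merged with the presentation of $B_\varphi$ from Lemma~\ref{Lem:BphiPres}. Your added remarks---that $q^2\equiv1\bmod 8$ makes $-1=\det(J)$ a square so that $J\circ_\varphi J=I_2$, and the explicit rewriting of $JTJ=TJZ_0^{(q^2-1)/2}T$ into $(JT)^3=Z_0^{(q^2-1)/2}$ via the commutation relations---are correct details the paper leaves implicit.
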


\subsubsection{A presentation for the Zassenhaus group $\mathrm{M}(q^2)$}
Combining Proposition~\ref{Prop:GphiPres} with Parts (a) and (b) of Lemma~\ref{Lem:GvarphiActetc}, and removing the generator $b$, we find the following presentation for the Zassenhaus group $\mathrm{M}(q^2)$.

\begin{te}
\label{Thm:MPres}
Let $p$ be an odd prime, $m\geq1$ an integer, and let $q=p^m$. Then the group $(PGL_2(q^2))_{\bar{\varphi}} \cong \mathrm{M}(q^2)$ is generated by symbols $a, T,$ and $J,$ subject to the relations
\begin{align*}
&(\bar{D}_\varphi) \qquad [J,a]^{\frac{q^2-1}{2}} = 1,\,\,a^2 = [J,a]^{-\frac{q+1}{2}},\,\,a^{[J,a]} = [J,a]^{q-1} a,\\[1mm]
&(\bar{B}_\varphi)_1\qquad T^2=1,\\[1mm]
&(\bar{B}_\varphi)_{2a}\qquad T R(\zeta^j) T = R(\zeta^{k_j}) T R(\zeta^{\frac{q^2-1}{2} - k_j + j})\quad (1\leq j<q^2-1,\, 2\mid k_j),\\[1mm]
&(\bar{B}_\varphi)_{2b}\qquad T R(\zeta^j) T = R(\zeta^{k_j}) T R(\zeta^{\frac{q^2-1}{2} - qk_j + qj})\quad (1\leq j<q^2-1,\, 2\nmid k_j),\\[2mm] 
&(\bar{G}_\varphi) \qquad J^2 = (J [J,a])^2 = (J T)^3 = 1,
\end{align*}
where $k_j$ is the integer in the range $1\leq k_j<q^2-1$ satisfying $1-\zeta^j = \zeta^{k_j},$ and $R(\zeta^k)$ has to be rewritten as
\[
R(\zeta^k) = [J,a]^{\lceil k/2\rceil}\,a^{k-2\lfloor k/2\rfloor},\quad 1\leq k<q^2-1.
\] 
\end{te}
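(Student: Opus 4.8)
The plan is to obtain the presentation of $\mathrm{M}(q^2)$ by pushing the presentation of $G_\varphi$ from Proposition~\ref{Prop:GphiPres} down through the surjection $\pi$ of Lemma~\ref{Lem:GvarphiActetc}(a). Since $\pi$ is a homomorphism from $G_\varphi$ onto $\bar{G}_{\bar{\varphi}}$ with kernel $\zeta_1(GL_2(K)) = Z \cong K^\times$, and since $\bar{\Phi}$ of Lemma~\ref{Lem:GvarphiActetc}(b) identifies $\bar{G}_{\bar{\varphi}}$ with $\mathrm{M}(q^2)$, a presentation for the quotient is obtained from the presentation of $G_\varphi$ by adjoining the relations that kill $Z$, that is, by setting $Z_0 = 1$, and then simplifying. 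Concretely, I would first record that $Z = \langle Z_0\rangle$ is central (by $(G_\varphi)_1$ and $[Z_0,T]=1$ from $(B_\varphi)_1$, together with the fact that $a,b\in\mathcal{R}$ centralize $Z$ only up to the Frobenius twist recorded in $(D_\varphi)$), so that imposing $Z_0=1$ is exactly quotienting by $\langle Z_0\rangle^{G_\varphi}$; one must check this normal closure equals $Z$, which is immediate because $Z$ is already normal in $G_\varphi$ with $\pi$ having kernel precisely $Z$.

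Next I would feed $Z_0 = 1$ into each block of relations of Proposition~\ref{Prop:GphiPres} and read off what survives. In $(D_\varphi)$ the relations $Z_0^{q^2-1}=1$, $Z_0^a = Z_0^q$, $Z_0^b = Z_0$ become vacuous, and the remaining three relations $b^{(q^2-1)/2}=1$, $a^2 = b^{-(q+1)/2}$, $a^b = b^{q-1}a$ persist. The key observation, which lets me remove the generator $b$, is the relation $[J,a] = Z_0^{-1}b$ from $(G_\varphi)_2$: modulo $Z_0=1$ this reads $b = [J,a]$, so everywhere $b$ occurs I substitute $[J,a]$. This turns $(D_\varphi)$ into $(\bar D_\varphi)$, turns the clause $(Jb)^2 = Z_0^2$ of $(G_\varphi)_2$ into $(J[J,a])^2 = 1$, and turns $(JT)^3 = Z_0^{(q^2-1)/2}$ into $(JT)^3 = 1$, giving $(\bar G_\varphi)$. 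The Borel relations $(B_\varphi)_{2a}$ and $(B_\varphi)_{2b}$ lose their central factors $Z_0^{(q^2-1)/2}$, becoming $(\bar B_\varphi)_{2a}$ and $(\bar B_\varphi)_{2b}$, while $(B_\varphi)_1$ collapses to $T^2=1$, i.e.\ $(\bar B_\varphi)_1$; and $(G_\varphi)_1$ collapses to $J^2=1$, already subsumed in $(\bar G_\varphi)$. The rewriting rule $R(\zeta^k) = b^{\lceil k/2\rceil} a^{k-2\lfloor k/2\rfloor}$ of Lemma~\ref{Lem:BphiPres} becomes the stated rule with $b$ replaced by $[J,a]$.

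The one genuinely substantive point — and the step I expect to be the main obstacle — is verifying that the substitution $b = [J,a]$ is legitimate as a Tietze transformation, i.e.\ that after adjoining $Z_0=1$ the generator $b$ is redundant and its defining relations are consequences of the substituted ones. This requires checking that the relation $[J,a]=Z_0^{-1}b$ can be used to eliminate $b$ without losing information: one reads it as $b = Z_0[J,a]$ before quotienting, uses it to express $b$ in terms of $a, J, Z_0$, substitutes throughout, and only then imposes $Z_0=1$; one must confirm that the relations of $(D_\varphi)$ and the $b$-clauses of $(G_\varphi)_2$ become the displayed relations and that no independent relation on $b$ is orphaned. I would carry this out by a careful bookkeeping Tietze argument: add $b=Z_0[J,a]$ as a definition, delete $b$ as a generator and rewrite, then quotient by $Z_0$. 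Because all the relations are explicit and the kernel of $\pi$ is exactly the central $\langle Z_0\rangle$, standard presentation-of-a-quotient theory (a quotient $G/N$ with $N$ the normal closure of a set $S$ has presentation obtained by adjoining $S$ to the relators) guarantees the outcome; the labor is purely in confirming that the central factors $Z_0^{(q^2-1)/2}$ and $Z_0^2$ appearing in $(B_\varphi)_{2a}$, $(B_\varphi)_{2b}$, and $(Jb)^2=Z_0^2$ disappear cleanly and that the metabelian structure of $D_\varphi$ recorded in Lemma~\ref{Lem:DphiPres} is compatible with setting $Z_0=1$. Once this verification is complete, the presentation of Theorem~\ref{Thm:MPres} follows directly.
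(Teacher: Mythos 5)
Your proposal is correct and is essentially the paper's own argument: the paper obtains Theorem~\ref{Thm:MPres} precisely by combining the presentation of $G_\varphi$ from Proposition~\ref{Prop:GphiPres} with Parts~(a) and (b) of Lemma~\ref{Lem:GvarphiActetc} (so quotienting by the kernel $\langle Z_0\rangle\cong K^\times$ of $\pi$ and identifying the quotient with $\mathrm{M}(q^2)$ via $\bar{\Phi}$) and then eliminating the generator $b$ through the relation $[J,a]=Z_0^{-1}b$, exactly the Tietze bookkeeping you describe. Your only slip is calling $\langle Z_0\rangle$ \emph{central} in $G_\varphi$ --- the relation $Z_0^a=Z_0^q$ in $(D_\varphi)$ shows it is merely normal --- but as you yourself note, normality is all that the quotient-presentation argument requires.
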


In preparation for our work in the next section on the small Mathieu groups, we derive from Theorem~\ref{Thm:MPres} an explicit presentation for the group $\mathrm{M}(3^2)$.

\begin{co}
\label{Cor:M(9)}
We have
\begin{multline}
\label{Eq:M(9)Pres}
\mathrm{M}(3^2) \cong  (\mathrm{PGL}_2(3^2))_{\bar{\varphi}} \cong\\[1mm] \big\langle a, T, J \,\big\vert\, T^2 = J^2 = (JT)^3 = (Ta^2)^3 = [J,a]^4 =(J[J,a])^2 = 1,\\[1mm] 
a^2 = [J,a]^2, (aT)^2=T^{[J,a]}\big\rangle,
\end{multline}
where the generators $a, T, J$ are, respectively, the M\"obius transformations $a: \omega\mapsto \zeta^{-1}\omega,$ $T: \omega\mapsto 1-\omega,$ and $J: \omega\mapsto \omega^{-1},$ and $\zeta$ is a primitive element of the field $K= \mathrm{GF}(3^2)$.
\end{co}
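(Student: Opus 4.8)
The plan is to specialize the presentation of Theorem~\ref{Thm:MPres} to the case $q=3$ and then to reduce the resulting relation set to the one claimed in Corollary~\ref{Cor:M(9)} by elementary Tietze transformations. Write $b:=[J,a]$. Substituting $q=3$ (so that $q^2-1=8$, $\tfrac{q^2-1}{2}=4$, $\tfrac{q+1}{2}=2$, $q-1=2$) into the relation blocks $(\bar D_\varphi)$, $(\bar B_\varphi)_1$ and $(\bar G_\varphi)$ immediately yields the relations $T^2=J^2=(JT)^3=b^4=(Jb)^2=1$ and $a^2=b^{-2}=b^2$, all of which appear verbatim in the corollary. What remains is to reconcile the surviving relation $a^{b}=b^{2}a$ of $(\bar D_\varphi)$ together with the seven conjugation relations $(\bar B_\varphi)_{2a}$, $(\bar B_\varphi)_{2b}$ (one for each $j\in\{1,\dots,7\}$) against the two extra relations $(Ta^2)^3=1$ and $(aT)^2=T^{b}$ of Corollary~\ref{Cor:M(9)}.

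First I would record the arithmetic of $K=\mathrm{GF}(9)$. Choosing $\zeta$ with $\zeta^2=\zeta+1$, one computes the discrete-logarithm data $k_j$ defined by $1-\zeta^j=\zeta^{k_j}$, namely $(k_1,\dots,k_7)=(3,5,1,4,2,7,6)$, and reads off the parity of each $k_j$ to select between $(\bar B_\varphi)_{2a}$ and $(\bar B_\varphi)_{2b}$. Substituting these values, reducing the resulting exponents modulo $8$, and rewriting every occurrence of $R(\zeta^k)$ through the formula $R(\zeta^k)=b^{\lceil k/2\rceil}a^{k-2\lfloor k/2\rfloor}$ (and $b^4=1$) converts the seven relations into explicit words in $a,b,T$; for instance the case $j=4$ becomes $Tb^2T=b^2Tb^2$ and the case $j=7$ becomes $TaT=b^{-1}Tb^{-1}a$.

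The heart of the proof is then a finite relation-chase showing the two extra sets have the same normal closure. The organizing observation is that, modulo the shared relations together with $a^{b}=b^{2}a$, the subgroup $\langle a,b\rangle$ is the quaternion group $\mathfrak{Q}$, in agreement with $\mathrm{M}(3^2)_{0,\infty}\cong\mathfrak{Q}$; the block $(\bar D_\varphi)$ is indeed exactly the quaternion presentation for $q=3$. Using this $\mathfrak{Q}$-arithmetic and $T^2=1$, the $j=4$ relation is readily seen to be equivalent to $(Ta^2)^3=1$: from $Tb^2T=b^2Tb^2$ one gets $(Tb^2)^3=b^2Tb^4Tb^2=b^4=1$, and conversely. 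One then checks that a suitable one (or short combination) of the remaining relations is equivalent to $(aT)^2=T^{b}$, and finally that the other conjugation relations, as well as $a^{b}=b^{2}a$ itself, are consequences of these two together with the shared relations; carrying out the reverse implications completes the equivalence.

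The obstacle I expect is purely combinatorial: keeping the bookkeeping of the seven highly interdependent conjugation relations straight and isolating a minimal equivalent pair. The delicate point is that the $\mathfrak{Q}$-structure on $\langle a,b\rangle$ is \emph{not} forced by the $J$-relations alone (which only give the dihedral relation $JbJ=b^{-1}$), but must be extracted from the relations involving the extra involution $T$; a careless reduction can silently assume the very relation $a^{b}=b^{2}a$ one is trying to derive. As in the proof of Corollary~\ref{Cor:PSL34Pres}, I would verify the final dependency list with a computer-algebra computation (e.g.\ in GAP), confirming in particular that the presented group has order $|\mathrm{M}(3^2)|=720$.
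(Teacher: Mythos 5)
Your proposal follows essentially the same route as the paper's proof: specialize Theorem~\ref{Thm:MPres} at $q=3$, compute the discrete-logarithm data $k_j$, simplify by hand to the corollary's relation set plus one residual relation --- the paper isolates $a^{[J,a]}=a^{-1}$, which is exactly your $a^{b}=b^{2}a$ modulo $a^2=b^2$ and $b^4=1$ --- and then eliminate that residual relation by a GAP verification. The only cosmetic difference is your choice of primitive element ($\zeta^2=\zeta+1$ versus the paper's $\zeta^2=1-\zeta$), which merely permutes the values $k_j$ and does not affect the argument.
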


\begin{proof}
Using the relation $\zeta^2=1-\zeta$, one finds that $k_1=2$, $k_2= 1$, $k_3= 6$, $k_4=4$, $k_5= 7$, $k_6=3$, and $k_7=5$. Applying Theorem~\ref{Thm:MPres} we obtain, after straightforward simplification, a presentation for $\mathrm{M}(3^2)$ with generators $a$, $T$, and $J$, and defining relations as specified in Corollary~\ref{Cor:M(9)}, plus an extra relation $a^{[J,a]} = a^{-1}$. A dose of GAP then shows that this extra relation is a consequence of the ones given in the corollary, whence the result.
\end{proof}

\section{The Mathieu groups}
\label{Sec:Mathieu}
\subsection{On Witt's construction of multiply transitive groups}
We shall need a version of Witt's construction \cite[Satz~1]{Witt}, where only one new point and one new generator are adjoined at a time. This is the following.

\begin{pr}
\label{Prop:Witt} 
Let $G$ be a $(t-1)$-transitive group on the set 
\[
\Omega = \big\{p_1,\ldots, p_s, q_1,\ldots, q_{t-1}\big\},
\]
where $t\geq3$. For $2\leq j\leq t-1,$ let $S_j\in G$ be a permutation interchanging $q_{j-1}$ with $q_j,$ while fixing $q_\nu$ for all $\nu$ with $1\leq \nu\leq t-1$ and $\nu\neq j-1,j$. Moreover, let $q_t\not\in\Omega$ be a new point, set $\Omega':= \Omega\cup\{q_t\},$ and let $S_t\in\Sigma:=\mathrm{Sym}(\Omega')$ be a permutation interchanging $q_{t-1}$ with $q_t,$ while fixing $q_\nu$ for $1\leq \nu\leq t-2$. Set $H:= G_{q_1,\ldots,q_{t-1}}$ and $G_1:= G_{q_2,\ldots,q_{t-1}},$ and suppose that 
\begin{align}
S_t^2 &\equiv 1\,\,\mathrm{mod}\,\,H,\label{Eq:WittRel1}\\
(S_jS_t)^2 &\equiv 1\,\,\mathrm{mod}\,\,H,\quad (2\leq j\leq t-2),\label{Eq:WittRel2}\\
(S_{t-1} S_t)^3 &\equiv 1\,\,\mathrm{mod}\,\,H,\label{Eq:WittRel3}\\
S_t G_1 S_t &= G_1.\label{Eq:WittRel4}
\end{align}
Then the set of permutations
\[
\widetilde{G} = G \cup G S_t G \subseteq \Sigma
\]
is a $t$-transitive permutation group on the set $\Omega',$ and we have $\widetilde{G}_{q_t} = G$.
\end{pr}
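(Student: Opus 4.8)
The plan is to establish the three assertions --- that $\widetilde{G}$ is a subgroup of $\Sigma$, that $\widetilde{G}_{q_t}=G$, and that $\widetilde{G}$ is $t$-transitive on $\Omega'$ --- essentially in that order, with the bulk of the effort going into closure. Two preliminary observations cost nothing: every element of $G$, read as a permutation of $\Omega'$, fixes $q_t$; and $\widetilde{G}$ is closed under inversion, since $S_t^2\in H\subseteq G$ forces $GS_t^{-1}G=GS_tG$, so $\widetilde{G}^{-1}=\widetilde{G}$. For closure it then suffices to prove the single inclusion $S_tGS_t\subseteq\widetilde{G}$: the products $G\cdot G$, $G\cdot GS_tG$ and $GS_tG\cdot G$ are trivially absorbed, while $GS_tG\cdot GS_tG=G(S_tGS_t)G$, which lands in $G(G\cup GS_tG)G=\widetilde{G}$ once $S_tGS_t\subseteq\widetilde{G}$ is known.

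To prove $S_tgS_t\in\widetilde{G}$ for every $g\in G$ I would avoid the (circular) temptation to argue on generators, and instead track two auxiliary sets. From (\ref{Eq:WittRel2}) one reads off $S_tS_jS_t=S_j^{-1}(S_jS_t)^2\in G$ for $2\le j\le t-2$, and from (\ref{Eq:WittRel3}) that $S_tS_{t-1}S_t=S_{t-1}^{-1}S_t^{-1}S_{t-1}^{-1}(S_{t-1}S_t)^3\in GS_tG$ (using $S_t^{-1}\in HS_t$). Set $C:=\{a\in G:\,S_taS_t\in G\}$. A one-line check shows $C$ is a subgroup of $G$; by (\ref{Eq:WittRel4}) it contains $G_1$, and by the previous display it contains $S_2,\dots,S_{t-2}$. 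The crucial claim is then that $C$ contains the \emph{full} point stabiliser $G_{q_{t-1}}$. Granting this, put $T:=\{g\in G:\,S_tgS_t\in\widetilde{G}\}$; the identity $S_t(agb)S_t=(S_taS_t)(S_tgS_t)(S_tbS_t)$ with $a,b\in C$ shows that $agb\in T$ whenever $g\in T$, and since both $1$ and $S_{t-1}$ lie in $T$ we get $T\supseteq C\cup CS_{t-1}C\supseteq G_{q_{t-1}}\cup G_{q_{t-1}}S_{t-1}G_{q_{t-1}}$. Because $G$ is $2$-transitive, $G_{q_{t-1}}$ has exactly two orbits on $\Omega$, hence exactly two $(G_{q_{t-1}},G_{q_{t-1}})$-double cosets, and $S_{t-1}$ (which moves $q_{t-1}$) represents the nontrivial one; thus $G=G_{q_{t-1}}\sqcup G_{q_{t-1}}S_{t-1}G_{q_{t-1}}$, so $T=G$ and $S_tGS_t\subseteq\widetilde{G}$.

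The main obstacle is the generation claim $G_{q_{t-1}}\subseteq C$, i.e. $G_{q_{t-1}}=\langle G_1,S_2,\dots,S_{t-2}\rangle$, which I would prove by a primitivity ladder. The subgroup $G_1=G_{q_2,\dots,q_{t-1}}$ and its conjugate $S_2G_1S_2=G_{q_1,q_3,\dots,q_{t-1}}$ are the stabilisers of the two points $q_2$ and $q_1$ inside the $2$-transitive group $G_{q_3,\dots,q_{t-1}}$; since a point stabiliser in a $2$-transitive group is maximal, these two join to all of $G_{q_3,\dots,q_{t-1}}$. Adjoining $S_3,\dots,S_{t-2}$ one at a time and repeating this step up the stabiliser chain (each ambient group $G_{q_{k+1},\dots,q_{t-1}}$ being at least $2$-transitive) yields $\langle G_1,S_2,\dots,S_{t-2}\rangle=G_{q_{t-1}}$, and the same ladder with $S_{t-1}$ adjoined gives $\langle G_1,S_2,\dots,S_{t-1}\rangle=G$. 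The reverse inclusion $C\subseteq G_{q_{t-1}}$ is immediate, since $S_taS_t\in G$ fixes $q_t$ and hence $a$ fixes $q_{t-1}$; so in fact $C=G_{q_{t-1}}$.

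Finally, the last two assertions are quick. For $\widetilde{G}_{q_t}=G$ the inclusion $\supseteq$ is the preliminary observation, and conversely if $g_1S_tg_2$ fixes $q_t$ then, as $g_2$ fixes $q_t$ and $S_t$ sends $q_t\mapsto q_{t-1}$, we would need $g_1(q_{t-1})=q_t\notin\Omega$, impossible for $g_1\in G$; hence no element of $GS_tG$ fixes $q_t$. For $t$-transitivity, $\widetilde{G}$ is transitive on $\Omega'$ (as $G$ is transitive on $\Omega$ and $S_t$ carries $q_{t-1}$ to $q_t$), while its point stabiliser $\widetilde{G}_{q_t}=G$ is $(t-1)$-transitive on $\Omega'\setminus\{q_t\}=\Omega$; a transitive group whose point stabiliser is $(t-1)$-transitive on the complement is $t$-transitive, which is the desired conclusion.
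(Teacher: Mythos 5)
Your proof is correct, and it follows the same Witt-style skeleton as the paper's argument: reduce closure of $\widetilde{G}$ to the single inclusion $S_tGS_t\subseteq\widetilde{G}$; dispose of the full point stabiliser $G_{q_{t-1}}=G_{t-2}$ using hypotheses (\ref{Eq:WittRel1}), (\ref{Eq:WittRel2}), (\ref{Eq:WittRel4}); handle the remaining double coset via (\ref{Eq:WittRel3}); then compute $\widetilde{G}_{q_t}=G$ and invoke the standard fact that a transitive group with $(t-1)$-transitive point stabiliser is $t$-transitive. The one genuinely different ingredient is your proof of the generation claim $G_{q_{t-1}}=\langle G_1,S_2,\dots,S_{t-2}\rangle$: you climb the stabiliser chain using maximality of point stabilisers in $2$-transitive (hence primitive) groups, whereas the paper derives the same fact from the explicit decompositions $G_j=G_{j-1}\cup G_{j-1}S_jG_{j-1}$ ($2\leq j\leq t-1$), proved by pure coset counting (a left coset of $G_{j-1}=(G_j)_{q_j}$ is determined by the image of $q_j$, and the right-hand side realises every image). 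The paper's route is more elementary and has the side benefit that the case $j=t-1$ directly furnishes the splitting $G=G_{t-2}\cup G_{t-2}S_{t-1}G_{t-2}$, which you re-derive separately from $2$-transitivity of $G$; your bookkeeping with the subgroup $C=\{a\in G:S_taS_t\in G\}$ and the set $T$ is a clean repackaging of the paper's computation $S_tG_{t-2}=G_{t-2}S_t$, and your observation $C=G_{q_{t-1}}$ is a nice extra not in the paper.

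Two small repairs are needed. First, since $S_t$ need not be an involution (only $S_t^2\in H$), the displayed identity $S_t(agb)S_t=(S_taS_t)(S_tgS_t)(S_tbS_t)$ is false as written; the correct form is
\begin{equation*}
S_t(agb)S_t=(S_taS_t)\,S_t^{-2}\,(S_tgS_t)\,S_t^{-2}\,(S_tbS_t),
\end{equation*}
which is harmless because $S_t^{-2}\in H$ and $G$, $\widetilde{G}$ absorb $H$; the same insertion is what makes your ``one-line check'' that $C$ is a subgroup go through. Second, the phrase ``these two join to all'' tacitly assumes the two point stabilisers are distinct, which holds because each rung $K_j=G_{q_{j+1},\dots,q_{t-1}}$ acts $2$-transitively on $s+j\geq 3$ points (so $s\geq1$, as the notation $p_1,\dots,p_s$ indicates); alternatively, and uniformly in all cases, note that $\langle L_{j-1},S_j\rangle$ properly contains the maximal subgroup $L_{j-1}=(K_j)_{q_j}$ of $K_j$, since $S_j$ moves $q_j$, and hence equals $K_j$.
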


The proof is a minor variation of Witt's original argument in \cite{Witt}. Our next result combines Witt's construction with Corollary~\ref{Cor:Main2Trans} to obtain a presentation for the transitive extension  $\widetilde{G}$ in terms of the original group $G$. 
\begin{pr}
\label{Prop:WittTransfer}
In the notation of Proposition~{\em \ref{Prop:Witt},} let $\sigma_1,\ldots, \sigma_r$ be generators for $G_1,$ and set 
\begin{align*}
\sigma_\rho' &= S_t^{-1} \sigma_\rho S_t\in G_1,\quad 1\leq \rho\leq r,\\
h_j &= [S_t^{-1}, S_j^{-1}]\in H,\quad 2\leq j\leq t-2,\\  
h &=S_t^2\in H.
\end{align*}
Then $\widetilde{G}$ is generated by its subgroup $G$ plus one extra generator $U$ (which may be identified with the element $S_t$), subject to the relations
\begin{align}
\sigma_\rho^U &= \sigma_\rho',\quad 1\leq \rho\leq r,\label{Eq:WittTransRel1}\\
[U^{-1}, S_j^{-1}] &= h_j,\quad 2\leq j\leq t-2,\label{Eq:WittTransRel2}\\
U^2 &= h,\label{Eq:WittTransRel3}\\
U S_{t-1} U &= S_{t-1} U S_t^{S_{t-1} S_t}.\label{Eq:WittTransRel4}
\end{align}
\end{pr}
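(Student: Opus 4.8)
The plan is to apply Corollary~\ref{Cor:Main2Trans} to the $t$-transitive group $\widetilde{G}$ acting on $\Omega'$, taking the two distinguished points to be $q_t$ and $q_{t-1}$. By Proposition~\ref{Prop:Witt} the stabiliser of $q_t$ in $\widetilde{G}$ is exactly $G$, and $S_t$ is an involution interchanging $q_t$ and $q_{t-1}$, so in the notation of that corollary I would set $H = \widetilde{G}_{q_t} = G$, choose $\omega_0 = q_t$, $\omega_1 = q_{t-1}$, and $\tau = S_t = U$. The corollary then asserts that $\widetilde{G}$ is generated by $G$ together with one extra generator $U$, subject to three families of relations (\ref{Eq:Gen2TransRel1})--(\ref{Eq:Gen2TransRel3}); the whole task is to identify each of these families explicitly with the relations (\ref{Eq:WittTransRel1})--(\ref{Eq:WittTransRel4}) claimed in the statement.

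First I would treat the type-(I) relations (\ref{Eq:Gen2TransRel1}), which read $h_i\, U = U\,\iota(h_i)$ for a generating set $\{h_i\}$ of $H_1 = H \cap \tau H \tau^{-1} = G \cap S_t G S_t$. The key structural observation is that this intersection is precisely $G_1 = G_{q_2,\dots,q_{t-1}}$ together with the permutations $S_2,\dots,S_{t-2}$: indeed $S_t$ fixes $q_1,\dots,q_{t-2}$, so conjugation by $S_t$ stabilises the subgroup fixing those points, and Assumption (\ref{Eq:WittRel4}) guarantees $S_t G_1 S_t = G_1$. Thus $G_{t-2} = \langle G_1, S_2,\dots,S_{t-2}\rangle$ is contained in $H_1$; I would argue the reverse inclusion by a point-stabiliser count (an element of $G$ conjugated into $G$ by the involution $S_t$ must fix $q_{t-1} = S_t q_t$, hence lies in $G_{t-2}$). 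Rewriting $\iota(h_i) = S_t^{-1} h_i S_t$ for the generators $\sigma_\rho$ of $G_1$ gives exactly (\ref{Eq:WittTransRel1}) with $\sigma_\rho' = S_t^{-1}\sigma_\rho S_t$, while applying it to the generators $S_j$ ($2\le j\le t-2$) and rearranging $U^{-1}S_j^{-1}U S_j = h_j$ yields (\ref{Eq:WittTransRel2}) with $h_j = [S_t^{-1}, S_j^{-1}]$; here I must check that $\iota$ carries each $S_j$ back into $H_1$, which is where (\ref{Eq:WittRel1})--(\ref{Eq:WittRel2}) and the commuting computations from the proof of Proposition~\ref{Prop:Witt} are used.

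Next the type-(II) relation (\ref{Eq:Gen2TransRel2}), $x^2 = \theta := \tau^2$, is immediate: $\theta = S_t^2 = h \in H$, giving (\ref{Eq:WittTransRel3}). It remains to handle the single relation (\ref{Eq:Gen2TransRel3}), $x\,q\,x = \rho_q'\,x\,\lambda_q'$, coming from the unique nontrivial double coset $q$ of $H_1\backslash H/H_1$; since the action is $t$-transitive with $t\ge3$, the corollary guarantees exactly one such relation, and the natural representative of the nontrivial double coset is $q = S_{t-1}$ (which moves $q_{t-1}$ to $q_{t-2}$, leaving the $H_1$-orbit). I would compute the invariants $\rho_q'$ and $\lambda_q'$ for $S_t\, S_{t-1}\, S_t$: the element $\rho_q'\in P$ is determined by $S_t S_{t-1} q_{t-1} = \rho_q' q_{t-1}$, and using $(S_{t-1}S_t)^3 \equiv 1 \bmod H$ from (\ref{Eq:WittRel3}) one identifies $\rho_q' = S_{t-1}$ and the trailing factor as the conjugate $S_t^{S_{t-1}S_t}$, producing (\ref{Eq:WittTransRel4}). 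The main obstacle I anticipate is precisely this last identification: pinning down $\rho_q'$ and $\lambda_q' = ((\rho_q')^{-1}\tau q)^\tau$ requires a careful braid-type manipulation of $S_{t-1}$ and $S_t$ modulo $H$, and verifying that the relation (\ref{Eq:WittRel3}) is exactly what makes $U S_{t-1} U = S_{t-1} U S_t^{S_{t-1}S_t}$ hold rather than some other conjugate; everything else is a routine translation of the corollary's generic relations into the present notation.
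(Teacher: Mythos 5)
Your proposal is correct and takes essentially the same approach as the paper's proof: Corollary~\ref{Cor:Main2Trans} applied with $\omega_0=q_t$, $\omega_1=q_{t-1}$, $\tau=S_t$, the identification of the double stabiliser $\widetilde{G}_{q_t,q_{t-1}}=G_{q_{t-1}}=\langle G_1, S_2,\ldots,S_{t-2}\rangle$ yielding the type~(I) relations, $\theta=S_t^2=h$, and $S_{t-1}$ as representative of the unique nontrivial double coset with $\rho'=S_{t-1}$ and $\lambda'=S_t^{S_{t-1}S_t}$. One small correction: the identification $\lambda'_{S_{t-1}}=((\rho')^{-1}\tau S_{t-1})^{\tau}=S_t^{S_{t-1}S_t}$ is a literal identity of group elements obtained by direct computation (as in the paper), and does not hinge on relation~(\ref{Eq:WittRel3}) in the way you anticipate.
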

\begin{proof}
Since $\widetilde{G}$ is $3$-transitive on $\Omega'$, we may apply Corollary~\ref{Cor:Main2Trans} in its most convenient form. In the notation of that corollary, let $\omega_0=q_t$, $\omega_1=q_{t-1}$, set $\tau=S_t$, and fix a system of generators $\{\sigma_1, \ldots, \sigma_r\}$ for $G_1$, noting that $\widetilde{G}_{\omega_0,\omega_1} = G_{q_{t-1}}$ is generated by the elements $\sigma_1,\ldots,\sigma_r, S_2^{-1},\ldots, S_{t-2}^{-1}$. Moreover, we have $\theta = S_t^2=h$, and the automorphism $\iota$ of $G_{q_{t-1}}$ is given by $x \mapsto S_t^{-1} x S_t$, so that $\iota(\sigma_\rho) = \sigma_\rho'$ for $1\leq \rho\leq r$ and $\iota(S_j^{-1}) = h_j S_j^{-1}$ for $2\leq j\leq t-2$. Next, since $\vert\Omega'\vert = s + t \geq3$ and the action of $\tilde{G}$ on $\Omega'$ is $3$-transitive, the space $G_{q_{t-1}}\backslash G/G_{q_{t-1}}$ consists only of two double cosets, with the non-trivial one being represented, for instance, by the element $S_{t-1}$. Also, we may set 
\[
P = \big\{1, \kappa_{p_1}, \ldots, \kappa_{p_\tau}, \kappa_{q_1}, \ldots, \kappa_{q_{t-2}}\big\},
\]
where
\begin{align*}
\kappa_{q_j} &= \prod_{\mu=j+1}^{t-1} S_\mu,\quad 1\leq j\leq t-2,\\
\kappa_{p_k} &= \xi_k \prod_{\mu=2}^{t-1} S_\mu,\quad 1\leq k\leq \tau,
\end{align*}
with $\xi_k\in G_1$ chosen such that $\xi_k(q_1) = p_k$ (this is possible since, by hypothesis, $G_1$ is transitive on the points $p_1,\ldots, p_\tau, q_1$). Finally, the element $\rho' = \rho'_{S_{t-1}}\in P$ has to map the point $q_{t-1}$ to $q_{t-2}$, so that we may take $\rho' = S_{t-1} = \kappa_{q_{t-2}}$, and we compute that $\lambda' = \lambda'_{S_{t-1}} = S_t^{S_{t-1} S_t}\in G$. Applying Corollary~\ref{Cor:Main2Trans}, the assertions of Proposition~\ref{Prop:WittTransfer} follow.
\end{proof}
\subsection{The Mathieu groups $M_{11}$ and $M_{12}$}
\label{Subsec:SmallMathieu}
Let $K=GF(3^2)$, and let $\zeta\in K$ be such that $\zeta^2=1-\zeta$; thus, $\zeta$ is a primitive element of $K$, and $\{1, \zeta\}$ is a basis of $K$ over the prime field $\Pi\cong GF(3)$. The group $M_{11}$ arises from Witt's construction, in the form of Proposition~\ref{Prop:Witt}, with $t=4$ and $G=\mathrm{M}(3^2)$, a sharply $3$-transitive permutation group on the set $\Omega=\mathbb{P}^1(K)=K\cup\{\infty\}$. More precisely, we set 
\[
\Omega = (K^\times\backslash\{1\}) \cup \{q_1, q_2, q_3\},
\]
where $q_1=1$, $q_2=0$, and $q_3=\infty$, we let 
$S_2(\omega) = 1-\omega$ and $S_3(\omega) = \omega^{-1}$ for $\omega\in\Omega$, and we let 
\[
S_4(\infty) = v,\,\,S_4(v) = \infty,\,\,
S_4(\alpha+\zeta \beta) = \alpha-\zeta \beta,\quad(\alpha, \beta\in \Pi),
\]
where $v=q_4\not\in\Omega$; cf.\ \cite[Chap.~XII, Theorem~1.3]{HuBl}. Then
$\tilde{G} = \langle G, S_4\rangle = M_{11}$, 
a sharply $4$-transitive permutation group  of degree $11$ on the set $\Omega' = \Omega \cup\{v\}$. We note that, in the notation of Corollary~\ref{Cor:M(9)}, we have $S_2=T$ and $S_3=J$; in particular, $S_2^2=S_3^2=1$. Since $\mathrm{M}(3^2)$ is sharply $3$-transitive, we have $H=\mathrm{M}(3^2)_{0,1,\infty} = 1$, thus $h_2=1$ and  $h=S_4^2=1$. The group $G_1=\mathrm{M}(3^2)_{0,\infty}$ is generated by the transformations $a: \omega\mapsto \zeta^{-1}\omega^{3}$ and  $b=[J,a]: \omega\mapsto \zeta^2\omega$, and we compute that
$(S_4 a S_4)(1) = S_4(\zeta^{-1}) = 1-\zeta = \zeta^2$ and 
$(S_4 b S_4)(1) = S_4(\zeta^2) = 1+\zeta =\zeta^{-1}$, so that $\rho_1'=S_4aS_4=b$ and $\rho_2'=S_4bS_4=a$, since $G_1$ is sharply transitive on $K^\times$.
Moreover, one checks that the permutation $X=(S_4S_3)^2S_4\in \mathrm{M}(3^2)$ fixes the point $1$ and interchanges $0$ and $\infty$; hence, $X = S_3 = J$. Applying Proposition~\ref{Prop:WittTransfer}, we find that $M_{11}$ is generated by its subgroup $\mathrm{M}(3^2)$ plus one extra generator $U$ (which may be identified with $S_4$), subject to the relations
\begin{equation}
\label{Eq:M11Rel}
U^2 = (UT)^2 = (UJ)^3 = 1,\, a^U = [J,a].
\end{equation}
We thus obtain the following result.
\begin{pr}
\label{Prop:M11Pres}
The group $M_{11}$ is generated by symbols $a, T, J$ (as in Corollary~{\em \ref{Cor:M(9)}}) and $U=S_4,$ subject to the relations specified in Corollary~{\em \ref{Cor:M(9)}} plus the relations {\em (\ref{Eq:M11Rel})}. 
\end{pr}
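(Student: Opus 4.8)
The plan is to invoke Proposition~\ref{Prop:WittTransfer} with $t=4$, $G=\mathrm{M}(3^2)$ acting sharply $3$-transitively on $\Omega=\mathbb{P}^1(K)=K\cup\{\infty\}$, and the new point $v=q_4$ adjoined by the permutation $U=S_4$. The bulk of the work is verifying that the hypotheses of Propositions~\ref{Prop:Witt} and~\ref{Prop:WittTransfer} are met and then simplifying the four relation schemes \eqref{Eq:WittTransRel1}--\eqref{Eq:WittTransRel4} to the compact list \eqref{Eq:M11Rel}. First I would record the concrete data: with $q_1=1$, $q_2=0$, $q_3=\infty$, one has $S_2=T:\omega\mapsto 1-\omega$ and $S_3=J:\omega\mapsto\omega^{-1}$ (matching the generators of Corollary~\ref{Cor:M(9)}), while $S_4$ is the involution fixing $1$, swapping $\infty$ and $v$, and sending $\alpha+\zeta\beta\mapsto\alpha-\zeta\beta$ for $\alpha,\beta\in\Pi$. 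Because $\mathrm{M}(3^2)$ is \emph{sharply} $3$-transitive, the stabiliser $H=\mathrm{M}(3^2)_{0,1,\infty}$ is trivial, which is the key simplification: the congruences mod $H$ in \eqref{Eq:WittRel1}--\eqref{Eq:WittRel3} become genuine equalities, and the constants $h=S_4^2$ and $h_2=[S_4^{-1},S_2^{-1}]$ in Proposition~\ref{Prop:WittTransfer} are both forced to be $1$.

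Next I would check Witt's four conditions \eqref{Eq:WittRel1}--\eqref{Eq:WittRel4} directly on points of $\Omega'$. Condition \eqref{Eq:WittRel1} is $S_4^2=1$, immediate from the definition of $S_4$ as an involution. For \eqref{Eq:WittRel2} with $t=4$ only $j=2$ occurs, giving $(S_2S_4)^2\equiv1$, i.e.\ $(TS_4)^2=1$; this is verified by tracking the action on $1,0,\infty,v$ and the affine points. Condition \eqref{Eq:WittRel3} reads $(S_3S_4)^3=(JS_4)^3=1$, again a finite check. Condition \eqref{Eq:WittRel4}, $S_4 G_1 S_4=G_1$ where $G_1=\mathrm{M}(3^2)_{0,\infty}=\langle a,b\rangle$ with $a:\omega\mapsto\zeta^{-1}\omega^{3}$ and $b=[J,a]:\omega\mapsto\zeta^2\omega$, is the substantive normalisation fact. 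I would compute $S_4aS_4$ and $S_4bS_4$ on the base point $1$: since $(S_4aS_4)(1)=S_4(\zeta^{-1})=1-\zeta=\zeta^2$ and $(S_4bS_4)(1)=S_4(\zeta^2)=1+\zeta=\zeta^{-1}$, and $G_1$ is sharply transitive on $K^\times$, these conjugates are determined as $S_4aS_4=b$ and $S_4bS_4=a$. In the language of Proposition~\ref{Prop:WittTransfer}, with generators $\sigma_1=a,\sigma_2=b$, this gives $\sigma_1'=b$ and $\sigma_2'=a$, so relation scheme \eqref{Eq:WittTransRel1} becomes $a^U=b=[J,a]$ (the second, $b^U=a$, being a consequence since $U$ is an involution).

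With $H=1$ the remaining schemes collapse cleanly: \eqref{Eq:WittTransRel3} gives $U^2=h=1$; \eqref{Eq:WittTransRel2} with $j=2$ gives $[U^{-1},S_2^{-1}]=h_2=1$, i.e.\ $U$ commutes with $T$, which together with $U^2=T^2=1$ is the relation $(UT)^2=1$. The last scheme \eqref{Eq:WittTransRel4} is $US_3U=S_3US_4^{S_3S_4}$; here I would use the stated observation that the permutation $X=(S_4S_3)^2S_4$ fixes $1$ and swaps $0,\infty$, whence by sharp $3$-transitivity $X=S_3=J$. Rearranging $X=J$ yields precisely $(UJ)^3=1$ after using $U^2=J^2=1$. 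Collecting, the extra relations beyond those of Corollary~\ref{Cor:M(9)} are exactly $U^2=(UT)^2=(UJ)^3=1$ and $a^U=[J,a]$, which is the set \eqref{Eq:M11Rel}, and the conclusion $\widetilde{G}=M_{11}$ with $\widetilde{G}_v=\mathrm{M}(3^2)$ follows from Proposition~\ref{Prop:Witt}.

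I expect the main obstacle to be the explicit conjugation computation underlying condition \eqref{Eq:WittRel4} and the identification $X=S_3$: one must be careful with the Frobenius twist in $a:\omega\mapsto\zeta^{-1}\omega^{3}$ (so that $a$ is \emph{not} $K$-linear) and with the bookkeeping of how $S_4$ interacts with the conjugation action on $\mathbb{P}^1(K)$, since it is this nonlinearity that makes $M_{11}$ genuinely larger than a Frobenius-type extension. Everything else is routine finite verification, most of which can be confirmed, if desired, with a short computer-algebra check as in the proof of Corollary~\ref{Cor:M(9)}.
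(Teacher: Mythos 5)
Your proposal is correct and follows essentially the same route as the paper: Witt's construction with $t=4$, the observation that sharp $3$-transitivity forces $H=1$ (hence $h=h_2=1$), the computation $S_4aS_4=b$, $S_4bS_4=a$ via evaluation at the point $1$ together with sharp transitivity of $G_1$ on $K^\times$, the identification $X=(S_4S_3)^2S_4=J$, and the collapse of the relation schemes (\ref{Eq:WittTransRel1})--(\ref{Eq:WittTransRel4}) to (\ref{Eq:M11Rel}). The only (harmless) divergence is that you verify Witt's hypotheses (\ref{Eq:WittRel1})--(\ref{Eq:WittRel4}) and the identification of $\widetilde{G}$ with $M_{11}$ by direct finite checks, where the paper delegates these facts to Huppert--Blackburn \cite[Chap.~XII, Theorem~1.3]{HuBl}.
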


Similarly, the group $M_{12}$ arises from Proposition~\ref{Prop:Witt} with $t=5$ and $G=M_{11}$ acting on the set
\[
\Omega' = (K^\times\backslash\{1\}) \cup \{q_1,q_2,q_3,q_4\},
\]
where $q_1,q_2,q_3, q_4$ are as before, as are the permutations $S_2=S$, $S_3=T$, and $S_4=U$. Let $q_5=w\not\in\Omega'$ be a new point, and let $S_5$ be the permutation on $\Omega''=\Omega'\cup\{w\}$ given by
\[
S_5(\infty) = \infty, \,S_5(v)=w,\, S_5(w)=v,\,S_5(\omega) = \omega^3\,(\omega\in K).
\]
Then $\tilde{G} = \langle G, S_5\rangle = M_{12}$, 
a sharply $5$-transitive permutation group of degree $12$ on the set $\Omega''$; cf.\ again \cite[Chap.~XII, Theorem~1.3]{HuBl}. In the notation of Proposition~\ref{Prop:WittTransfer}, we have $H=(M_{11})_{0,1,\infty, v} = 1$, so that $h_2 = h_3 = 1$ and  $h =S_5^2 = 1$. Also, 
$\sigma_1' = S_5aS_5$ and $\sigma_2' = S_5bS_5$ with $a, b$ generating the group  
$G_1 = (M_{11})_{0,\infty,v} = \mathrm{M}(3^2)_{0,\infty}$, as above.
We compute $(S_5aS_5)(1)=\zeta^5$ and $(S_5bS_5)(1) = \zeta^6$, so that $\sigma_1' = b^{-1}a$ and $\sigma_2' = b^{-1}$. Moreover, one checks that the permutation $X=(S_5S_4)^2S_5\in M_{11}$ fixes $0, 1, w$ and interchanges $v$ and $\infty$; thus, $X=S_4=U$. Applying Proposition~\ref{Prop:WittTransfer}, we thus obtain the following.
\begin{pr}
\label{Prop:M12Pres}
The group $M_{12}$ is generated by symbols $a, T, J, U$ (as in Proposition~{\em \ref{Prop:M11Pres}}) and $V=S_5,$ subject to the relations specified in Proposition~{\em \ref{Prop:M11Pres},} plus the relations
\[
V^2 = (VT)^2 = (VJ)^2 = (VU)^3 = 1,\,\, 
a^V = [J,a]^{-1} a,\,\, [J,a]^V = [J,a]^{-1}.
\]
\end{pr}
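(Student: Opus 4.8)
The plan is to realize $M_{12}$ as the transitive extension produced by Witt's construction and then read off its presentation by a single application of Proposition~\ref{Prop:WittTransfer}, exactly paralleling the treatment of $M_{11}$ in Proposition~\ref{Prop:M11Pres}. Concretely, I would take $t=5$ and $G=M_{11}$ in Proposition~\ref{Prop:Witt}, adjoining the new point $q_5=w$ and the permutation $S_5$ described above, retaining the inherited involutions $S_2=T$, $S_3=J$ and $S_4=U$. The first task is to verify the four hypotheses (\ref{Eq:WittRel1})--(\ref{Eq:WittRel4}), so that $\widetilde{G}=\langle G,S_5\rangle$ is sharply $5$-transitive of degree $12$ with $\widetilde{G}_w=M_{11}$. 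Since the pointwise stabiliser $H=(M_{11})_{0,1,\infty,v}$ is trivial, the congruences modulo $H$ in (\ref{Eq:WittRel1})--(\ref{Eq:WittRel3}) collapse to genuine identities $S_5^2=(S_jS_5)^2=(S_4S_5)^3=1$, which are checked by explicit permutation computation, while (\ref{Eq:WittRel4}) is the statement $S_5\,G_1\,S_5=G_1$ for $G_1=(M_{11})_{0,\infty,v}=\mathrm{M}(3^2)_{0,\infty}$.

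With Witt's construction in hand, I would apply Proposition~\ref{Prop:WittTransfer} with the extra generator $V:=S_5$, which requires assembling the data of that proposition. Taking $\{a,b\}$ with $b=[J,a]$ as generators of $G_1$, I compute the conjugates $\sigma_\rho'=S_5\sigma_\rho S_5$ by evaluating the associated maps at the point $1$: the calculations $(S_5aS_5)(1)=\zeta^5$ and $(S_5bS_5)(1)=\zeta^6$, together with the sharp transitivity of $G_1$ on $K^\times$, force $\sigma_1'=b^{-1}a$ and $\sigma_2'=b^{-1}$, which are precisely the relations $a^V=[J,a]^{-1}a$ and $[J,a]^V=[J,a]^{-1}$ of the statement. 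Because $H=1$, the elements $h_2,h_3$ and $h=S_5^2$ of Proposition~\ref{Prop:WittTransfer} all vanish, so (\ref{Eq:WittTransRel2}) and (\ref{Eq:WittTransRel3}) reduce to $[V,T]=[V,J]=1$ and $V^2=1$; in the presence of $V^2=T^2=J^2=1$ these are equivalent to $(VT)^2=(VJ)^2=1$ and $V^2=1$.

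The one relation demanding real computation is the ``long'' relation (\ref{Eq:WittTransRel4}), namely $VUV=UV\,\lambda'$ with $\lambda'=S_5^{\,S_4S_5}$. The key observation is that the permutation $X:=(S_5S_4)^2S_5$ lies in $M_{11}=\widetilde{G}_w$, fixes $0,1,w$ and interchanges $v$ and $\infty$; since $M_{11}$ is sharply $4$-transitive, $X$ is thereby uniquely determined and must equal $S_4=U$. As $S_4^2=S_5^2=1$ one has $\lambda'=S_5^{\,S_4S_5}=(S_5S_4)^2S_5=X=U$, so that (\ref{Eq:WittTransRel4}) becomes the braid relation $VUV=UVU$, which together with $U^2=V^2=1$ is equivalent to $(VU)^3=1$. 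I expect this identification $\lambda'=U$, and more generally the verification that the various conjugates genuinely land in the prescribed stabiliser subgroups, to be the main obstacle, since it rests on the explicit permutation geometry of the Witt extension rather than on formal group-theoretic manipulation.

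Finally, I would collect the relations output by Proposition~\ref{Prop:WittTransfer} --- the reductions of (\ref{Eq:WittTransRel1})--(\ref{Eq:WittTransRel4}) just described --- and adjoin them to the presentation of $M_{11}$ furnished by Proposition~\ref{Prop:M11Pres}. This yields exactly $V^2=(VT)^2=(VJ)^2=(VU)^3=1$ together with $a^V=[J,a]^{-1}a$ and $[J,a]^V=[J,a]^{-1}$ on top of the $M_{11}$ relations, which is the asserted presentation of $M_{12}$, completing the proof.
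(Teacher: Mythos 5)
Your proposal is correct and takes essentially the same route as the paper: Witt's construction (Proposition~\ref{Prop:Witt}) with $t=5$, $G=M_{11}$, followed by Proposition~\ref{Prop:WittTransfer} with trivial $H$, the evaluations $(S_5aS_5)(1)=\zeta^5$, $(S_5bS_5)(1)=\zeta^6$ combined with sharp transitivity of $G_1$ on $K^\times$ to get $\sigma_1'=b^{-1}a$, $\sigma_2'=b^{-1}$, and the identification $X=(S_5S_4)^2S_5=S_4=U$ via sharp $4$-transitivity of $M_{11}$, which turns the long relation into $(VU)^3=1$. The only difference is that you spell out the verification of hypotheses (\ref{Eq:WittRel1})--(\ref{Eq:WittRel4}), which the paper delegates to the cited result of Huppert--Blackburn.
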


\subsection{The Mathieu groups $M_{22}$, $M_{23}$, and $M_{24}$}
\label{Subsec:LargeMathieu}
The group $M_{22}$ arises from Witt's construction (in the form of Proposition~\ref{Prop:Witt}) with $t=3$ and $G=\mathrm{PSL}_3(4)$, the latter interpreted as a $2$-transitive permutation group on the $21$ points of the projective plane $\Omega=\mathbb{P}^2(4)$. 
More precisely, we set $q_1=[0,1,0]$,  $q_2= [1,0,0]$, $q_3=u\not\in\Omega$, we let $S_2$ be the permutation on $\Omega$ given by
\[
S_2[x,y,z] = [y,x,z],\quad [x,y,z]\in\Omega,
\] 
so that $S_2$ is induced by the permutation matrix   ${\tiny \begin{pmatrix}0&1&0\\1&0&0\\0&0&1\end{pmatrix}}\in SL_3(4)$; and we let $S_3$ be the permutation on $\Omega'=\Omega\cup\{u\}$ given by $S_3 u = [1,0,0]$, $S_3 [1,0,0] = u$, and
\[
S_3[x,y,z] = [x^2+yz, y^2, z^2],\quad([x,y,z]\in\Omega\backslash\{[1,0,0]\}).
\]
Then $\tilde{G} = \langle G, S_3\rangle = M_{22}$.\footnote{For this as well as corresponding information concerning 
the groups $M_{23}$ and $M_{24}$, see Theorem~1.4 in \cite[Chap.~XII]{HuBl}.} We note that $S_2^2=1$ and $S_3^2=h=1$. Also, 
$S_2 = \mathfrak{j} = (S_3S_2)^2S_3$.
Let $\zeta$ be a primitive element of $GF(4)$. The group $G_1 = \big(\mathrm{PSL}_3(4)\big)_{q_2}$ is generated by the images $\mathfrak{r}$, $\s$, $\s^\mathfrak{t}$, $\s^{\mathfrak{j}\mathfrak{t}}$ of  the matrices $\hat{R
}(\zeta)$,  $\hat{S}$, $\hat{S}^T$, and $\hat{S}^{\hat{J}T}$.   We compute that
$S_3 \hat{R}(\zeta) S_3 = \hat{J} \hat{R}(\zeta)\hat{J}$,\,
$S_3 \hat{S} S_3 = \hat{S}$,\,
$S_3 \hat{S}^T S_3 = \hat{S} \hat{S}^T$, and that 
$S_3 \hat{S}^{\hat{J}T} S_3 = \hat{S}^{\hat{J}T} \hat{S}^{T\hat{J}T}$. 
It follows from Proposition~\ref{Prop:WittTransfer} that $M_{22}$ is generated by the group $PSL_3(4)$ plus one extra generator $\mathfrak{u}$ (which may be identified with $S_3$), subject to the relations
\begin{equation}
\mathfrak{u}^2 = (\mathfrak{u}\s)^2 = (\mathfrak{u}\mathfrak{j})^3 = 1,\,
\mathfrak{r}^\mathfrak{u} = \mathfrak{r}^\mathfrak{j},\, 
\s^{\mathfrak{t}\mathfrak{u}} = \s \s^\mathfrak{t},\,
\s^{\mathfrak{j}\mathfrak{t}\mathfrak{u}} = \s^{\mathfrak{j}\mathfrak{t}} \s^{\mathfrak{t}\mathfrak{j}\mathfrak{t}}.\label{Eq:M22ExtraRel}
\end{equation}
Combining this with Corollary~\ref{Cor:PSL34Pres}, we readily obtain a presentation for $M_{22}$.
\begin{pr}
\label{Prop:M22Pres}
The group $M_{22}$ is generated by symbols $\s$, $\mathfrak{j}$, $\mathfrak{t}$, $\mathfrak{r}$ (as in Corollary~{\em \ref{Cor:PSL34Pres})} and $\mathfrak{u}=S_3,$ subject to the relations specified in Corollary~{\em \ref{Cor:PSL34Pres}} plus the relations {\em (\ref{Eq:M22ExtraRel})}.
\end{pr}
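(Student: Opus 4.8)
The plan is to realise $M_{22}$ as a transitive extension of $PSL_3(4)$, transfer the presentation across this extension by means of Proposition~\ref{Prop:WittTransfer}, and finally substitute the presentation of $PSL_3(4)$ furnished by Corollary~\ref{Cor:PSL34Pres}. Accordingly, the first step is to confirm that the data already introduced above---$t=3$, the group $G=PSL_3(4)$ acting $2$-transitively on $\Omega=\mathbb{P}^2(4)$, the new point $q_3=u$, and the explicit permutations $S_2$ (coordinate swap) and $S_3$---meet the hypotheses (\ref{Eq:WittRel1})--(\ref{Eq:WittRel4}) of Proposition~\ref{Prop:Witt}. Since the index range in (\ref{Eq:WittRel2}) is empty for $t=3$, only three conditions require attention: $S_3^2=1$ (a direct check, using $a^4=a$ in $GF(4)$), $(S_2S_3)^3\equiv 1\bmod H$, and the normalising condition $S_3G_1S_3=G_1$; once these hold, Proposition~\ref{Prop:Witt} delivers $M_{22}=G\cup GS_3G$ as a $3$-transitive group of degree $22$ with $\widetilde G_u=G$.

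With the extension in place, I would invoke Proposition~\ref{Prop:WittTransfer} with $U=S_3$. Two pieces of data are immediate: $h:=S_3^2=1$, so that relation (\ref{Eq:WittTransRel3}) becomes $\mathfrak{u}^2=1$; and, the range $2\le j\le t-2$ being empty, no relations of the form (\ref{Eq:WittTransRel2}) arise, so the two-point stabiliser $G_1=(PSL_3(4))_{q_2}$ is generated purely by the elements $\mathfrak{r},\s,\s^{\mathfrak{t}},\s^{\mathfrak{j}\mathfrak{t}}$. The substantive computation is then to conjugate these four generators by $S_3$ and rewrite each image as a word in the $PSL_3(4)$-generators; this yields precisely the relations $\mathfrak{r}^{\mathfrak{u}}=\mathfrak{r}^{\mathfrak{j}}$, $\s^{\mathfrak{u}}=\s$, $\s^{\mathfrak{t}\mathfrak{u}}=\s\s^{\mathfrak{t}}$, and $\s^{\mathfrak{j}\mathfrak{t}\mathfrak{u}}=\s^{\mathfrak{j}\mathfrak{t}}\s^{\mathfrak{t}\mathfrak{j}\mathfrak{t}}$ of (\ref{Eq:M22ExtraRel}); note that $\s^{\mathfrak{u}}=\s$, in the presence of $\mathfrak{u}^2=\s^2=1$, is just $(\mathfrak{u}\s)^2=1$.

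It remains to produce the relation $(\mathfrak{u}\mathfrak{j})^3=1$, which encodes the factor $\lambda'=S_3^{S_2S_3}$ occurring in relation (\ref{Eq:WittTransRel4}). Here I would evaluate the permutation $X:=(S_3S_2)^2S_3$ on $\Omega$ and verify that it coincides with $S_2=\mathfrak{j}$, so that $\lambda'=\mathfrak{j}\in PSL_3(4)$; substituting this into (\ref{Eq:WittTransRel4}) gives the braid identity $\mathfrak{u}\mathfrak{j}\mathfrak{u}=\mathfrak{j}\mathfrak{u}\mathfrak{j}$, which together with $\mathfrak{u}^2=\mathfrak{j}^2=1$ is equivalent to $(\mathfrak{u}\mathfrak{j})^3=1$ (and simultaneously confirms hypothesis (\ref{Eq:WittRel3})). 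Collecting the relations $\mathfrak{u}^2=(\mathfrak{u}\s)^2=(\mathfrak{u}\mathfrak{j})^3=1$ together with the three conjugation relations, and prepending the presentation of $PSL_3(4)$ from Corollary~\ref{Cor:PSL34Pres}, then gives the stated presentation of $M_{22}$.

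The main obstacle is entirely computational and is concentrated in the second and third steps: one must correctly carry out the conjugations $S_3\sigma S_3$ for $\sigma\in\{\mathfrak{r},\s,\s^{\mathfrak{t}},\s^{\mathfrak{j}\mathfrak{t}}\}$ and the evaluation of $X=(S_3S_2)^2S_3$ as self-maps of the $21$ points of $\mathbb{P}^2(4)$---keeping careful track of the Frobenius squaring built into $S_3$---and then match each outcome to a word in $\mathfrak{r},\s,\mathfrak{j},\mathfrak{t}$. Beyond this bookkeeping no conceptual difficulty intervenes, as Propositions~\ref{Prop:Witt} and \ref{Prop:WittTransfer} and Corollary~\ref{Cor:PSL34Pres} provide the entire structural framework.
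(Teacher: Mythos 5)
Your proposal is correct and follows essentially the same route as the paper: Witt's construction with $t=3$ and $G=PSL_3(4)$, transfer of the presentation via Proposition~\ref{Prop:WittTransfer} with $U=S_3$ (so $h=S_3^2=1$ and the index range in (\ref{Eq:WittTransRel2}) is empty), the conjugation computations $S_3\sigma S_3$ for the four generators $\mathfrak{r},\s,\s^{\mathfrak{t}},\s^{\mathfrak{j}\mathfrak{t}}$ of $G_1=(PSL_3(4))_{q_2}$, and the identification $(S_3S_2)^2S_3=S_2=\mathfrak{j}$, which turns (\ref{Eq:WittTransRel4}) into the braid relation equivalent to $(\mathfrak{u}\mathfrak{j})^3=1$. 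The only cosmetic differences are that you verify the hypotheses of Proposition~\ref{Prop:Witt} directly where the paper cites Huppert--Blackburn for $\widetilde{G}=\langle G,S_3\rangle=M_{22}$, and that for $t=3$ the group $G_1$ is the one-point stabiliser $G_{q_2}$ rather than a two-point stabiliser (a slip of terminology only, since you work with the correct group).
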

Next, the group $M_{23}$ arises from Proposition~\ref{Prop:Witt} with $t=4$ and $G=M_{22}$, a $3$-transitive permutation group of degree $22$ on the set $\Omega'=\Omega\cup\{u\}$. Let $q_1,q_2,q_3$ and $S_2=\mathfrak{j}$, $S_3=\mathfrak{u}$ be as above, let $q_4=v\not\in\Omega'$ be a new point, and let $S_4$ be the permutation on $\Omega'':=\Omega'\cup\{v\}$ given by
\[
S_4(u)=v,\, S_4(v)=u, \mbox{ and } S_4[x,y,z] = [x^2, y^2, \zeta z^2]\mbox{ for }[x,y,z]\in\Omega,
\] 
where $\zeta$ is as before. Then 
$\tilde{G} = \langle G, S_4\rangle = M_{23}$,
a $4$-transitive permutation group of degree $23$ on the set $\Omega''$. We note that $S_4^2 = h = 1$, $h_2 = (S_4S_2)^2 = 1$, and that $(S_4 S_3)^2S_4 = S_3 = \mathfrak{u}$, and we compute that 
$S_4 \hat{R}(\zeta) S_4 = \hat{R}(\zeta)^{-1}$,\,
$S_4 \hat{S} S_4 = \hat{S}$,\,
$S_4 \hat{S}^T S_4 = \hat{S}^{T\hat{R}(\zeta)}$, and that 
$S_4 \hat{S}^{\hat{J}T} S_4 = \hat{S}^{\hat{J}T\hat{R}(\zeta)}$.
Applying Proposition~\ref{Prop:WittTransfer}, we find the following.
\begin{pr}
\label{Prop:M23Pres}
The group $M_{23}$ is generated by the symbols $\s,$ $\mathfrak{j},$ $\mathfrak{t},$ $\mathfrak{r},$ $\mathfrak{u},$ and  $\mathfrak{v}=S_4,$ (the first five as in Proposition~{\em \ref{Prop:M22Pres}),} subject to the relations specified in Proposition~{\em \ref{Prop:M22Pres},} plus the relations
\begin{equation}
\mathfrak{v}^2 = (\mathfrak{v}\s)^2 = (\mathfrak{v}\mathfrak{u})^3 = (\mathfrak{v}\mathfrak{j})^2 = (\mathfrak{v}\mathfrak{r})^2 = 1,\, 
\s^{\mathfrak{t}\mathfrak{v}} = \s^{\mathfrak{t}\mathfrak{r}},\,
\s^{\mathfrak{j}\mathfrak{t}\mathfrak{v}} = \s^{\mathfrak{j}\mathfrak{t}\mathfrak{r}}.\label{Eq:M23ExtraRels}
\end{equation}
\end{pr}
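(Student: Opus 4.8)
The plan is to realise $M_{23}$ as a transitive extension of $M_{22}$ by one point via Witt's construction (Proposition~\ref{Prop:Witt}), and then transport the presentation of $M_{22}$ furnished by Proposition~\ref{Prop:M22Pres} up to $M_{23}$ by means of the transfer result, Proposition~\ref{Prop:WittTransfer}. Concretely, I would take $t=4$ and $G=M_{22}$ in its $3$-transitive action on the $22$-point set $\Omega'=\Omega\cup\{u\}$ (with $\Omega=\mathbb{P}^2(4)$), adjoin the new point $q_4=v$, and adjoin the permutation $S_4=\mathfrak{v}$ defined by $S_4(u)=v$, $S_4(v)=u$, and $S_4[x,y,z]=[x^2,y^2,\zeta z^2]$ on $\Omega$. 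That $\langle M_{22},S_4\rangle\cong M_{23}$ is a $4$-transitive group of degree $23$ on $\Omega''=\Omega'\cup\{v\}$ may be quoted from \cite[Chap.~XII, Theorem~1.4]{HuBl}, so the existence and identity of the extension are not at issue; what remains is to compute the data entering Proposition~\ref{Prop:WittTransfer}.

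First I would pin down the groups in Witt's notation: here $S_2=\mathfrak{j}$, $S_3=\mathfrak{u}$, $S_{t-1}=S_3=\mathfrak{u}$, $S_t=S_4=\mathfrak{v}$, while $H=(M_{22})_{q_1,q_2,q_3}$ and $G_1=(M_{22})_{q_2,q_3}$. Since $(M_{22})_{q_3}=(M_{22})_u=\mathrm{PSL}_3(4)$ by the $M_{22}$ construction, the stabiliser $G_1$ coincides with $(\mathrm{PSL}_3(4))_{q_2}$, which is generated by the four images $\mathfrak{r},\mathfrak{s},\mathfrak{s}^{\mathfrak{t}},\mathfrak{s}^{\mathfrak{j}\mathfrak{t}}$ of $\hat{R}(\zeta),\hat{S},\hat{S}^{T},\hat{S}^{\hat{J}T}$ identified in Section~\ref{Subsec:LargeMathieu}. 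A direct computation in $\mathrm{Sym}(\Omega'')$ then yields $S_4^2=1$ and $(S_4S_2)^2=1$, so that the elements $h=S_t^2$ and $h_2=[S_t^{-1},S_2^{-1}]$ of Proposition~\ref{Prop:WittTransfer} are trivial; it also yields the normalising condition $S_4G_1S_4=G_1$. Finally, I would observe that $X:=(S_4S_3)^2S_4$ fixes $q_1$ while interchanging $q_2$ and $q_3$, whence $X=S_3=\mathfrak{u}$; since $\lambda'=S_t^{S_{t-1}S_t}=(S_4S_3)^2S_4$ in the present involutory situation, this gives exactly $\lambda'=S_{t-1}=\mathfrak{u}$.

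Feeding this into Proposition~\ref{Prop:WittTransfer}, the relations (\ref{Eq:WittTransRel1})--(\ref{Eq:WittTransRel4}) read off as follows. Relation (\ref{Eq:WittTransRel3}) is $\mathfrak{v}^2=1$; relation (\ref{Eq:WittTransRel2}), in which only $j=2$ occurs because $t-2=2$, reduces to $h_2=1$, i.e. $(\mathfrak{v}\mathfrak{j})^2=1$; and relation (\ref{Eq:WittTransRel4}) with $\lambda'=\mathfrak{u}$ becomes the braid relation $\mathfrak{v}\mathfrak{u}\mathfrak{v}=\mathfrak{u}\mathfrak{v}\mathfrak{u}$, which together with $\mathfrak{v}^2=\mathfrak{u}^2=1$ is equivalent to $(\mathfrak{v}\mathfrak{u})^3=1$. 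Relation (\ref{Eq:WittTransRel1}), applied to the four generators of $G_1$, gives $\mathfrak{r}^{\mathfrak{v}}=\mathfrak{r}^{-1}$, $\mathfrak{s}^{\mathfrak{v}}=\mathfrak{s}$, $\mathfrak{s}^{\mathfrak{t}\mathfrak{v}}=\mathfrak{s}^{\mathfrak{t}\mathfrak{r}}$, and $\mathfrak{s}^{\mathfrak{j}\mathfrak{t}\mathfrak{v}}=\mathfrak{s}^{\mathfrak{j}\mathfrak{t}\mathfrak{r}}$; using $\mathfrak{v}^2=\mathfrak{r}^3=\mathfrak{s}^2=1$ the first two rewrite as $(\mathfrak{v}\mathfrak{r})^2=1$ and $(\mathfrak{v}\mathfrak{s})^2=1$, while the last two already have the required form. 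Collecting these identities gives precisely the relations (\ref{Eq:M23ExtraRels}), and splicing in the presentation of the subgroup $M_{22}=\langle\mathfrak{s},\mathfrak{j},\mathfrak{t},\mathfrak{r},\mathfrak{u}\rangle$ from Proposition~\ref{Prop:M22Pres} completes the presentation; no redundancy analysis is needed here, since both Proposition~\ref{Prop:WittTransfer} and Proposition~\ref{Prop:M22Pres} already supply complete presentations.

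I expect the only genuinely laborious step to be the verification of the four conjugation formulae $S_4\sigma_\rho S_4=\sigma_\rho'$ and of the action of $S_4$ on $\Omega''$, since these require explicit computation with the field $GF(4)$ and the Frobenius-type map $[x,y,z]\mapsto[x^2,y^2,\zeta z^2]$; once these are in hand, together with the three auxiliary facts $S_4^2=1$, $(S_4S_2)^2=1$, and $(S_4S_3)^2S_4=\mathfrak{u}$, the remaining matching against (\ref{Eq:M23ExtraRels}) and the invocation of Propositions~\ref{Prop:Witt} and~\ref{Prop:WittTransfer} are purely formal. These permutation computations can be carried out by hand and cross-checked in GAP \cite{GAP}.
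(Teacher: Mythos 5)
Your proposal reproduces the paper's proof essentially verbatim: Witt's construction with $t=4$ and $G=M_{22}$, the same extending permutation $S_4$, the identification $\langle M_{22}, S_4\rangle = M_{23}$ quoted from Huppert--Blackburn, the same auxiliary computations $h=S_4^2=1$, $h_2=(S_4S_2)^2=1$, and $(S_4S_3)^2S_4=S_3=\mathfrak{u}$, the same four conjugation formulae for the generators $\mathfrak{r},\s,\s^{\mathfrak{t}},\s^{\mathfrak{j}\mathfrak{t}}$ of $G_1=(\mathrm{PSL}_3(4))_{q_2}$, and the same translation through Proposition~\ref{Prop:WittTransfer} into the relations (\ref{Eq:M23ExtraRels}). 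One minor caution: since $M_{22}$ is $3$-transitive but not sharply so (the three-point stabiliser has order $48$), your intermediate inference that $X=(S_4S_3)^2S_4$ fixes $q_1$ and swaps $q_2,q_3$, ``whence'' $X=S_3$, is not by itself conclusive -- the equality $(S_4S_3)^2S_4=S_3$ must be verified as an identity of permutations, which you do acknowledge in your final paragraph and which is exactly what the paper asserts by direct computation.
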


Finally, the group $M_{24}$ arises from Proposition~\ref{Prop:Witt} with $t=5$ and $G=M_{23}$. Let $q_1$, $q_2$, $q_3$, $q_4$, $S_2=\mathfrak{j}$, $S_3=\mathfrak{u}$, $S_4=\mathfrak{v}$, and $\Omega''$ be as above, let $q_5=w\not\in\Omega''$ be a new point, and let $S_5$ be the permutation on $\Omega'''=\Omega''\cup\{w\}$ given by
\[
S_5(u)=u,\, S_5(v)=w,\,S_5(w)=v,\mbox{ and }S_5([x,y,z]) = [x^2, y^2, z^2] \mbox{ for }[x,y,z]\in\Omega.
\]
Then $\tilde{G} = \langle G, S_5\rangle = M_{24}$, a $5$-transitive permutation group of degree $24$ on $\Omega'''$. We have $h_2=h_3=h=1$, $(S_5S_4)^2S_5 = S_4=\mathfrak{v}$, and we compute that 
$S_5 \hat{R}(\zeta) S_5 = \hat{R}(\zeta)^{-1}$,\,
$S_5 \hat{S} S_5 = \hat{S}$,\,
$S_5 \hat{S}^T S_5 = \hat{S}^T$, and 
$S_5 \hat{S}^{\hat{J}T} S_5 = \hat{S}^{\hat{J}T}$. 
Applying Proposition~\ref{Prop:WittTransfer}, we conclude that $M_{24}$ is generated by the group $M_{23}$ plus one extra generator $\mathfrak{w}=S_5$, subject to the relations
\begin{equation}
\mathfrak{w}^2 = (\mathfrak{w}\mathfrak{r})^2 =  (\mathfrak{w}\mathfrak{s})^2 = (\mathfrak{w}\mathfrak{j})^2 = (\mathfrak{w}\mathfrak{u})^2 = (\mathfrak{w}\mathfrak{s}^\mathfrak{t})^2 =  (\mathfrak{w}\mathfrak{s}^{\mathfrak{j}\mathfrak{t}})^2 = (\mathfrak{w}\mathfrak{v})^3 = 1.\label{Eq:M24ExtraRel}
\end{equation}
Combining this information with Proposition~\ref{Prop:M23Pres}, we obtain the following.
\begin{pr}
\label{Prop:M24Pres}
The group $M_{24}$ is generated by the symbols $\mathfrak{s},$ $\mathfrak{j},$ $\mathfrak{t},$ $\mathfrak{r},$ $\mathfrak{u},$ $\mathfrak{v},$ and $\mathfrak{w}=S_5$ (the first six as in Proposition~{\em \ref{Prop:M23Pres}),} subject to the relations specified in Proposition~{\em \ref{Prop:M23Pres},} plus the relations {\em (\ref{Eq:M24ExtraRel})}.
\end{pr}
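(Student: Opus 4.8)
The plan is to follow verbatim the two-step template already employed for $M_{22}$ and $M_{23}$: first realise $M_{24}$ as the transitive one-point extension $\langle M_{23},S_5\rangle$ by means of Witt's construction (Proposition~\ref{Prop:Witt}), and then transfer the presentation of $M_{23}$ supplied by Proposition~\ref{Prop:M23Pres} through the presentation-lifting machine of Proposition~\ref{Prop:WittTransfer}, identifying the new generator $\mathfrak{w}$ with the permutation $S_5$.

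First I would verify the hypotheses of Proposition~\ref{Prop:Witt} for $t=5$, $G=M_{23}$, on the $24$-point set $\Omega'''$. Because the pointwise stabiliser $H=(M_{23})_{q_1,q_2,q_3,q_4}$ is trivial, the congruences (\ref{Eq:WittRel1})--(\ref{Eq:WittRel3}) become genuine identities: one checks on $\Omega'''$ that $S_5^2=1$ and $(S_2S_5)^2=(S_3S_5)^2=1$, while the computation $(S_5S_4)^2S_5=S_4$, together with $S_4^2=S_5^2=1$, forces $(S_4S_5)^3=1$, giving (\ref{Eq:WittRel3}). The stability requirement (\ref{Eq:WittRel4}), namely $S_5G_1S_5=G_1$, follows from the four conjugation formulas for $S_5\hat{R}(\zeta)S_5$, $S_5\hat{S}S_5$, $S_5\hat{S}^{T}S_5$, $S_5\hat{S}^{\hat{J}T}S_5$ recorded before the statement, since $G_1=(M_{23})_{q_2,q_3,q_4}=(\mathrm{PSL}_3(4))_{q_2}$ is generated by $\mathfrak{r},\mathfrak{s},\mathfrak{s}^{\mathfrak{t}},\mathfrak{s}^{\mathfrak{j}\mathfrak{t}}$ (this identification of $G_1$ being exactly the one used for $M_{22}$ and $M_{23}$). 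Hence $M_{24}=\langle M_{23},S_5\rangle$ is $5$-transitive of degree $24$ with $(M_{24})_w=M_{23}$.

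Next I would feed this data into Proposition~\ref{Prop:WittTransfer} with $U=\mathfrak{w}=S_5$, and specialise the relations (\ref{Eq:WittTransRel1})--(\ref{Eq:WittTransRel4}). Since $h=h_2=h_3=1$, relation (\ref{Eq:WittTransRel3}) reads $\mathfrak{w}^2=1$, and (\ref{Eq:WittTransRel2}), for $j=2,3$ with $S_2=\mathfrak{j}$ and $S_3=\mathfrak{u}$, becomes the commuting relations, equivalently $(\mathfrak{w}\mathfrak{j})^2=(\mathfrak{w}\mathfrak{u})^2=1$. The conjugation relations (\ref{Eq:WittTransRel1}) for the four generators of $G_1$ become, via $\mathfrak{r}^{\mathfrak{w}}=\mathfrak{r}^{-1}$ and the centralising of $\mathfrak{s},\mathfrak{s}^{\mathfrak{t}},\mathfrak{s}^{\mathfrak{j}\mathfrak{t}}$ by $\mathfrak{w}$, the relations $(\mathfrak{w}\mathfrak{r})^2=(\mathfrak{w}\mathfrak{s})^2=(\mathfrak{w}\mathfrak{s}^{\mathfrak{t}})^2=(\mathfrak{w}\mathfrak{s}^{\mathfrak{j}\mathfrak{t}})^2=1$. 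Finally, because $\lambda'=(S_5S_4)^2S_5=S_4=\mathfrak{v}$, the braid-type relation (\ref{Eq:WittTransRel4}) reduces, using $\mathfrak{w}^2=\mathfrak{v}^2=1$, to $(\mathfrak{w}\mathfrak{v})^3=1$. These are exactly the relations (\ref{Eq:M24ExtraRel}); splicing them onto the presentation of $M_{23}$ from Proposition~\ref{Prop:M23Pres} yields the stated presentation.

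The only genuine work, and thus the main (though entirely mechanical) obstacle, is the explicit permutation arithmetic on the $24$ points: confirming that $S_5$ commutes with $S_2$ and $S_3$, that $(S_5S_4)^2S_5$ equals $S_4$ exactly rather than some other coset representative, and that the four conjugates $S_5\,\sigma\,S_5$ return the asserted elements of $G_1$ (most conveniently checked inside $SL_3(4)$ for the $\mathrm{PSL}_3(4)$-generators). Once these finite verifications are in place, the proposition is a formal specialisation of Propositions~\ref{Prop:Witt} and \ref{Prop:WittTransfer}, exactly parallel to the derivations of Propositions~\ref{Prop:M22Pres} and \ref{Prop:M23Pres}.
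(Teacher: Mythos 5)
Your proposal is correct and follows essentially the same route as the paper: Witt's construction (Proposition~\ref{Prop:Witt}) with $t=5$, $G=M_{23}$, followed by Proposition~\ref{Prop:WittTransfer} with $\mathfrak{w}=S_5$, using $h=h_2=h_3=1$, the computation $(S_5S_4)^2S_5=S_4=\mathfrak{v}$ to reduce the braid relation to $(\mathfrak{w}\mathfrak{v})^3=1$, and the four conjugation formulas for $S_5\hat{R}(\zeta)S_5$, $S_5\hat{S}S_5$, $S_5\hat{S}^{T}S_5$, $S_5\hat{S}^{\hat{J}T}S_5$ to obtain the remaining relations of (\ref{Eq:M24ExtraRel}). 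The only cosmetic difference is that you re-verify the hypotheses of Proposition~\ref{Prop:Witt} explicitly, whereas the paper takes the $5$-transitivity of $\langle M_{23},S_5\rangle$ from Huppert--Blackburn.
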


\section{Group extensions of polygroups by groups}
\label{Sec:ExtPr}

\subsection{The category $\cG\cE\cX\cT(E, H)$ and the extension space ${\rm GEXT}(E, H)$}
\label{Subsec:GroupExt}
Let $H$ be a group, and let $E$ be a polygroup. A triple $(G, j, p)$, consisting of a group $G$, a monomorphism 
$j: H \rightarrow G$, and a surjective map $p : G \rightarrow E$, is called a {\em group extension of the polygroup $E$ by the group} 
$H$ if the map $p$ induces an isomorphism of polygroups $j(H) \sm G /j(H) \rightarrow E$. We denote by $\cG\cE\cX\cT(E, H)$ the
{\em class} of all group extensions of the polygroup $E$ by the group $H$. Note that $\cG\cE\cX\cT(E, H)$ may be empty; this happens, for instance, if $E$ is not a double coset polygroup, or if $H$ is trivial and $E$ is not a group. 

$\cG\cE\cX\cT(E, H)$ is a groupoid (a category in which all morphisms are invertible), where a morphism 
$(G, j, p) \rightarrow (G', j', p')$ is, by definition, a group isomorphism $\varphi: G \rightarrow G'$ such that $\varphi \circ j = j'$ and $p' \circ \varphi = p$. We denote by ${\rm GEXT}(E, H)$ the {\em set} of isomorphism classes (connected components) of the groupoid $\cG\cE\cX\cT(E, H)$. 

Given a pair $(E, H)$ as above, the corresponding {\em extension problem} asks for a parametrization of the (possibly empty) set
${\rm GEXT}(E, H)$ in terms of a suitable generalization of Schreier's {\em factor systems} ({\em $2$-cocycles}). It 
turns out that the {\em group-like graphs of groups}, introduced in \cite[Sec.~4]{Ba1} in the context 
of group actions on connected groupoids, after suitable modification and refinement, provide the desired generalization of Schreier's factor systems (see Theorem~
\ref{Thm:SolExtP} below). 
\subsection{Group-like graphs of groups} 
\label{Subsec:GrouplikeGraphsGroups}
Let $(E, H)$ be a pair consisting of a group $H$ and a set $E$ endowed with an involution \,$\bar{}: E \rightarrow E, f \mapsto
\bar{f},$ together with a distinguished element $e = \bar{e}\in E$, viewed as a graph with a single vertex, with $E$ as the set of edges. We introduce a suitable version of Serre's concept of graph of groups (\cite[Sec.~4.4, Definition 8]{Trees}), based on 
the given pair $(E, H)$.
\begin{de} 
\label{Def:GraphGroups}
{\em By a {\em graph of groups based on the pair} $(E, H)$ we mean a triple $(\H, \iota, \theta)$ consisting of a family 
$\H = (H_f)_{f \in E}$ of subgroups of $H$, a family of isomorphisms $\iota = (\iota_f)_{f\in E}$, where $\iota_f: H_f \rightarrow H_{\bar{f}}$,
and a family of distinguished group elements $\theta = (\theta_f)_{f \in E}$ with $\theta_f \in H_f$, subject to the 
following conditions:

\vspace{-2mm}

\ben
\item[\rm (i)] $H_e = H$, $\theta_e = 1$, and $\iota_e = 1_H$, the identity automorphism of $H$;

\vspace{1mm}

\item[\rm (ii)] for $f\in E$ with $f \neq \bar{f}$, $\iota_{\bar{f}} = \iota_f^{- 1}$, and $\theta_f = 1$;

\vspace{1mm}

\item[\rm (iii)] for $f\in E$ with $f = \bar{f}$, $\iota_f^2=\iota_f \circ \iota_f$ is the inner automorphism of $H_f$ given by 
$h \mapsto \theta_f^{- 1} h \theta_f$, and we have $\iota_f(\theta_f) = \theta_f$.
\een }
\end{de}

Thus, roughly speaking, a graph of groups based on $(E, H)$ is a suitable family of {\em partial automorphisms} of $H$ indexed by the elements of $E$.

Given a graph of groups $(\H, \iota, \theta)$ as above, let $\fG$ be the set of triples $\mathfrak{g}=(f, \rho, \lam)$, where
$f \in E$, $\rho$ is a coset in $H/H_f$, and $\lam: \rho \rightarrow H$ is a map such that 
$\lam(p h) = \iota_f(h)^{- 1} \lam(p)$ for $p \in \rho, h \in H_f$; in particular, $\lam$ is injective, and is completely determined by its value on some element $p \in \rho$. For any element $\mathfrak{g} \in \fG$, we denote its components
by $\eps(\mathfrak{g}) \in E$,\, $\rho(\mathfrak{g}) \in H/H_{\eps(\mathfrak{g})}$, and $\lam(\mathfrak{g}): \rho(\mathfrak{g}) \rightarrow H$, so that $\mathfrak{g} = (\eps(\mathfrak{g}), \rho(\mathfrak{g}), \lam(\mathfrak{g}))$.
We identify $E$ and $H$ with subsets of $\fG$ via the injective maps 
\[
E \lra \fG,\,\, f \mapsto \wt{f}:= (f, H_f, 1 \mapsto 1) = (f, H_f, h \mapsto \iota_f(h)^{- 1})
\]
and 
\[
H \lra \fG,\,\, h \mapsto \wt{h}:= (e, H, 1 \mapsto h) = (e, H, h' \mapsto (h')^{- 1} h);
\] 
in particular, $\wt{e} = (e, H, h\mapsto h^{-1}) = \wt{1}$ is the unique common element of the images $\wt{E}$ and $\wt{H}$ in $\fG$.
 
The map $H \times E \times H \lra \fG, (h_1, f, h_2) \mapsto (f, h_1 H_f, h_1 \mapsto h_2)$ is surjective, and 
identifies $\fG$ with the quotient of $H \times E \times H$ modulo the equivalence relation $\sim$ given by
\begin{equation}
\label{Eq:Equiv}
(h_1, f, h_2) \sim (h_1', f', h_2') \Llra f = f',\, h_1^{- 1} h_1' \in H_f,\,\mbox{and}\,\,\iota_f(h_1^{- 1} h_1') =
h_2 (h_2')^{- 1}.
\end{equation}
 
Next, we consider group-like graphs of groups; these are graphs of groups in the sense of Definition~\ref{Def:GraphGroups}, together with a map $\alpha: E \times H \times E \lra \fG$ satisfying certain axioms as follows.

\begin{de} 
\label{Def:GroupGraphGroups}
{\em By a {\em group-like graph of groups} based on the pair $(E, H)$ we mean a graph of groups $(\H, \iota, \theta)$ based on $(E, H)$ together with a map $\alpha: E \times H \times E \rightarrow \fG$, such that

\vspace{-2mm}

\ben
\item[\rm (i)] $\alpha(e, h, f) = (f, h H_f, h \mapsto 1)$;

\vspace{1.5mm}

\item[\rm (ii)] $\alpha(f, h, e) = (f, H_f, 1 \mapsto h)$;

\vspace{1.5mm}

\item[\rm (iii)] $\alpha(f, 1, \bar{f}) = \wt{\theta_f}$;

\vspace{1.5mm}

\item[\rm (iv)] $\alpha(D) = (\eps(\alpha(C)), u \rho(\alpha(C)), u p \mapsto \lam(\alpha(C))(p) \cdot \lam(\alpha(B))(v)^{- 1})$,
where 
\[
A = (f, h, f'), B = (f', h', f''), u \in \rho(\alpha(A)), v \in \rho(\alpha(B)),
\] 
\[
C = (\eps(\alpha(A)), \lam(\alpha(A))(u) \cdot h', f''),\,\mbox{and}\,D = (f, h v, \eps(\alpha(B))).
\]
\een}
\end{de}
Note that (i) and (ii) imply $\alpha(e, 1, f) = \alpha(f, 1, e) = \wt{f}$ for $f \in E$, and $\alpha(e, h, e) = \wt{h}$
for $h \in H$; in particular, $\alpha(e, 1, e) = \wt{e} = \wt{1}$.

Our next result exhibits a crucial property of the map $\alpha$.

\begin{lem} 
\label{Lem:alpha}
Let $(\H, \iota, \theta, \alpha)$ be a group-like graph of groups based on the pair $(E, H)$. Let $A = (f, h, f') \in
E \times H \times E$, $a \in H_f, b \in H_{f'}$, and let $B = (f, \iota_f(a) h b, f')$. Then we have 
\begin{equation}
\label{Eq:alpha(B)}
\alpha(B) = (\eps(\alpha(A)), a \rho(\alpha(A)), a p \mapsto \lam(\alpha(A))(p) \cdot \iota_{f'}(b)).
\end{equation}
\end{lem}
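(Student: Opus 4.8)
The plan is to deduce the identity (\ref{Eq:alpha(B)}) from the composition axiom (iv) of Definition~\ref{Def:GroupGraphGroups} by separating off the left factor $\iota_f(a)$ and the right factor $b$, treating each by a single, carefully specialized application of (iv), and then combining the two special cases. Throughout I write $g := \alpha(A)$, so that $\eps(g)\in E$, $\rho(g)\in H/H_{\eps(g)}$, and $\lam(g):\rho(g)\to H$ obeys the twisted equivariance $\lam(g)(ph)=\iota_{\eps(g)}(h)^{-1}\lam(g)(p)$. I would also keep at hand the two degenerate evaluations $\alpha(e,h,f)=(f,hH_f,h\mapsto 1)$ and $\alpha(f,h,e)=(f,H_f,1\mapsto h)$ supplied by axioms (i) and (ii); these furnish the ``known'' vertices in each use of (iv).

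First I would establish the right-hand case $a=1$: for $b\in H_{f'}$,
\[
\alpha(f,hb,f') = \big(\eps(g),\,\rho(g),\,p\mapsto \lam(g)(p)\,\iota_{f'}(b)\big).
\]
For this I apply (iv) with $A=(f,h,f')$ and $B=(f',1,e)$, so that $f''=e$ and, by axiom (ii), $\alpha(B)=\wt{f'}=(f',H_{f'},1\mapsto 1)$ with $\lam(\alpha(B))(b)=\iota_{f'}(b)^{-1}$. Choosing $v=b\in H_{f'}=\rho(\alpha(B))$ and any $u\in\rho(g)$ makes $D=(f,hv,\eps(\alpha(B)))=(f,hb,f')$ the target triple, while $C=(\eps(g),\lam(g)(u),e)$ has $\alpha(C)=(\eps(g),H_{\eps(g)},1\mapsto\lam(g)(u))$, again by axiom (ii). Evaluating the third component of (iv) at the representative $up$ with $p=1$ then yields $\lam(\alpha(D))(u)=\lam(g)(u)\,\iota_{f'}(b)$; since $u$ represents $\rho(g)$, the formula follows once the twisted equivariance is used to confirm consistency on the whole coset.

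Next I would treat the left-hand case $b=1$: for $a\in H_f$,
\[
\alpha(f,\iota_f(a)h,f') = \big(\eps(g),\,a\rho(g),\,ap\mapsto \lam(g)(p)\big).
\]
Here the roles are reversed: I apply (iv) with $A=(e,a,f)$ and $B=(f,h,f')$, so that $\alpha(A)=(f,aH_f,a\mapsto 1)$ by axiom (i). The crucial point, and the step I expect to be the main obstacle, is the choice of representative $u\in\rho(\alpha(A))=aH_f$. Taking $u=1$ (legitimate, since $a^{-1}\in H_f$) and combining the twisted equivariance with $\iota_f(a^{-1})^{-1}=\iota_f(a)$ gives $\lam(\alpha(A))(1)=\iota_f(a)$; consequently $C=(\eps(\alpha(A)),\lam(\alpha(A))(u)h,f')=(f,\iota_f(a)h,f')$ is exactly the triple whose value we seek. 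On the other hand $D=(e,av,\eps(\alpha(B)))$ has first component $e$, so $\alpha(D)$ is computable from axiom (i) alone as $(\eps(\alpha(B)),avH_{\eps(\alpha(B))},av\mapsto 1)$; comparing this explicit value with the formula produced by (iv) lets me solve for $\alpha(C)$, obtaining $\eps(\alpha(C))=\eps(g)$, $\rho(\alpha(C))=a\rho(g)$, and $\lam(\alpha(C))(av)=\lam(g)(v)$, which is the asserted formula.

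Finally I would combine the two cases. Applying the right-hand case to $A=(f,h,f')$ produces $A_1=(f,hb,f')$ with $\alpha(A_1)=(\eps(g),\rho(g),p\mapsto\lam(g)(p)\iota_{f'}(b))$; applying the left-hand case to $A_1$, with $hb$ now in the role of the middle entry, gives $\alpha(f,\iota_f(a)hb,f')=(\eps(\alpha(A_1)),a\rho(\alpha(A_1)),ap\mapsto\lam(\alpha(A_1))(p))$. Since $\eps(\alpha(A_1))=\eps(g)$, $\rho(\alpha(A_1))=\rho(g)$ and $\lam(\alpha(A_1))(p)=\lam(g)(p)\iota_{f'}(b)$, and since $\iota_f(a)\,(hb)=\iota_f(a)hb$ is precisely the middle entry of the triple $B$ in the statement, this is exactly (\ref{Eq:alpha(B)}). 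Apart from the representative choice flagged above, the only remaining care is the bookkeeping of cosets and the repeated appeal to the twisted equivariance of the maps $\lam(\cdot)$, which guarantees that all the formulae are independent of the chosen representatives.
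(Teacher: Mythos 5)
Your proof is correct, but it deploys axiom (iv) of Definition~\ref{Def:GroupGraphGroups} differently from the paper. The paper proves the lemma in a \emph{single} forward application of (iv): it feeds in the two degenerate triples $A' = (f, \iota_f(a), e)$ and $B' = (e, h, f')$, whose values $\alpha(A') = (f, H_f, a \mapsto 1)$ and $\alpha(B') = (f', hH_{f'}, hb \mapsto \iota_{f'}(b)^{-1})$ come from Parts~(i) and (ii), and chooses the representatives $u = a$, $v = hb$; with these choices the axiom's auxiliary triples $C, D$ become precisely the $A$ and $B$ of the lemma, so the output formula of (iv) is already the identity (\ref{Eq:alpha(B)}), the factor $\lam(\alpha(B'))(hb)^{-1} = \iota_{f'}(b)$ supplying the right-hand twist --- both factors are absorbed at once. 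You instead decompose into the one-sided cases $a = 1$ and $b = 1$ and compose them via $\iota_f(a)hb = \iota_f(a)\cdot(hb)$. Your right-hand case is a forward use of (iv) in the same spirit as the paper's; your left-hand case, however, uses (iv) \emph{in reverse}: the unknown $\alpha(f, \iota_f(a)h, f')$ occurs there as $\alpha(C)$, and you recover it by computing $\alpha(D) = \alpha(e, av, \eps(\alpha(B)))$ independently from axiom (i) and solving the componentwise equality of triples for $\alpha(C)$ --- a legitimate move, since with $u = 1$ every component of $\alpha(C)$ is uniquely determined, and your representative choices (in particular $u = 1 \in aH_f$ with $\lam(\alpha(A))(1) = \iota_f(a)$, and the passage from the value at one representative to the whole coset via the twisted equivariance of $\lam$) are all valid. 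The trade-off: the paper's one-shot specialization is shorter and avoids the inversion step; your route costs an extra application of the axiom but isolates two reusable one-sided rules (right twist of $\lam$ by $\iota_{f'}(b)$, left translation of $\rho$ by $a$), which makes transparent exactly where the two modifications appearing in (\ref{Eq:alpha(B)}) come from.
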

\bp
With $f, f' \in E$ and $h \in H, a \in H_f$, set $A' := (f, \iota_f(a), e)$ and $B' := (e, h, f')$. Then, by Parts~(i) and (ii) of  
Definition~\ref{Def:GroupGraphGroups}, we obtain 
\[
\alpha(A') = (f, H_f, 1 \mapsto \iota_f(a)) = (f, H_f, a \mapsto 1)
\] 
and 
\[
\alpha(B') = (f', h H_{f'}, h \mapsto 1) = (f', h H_{f'}, h b \mapsto \iota_{f'}(b)^{- 1}).
\]
If we now replace $A, B$, $u \in \rho(\alpha(A))$, $v \in \rho(\alpha(B))$ in Part~(iv) of Definition~\ref{Def:GroupGraphGroups} 
with $A', B'$, $a \in \rho(\alpha(A'))$, and  $h b \in \rho(\alpha(B'))$, respectively, then we find that the triples $C, D$ there become,
respectively, 
$C' = (f, h, f') = A$ and $D' = (f, \iota_f(a) h b, f') = B$. Applying Part~(iv) of Definition~\ref{Def:GroupGraphGroups} now yields 
\begin{multline*}
\alpha(B) = \alpha(D') = (\eps(\alpha(C')), a \rho(\alpha(C')), a p \mapsto \lam(\alpha(C'))(p) \cdot \lam(\alpha(B'))(h b)^{- 1})\\[1mm]
= (\eps(\alpha(A)), a \rho(\alpha(A)), a p \mapsto \lam(\alpha(A))(p) \cdot \iota_{f'}(b)),
\end{multline*}
as desired.
\ep

{\footnotesize 
\begin{rem} \em 
\label{Rem:epsilon/rho}
Let $f, f' \in E$. According to Lemma~\ref{Lem:alpha}, the map $H \longrightarrow E$, $h \mapsto \eps(\alpha(f, h, f'))$, factors 
through the space of double cosets $H_{\bar{f}} \sm H/ H_{f'}$, while the map $H \longrightarrow \coprod_{f^{'\!'} \in E} H/H_{f^{'\!'}}$,
$h \mapsto \rho(\alpha(f, h, f'))$, factors through the space of left cosets $H/H_{f'}$.
\end{rem}}

\begin{co}
\label{Cor:Operation}
In the context of Lemma~{\em \ref{Lem:alpha},} let $\mathfrak{g}, \mathfrak{g}' \in \fG$ and, for $i=1,2,$ let $a_i \in \rho(\mathfrak{g}),$ $a_i' \in \rho(\mathfrak{g}'),$ and $A_i = (\eps(\mathfrak{g}), \lam(\mathfrak{g})(a_i) \cdot a_i', \eps(\mathfrak{g}')) \in 
E \times H \times E$. Then the identity 
\begin{multline}
\label{Eq:GId}
 (\eps(\alpha(A_1)), a_1 \rho(\alpha(A_1)), a_1 p_1 \mapsto \lam(\alpha(A_1))(p_1) \cdot \lam(\mathfrak{g}')(a_1')) \\[1mm]
= (\eps(\alpha(A_2)), a_2 \rho(\alpha(A_2)), a_2 p_2 \mapsto \lam(\alpha(A_2))(p_2) \cdot \lam(\mathfrak{g}')(a_2'))
\end{multline}
holds in $\fG$.
\end{co}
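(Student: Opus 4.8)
The plan is to reduce everything to Lemma~\ref{Lem:alpha}, which already records precisely how $\alpha$ transforms when the $H_f$-component on the left and the $H_{f'}$-component on the right of a triple are modified. Write $f:=\eps(\mathfrak{g})$ and $f':=\eps(\mathfrak{g}')$. Since $a_1,a_2$ lie in the common coset $\rho(\mathfrak{g})\in H/H_f$, the element $a:=a_1^{-1}a_2$ lies in $H_f$; similarly $b:=(a_1')^{-1}a_2'\in H_{f'}$. Setting $h:=\lam(\mathfrak{g})(a_1)\cdot a_1'$, we have $A_1=(f,h,f')$, and the defining property $\lam(\mathfrak{g})(ph)=\iota_f(h)^{-1}\lam(\mathfrak{g})(p)$ of $\lam(\mathfrak{g})$ gives $\lam(\mathfrak{g})(a_2)=\iota_f(a)^{-1}\lam(\mathfrak{g})(a_1)$; combined with $a_2'=a_1'b$, this yields $A_2=(f,\iota_f(a^{-1})\,h\,b,f')$.

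First I would apply Lemma~\ref{Lem:alpha} to $A=A_1$ with the distinguished elements $a^{-1}\in H_f$ and $b\in H_{f'}$, so that the triple $B$ produced by the lemma is exactly $A_2$. This immediately supplies the three facts I need:
\[
\eps(\alpha(A_2))=\eps(\alpha(A_1)),\quad \rho(\alpha(A_2))=a^{-1}\rho(\alpha(A_1)),
\]
together with $\lam(\alpha(A_2))(a^{-1}p)=\lam(\alpha(A_1))(p)\cdot\iota_{f'}(b)$ for all $p\in\rho(\alpha(A_1))$.

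With these in hand, I would verify the equality of the two triples in (\ref{Eq:GId}) component by component. The first components agree by the first identity above. For the second components, $a_2\rho(\alpha(A_2))=a_2a^{-1}\rho(\alpha(A_1))=a_1\rho(\alpha(A_1))$ (since $a_2a^{-1}=a_1$), so the underlying cosets coincide. For the third component (the map $\lam$), I would fix an element $x$ of this common coset and write it both as $x=a_1p_1$ and as $x=a_2p_2$; comparing the two expressions forces $p_2=a^{-1}p_1$. Evaluating the right-hand map at $x=a_2p_2$ and substituting the lemma's formula for $\lam(\alpha(A_2))$ turns its value into $\lam(\alpha(A_1))(p_1)\cdot\iota_{f'}(b)\cdot\lam(\mathfrak{g}')(a_2')$; since $a_2'=a_1'b$, the cocycle identity for $\lam(\mathfrak{g}')$ gives $\lam(\mathfrak{g}')(a_2')=\iota_{f'}(b)^{-1}\lam(\mathfrak{g}')(a_1')$, the two factors $\iota_{f'}(b)$ cancel, and the value reduces to $\lam(\alpha(A_1))(p_1)\cdot\lam(\mathfrak{g}')(a_1')$, which is exactly the value of the left-hand map at $x=a_1p_1$. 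Hence the maps agree, and the triples are equal.

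The only genuine subtlety — and the step I would watch most carefully — is the bookkeeping of inverses and of the convention $\lam(ph)=\iota_f(h)^{-1}\lam(p)$: one must feed $a^{-1}$ (not $a$) into Lemma~\ref{Lem:alpha} to match its hypothesis $B=(f,\iota_f(\,\cdot\,)hb,f')$, and the cancellation of $\iota_{f'}(b)$ in the final step hinges on getting the direction of this twist right. Everything else is direct substitution, so I would expect no real difficulty beyond this.
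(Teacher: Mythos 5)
Your proof is correct and takes essentially the same route as the paper's: both write $a_2=a_1a$, $a_2'=a_1'b$ with $a\in H_{\eps(\mathfrak{g})}$, $b\in H_{\eps(\mathfrak{g}')}$, apply Lemma~\ref{Lem:alpha} to $A_1$ with $a^{-1}$ and $b$ so that the triple $B$ produced by the lemma is exactly $A_2$, and then compare the three components, with the final cancellation of $\iota_{\eps(\mathfrak{g}')}(b)$ coming from the rigidity identity $\lam(\mathfrak{g}')(a_1'b)=\iota_{\eps(\mathfrak{g}')}(b)^{-1}\lam(\mathfrak{g}')(a_1')$. The inverse twist you flag as the delicate point is handled identically in the paper, which feeds $b^{-1},b'$ (your $a^{-1},b$) into the lemma.
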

\bp
Clearly, the right-hand, as well as the left-hand side of (\ref{Eq:GId}), are elements of $\fG$, so it suffices to prove equality of these two triples. By assumption, $a_2 = a_1 b, a_2' = a_1' b'$ with some $b \in H_{\eps(\mathfrak{g})}, b' \in H_{\eps(\mathfrak{g}')}$, and hence
\[ 
\lam(\mathfrak{g})(a_2) \cdot a_2' = \iota_{\eps(\mathfrak{g})}(b)^{- 1} \cdot (\lam(\mathfrak{g})(a_1) \cdot a_1') \cdot b'.
\] 
Applying Lemma~\ref{Lem:alpha} with $A_1$ in place of $A$, and with $b^{-1}, b'$ in place of the elements $a$ and $b$, respectively  (so that $B$ becomes $A_2$), we get 
\[
\alpha(A_2) = (\eps(\alpha(A_1)), b^{- 1} \rho(\alpha(A_1)), b^{- 1} p_1 \mapsto \lam(\alpha(A_1))(p_1) \cdot 
\iota_{\eps(\mathfrak{g}')}(b')).
\]
Consequently, we have
$\varepsilon(\alpha(A_2)) = \varepsilon(\alpha(A_1))$ as well as  
\[
a_2 \rho(\alpha(A_2)) = (a_1 b) (b^{- 1} \rho(\alpha(A_1))) = a_1 \rho(\alpha(A_1)), 
\]
and the map
\[
a_2 \rho(\alpha(A_2)) \rightarrow H,\,\, a_2 p_2 \mapsto \lam(\alpha(A_2))(p_2) \cdot \lam(\mathfrak{g}')(a_2')
\] 
sends $a_1 p_1 = a_2 \cdot (b^{- 1} p_1)$, for $p_1 \in \rho(\alpha(A_1))$, to 
\begin{align*}
\lam(\alpha(A_2))(b^{- 1} p_1) \cdot \lam(\mathfrak{g}')(a_1' b') 
&= (\lam(\alpha(A_1))(p_1) \cdot \iota_{\eps(\mathfrak{g}')}(b')) \cdot (\iota_{\eps(\mathfrak{g}')}(b')^{- 1} \cdot \lam(\mathfrak{g}')(a_1'))\\[2mm] 
&= \lam(\alpha(A_1))(p_1) \cdot \lam(\mathfrak{g}')(a_1'), 
\end{align*}
as claimed.
\ep
We now introduce a binary operation $\bullet$ on $\fG$ via
\begin{equation}
\label{Eq:Operation}
\mathfrak{g} \bullet \mathfrak{g}':= (\eps(\alpha(A)), a \rho(\alpha(A)), a p \mapsto \lam(\alpha(A))(p) \cdot \lam(\mathfrak{g}')(a')),
\end{equation}
where $a \in \rho(\mathfrak{g}), a' \in \rho(\mathfrak{g}')$, and $A = (\eps(\mathfrak{g}), \lam(\mathfrak{g})(a) \cdot a', \eps(\mathfrak{g}'))$ (this is well defined in view of Corollary~\ref{Cor:Operation}). Our next result shows that $\bullet$ is in fact a group operation on $\fG$, thereby in particular justifying the terminology
``group-like''.

\begin{pr} 
\label{Prop:AssGroup}

\ben
\item[\rm (1)] Given a group-like graph of groups $(\H, \iota, \theta, \alpha)$ based on the pair $(E, H),$ the set 
$\fG$ carries a canonical group structure, with multiplication $\bullet$ given by {\em (\ref{Eq:Operation})}. The 
identity element of $\fG$ is $\wt{e} = \wt{1},$ and for any element $\mathfrak{g} \in \fG$, its inverse is 
\[
\mathfrak{g}^{-1} = (\overline{\eps(\mathfrak{g})}, \lam(\mathfrak{g})(\rho(\mathfrak{g}))^{- 1}, \lam(\mathfrak{g})(p)^{- 1} \mapsto (p\, \theta_{\eps(\mathfrak{g})})^{- 1}).
\]
 
\item[\rm (2)] The map $H \lra \fG,\, h \mapsto \wt{h} = (e, H, 1 \mapsto h)$ is a monomorphism of groups, identifying
$H$ with the subgroup $\wt{H} = \big\{\wt{h}\,:\,h \in H\big\}$ of $\fG$.

\vspace{1mm}

\item[\rm (3)] For any $\mathfrak{g} \in \fG,$ and for any $a \in \rho(\mathfrak{g}),$ the identity $\mathfrak{g} = \wt{a} \bullet \wt{\eps(\mathfrak{g})} \bullet
\wt{\lam(\mathfrak{g})(a)}$ holds in $\fG;$ in particular, the group $\fG$ is generated by the set $\wt{H} \cup \wt{E}$.

\vspace{1mm}

\item[\rm (4)] For any triple $A = (f, h, f') \in E \times H \times E,$ we have $\alpha(A) = \wt{f} \bullet \wt{h} \bullet \wt{f'};$ in particular, for any pair $(f, f') \in E \times E,$ the map $H \rightarrow \fG$ sending $h$ to $\alpha(f, h, f')$ is injective. 

\vspace{1mm}

\item[\rm (5)] The map $\eps: \fG \lra E$ is a retract of the embedding $E \lra \fG, f \mapsto \wt{f} = (f, H_f, 
1 \mapsto 1)$, inducing an isomorphism $\wh{\eps}$ from the polygroup with support $\wt{H} \sm \fG/ \wt{H}$ onto the 
polygroup $(E;\,\circ,\,\bar{}\,,\,e)$ with the associative hyperoperation 
\[
f \circ f':= \big\{\eps(\alpha(f, h, f')): \,h\in H\big\}.
\]
Thus, the triple $(\fG,\,\wt{}: H \rightarrow \fG, \eps: \fG \rightarrow E)$ is a group extension of the polygroup 
$(E;\,\circ,\,\bar{}\,,e)$ by the group $H$.
\een
\end{pr}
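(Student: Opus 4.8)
The plan is to verify the five assertions essentially in the order listed, since each builds on its predecessors; the real content lies in Part~(1), whose associativity is the main obstacle. First I would prove Part~(2) independently, as it is quickly disposed of: the map $h \mapsto \wt{h} = (e,H,1\mapsto h)$ is injective by the defining equivalence~(\ref{Eq:Equiv}), and using~(\ref{Eq:Operation}) with $\eps(\mathfrak{g}) = \eps(\mathfrak{g}') = e$, $H_e = H$, $\iota_e = 1_H$, and $\alpha(e,h,e) = \wt{h}$ one checks directly that $\wt{h_1} \bullet \wt{h_2} = \wt{h_1 h_2}$, so $\wt{H}$ is a subgroup isomorphic to $H$.

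For Part~(1), I would establish in turn that $\wt{e}$ is a two-sided identity, that $\bullet$ is associative, and that the stated formula gives a two-sided inverse. The identity law follows from Axioms~(i),(ii) of Definition~\ref{Def:GroupGraphGroups} together with the normalisation $\alpha(e,h,f) = (f,hH_f,h\mapsto 1)$ and $\alpha(f,h,e) = (f,H_f,1\mapsto h)$: computing $\wt{e}\bullet\mathfrak{g}$ and $\mathfrak{g}\bullet\wt{e}$ from~(\ref{Eq:Operation}) reproduces $\mathfrak{g}$. \emph{The hard part will be associativity}, where Axiom~(iv) is exactly the cocycle-type compatibility that makes $(\mathfrak{g}_1\bullet\mathfrak{g}_2)\bullet\mathfrak{g}_3 = \mathfrak{g}_1\bullet(\mathfrak{g}_2\bullet\mathfrak{g}_3)$. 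Concretely, I would pick representatives $a_i \in \rho(\mathfrak{g}_i)$, expand both triple products using~(\ref{Eq:Operation}), and show the three components ($\eps$, the coset $\rho$, and the map $\lam$) agree. Lemma~\ref{Lem:alpha} and Corollary~\ref{Cor:Operation} are the technical engine here: they tell us precisely how $\alpha$ transforms under left/right multiplication of its middle argument by elements of $H_f$ and $H_{f'}$, so that the ambiguity in choosing representatives $a_i$ (and the induced representatives one step up) is controlled. The well-definedness granted by Corollary~\ref{Cor:Operation} lets me choose the most convenient representatives at each stage; Axiom~(iv) then matches the nested $\alpha$-evaluations. I expect the verification that the $\lam$-components coincide to be the most delicate bookkeeping, since it involves composing the twisting maps $\iota_{\eps(\cdot)}$ correctly.

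The inverse formula would be checked by direct substitution into~(\ref{Eq:Operation}), using Axiom~(iii) (namely $\alpha(f,1,\bar f) = \wt{\theta_f}$ and $\iota_f(\theta_f) = \theta_f$) and Part~(ii) of Definition~\ref{Def:GraphGroups} to handle the cases $\eps(\mathfrak{g}) \neq \overline{\eps(\mathfrak{g})}$ and $\eps(\mathfrak{g}) = \overline{\eps(\mathfrak{g})}$ separately; in the latter case the $\theta_f$-correction is what produces $\wt{e}$ rather than a spurious element. Part~(3) then follows by applying~(\ref{Eq:Operation}) twice to the product $\wt{a}\bullet\wt{\eps(\mathfrak{g})}\bullet\wt{\lam(\mathfrak{g})(a)}$ and reading off, via the normalisation axioms, that it equals $\mathfrak{g}$; this simultaneously shows $\wt{H}\cup\wt{E}$ generates $\fG$. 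Part~(4) is the special case $\mathfrak{g} = \wt{f}$, $\mathfrak{g}' = \wt{f'}$ of the operation combined with Part~(3), giving $\alpha(f,h,f') = \wt{f}\bullet\wt{h}\bullet\wt{f'}$, and injectivity in $h$ is then inherited from injectivity of the $\lam$-component.

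Finally, for Part~(5) I would verify that $\eps(\wt{f}) = f$ (so $\eps$ is a retraction), that two elements lie in the same $\wt{H}$-double coset iff they share the same $\eps$-value — this uses Part~(3), which writes every $\mathfrak{g}$ as $\wt{a}\bullet\wt{\eps(\mathfrak{g})}\bullet\wt{\lam(\mathfrak{g})(a)}$ with $\wt{a},\wt{\lam(\mathfrak{g})(a)}\in\wt{H}$ — and hence that $\eps$ descends to a bijection $\wt{H}\sm\fG/\wt{H}\to E$. That this bijection respects the hyperoperation amounts to the identity $\eps(\wt{f}\bullet\wt{h}\bullet\wt{f'}) = \eps(\alpha(f,h,f'))$ from Part~(4), which matches the double-coset hyperoperation of Section~\ref{Subsec:DoubleCosetSpace} with the defining formula $f\circ f' = \{\eps(\alpha(f,h,f')):h\in H\}$; the involution and scalar identity correspond under $\bar{f}$ and $e$ by Axiom~(iii) and the inverse formula. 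Compatibility of this hyperoperation with the polygroup axioms for $E$ is then inherited from the group structure on $\fG$ just established. Thus $(\fG,\wt{}\,,\eps)$ is a group extension of $(E;\circ,\bar{}\,,e)$ by $H$, completing the proof.
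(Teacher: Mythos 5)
Your proposal is correct and follows essentially the same route as the paper's proof: direct verification of the group axioms from (\ref{Eq:Operation}), with Part~(iv) of Definition~\ref{Def:GroupGraphGroups} driving associativity, Lemma~\ref{Lem:alpha} and Corollary~\ref{Cor:Operation} controlling the choice of representatives, and Parts~(2)--(5) established by the same direct computations (including transporting the double-coset hyperoperation via $\hat{\eps}$ using Parts~(3) and (4)). The only cosmetic differences are the order of verification and your proposed case split for the inverse formula, which the paper handles uniformly by noting that $\theta_f = 1$ whenever $f \neq \bar{f}$, so that $\alpha(f, 1, \bar{f}) = \wt{\theta_f}$ covers both cases at once.
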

\bp
(1) Let us show first that $\mathfrak{g} \bullet \wt{e} = \mathfrak{g}$ for all $\mathfrak{g} \in \fG$. Setting $\mathfrak{g}' = \wt{e}$ in (\ref{Eq:Operation}), choosing some $a \in \rho(\mathfrak{g})$, and letting 
$a' = \lam(\mathfrak{g})(a)^{- 1} \in H=\rho(\mathfrak{g}')$, we get $A = (\eps(\mathfrak{g}), 1, e)$, so that, according to Part~(ii) of 
Definition~\ref{Def:GroupGraphGroups}, $\alpha(A) = (\eps(\mathfrak{g}), H_{\eps(\mathfrak{g})}, 1 \mapsto 1) = \wt{\eps(\mathfrak{g})}$. Hence, 
\[
\mathfrak{g} \bullet \wt{e} = (\varepsilon(\mathfrak{g}), a H_{\varepsilon(\mathfrak{g})}, ap \mapsto \iota_{\varepsilon(\mathfrak{g})}(p)^{-1}\cdot(a')^{-1}) = \mathfrak{g},
\]
as desired. Similarly, applying Part~(i) of Definition~\ref{Def:GroupGraphGroups}, we find that $\wt{e} \bullet \mathfrak{g} = \mathfrak{g}$ for all $\mathfrak{g} \in \fG$. Thus, $\wt{e}$ is a two-sided identity element for $(\fG, \bullet)$.

Next, let $\mathfrak{g} \in \fG$ be arbitrary, and set 
\[
\mathfrak{g}':= (\overline{\eps(\mathfrak{g})}, \lam(\mathfrak{g})(\rho(\mathfrak{g}))^{- 1}, \lam(\mathfrak{g})(p)^{- 1} \mapsto (p \theta_{\eps(\mathfrak{g})})^{- 1})
\] 
in (\ref{Eq:Operation}). Choosing $a \in \rho(\mathfrak{g})$, and letting $a':= \lam(\mathfrak{g})(a)^{- 1} \in \rho(\mathfrak{g}')$, we obtain $A = (\eps(\mathfrak{g}), 1, \overline{\eps(\mathfrak{g})})$, so that, according to Part~(iii) of Definition~\ref{Def:GroupGraphGroups},
\[
\alpha(A) = \wt{\theta_{\eps(\mathfrak{g})}} = (e,\, H,\, h \mapsto h^{-1}\theta_{\eps(\mathfrak{g})}). 
\]
Hence, 
\[
\mathfrak{g} \bullet \mathfrak{g}' = (e, aH, ap \mapsto p^{-1}\theta_{\varepsilon(\mathfrak{g})} (a \theta_{\varepsilon(\mathfrak{g})})^{-1}) = (e, H, h\mapsto h^{-1}) = \widetilde{e},
\]
as required. If instead we want to evaluate the product $\mathfrak{g}' \bullet \mathfrak{g}$, we choose $a \in \rho(\mathfrak{g}) \in H/H_{\eps(\mathfrak{g})}$, so that 
$a \theta_{\eps(\mathfrak{g})}^{- 1} \in \rho(\mathfrak{g})$, since $\theta_{\eps(\mathfrak{g})} \in H_{\eps(\mathfrak{g})}$ by Definition~\ref{Def:GraphGroups}. We may thus take $a':= \lam(\mathfrak{g})(a \theta_{\eps(\mathfrak{g})}^{- 1})^{- 1} \in \rho(\mathfrak{g}')$ in (\ref{Eq:Operation}) with $\mathfrak{g}, \mathfrak{g}'$ 
interchanged, and the corresponding triple $A$ becomes $A = (\overline{\eps(\mathfrak{g})}, 1, \eps(\mathfrak{g}))$. Proceeding as
above, we obtain $\mathfrak{g}' \bullet \mathfrak{g} = \wt{e}$, so that $\mathfrak{g}' = \mathfrak{g}^{- 1}$ is the two-sided inverse of $\mathfrak{g}$.

Finally, we check associativity of the multiplication $\bullet$ on $\fG$. For $i=1,2,3$, let $\mathfrak{g}_i \in \fG$, and choose arbitrary elements $a_i \in \rho(\mathfrak{g}_i)$. Then, according to (\ref{Eq:Operation}), we have, for $j=1,2$, 
\[
\mathfrak{g}_j \bullet \mathfrak{g}_{j + 1} = (\eps(\alpha(A_j)), a_j \rho(\alpha(A_j)), a_j p_j \mapsto \lam(\alpha(A_j))(p_j) \cdot 
\lam(\mathfrak{g}_{j + 1})(a_{j + 1})),
\]
where 
\[
A_j = (\eps(\mathfrak{g}_j),\, \lam(\mathfrak{g}_j)(a_j) \cdot a_{j + 1},\, \eps(\mathfrak{g}_{j + 1})),\quad j = 1, 2. 
\]
Choosing, for $j=1,2$, an element $p_j \in \rho(\alpha(A_j))$, we obtain, again by (\ref{Eq:Operation}),
\[
(\mathfrak{g}_1 \bullet \mathfrak{g}_2) \bullet \mathfrak{g}_3 = (\eps(\alpha(A_3)), a_1 p_1 \rho(\alpha(A_3)), a_1 p_1 p_3 \mapsto \lam(\alpha(A_3))(p_3)
\cdot \lam(\mathfrak{g}_3)(a_3)),
\]
where $A_3 = (\eps(\alpha(A_1)), \lam(\alpha(A_1))(p_1) \cdot \lam(\mathfrak{g}_2)(a_2) \cdot a_3, \eps(\mathfrak{g}_3))$, and
\[
\mathfrak{g}_1 \bullet (\mathfrak{g}_2 \bullet \mathfrak{g}_3) = (\eps(\alpha(A_4)), a_1 \rho(\alpha(A_4)), a_1 p_4 \mapsto \lam(\alpha(A_4))(p_4) \cdot
\lam(\alpha(A_2))(p_2) \cdot \lam(\mathfrak{g}_3)(a_3)),
\]
where $A_4 = (\eps(\mathfrak{g}_1), \lam(\mathfrak{g}_1)(a_1) \cdot a_2 p_2, \eps(\alpha(A_2)))$. 

Applying Part~(iv) of Definition~\ref{Def:GroupGraphGroups} to the triples $A_1, A_2$ in place of $A$ and $B$, respectively (so that  $C, D$ become, respectively, $A_3$ and $A_4$), we get that  
\[
\alpha(A_4) = (\eps(\alpha(A_3)), p_1 \rho(\alpha(A_3)), p_1 p_3 \mapsto \lam(\alpha(A_3))(p_3) \cdot 
\lam(\alpha(A_2))(p_2)^{- 1}),
\]
implying, in particular, $\varepsilon(\alpha(A_4)) = \varepsilon(\alpha(A_3))$ and $\rho(\alpha(A_4)) = p_1\rho(\alpha(A_3))$. Moreover, we find that the map $\lambda(\mathfrak{g}_1\bullet (\mathfrak{g}_2\bullet \mathfrak{g}_3)): a_1 p_1 \rho(\alpha(A_3)) \rightarrow H$ sends $a_1p_1p_3$, for $p_3\in \rho(\alpha(A_3))$, to
\begin{multline*}
\lambda(\alpha(A_4))(p_1p_3) \cdot \lambda(\alpha(A_2))(p_2)\cdot \lambda(\mathfrak{g}_3)(a_3)\\[1mm] 
= (\lambda(\alpha(A_3))(p_3)\cdot \lambda(\alpha(A_2))(p_2)^{-1})\cdot(\lambda(\alpha(A_2))(p_2)\cdot \lambda(\mathfrak{g}_3)(a_3))\\[1mm] 
= \lambda(\alpha(A_3))(p_3) \cdot \lambda(\mathfrak{g}_3)(a_3) = \lambda((\mathfrak{g}_1\bullet \mathfrak{g}_2)\bullet \mathfrak{g}_3)(a_1p_1p_3),
\end{multline*}
whence the desired identity $(\mathfrak{g}_1 \bullet \mathfrak{g}_2) \bullet \mathfrak{g}_3 = \mathfrak{g}_1 \bullet (\mathfrak{g}_2 \bullet \mathfrak{g}_3)$. The proof
of Part (1) is thus complete.

(2) We have already observed that the map in question is injective, so it suffices to show that it is a group homomorphism. Let $h, h' \in H$. Setting $\mathfrak{g} = \wt{h} = (e, H, 1 \mapsto h)$,\, $\mathfrak{g}' = \wt{h'} = (e, H, 1 \mapsto h')$, and  $a = a' = 1$ in (\ref{Eq:Operation}), and using, say, Part~(ii) of Definition~\ref{Def:GroupGraphGroups}, we find that $A = (e, h, e)$, that 
$\alpha(A) = \wt{h}$, and that 
\[
\wt{h} \bullet \wt{h'} = (e, H, 1 \mapsto h \cdot h') = \wt{h \cdot h'}, 
\]
as desired. 

(3) Let $\mathfrak{g} \in \fG$ and $a \in \rho(\mathfrak{g})$. From (\ref{Eq:Operation}) and Part~(i) of Definition~\ref{Def:GroupGraphGroups}, we get
\[
\wt{a} \bullet \wt{\eps(\mathfrak{g})} = (\eps(\mathfrak{g}), a H_{\eps(\mathfrak{g})}, a \mapsto 1),
\]
while the desired identity 
\[
(\wt{a} \bullet \wt{\eps(\mathfrak{g})}) \bullet \wt{\lam(\mathfrak{g})(a)} = (\eps(\mathfrak{g}), a H_{\eps(\mathfrak{g})}, a \mapsto \lam(\mathfrak{g})(a)) = \mathfrak{g}
\]
follows by (\ref{Eq:Operation}) and Part~(ii) of Definition~\ref{Def:GroupGraphGroups}.

(4) Let $A = (f, h, f') \in E \times H \times E$. Then $\wt{f} \bullet \wt{h} = (f, H_f, 1 \mapsto h)$ by 
(\ref{Eq:Operation}) and Part~(ii) of Definition~\ref{Def:GroupGraphGroups}, and the desired identity $(\wt{f} \bullet \wt{h}) \bullet \wt{f'} = \alpha(A)$ follows now  from (\ref{Eq:Operation}).

(5) By definition, $\eps(\wt{f}) = f$ for all $f \in E$, so that $\varepsilon$ is indeed a retract of the map ${}^\sim:E\rightarrow\mathfrak{G}$; in particular, $\varepsilon$ is surjective. Next, using (\ref{Eq:Operation}) together with Parts~(i) and (ii) of Definition~\ref{Def:GroupGraphGroups}, one sees that, for $\mathfrak{g}\in\mathfrak{G}$ and $h, h'\in H$,
\[
\varepsilon(\tilde{h}\bullet \mathfrak{g}) = \varepsilon(\mathfrak{g}) = \varepsilon(\mathfrak{g}\bullet \tilde{h'}),
\]
so that $\varepsilon$ induces a well-defined surjective map $\hat{\varepsilon}: \widetilde{H}\backslash \mathfrak{G}/\widetilde{H} \rightarrow E$. Setting $C(\mathfrak{g}):= \wt{H} \bullet \mathfrak{g} \bullet \wt{H} \in \wt{H} \sm \fG/
\wt{H}$ for $\mathfrak{g} \in \fG$, we have $C(\mathfrak{g}) = C(\wt{\eps(\mathfrak{g})})$ by Part~(3) of our  proposition, thus 
$\hat{\eps}: \wt{H} \sm \fG/ \wt{H} \rightarrow E$ is a bijection. Also, by Part~(1) of the proposition, 
\[
\hat{\varepsilon}(\overline{C(\mathfrak{g})}) = \hat{\eps}(C(\mathfrak{g}^{- 1})) = \hat{\varepsilon}(C(\widetilde{\varepsilon(\mathfrak{g}^{-1})})) = \varepsilon(\widetilde{\varepsilon(\mathfrak{g}^{-1})}) = \varepsilon(\mathfrak{g}^{-1}) = \overline{\eps(\mathfrak{g})} = 
\overline{\hat{\eps}(C(\mathfrak{g}))},
\]
so that the bijection $\hat{\varepsilon}: \widetilde{H}\backslash \mathfrak{G}/\widetilde{H} \rightarrow E$ respects the bar operation, and we also have
$\hat{\eps}(C(\wt{1})) = e$. Finally, the associative hyperoperation $\circ$ on $E$, as defined in
the proposition, is obtained by transporting the canonical hyperoperation on the double coset space 
$\wt{H} \sm \fG/ \wt{H}$ via the bijection $\hat{\eps}$, making use of Part~(4).
\ep

\begin{de}
\label{Def:FundGroup}
{\em The group $\mathfrak{G}$ associated with a group-like graph of groups $(\mathbb{H}, \iota, \theta, \alpha)$ based on the pair $(E, H)$ according to Part~(1) of Proposition~\ref{Prop:AssGroup} is called the \emph{fundamental group} of $(\mathbb{H}, \iota, \theta, \alpha)$, and we write $\mathfrak{G} = \pi_1(\mathbb{H}, \iota, \theta, \alpha)$.}
\end{de}

\subsection{A presentation for $\pi_1(\mathbb{H}, \iota, \theta, \alpha)$}
\label{Subsec:Presentation}
Let $(\H, \iota, \theta, \alpha)$ be a group-like graph of groups based on the pair $(E, H)$ with its associated fundamental group $\fG = \pi_1(\H, \iota, \theta, \alpha)$ as defined in Section~\ref{Subsec:GrouplikeGraphsGroups}. In order to obtain a combinatorial description of the group $\fG$, we proceed as follows.

First, for each $f \in E$, we choose a left transversal $P_f$ for $H$ modulo its subgroup $H_f$, with $1 \in P_f$. Consequently, according to Part~(3) of Proposition~\ref{Prop:AssGroup}, we obtain a {\em normal form} for the elements of $\fG$: setting, for $\mathfrak{g}\in \mathfrak{G}$, $\rho(\mathfrak{g}) \cap P_{\varepsilon(\mathfrak{g})} = \{\rho_{\mathfrak{g}}\}$ and $\lambda_{\mathfrak{g}} = \lambda(\mathfrak{g})(\rho_{\mathfrak{g}})$, we have 
\begin{equation}
\label{Eq:NormalForm}
\mathfrak{g} = \wt{\rho_{\mathfrak{g}}} \bullet \wt{\eps(\mathfrak{g})} \bullet \wt{\lam_{\mathfrak{g}}}. 
\end{equation}
Next, for each pair $(f, f') \in E \times E$, we choose a set $Q_{f, f'} \sse H$ of pairwise inequivalent representatives for the double cosets in the space $H_f \sm H/ H_{f'}$ with $1 \in Q_{f, f'},$ and with  
$Q_{f', f} = Q_{f, f'}^{- 1}:= \big\{q^{- 1}: q \in Q_{f, f'}\big\}$, provided that $f \neq f'$. The following lemma investigates the behaviour under multiplication of the invariants $\eps(\mathfrak{g}), \rho_{\mathfrak{g}},$ and $\lam_{\mathfrak{g}}$ 
of $\mathfrak{g} \in \fG$. 
\begin{lem}
\label{Lem:InvDepMult}
Let $\mathfrak{g}, \mathfrak{g}'\in \fG,$ let $h \in Q_{\overline{\eps(\mathfrak{g})}, \eps(\mathfrak{g}')}, a \in H_{\eps(\mathfrak{g})}, b \in H_{\eps(\mathfrak{g}')}$ be chosen such that 
$\lam_{\mathfrak{g}} \rho_{\mathfrak{g}'} = \iota_{\eps(\mathfrak{g})}(a) h b,$ and set $\mathfrak{u}:= \alpha(\eps(\mathfrak{g}), h, \eps(\mathfrak{g}'))$. Then we have the following:
\vspace{-2mm}
\begin{enumerate}
\item[(i)] \hspace{2mm} $\eps(\mathfrak{g} \bullet \mathfrak{g}') = \eps(\mathfrak{u}),$
\vspace{2mm}
\item[(ii)]\hspace{2mm} $\rho_{\mathfrak{g} \bullet \mathfrak{g}'} \equiv \rho_{\mathfrak{g}} \cdot a \cdot \rho_{\mathfrak{u}}\,{\rm mod}\,H_{\eps(\mathfrak{u})},$ 
\vspace{2mm}
\item[(iii)]\hspace{2mm} $\lam_{\mathfrak{g} \bullet \mathfrak{g}'} = \iota_{\eps(\mathfrak{u})}(\rho_{\mathfrak{g}\bullet \mathfrak{g}'}^{-1} \rho_{\mathfrak{g}}  a  
\rho_{\mathfrak{u}}) \cdot \lam_{\mathfrak{u}} \cdot \iota_{\eps(\mathfrak{g}')}(b) \cdot \lam_{\mathfrak{g}'}$.
\end{enumerate}
\end{lem}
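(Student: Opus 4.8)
The plan is to evaluate the product $\mathfrak{g}\bullet\mathfrak{g}'$ directly from the defining formula~(\ref{Eq:Operation}) and then read off the three invariants by comparison with the normal form~(\ref{Eq:NormalForm}). Since by Corollary~\ref{Cor:Operation} the value of $\mathfrak{g}\bullet\mathfrak{g}'$ does not depend on the chosen coset representatives, the natural move is to take the transversal representatives $\rho_{\mathfrak{g}}\in\rho(\mathfrak{g})$ and $\rho_{\mathfrak{g}'}\in\rho(\mathfrak{g}')$ in~(\ref{Eq:Operation}). With these choices the middle entry of the relevant triple becomes $A=(\eps(\mathfrak{g}),\lam(\mathfrak{g})(\rho_{\mathfrak{g}})\cdot\rho_{\mathfrak{g}'},\eps(\mathfrak{g}'))=(\eps(\mathfrak{g}),\lam_{\mathfrak{g}}\rho_{\mathfrak{g}'},\eps(\mathfrak{g}'))$, which is precisely the quantity governed by the hypothesis.

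Next I would feed the hypothesis $\lam_{\mathfrak{g}}\rho_{\mathfrak{g}'}=\iota_{\eps(\mathfrak{g})}(a)\,h\,b$ into Lemma~\ref{Lem:alpha}. Indeed, the triple $A$ is obtained from $(\eps(\mathfrak{g}),h,\eps(\mathfrak{g}'))$ exactly by the left-twist by $\iota_{\eps(\mathfrak{g})}(a)$ and right-twist by $b$ handled in that lemma; since $\mathfrak{u}=\alpha(\eps(\mathfrak{g}),h,\eps(\mathfrak{g}'))$, it gives $\alpha(A)=(\eps(\mathfrak{u}),\,a\rho(\mathfrak{u}),\,ap\mapsto\lam(\mathfrak{u})(p)\cdot\iota_{\eps(\mathfrak{g}')}(b))$. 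Substituting this into~(\ref{Eq:Operation}) at once yields $\eps(\mathfrak{g}\bullet\mathfrak{g}')=\eps(\alpha(A))=\eps(\mathfrak{u})$, establishing~(i), together with $\rho(\mathfrak{g}\bullet\mathfrak{g}')=\rho_{\mathfrak{g}}\cdot a\rho(\mathfrak{u})$ as a coset in $H/H_{\eps(\mathfrak{u})}$; passing to transversal representatives in $P_{\eps(\mathfrak{u})}$ then gives the congruence~(ii).

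The main work is~(iii), and here the plan is careful bookkeeping rather than any new idea. I would set $p:=\rho_{\mathfrak{g}}^{-1}\rho_{\mathfrak{g}\bullet\mathfrak{g}'}\in\rho(\alpha(A))$, so that $\rho_{\mathfrak{g}\bullet\mathfrak{g}'}=\rho_{\mathfrak{g}}p$ lies in the domain on which the $\lambda$-component of $\mathfrak{g}\bullet\mathfrak{g}'$ was described in~(\ref{Eq:Operation}); thus $\lam_{\mathfrak{g}\bullet\mathfrak{g}'}=\lam(\alpha(A))(p)\cdot\lam_{\mathfrak{g}'}$. Writing $p=ap'$ with $p'=a^{-1}p\in\rho(\mathfrak{u})$ turns the formula for $\alpha(A)$ into $\lam(\alpha(A))(p)=\lam(\mathfrak{u})(p')\cdot\iota_{\eps(\mathfrak{g}')}(b)$, and the defining equivariance $\lam(\mathfrak{u})(qh')=\iota_{\eps(\mathfrak{u})}(h')^{-1}\lam(\mathfrak{u})(q)$ of the $\lambda$-maps (from the construction of $\fG$) lets me move $p'$ to the transversal representative $\rho_{\mathfrak{u}}$, giving $\lam(\mathfrak{u})(p')=\iota_{\eps(\mathfrak{u})}((p')^{-1}\rho_{\mathfrak{u}})\,\lam_{\mathfrak{u}}$ with $(p')^{-1}\rho_{\mathfrak{u}}=\rho_{\mathfrak{g}\bullet\mathfrak{g}'}^{-1}\rho_{\mathfrak{g}}\,a\,\rho_{\mathfrak{u}}$. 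Chaining these identities produces exactly~(iii). The only genuine obstacle is index bookkeeping---keeping straight which elements are honest transversal representatives and applying the equivariance relation in the correct direction---so I expect the argument to be short once these substitutions are organised.
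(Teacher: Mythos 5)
Your proposal is correct and follows essentially the same route as the paper's own proof: both apply Lemma~\ref{Lem:alpha} to the triple with middle entry $\lam_{\mathfrak{g}}\rho_{\mathfrak{g}'}$ (using the hypothesis $\lam_{\mathfrak{g}}\rho_{\mathfrak{g}'}=\iota_{\eps(\mathfrak{g})}(a)hb$), substitute the result into (\ref{Eq:Operation}) to read off (i) and (ii), and then obtain (iii) by rewriting $\rho_{\mathfrak{g}\bullet\mathfrak{g}'}$ in terms of the natural representative $\rho_{\mathfrak{g}}a\rho_{\mathfrak{u}}$ and invoking the equivariance $\lam(ph')=\iota_f(h')^{-1}\lam(p)$ of the rigid maps. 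Your two-step application of the equivariance (first inside $\alpha(A)$, then inside $\rho(\mathfrak{u})$) is only a cosmetic rearrangement of the paper's single evaluation of $\lam(\mathfrak{g}\bullet\mathfrak{g}')$ at $\rho_{\mathfrak{g}}a\rho_{\mathfrak{u}}\cdot(\rho_{\mathfrak{g}\bullet\mathfrak{g}'}^{-1}\rho_{\mathfrak{g}}a\rho_{\mathfrak{u}})^{-1}$.
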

\begin{proof}
Let $A = (\eps(\mathfrak{g}), h, \eps(\mathfrak{g}'))$ and $B = (\eps(\mathfrak{g}), \lambda_{\mathfrak{g}} \rho_{\mathfrak{g}'}, \eps(\mathfrak{g}'))$. Thus 
$\alpha(A) = \mathfrak{u}$, and, by Lemma~\ref{Lem:alpha}, we obtain 
\[
\alpha(B) = \big(\eps(\mathfrak{u}),\, a \rho(\mathfrak{u}),\, a \rho_{\mathfrak{u}} \mapsto \lam_{\mathfrak{u}} \cdot \iota_{\eps(\mathfrak{g}')}(b)\big). 
\]
According to (\ref{Eq:Operation}), we get 
\[
\mathfrak{g} \bullet \mathfrak{g}' = \big(\varepsilon(\mathfrak{u}),\, \rho_{\mathfrak{g}} a \rho(\mathfrak{u}),\,\rho_{\mathfrak{g}} a \rho_{\mathfrak{u}} \mapsto \lambda_{\mathfrak{u}}\cdot \iota_{\varepsilon(\mathfrak{g}')}(b) \cdot \lambda_{\mathfrak{g}'}\big),
\]
from which Assertions~(i) and (ii) follow immediately. Also,
\begin{align*}
\lambda_{\mathfrak{g}\bullet \mathfrak{g}'} &= \lambda(\mathfrak{g}\bullet \mathfrak{g}')(\rho_{\mathfrak{g}\bullet \mathfrak{g}'})\\[1mm] 
&= \lambda(\mathfrak{g}\bullet \mathfrak{g}')(\rho_{\mathfrak{g}} a \rho_{\mathfrak{u}} \cdot (\rho_{\mathfrak{g}\bullet \mathfrak{g}'}^{-1} \rho_{\mathfrak{g}} a \rho_{\mathfrak{u}})^{-1})\\[1mm] 
&= \iota_{\varepsilon(\mathfrak{u})}(\rho_{\mathfrak{g}\bullet \mathfrak{g}'}^{-1} \rho_{\mathfrak{g}} a \rho_{\mathfrak{u}})\cdot \lambda_{\mathfrak{u}}\cdot \iota_{\varepsilon(\mathfrak{g}')}(b)\cdot \lambda_{\mathfrak{g}'}, 
\end{align*}
whence (iii).
\end{proof}
Next, we choose an \textit{orientation} of $(E,{}^-)$, that is, a subset $E_+\subseteq E$ such that
\[
E = E_+ \amalg \big\{\bar{f}: f\in E_+ \mbox{ and } f\neq \bar{f}\big\};
\] 
in particular, $e = \bar{e}\in E_+$. We denote by $F = F_X$ the free group with basis 
\[
X = \big\{x_f: f \in E_+ \setminus \{e\}\big\};
\]
that is, the elements of $X$ are in bijective correspondence with the elements of the set $E_+':= E_+\setminus\{e\}$, and we let $\wt{\fG} = H\ast F$ be the free product 
of the groups $H$ and $F$. According to Part (3) of Proposition~\ref{Prop:AssGroup}, the group $\fG$ is generated by the 
union $\wt{H} \cup \wt{E}$. Moreover, by Part~(1) of Proposition~\ref{Prop:AssGroup}, the definition of the map ${}^\sim\!: E \rightarrow \mathfrak{G}$, and Part~(ii) of Definition~\ref{Def:GraphGroups}, we have $\wt{f}^{- 1} = \wt{\bar{f}}$ for $f \in E$ and $f \neq \bar{f},$ so that 
$\fG$ is generated by $\wt{H} \cup \wt{E_+'}$. We note that , in general,
\begin{equation}
\label{Eq:ftildeInverse}
\tilde{f}^{-1} = \tilde{\bar{f}} \bullet \widetilde{\theta_f}^{-1},\quad f\in E.
\end{equation}
We have a canonical epimorphism $\pi: \wt{\fG} \rightarrow \fG$ 
with $\pi(h) = \wt{h}$ for $h \in H$, and $\pi(x_f) = \wt{f}$ for $f \in E_+'$. It is convenient to set $x_e:= 1$ and 
$x_f:= x_{\bar{f}}^{-1}$ for $f\in E\setminus E_+$, so that we have $\pi(x_f) = \wt{f}$ for all $f\in E$. In order to obtain the 
desired presentation of $\fG$ in terms of the subgroup $H$ and the set $E_+$, we have to provide a system of generators 
for the kernel $K=\Ker(\pi)$ as a normal subgroup of $\wt{\fG}$.

Denote by $\sim_1$ the congruence on the group $\wt{\fG}$ generated by the family of pairs
\[
(h x_f, x_f \iota_f(h)),\quad(f\in E_+',\, h\in H_f).
\]
Note that we have
\[
h x_f \sim_1 x_f\, \iota_f(h),\quad(f\in E,\,h\in H_f).
\]
Indeed, this is clear for $f=e$, and if $f\in E\setminus E_+$ and $h\in H_f$, then $\bar{f}\in E_+'$ and, by definition of 
$\sim_1$,
\[
\iota_f(h^{-1}) x_{\bar{f}} \sim_1 x_{\bar{f}} \iota_{\bar{f}}(\iota_f(h^{-1})) = x_{\bar{f}} h^{-1},
\]
hence 
\[
h x_f = h x_{\bar{f}}^{-1} \sim_1 x_{\bar{f}}^{-1} \iota_f(h) = x_f \iota_f(h),
\]
as desired. The quotient group $\wh{\fG}:= \wt{\fG}/\!\!\sim_1$ is an HNN-extension with base group $H$,  stable letters $x_f$ for $f\in E_+'$, associated 
subgroups $H_f, H_{\bar{f}}$, and associated isomorphisms $\iota_f: H_f\rightarrow H_{\bar{f}}$. By the normal form theorem for HNN-extensions, the projection 
$\wh{\pi}: \wt{\fG} \rightarrow \wh{\fG}$ is the identity on the factors $H$ and $F$ of the free product 
$\wt{\fG}=H\ast F$, so we may use the same symbols for the images via $\wh{\pi}$ of the elements in $H\cup F$; cf.\, for 
instance, Theorem~3 in \cite[Chap.~VI]{Baumslag}. We also note that, by the definition (\ref{Eq:Operation}) of the group
operation $\bullet$, the epimorphism $\pi: \wt{\fG} \rightarrow \fG$ factors through $\wh{\fG}$, so that $\fG$ is a quotient of $\wh{\fG}$. More explicitly, combining Part (iii) of Definition~\ref{Def:GroupGraphGroups} with Lemma~\ref{Lem:alpha}, we find that
\begin{equation}
\label{Eq:ffbar}
\alpha(\bar{f}, h, f) = \widetilde{\theta_{\bar{f}} \iota_f(h)},\quad (f\in E,\, h\in H_f).
\end{equation}

Next, denote by $\sim_2$ the congruence on $\wh{\fG}$ generated by the family of pairs
\[
(x_f h x_{f'}, \rho_{\mathfrak{g}} x_{\varepsilon(\mathfrak{g})} \lambda_{\mathfrak{g}})\quad (f,f' \in E',\,h\in Q_{\bar{f}, f'},\, \mathfrak{g}= 
\alpha(f, h, f')),
\] 
where $E':=E\setminus\{e\}$, and let $\sim$ be the congruence on $\wt{\fG}$ generated by the union of the congruence $\sim_1$ and the lifting via 
$\wh{\pi}$ of the congruence $\sim_2$ to $\wt{\fG}$. Note that, by Parts~(i) and (ii) of Definition~\ref{Def:GroupGraphGroups}, we have
\[
x_f h x_{f'} \sim \rho_{\mathfrak{g}} x_{\varepsilon(\mathfrak{g})} \lambda_{\mathfrak{g}} \mbox{ for }f,f'\in E,\, h\in Q_{\bar{f}, f'}, \mbox{ and }\mathfrak{g} = \alpha(f,h,f').
\]
By Part~(4) of Proposition~\ref{Prop:AssGroup} plus Equation~(\ref{Eq:NormalForm}), the epimorphism 
$\pi: \wt{\fG}\rightarrow \fG$ factors through the quotient $\wt{\fG}/\!\!\sim\,\, \cong \wh{\fG}/\!\!\sim_2$. Moreover, 
we have the following statement refining \cite[Proposition 4.4.(ii)]{Ba1}.
\begin{te}
\label{Thm:Presentation}
The canonical projection 
$\varphi: \wt{\fG}/\!\!\sim\,\rightarrow\, \fG$ 
induced by $\pi: \wt{\fG} \rightarrow \fG$ is an isomorphism. Consequently, the group $\fG = \pi_1(\mathbb{H}, \iota, \theta, \alpha)$ is generated by the free product 
$\wt{\fG} = H\ast F$ modulo the relations of type {\em (I)}
\begin{equation}
\label{Eq:Type(I)}
h x_f = x_f \iota_f(h),\quad(f\in E_+',\, h\in H_f),
\end{equation}
together with the relations of type {\em (II)}
\begin{equation}
\label{Eq:Type(II)}
x_f h x_{f'} = \rho_{\mathfrak{g}} x_{\varepsilon(\mathfrak{g})} \lambda_{\mathfrak{g}},\quad(f, f'\in E',\, h\in Q_{\bar{f},f'},\, \mathfrak{g}= \alpha(f, h, f')).
\end{equation}
\end{te}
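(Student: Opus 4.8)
The plan is to prove that the canonical projection $\varphi: \wt{\fG}/\!\!\sim\,\rightarrow\, \fG$ is an isomorphism by constructing an explicit two-sided inverse. Since $\pi: \wt{\fG}\rightarrow\fG$ is surjective (by Part~(3) of Proposition~\ref{Prop:AssGroup}, $\fG$ is generated by $\wt{H}\cup\wt{E}$, and these lie in the image), the induced map $\varphi$ is surjective, so it suffices to produce a set-theoretic section $\psi:\fG\rightarrow\wt{\fG}/\!\!\sim$ and show that $\varphi\circ\psi = 1_{\fG}$ and $\psi\circ\varphi = 1_{\wt{\fG}/\sim}$. I would define $\psi$ on $\mathfrak{g}\in\fG$ using the normal form (\ref{Eq:NormalForm}): send $\mathfrak{g}=\wt{\rho_{\mathfrak{g}}}\bullet\wt{\eps(\mathfrak{g})}\bullet\wt{\lam_{\mathfrak{g}}}$ to the class of the word $\rho_{\mathfrak{g}}\,x_{\eps(\mathfrak{g})}\,\lam_{\mathfrak{g}}$ in $\wt{\fG}/\!\!\sim$. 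The identity $\varphi\circ\psi=1_{\fG}$ is then immediate from $\pi(\rho_{\mathfrak{g}}x_{\eps(\mathfrak{g})}\lam_{\mathfrak{g}}) = \wt{\rho_{\mathfrak{g}}}\bullet\wt{\eps(\mathfrak{g})}\bullet\wt{\lam_{\mathfrak{g}}}=\mathfrak{g}$.

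The substance of the argument is to show that $\psi$ is a \emph{homomorphism} and that the normal-form words $\rho\,x_f\,\lam$ (with $\rho\in P_f$, $\lam\in H$, $f\in E$) exhaust $\wt{\fG}/\!\!\sim$; together these give $\psi\circ\varphi=1$. For the second point, I would argue that every element of $\wt{\fG}/\!\!\sim$ can be rewritten into such a normal form: starting from an arbitrary word in $H$ and the $x_f$, the type~(I) relations (\ref{Eq:Type(I)}) let one push elements of $H_f$ across the generators $x_f$, and the type~(II) relations (\ref{Eq:Type(II)}) let one collapse any subword $x_f\,h\,x_{f'}$ into a single $\rho\,x_{\eps(\mathfrak{g})}\,\lam$, thereby reducing the number of stable letters. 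An induction on the number of occurrences of the $x_f$ then shows that each class has a representative of normal form, so $\psi$ is surjective onto $\wt{\fG}/\!\!\sim$. The key technical input is that the multiplication in $\fG$, as computed through the invariants $(\eps,\rho_{\mathfrak{g}},\lam_{\mathfrak{g}})$, is matched step for step by the rewriting afforded by the relations; this is exactly the content of Lemma~\ref{Lem:InvDepMult}, whose three formulas (i)--(iii) describe how $\eps,\rho,\lam$ transform under $\bullet$ in terms of a representative $h\in Q_{\overline{\eps(\mathfrak{g})},\eps(\mathfrak{g}')}$ and the correction terms $a\in H_{\eps(\mathfrak{g})}$, $b\in H_{\eps(\mathfrak{g}')}$.

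Concretely, to verify $\psi(\mathfrak{g}\bullet\mathfrak{g}')=\psi(\mathfrak{g})\psi(\mathfrak{g}')$, I would take the product word $\rho_{\mathfrak{g}}x_{\eps(\mathfrak{g})}\lam_{\mathfrak{g}}\cdot\rho_{\mathfrak{g}'}x_{\eps(\mathfrak{g}')}\lam_{\mathfrak{g}'}$ in $\wt{\fG}/\!\!\sim$, write $\lam_{\mathfrak{g}}\rho_{\mathfrak{g}'}=\iota_{\eps(\mathfrak{g})}(a)\,h\,b$ with $h\in Q_{\overline{\eps(\mathfrak{g})},\eps(\mathfrak{g}')}$ as in Lemma~\ref{Lem:InvDepMult}, use a type~(I) relation to move $\iota_{\eps(\mathfrak{g})}(a)$ leftward past $x_{\eps(\mathfrak{g})}$ (turning it into $a$), then apply the type~(II) relation to $x_{\eps(\mathfrak{g})}\,h\,x_{\eps(\mathfrak{g}')}$ replacing it by $\rho_{\mathfrak{u}}\,x_{\eps(\mathfrak{u})}\,\lam_{\mathfrak{u}}$ with $\mathfrak{u}=\alpha(\eps(\mathfrak{g}),h,\eps(\mathfrak{g}'))$, and finally reassemble and normalise using another type~(I) relation to absorb $b$. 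Comparing the resulting word with $\rho_{\mathfrak{g}\bullet\mathfrak{g}'}x_{\eps(\mathfrak{g}\bullet\mathfrak{g}')}\lam_{\mathfrak{g}\bullet\mathfrak{g}'}$ reduces precisely to the identities (i)--(iii) of Lemma~\ref{Lem:InvDepMult}, which hold in $\fG$. Thus $\psi$ respects multiplication.

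The main obstacle I anticipate is bookkeeping rather than conceptual difficulty: one must be careful that the type~(II) relations, as stated in (\ref{Eq:Type(II)}), are indexed only over $f,f'\in E'$ and $h\in Q_{\bar f,f'}$ (with the normalisations $1\in Q_{f,f'}$, $Q_{f',f}=Q_{f,f'}^{-1}$), whereas the rewriting step above may a priori call for an arbitrary $h\in H$ and for the boundary cases where $f$ or $f'$ equals $e$. One therefore needs to check that the full family of relations $x_f\,h\,x_{f'}\sim\rho_{\mathfrak{g}}x_{\eps(\mathfrak{g})}\lam_{\mathfrak{g}}$ for \emph{all} $h\in H$ is a consequence of the chosen generating relations together with the type~(I) relations---this is exactly where Lemma~\ref{Lem:alpha} (equivalently, the reduction to a double-coset representative $h\in Q_{\bar f,f'}$) is invoked, and where the HNN-normal-form theorem for $\wh{\fG}=\wt{\fG}/\!\!\sim_1$ guarantees that the type~(I) relations introduce no unwanted collapse. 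Once it is confirmed that these relations suffice to implement every multiplication via the formulas of Lemma~\ref{Lem:InvDepMult}, the verification that $\psi$ is a well-defined homomorphic inverse of $\varphi$ closes the argument.
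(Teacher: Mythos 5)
Your proposal is correct and follows essentially the same route as the paper: you construct the same section $\psi(\mathfrak{g}) = \rho_{\mathfrak{g}}\, x_{\varepsilon(\mathfrak{g})}\, \lambda_{\mathfrak{g}}$ from the normal form (\ref{Eq:NormalForm}), verify that it is a homomorphism by exactly the rewriting the paper performs (decompose $\lambda_{\mathfrak{g}}\rho_{\mathfrak{g}'} = \iota_{\varepsilon(\mathfrak{g})}(a)\,h\,b$ with $h \in Q_{\overline{\varepsilon(\mathfrak{g})}, \varepsilon(\mathfrak{g}')}$, move $a$ and $b$ across the stable letters by type (I) relations, collapse $x_{\varepsilon(\mathfrak{g})}\, h\, x_{\varepsilon(\mathfrak{g}')}$ by a type (II) relation), and invoke the same key input, Lemma~\ref{Lem:InvDepMult}. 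Your anticipated obstacle is handled just as in the paper --- the boundary cases $f = e$ or $f' = e$ follow from Parts (i) and (ii) of Definition~\ref{Def:GroupGraphGroups}, and arbitrary $h \in H$ reduces to a double-coset representative via the type (I) relations --- while your separate normal-form exhaustion induction is redundant, since once $\psi$ is a homomorphism agreeing with $\varphi^{-1}$ on generators, $\psi \circ \varphi = 1$ follows immediately.
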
 
\begin{proof}
First, note that, by construction, the relations of types (I) and (II) hold in $\wt{\fG}/\!\!\sim$ for arbitrary 
$f,f'\in E$. Next, making use of the invariants $\rho_{\mathfrak{g}}$, $\varepsilon(\mathfrak{g})$, and $\lambda_{\mathfrak{g}}$ of an element $\mathfrak{g}\in\mathfrak{G}$, define a map $\psi: \fG \rightarrow \wt{\fG}/\!\!\sim$ via
\[
\mathfrak{g} \mapsto \psi(\mathfrak{g}):= \rho_{\mathfrak{g}} x_{\varepsilon(\mathfrak{g})} \lambda_{\mathfrak{g}}\mbox{ modulo }\sim.
\]
It suffices to show that $\psi$ is a homomorphism; that is, that
\[
L_{\mathfrak{g},\mathfrak{g}'}:= \rho_{\mathfrak{g}} x_{\varepsilon(\mathfrak{g})} \lambda_{\mathfrak{g}} \rho_{\mathfrak{g}'} x_{\varepsilon(\mathfrak{g}')} \lambda_{\mathfrak{g}'}\, \sim\, 
\rho_{\mathfrak{g} \bullet \mathfrak{g}'} x_{\varepsilon(\mathfrak{g} \bullet \mathfrak{g}')} \lambda_{\mathfrak{g} \bullet \mathfrak{g}'} =: R_{\mathfrak{g},\mathfrak{g}'}
\]
holds for all $\mathfrak{g},\mathfrak{g}'\in \fG$. Once this is accomplished, it is easily checked on generators that $\psi\circ \varphi = 1_{\tilde{\mathfrak{G}}/\sim}$, whence injectivity of $\varphi$. 

Given $\mathfrak{g}, \mathfrak{g}'\in \fG$, let $h \in Q_{\overline{\varepsilon(\mathfrak{g})}, \varepsilon(\mathfrak{g}')},$ $a\in H_{\varepsilon(\mathfrak{g})},$ and  
$b\in H_{\varepsilon(\mathfrak{g}')}$ be such that $\lambda_{\mathfrak{g}} \rho_{\mathfrak{g}'} = \iota_{\varepsilon(\mathfrak{g})}(a) h b$. Then
\[
L_{\mathfrak{g},\mathfrak{g}'} = \rho_{\mathfrak{g}} x_{\varepsilon(\mathfrak{g})}\, \iota_{\varepsilon(\mathfrak{g})}(a) h b\, x_{\varepsilon(\mathfrak{g}')} \lambda_{\mathfrak{g}'}.
\]
Since
\[
x_{\varepsilon(\mathfrak{g})}\, \iota_{\varepsilon(\mathfrak{g})}(a) \sim a x_{\varepsilon(\mathfrak{g})}
\]
as well as
\[
b x_{\varepsilon(\mathfrak{g}')} \sim x_{\varepsilon(\mathfrak{g}')}\,\iota_{\varepsilon(\mathfrak{g}')}(b)
\]
by relations of type (I), it follows that
\[
L_{\mathfrak{g},\mathfrak{g}'} \,\sim \rho_{\mathfrak{g}} a x_{\eps(\mathfrak{g})} h x_{\eps(\mathfrak{g}')} \iota_{\eps(\mathfrak{g}')}(b) \lam_{\mathfrak{g}'}.
\]
Set $\mathfrak{u}:= \alpha(\eps(\mathfrak{g}), h, \eps(\mathfrak{g}')) = \wt{\eps(\mathfrak{g})} \bullet \wt{h} \bullet \wt{\eps(\mathfrak{g}')}$. Since 
\[
x_{\varepsilon(\mathfrak{g})} h x_{\varepsilon(\mathfrak{g}')} \,\sim\, \rho_{\mathfrak{u}} x_{\varepsilon(\mathfrak{u})} \lambda_{\mathfrak{u}}
\] 
by a type (II) relation, we deduce that
\begin{equation}
\label{Eq:LEquiv}
L_{\mathfrak{g},\mathfrak{g}'} \,\sim\, \rho_{\mathfrak{g}} a \rho_{\mathfrak{u}} x_{\varepsilon(\mathfrak{u})} \lambda_{\mathfrak{u}} \iota_{\varepsilon(\mathfrak{g}')}(b) \lambda_{\mathfrak{g}'}.
\end{equation}
Further, by Parts~(i) and (ii) of Lemma~\ref{Lem:InvDepMult}, we have
$\eps(\mathfrak{g} \bullet \mathfrak{g}') = \eps(\mathfrak{u})$ as well as $\rho_{\mathfrak{g} \bullet \mathfrak{g}'}^{-1}\, \rho_{\mathfrak{g}} a \rho_{\mathfrak{u}} \in H_{\varepsilon(\mathfrak{u})}$, so that
\begin{equation}
\label{Eq:AuxRel}
(\rho_{\mathfrak{g} \bullet \mathfrak{g}'}^{-1} \rho_{\mathfrak{g}} a \rho_{\mathfrak{u}}) x_{\varepsilon(\mathfrak{u})}\, \sim\, x_{\varepsilon(\mathfrak{u})} 
\iota_{\varepsilon(\mathfrak{u})}(\rho_{\mathfrak{g} \bullet \mathfrak{g}'}^{-1} \rho_{\mathfrak{g}} a \rho_{\mathfrak{u}})
\end{equation}
by a type (I) relation. Making use of (\ref{Eq:AuxRel}), as well as Parts~(i) and (iii) of Lemma~\ref{Lem:InvDepMult}, 
we now find that
\begin{align*}
R_{\mathfrak{g},\mathfrak{g}'} &= \rho_{\mathfrak{g} \bullet \mathfrak{g}'} x_{\varepsilon(\mathfrak{g} \bullet \mathfrak{g}')} \lambda_{\mathfrak{g} \bullet \mathfrak{g}'}\\[2mm]
&= \rho_{\mathfrak{g} \bullet \mathfrak{g}'} x_{\varepsilon(\mathfrak{u})} \iota_{\varepsilon(\mathfrak{u})}(\rho_{\mathfrak{g} \bullet \mathfrak{g}'}^{-1} \rho_{\mathfrak{g}} a \rho_{\mathfrak{u}}) \lambda_{\mathfrak{u}} \iota_{\varepsilon(\mathfrak{g}')}(b) \lambda_{\mathfrak{g}'}\\[2mm]
&\sim \rho_{\mathfrak{g} \bullet \mathfrak{g}'} (\rho_{\mathfrak{g} \bullet \mathfrak{g}'}^{-1} \rho_{\mathfrak{g}} a \rho_{\mathfrak{u}}) x_{\varepsilon(\mathfrak{u})} \lambda_{\mathfrak{u}} 
\iota_{\varepsilon(\mathfrak{g}')}(b) \lambda_{\mathfrak{g}'}\\[2mm] 
&= \rho_{\mathfrak{g}} a \rho_{\mathfrak{u}} x_{\varepsilon(\mathfrak{u})} \lambda_{\mathfrak{u}} \iota_{\varepsilon(\mathfrak{g}')}(b) \lambda_{\mathfrak{g}'},
\end{align*}
the last displayed expression being equivalent to $L_{\mathfrak{g},\mathfrak{g}'}$ by (\ref{Eq:LEquiv}), as desired.  
\end{proof}

\subsection{Reducing the number of type (II) relations}

In specific cases, we may discard some of the type (II) relations using the concrete combinatorial structure of the given polygroup $E$. In 
any case, we have the following.
\begin{lem}
\label{Lem:ReduceII}
In the context of Theorem~{\em \ref{Thm:Presentation},} choose a total order $\leq$ on the set $E' = E\setminus\{e\}$. Then
 the relations of type {\em (II)} with the pair $(f, f')$ satisfying $\bar{f} > f'$ can be derived from type {\em (I)} relations plus the type {\em (II)} relations with $\bar{f}\leq f'$.
\end{lem}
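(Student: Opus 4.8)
The plan is to show that each type (II) relation with $\bar{f} > f'$ is a consequence of the relation obtained by ``inverting the equation'', whose left-hand side involves the pair $(\bar{f'}, f)$ satisfying $\overline{\bar{f'}} = f' \leq f$, together with type (I) relations. The key algebraic fact is that the group operation $\bullet$ is recorded faithfully in the presentation (Theorem~\ref{Thm:Presentation}), so it suffices to manipulate the defining relators formally using the already-available relations. First I would fix a pair $(f,f')$ with $\bar f > f'$ and a representative $h \in Q_{\bar f, f'}$, and set $\mathfrak{g} = \alpha(f,h,f')$. The corresponding type (II) relation reads $x_f\, h\, x_{f'} = \rho_{\mathfrak{g}}\, x_{\varepsilon(\mathfrak{g})}\, \lambda_{\mathfrak{g}}$.

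The main idea is to take the inverse of this equation inside $\fG$ and re-express it as a type (II) relation for a pair $(f'', f''')$ with $\bar{f''} \leq f'''$. Concretely, in $\fG$ one has $(\wt f \bullet \wt h \bullet \wt{f'})^{-1} = \wt{f'}^{-1} \bullet \wt{h}^{-1} \bullet \wt{f}^{-1}$, and using $\wt{f}^{-1} = \wt{\bar f} \bullet \wt{\theta_f}^{\,-1}$ (Equation~(\ref{Eq:ftildeInverse})) together with Part~(1) of Proposition~\ref{Prop:AssGroup}, this inverse is governed by $\alpha(\bar{f'}, h', f)$ for a suitable $h' \in H$ obtained from $h^{-1}$ by absorbing the relevant $\theta$-factors. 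Since $Q_{f',\bar f} = Q_{\bar f, f'}^{-1}$ by our choice of double-coset representatives (when $\bar f \neq f'$), the element $h'$ lies, after adjusting by elements of $H_{\bar{f'}}$ and $H_f$, in the representative set $Q_{\overline{\bar{f'}}, f} = Q_{f', f}$; crucially the new pair $(\bar{f'}, f)$ satisfies $\overline{\bar{f'}} = f' \leq f < \bar f$, placing it among the ``allowed'' relations with $\bar{f''} \leq f'''$. The adjustments by elements of $H_{\bar f}$ and $H_{f'}$ are exactly absorbed by type (I) relations (\ref{Eq:Type(I)}), via Lemma~\ref{Lem:alpha}, which describes how $\alpha$ transforms under left/right multiplication by elements of the associated subgroups.

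I would then carry out the bookkeeping in two steps. First, rewrite the inverse relation as a genuine type (II) relation $x_{\bar{f'}}\, h'\, x_f = \rho_{\mathfrak{g}'}\, x_{\varepsilon(\mathfrak{g}')}\, \lambda_{\mathfrak{g}'}$ with $\mathfrak{g}' = \alpha(\bar{f'}, h', f)$; since $f' \leq f$, this relation is available by hypothesis. Second, invert this available relation \emph{formally} in the free product modulo $\sim_1$ and use the HNN normal form (as in the proof of Theorem~\ref{Thm:Presentation}) to transport it back to an equation whose left side is $x_f\, h\, x_{f'}$ up to type (I) conjugation; comparing normal forms via the invariants $\eps,\rho,\lambda$ and Lemma~\ref{Lem:InvDepMult} then yields the original relation for $(f,f')$. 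The degenerate subcase $f = \bar f$ (so one cannot appeal to $Q_{f',\bar f} = Q_{\bar f,f'}^{-1}$) must be handled separately, using Part~(iii) of Definition~\ref{Def:GraphGroups} and the identity $\iota_f(\theta_f) = \theta_f$ to control the $\theta_f$-twist.

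The hard part will be the careful matching of double-coset representatives: one must verify that the twisted element $h'$ arising from $h^{-1}$ genuinely reduces, modulo $H_{\overline{\bar{f'}}}$ on the left and $H_f$ on the right, to \emph{the} chosen representative in $Q_{f',f}$, and that the residual left/right factors are precisely those absorbed by type (I) relations rather than introducing new, independent relators. This is where Lemma~\ref{Lem:alpha} and the cocycle-like transformation rule (\ref{Eq:alpha(B)}) do the essential work, and where sign/twist errors from the $\theta_f$ terms are most likely to creep in; I would therefore track the $\theta$-factors explicitly through (\ref{Eq:ftildeInverse}) and (\ref{Eq:ffbar}) at each stage rather than suppressing them.
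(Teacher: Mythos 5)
Your overall strategy---invert the relation, exploit $Q_{f',\bar f}=Q_{\bar f,f'}^{-1}$, track the $\theta$-twists via (\ref{Eq:ftildeInverse}), and recover the invariants through Lemma~\ref{Lem:InvDepMult}---is indeed the paper's strategy, but you misidentify the target pair, and this breaks the argument as written. Substituting $\wt{f}^{-1}=\wt{\bar f}\bullet\wt{\theta_f}^{\,-1}$ and $\wt{f'}^{-1}=\wt{\bar{f'}}\bullet\wt{\theta_{f'}}^{\,-1}$ into $(\wt{f}\bullet\wt{h}\bullet\wt{f'})^{-1}$ gives $\wt{\bar{f'}}\bullet\wt{\theta_{f'}^{-1}h^{-1}}\bullet\wt{\bar f}\bullet\wt{\theta_f}^{\,-1}$, so the inverse is governed by $\alpha(\bar{f'},\,\cdot\,,\bar f)$---outer letters $x_{\bar{f'}}$ and $x_{\bar f}$---and \emph{not} by $\alpha(\bar{f'},h',f)$ as you claim. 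The distinction matters twice. First, for the pair $(\bar{f'},\bar f)$ the representative is $h^{-1}$ itself: since $\bar f>f'$ forces $\bar f\neq f'$, one has $h^{-1}\in Q_{\bar f,f'}^{-1}=Q_{f',\bar f}=Q_{\overline{\bar{f'}},\bar f}$, so no ``adjustment into $Q_{f',f}$'' is needed---and none is available, because $h^{-1}$ lives in the double coset space $H_{f'}\backslash H/H_{\bar f}$, which bears no relation to $H_{f'}\backslash H/H_{f}$ in general. Second, the acceptance condition for $(\bar{f'},\bar f)$ is $\overline{\bar{f'}}=f'\leq\bar f$, which is exactly the hypothesis; your condition $f'\leq f$ for the pair $(\bar{f'},f)$ does \emph{not} follow from it (your chain ``$f'\leq f<\bar f$'' also asserts $f<\bar f$, equally unjustified---the total order is arbitrary). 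So with your pair, the relation you declare ``available by hypothesis'' is simply not among the allowed ones whenever, say, $f<f'<\bar f$.

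Once the slot is corrected, your plan collapses onto the paper's proof: write $x_f h x_{f'}=(x_f x_{\bar f})(x_{\bar{f'}}h^{-1}x_{\bar f})^{-1}(x_{\bar{f'}}x_{f'})$, observe that $x_t x_{\bar t}=\theta_t$ is always an accepted relation (trivial when $t\neq\bar t$, since then $\theta_t=1$ and $x_{\bar t}=x_t^{-1}$ by convention; the accepted $h=1$ relation for the pair $(t,t)$ when $t=\bar t$), and identify the resulting right-hand side $\theta_f\lam_{\mathfrak{g}}^{-1}x_{\overline{\eps(\mathfrak{g})}}\theta_{\eps(\mathfrak{g})}^{-1}\rho_{\mathfrak{g}}^{-1}\theta_{f'}$ with $\rho_{\mathfrak{g}'}x_{\eps(\mathfrak{g}')}\lam_{\mathfrak{g}'}$ by computing the invariants of $\mathfrak{g}'=\wt{\theta_f}\bullet\mathfrak{g}^{-1}\bullet\wt{\theta_{f'}}$ via Lemma~\ref{Lem:InvDepMult} plus one type (I) relation; no appeal to HNN normal forms is needed. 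Note also that your anticipated case split at $f=\bar f$ is superfluous: the only non-degeneracy required is $\bar f\neq f'$, which is automatic from the strictness of $\bar f>f'$, and the $\theta$-twists (including $\theta_{\eps(\mathfrak{g})}$ arising from inverting $x_{\eps(\mathfrak{g})}$) are handled uniformly by the relations $x_t x_{\bar t}=\theta_t$.
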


\begin{proof}
Let $f, f' \in E'$ be such that $\bar{f} > f'$, and let
$h \in Q_{\bar{f}, f'}\,$. As $\bar{f} \neq f'$, it follows that $h^{- 1} \in Q_{f', \bar{f}}$, so that we have an accepted type (II) relation 
\begin{equation}
\label{Eq:AccTypeII}
x_{\bar{f'}} h^{- 1} x_{\bar{f}} = \rho_{\mathfrak{g}} x_{\varepsilon(\mathfrak{g})} \lam_{\mathfrak{g}}, \quad \mathfrak{g} = \alpha(\bar{f'}, h^{-1}, \bar{f}).
\end{equation}
On the other hand, making use of Equations~(\ref{Eq:NormalForm}) and (\ref{Eq:ftildeInverse}), Part~(4) of Proposition~\ref{Prop:AssGroup}, plus the fact that, by Definition~\ref{Def:GraphGroups}, $\theta_f = \theta_{\bar{f}}$ for $f\in E$, we obtain
\begin{align}
\mathfrak{g}'&:= \alpha(f, h, f')\notag\\[1mm] 
&= \wt{f} \bullet \wt{h} \bullet \wt{f'}\notag\\[1mm]
&= (\wt{f} \bullet \wt{\bar{f}}) \bullet (\wt{\bar{f}}^{- 1} \bullet \wt{h} \bullet \wt{\bar{f'}}^{- 1}) \bullet 
(\wt{\bar{f'}} \bullet \wt{f'})\notag\\[1mm]
&= \wt{\theta_f} \bullet \mathfrak{g}^{- 1} \bullet \wt{\theta_{f'}}\notag\\[1mm]
&= \wt{\theta_f} \bullet \wt{\lam_{\mathfrak{g}}}^{- 1} \bullet \wt{\overline{\varepsilon(\mathfrak{g})}} \bullet \wt{\theta_{\varepsilon(\mathfrak{g})}}^{- 1} \bullet
\wt{\rho_{\mathfrak{g}}}^{- 1} \bullet \wt{\theta_{f'}}.\label{Eq:gprimeDecomp}
\end{align}
We now apply Lemma~\ref{Lem:InvDepMult} to the decomposition
\[
\mathfrak{g}' = (\widetilde{\theta_f\lambda_{\mathfrak{g}}^{-1}}\bullet \widetilde{\overline{\varepsilon(\mathfrak{g})}}) \bullet \widetilde{\theta_{\varepsilon(\mathfrak{g})}^{-1} \rho_{\mathfrak{g}}^{-1} \theta_{f'}} = (\overline{\varepsilon(\mathfrak{g})}, \theta_f \lambda_{\mathfrak{g}}^{-1} H_{\overline{\varepsilon(\mathfrak{g})}}, \theta_f \lambda_{\mathfrak{g}}^{-1} \mapsto 1) \bullet (e, H, 1 \mapsto \theta^{-1}_{\varepsilon(\mathfrak{g})} \rho_{\mathfrak{g}}^{-1} \theta_{f'}) 
\]
of $\mathfrak{g}'$ obtained in (\ref{Eq:gprimeDecomp}). In the notation of that lemma, we may take $a = b = h =1$,  so that, by Part~(ii) of Definition~\ref{Def:GroupGraphGroups}, 
\[
\mathfrak{u} = \alpha(\overline{\varepsilon(\mathfrak{g})}, 1, e) = (\overline{\varepsilon(\mathfrak{g})}, H_{\overline{\varepsilon(\mathfrak{g})}}, 1 \mapsto 1). 
\]
From Parts (i) and (iii) of Lemma~\ref{Lem:InvDepMult}, we find that
\[
\varepsilon(\mathfrak{g}') = \varepsilon(\mathfrak{u}) = \overline{\varepsilon(\mathfrak{g})}
\]
and that 
\[
\lam_{\mathfrak{g}'} = \iota_{\varepsilon(\mathfrak{g}')}(\rho_{\mathfrak{g}'}^{- 1} \theta_f
\lam_{\mathfrak{g}}^{- 1}) \theta_{\varepsilon(\mathfrak{g})}^{- 1} \rho_{\mathfrak{g}}^{- 1} \theta_{f'}.
\]
Finally, using the accepted type (II) relations
$x_t x_{\bar{t}} = \theta_t$ for $t \in E$, Relation (\ref{Eq:AccTypeII}), and a suitable type (I) relation, it follows that
\begin{align*}
x_f h x_{f'} &= (x_f x_{\bar{f}}) (x_{\bar{f'}} h^{- 1} x_{\bar{f}})^{- 1} (x_{\bar{f'}} x_{f'})\\[1mm]
&= \theta_f (\rho_{\mathfrak{g}} x_{\varepsilon(\mathfrak{g})} \lam_{\mathfrak{g}})^{- 1} \theta_{f'}\\[1mm]
&= \theta_f \lam_{\mathfrak{g}}^{- 1} x_{\overline{\varepsilon(\mathfrak{g})}} \theta_{\varepsilon(\mathfrak{g})}^{- 1} \rho_{\mathfrak{g}}^{- 1} \theta_{f'}\\[1mm]
&= \rho_{\mathfrak{g}'} ((\rho_{\mathfrak{g}'}^{- 1} \theta_f \lam_{\mathfrak{g}}^{- 1}) x_{\varepsilon(\mathfrak{g}')}) \theta_{\varepsilon(\mathfrak{g})}^{- 1} \rho_{\mathfrak{g}}^{- 1} \theta_{f'}\\[1mm]
&= \rho_{\mathfrak{g}'} (x_{\varepsilon(\mathfrak{g}')} \iota_{\varepsilon(\mathfrak{g}')}(\rho_{\mathfrak{g}'}^{- 1} \theta_f \lam_{\mathfrak{g}}^{- 1})) \theta_{\varepsilon(\mathfrak{g})}^{- 1} \rho_{\mathfrak{g}}^{- 1} \theta_{f'}\\[1mm]
&= \rho_{\mathfrak{g}'} x_{\varepsilon(\mathfrak{g}')} \lam_{\mathfrak{g}'}, 
\end{align*}
as desired.
\end{proof}

\subsection{Solving the extension problem}
\label{Subsec:ExtensionProblem}
Group-like graphs of groups and their fundamental groups arise naturally in the study of group actions on groupoids. Let $\Phi: G \rightarrow\mathrm{Aut}({\bf X})$ be an action of the group $G$ on the connected groupoid ${\bf X}$. Then, in a manner analogous to classical Bass-Serre theory for group actions on (simplicial) graphs, one can assodiate with $\Phi$ a group-like graph of groups $(\mathbb{H}, \iota, \theta, \alpha)$ such that there exists a short exact sequence
\[
1 \longrightarrow N \longrightarrow \Pi \longrightarrow G \longrightarrow 1,
\]
where $\Pi = \pi_1(\mathbb{H}, \iota, \theta, \alpha)$ and $N \cong \pi_1({\bf X})$; cf.\ \cite{Ba1} and \cite{BM1}. In this setting of a \emph{structure theorem for group actions on groupoids}, Theorem~\ref{Thm:PresGpAct} arises as the special case where the action is of simplicial type (i.\,e., the groupoid ${\bf X}$ is a blow-up of a non-empty set). Our solution of the Schreier-type extension problem for group extensions of polygroups by groups below in particular provides an alternative approach to Theorem~\ref{Thm:PresGpAct}; see Section~\ref{Subsec:ProofMT}.

Given a group $H$ and a polygroup $(E; \circ,\, \bar{}\,, e),$ let $\cG\cE\cX\cT(E, H)$ and ${\rm GEXT}(E, H)$ be
as defined in \ref{Subsec:GroupExt}. We come now to the description of ${\rm GEXT}(E, H)$ via group-like graphs of 
groups as {\em generalized factor systems}.

Let ${\cG\cF\cS}(E, H)$ be the set of those group-like graphs of groups $(\H, \iota, \theta, \alpha)$ based on the pair
$(\Gam(E), H)$, such that the hyperoperation of the polygroup $E$ is connected with the operations $\alpha, \varepsilon$ of $(\mathbb{H}, \iota, \theta, \alpha)$ via the identity 
\begin{equation}
\label{Eq:GFS}
f \circ f' = \big\{\eps(\alpha(f, h, f')):\,h \in H\big\},\quad (f, f'\in E).
\end{equation}
$\cG\cF\cS(E, H)$ is a groupoid, possibly empty, with morphisms defined as follows.
\begin{de}
\label{Def:IsoGFS}
By a morphism $(\H, \iota, \theta, \alpha) \lra (\H', \iota', \theta', \alpha')$ between two group-like graphs of groups $(\mathbb{H}, \iota, \theta, \alpha)$ and $(\mathbb{H}', \iota', \theta', \alpha')$ based on the same pair $(E, H)$ we mean a pair $(c, d)$ consisting of
maps $c: E \lra H,$ $f \mapsto c_f,$ and $d: E \lra H, f \mapsto d_f,$ such that

\vspace{-2mm}
\ben
\item[\rm (i)] $c_e = d_e = 1,$ $d_f = c_{\bar{f}}^{- 1}$ for $f \neq \bar{f},$ and $d_f c_f \in H_f$ for $f = \bar{f},$

\vspace{2mm}

\item[\rm (ii)] $H'_f = c_f H_f c_f^{- 1},$ $\iota'_f(c_f h c_f^{- 1}) = d_f^{- 1} \iota_f(h) d_f$ for
$h \in H_f,$ and $\theta'_f = c_f \theta_f \iota_f(d_f c_f) d_f$ for $f = \bar{f},$

\vspace{2mm}
 
\item[\rm (iii)] For any triple $A = (f, h, f') \in E \times H \times E,$ setting $A' = (f, d_f^{- 1} h c_{f'}^{- 1}, 
f')$, we have
\begin{align*}
\eps'(\alpha'(A')) &= \eps(\alpha(A)),\\[1mm]
\rho'(\alpha'(A')) &= c_f\, \rho(\alpha(A))\, c_{\eps(\alpha(A))}^{- 1},\\[1mm]
\lam'(\alpha'(A'))(c_f\, x\, c_{\eps(\alpha(A))}^{- 1}) &= d_{\eps(\alpha(A))}^{- 1}\,\lam(\alpha(A))(x)\, d_{f'},
\quad(x \in \rho(\alpha(A))).
\end{align*} 
\een
\end{de}

The composition rule for composable morphisms $(c, d)$ and $(c', d')$ is $(c', d') \circ (c, d):= (c'', d'')$, where
$c''_f = c'_f c_f, d''_f = d_f d'_f$ for $f \in E$; in particular, 
\[
(c, d): (\H, \iota, \theta, \alpha) \to (\H', \iota', \theta', \alpha')\,\mbox{and}\,(f \mapsto c_f^{- 1}, 
f \mapsto d_f^{- 1}): (\H', \iota', \theta', \alpha') \to (\H, \iota, \theta, \alpha)
\] 
are isomorphisms inverse to each other. We denote by $GFS(E, H)$ the set of isomorphism classes (connected components)
of the groupoid $\cG\cF\cS(E, H)$.
\begin{te}
\label{Thm:SolExtP}
Let $H$ be a group, and let $(E, \circ,\,{}^-, e)$ be a polygroup. Then 
there exists a full and essentially surjective\footnote{Such a functor is also called ``representative'' or ``dense'' in the literature; cf.\, for instance \cite[Chap.~1, Note~3.11]{PP} and \cite[Def.~12.5]{HS}.} covariant functor 
\[
\delta: \cG\cF\cS(E, H) \lra \cG\cE\cX\cT(E, H).
\] 
In particular, the induced map $\wh{\delta}: {\rm GFS}(E, H) \lra {\rm GEXT}(E, H)$ is a bijection. 
\end{te}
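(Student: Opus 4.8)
The plan is to construct the functor $\delta$ explicitly and then verify the two universal-category-theoretic properties (fullness and essential surjectivity) that together yield the claimed bijection $\widehat\delta$. First I would define $\delta$ on objects by sending a group-like graph of groups $(\mathbb{H},\iota,\theta,\alpha)\in\cG\cF\cS(E,H)$ to the triple $\bigl(\pi_1(\mathbb{H},\iota,\theta,\alpha),\, {}^\sim\colon H\to\fG,\,\eps\colon\fG\to E\bigr)$, where $\fG=\pi_1(\mathbb{H},\iota,\theta,\alpha)$ is the fundamental group built in Proposition~\ref{Prop:AssGroup}. That this triple actually lies in $\cG\cE\cX\cT(E,H)$ is precisely the content of Part~(5) of Proposition~\ref{Prop:AssGroup}: the map $\eps$ retracts the embedding $E\hookrightarrow\fG$ and induces an isomorphism $\widehat\eps$ from the double-coset polygroup $\wt H\backslash\fG/\wt H$ onto $(E;\circ,{}^-,e)$, using exactly the compatibility~(\ref{Eq:GFS}) that defines membership in $\cG\cF\cS(E,H)$. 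On morphisms, a pair $(c,d)$ as in Definition~\ref{Def:IsoGFS} should be sent to the group isomorphism $\fG\to\fG'$ that is the identity on $\wt H$ and sends the normal-form element $\wt{\rho}\bullet\wt f\bullet\wt\lambda$ to $\wt{\rho c_f^{-1}}\bullet'\wt{f}\bullet'\wt{d_f^{-1}\lambda}$ (informally, conjugation-twisting of the stable letters by the $c_f,d_f$); the three conditions of Definition~\ref{Def:IsoGFS} are engineered so that this assignment respects the type~(I) and type~(II) data, hence descends to a well-defined homomorphism commuting with both ${}^\sim$ and $\eps$. Checking functoriality (compatibility with the composition rule $(c'',d'')$) is then a direct but routine bookkeeping computation that I would only indicate, not carry out.

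With $\delta$ in hand, the bijection $\widehat\delta$ follows from two assertions. \emph{Essential surjectivity}: given any extension $(G,j,p)\in\cG\cE\cX\cT(E,H)$, I would reconstruct a group-like graph of groups whose fundamental group is isomorphic to $G$ over $(E,H)$. Identify $H$ with $j(H)\le G$; for each $f\in E$ choose a representative $\sigma(f)\in G$ with $p(\sigma(f))=f$, normalised so $\sigma(e)=1$ and $\sigma(\bar f)=\sigma(f)^{-1}$ when $f\ne\bar f$; set $H_f:=H\cap\sigma(f)H\sigma(f)^{-1}$, define $\iota_f(h):=\sigma(f)^{-1}h\sigma(f)$, put $\theta_f:=\sigma(f)^2\in H_f$ for $f=\bar f$, and define $\alpha(f,h,f')$ by expressing the product $\sigma(f)h\sigma(f')\in G$ in the normal form coming from the coset decompositions. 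The axioms (i)--(iv) of Definition~\ref{Def:GroupGraphGroups} are then direct translations of associativity and the double-coset structure in $G$, and Theorem~\ref{Thm:Presentation} supplies the isomorphism $\pi_1(\mathbb{H},\iota,\theta,\alpha)\cong G$ compatible with the structure maps. \emph{Fullness}: given objects $\xi,\xi'\in\cG\cF\cS(E,H)$ and an isomorphism $\Phi\colon\delta(\xi)\to\delta(\xi')$ of extensions, I would read off the twisting data by comparing how $\Phi$ acts on the stable-letter generators $\wt f$, reversing the construction of the previous paragraph to produce a pair $(c,d)$ with $\delta(c,d)=\Phi$; the conditions of Definition~\ref{Def:IsoGFS} are exactly what it means for $(c,d)$ to be a morphism, so this recovers $\Phi$ from an arrow of $\cG\cF\cS(E,H)$.

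Finally, that a full, essentially surjective functor between \emph{groupoids} induces a bijection on connected-component (isomorphism-class) sets is a general fact I would invoke: essential surjectivity gives surjectivity of $\widehat\delta$ on the nose, and for injectivity, if $\delta(\xi)$ and $\delta(\xi')$ are isomorphic in $\cG\cE\cX\cT(E,H)$ then fullness produces a morphism $\xi\to\xi'$ in $\cG\cF\cS(E,H)$, which is automatically an isomorphism since $\cG\cF\cS(E,H)$ is a groupoid, placing $\xi,\xi'$ in the same component. I expect the main obstacle to be the \emph{fullness} step: one must verify that an arbitrary extension-isomorphism $\Phi$, a priori just a group map commuting with $j$ and $p$, is forced to act on generators in the rigid ``twist by $(c,d)$'' form, and that the resulting $c,d$ satisfy the precise cocycle-type relations (ii) and the $\alpha$-compatibility (iii) of Definition~\ref{Def:IsoGFS}. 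This requires carefully tracking how $\Phi$ interacts with the normal form~(\ref{Eq:NormalForm}) and with the defining relations of $\fG$, and is where the bulk of the genuine verification lies; the object-level construction and the categorical conclusion are comparatively formal.
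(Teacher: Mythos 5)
Your proposal is correct and follows essentially the same route as the paper: the identical object- and morphism-level definition of $\delta$ (with Part~(5) of Proposition~\ref{Prop:AssGroup} together with (\ref{Eq:GFS}) placing $\delta(\mathbb{H},\iota,\theta,\alpha)$ in $\cG\cE\cX\cT(E,H)$, and $\delta(c,d)$ acting as the $(c_f,d_f)$-twist on normal forms), fullness by reading off the pair $(c,d)$ from $\varphi(\wt{f})$ using $\eps'\circ\varphi=\eps$ and verifying the conditions of Definition~\ref{Def:IsoGFS}, essential surjectivity via the same normalized section $\sigma$ and the same formulas for $H_f$, $\iota_f$, $\theta_f$, $\alpha$, and the same groupoid argument for the bijectivity of $\wh{\delta}$. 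The only (immaterial) deviation is that the paper obtains $G\cong\pi_1(\H,\iota,\theta,\alpha)$ by exhibiting the explicit bijection $\zeta\colon j(h_1)\si(f)j(h_2)\mapsto(f,h_1H_f,h_1\mapsto h_2)$ and checking multiplicativity directly from (\ref{Eq:Operation}), rather than routing through the presentation of Theorem~\ref{Thm:Presentation} as you suggest.
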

\bp
We define a covariant functor $\delta:\cG\cF\cS(E, H) \lra \cG\cE\cX\cT(E, H)$ as follows. An object 
$(\H, \iota, \theta, \alpha)$ of the category $\cG\cF\cS(E, H)$ is sent by $\delta$ to the triple 
\[
\delta(\H, \iota, \theta, \alpha):= (\fG,\,\,\wt{}: H \rightarrow \fG,\, 
\eps: \fG \rightarrow E),\quad \mathfrak{G} = \pi_1(\mathbb{H}, \iota, \theta, \alpha),
\] 
which is an object of the category $\cG\cE\cX\cT(E, H)$ in view of  Part~(5) of Proposition~\ref{Prop:AssGroup} and Condition~(\ref{Eq:GFS}). 

A morphism $(c, d): (\H, \iota, \theta, \alpha) \lra (\H', \iota', \theta', \alpha')$ of the groupoid $\cG\cF\cS(E, H)$
is sent by $\delta$ to the morphism 
\[
\delta(c, d): (\fG,\,\,\wt{}\,:H \rightarrow \fG,\, \eps: \fG \rightarrow E) \lra (\fG',\,\,\wt{\,}\,\,'\!\!: H \rightarrow \fG',\, \eps'\!: \fG' \rightarrow E)
\] 
of the groupoid $\cG\cE\cX\cT(E, H)$, defined by 
\[
\delta(c, d)(f, a H_f, a \mapsto b):= (f, a c_f^{- 1} H_f', ac_f^{- 1} \mapsto d_f^{- 1} b),\quad (a, b\in H).
\]
This definition of $\delta(c,d)$ invites a fair amount of comment.

a) \textit{$\delta(c,d): \mathfrak{G} \rightarrow \mathfrak{G}'$ is well defined.} Indeed, let 
\[
(f, \tilde{a} H_f, \tilde{a} \mapsto \iota_f(h)^{-1} b)
\]
be a second description for the group element $\mathfrak{g} = (f, aH_f, a \mapsto b)\in\mathfrak{G}$, where $\tilde{a} = ah$ with some $h\in H_f$. Then 
\[
\tilde{a} c_f^{-1} H'_f = a h c_f^{-1} H'_f = a c_f^{-1}\cdot c_f h c_f^{-1} H'_f = a c_f^{-1} H'_f 
\]
since $c_f h c_f^{-1}\in H'_f$ by Part~(ii) of Definition~\ref{Def:IsoGFS}. Also, again using Part~(ii) of Definition~\ref{Def:IsoGFS}, the rigid map $\lambda: a c_f^{-1} H'_f \rightarrow H$ given by $ac_f^{-1} \mapsto d_f^{-1}b$ is seen to send $\tilde{a} c_f^{-1} = a c_f^{-1} \cdot c_f h c_f^{-1}$ to 
\[
\iota'_f(c_f h c_f^{-1})^{-1} d_f^{-1} b = (d_f^{-1} \iota_f(h) d_f)^{-1} d_f^{-1} b = d_f^{-1} \iota_f(h)^{-1} b,  
\]
thus agrees with the rigid map $\tilde{a} c_f^{-1} H'_f \rightarrow H$ given by $\tilde{a} c_f^{-1} \mapsto d_f^{-1} \iota_f(h)^{-1} b$, as desired.

b) \textit{$\delta(c,d)$ is injective.} Let $\mathfrak{g}_1 = (f_1, a_1H_{f_1}, a_1 \mapsto b_1)$ and $\mathfrak{g}_2 = (f_2, a_2H_{f_2}, a_2 \mapsto b_2)$ be elements of $\mathfrak{G}$, and suppose that
\begin{equation}
\label{Eq:deltaInj}
(f_1, a_1 c_{f_1}^{-1} H'_{f_1}, a_1 c_{f_1}^{-1} \mapsto d_{f_1}^{-1} b_1) = (f_2, a_2 c_{f_2}^{-1} H'_{f_2}, a_2 c_{f_2}^{-1} \mapsto d_{f_2}^{-1} b_2).
\end{equation}
We have to show that $\mathfrak{g}_1 = \mathfrak{g}_2$. Comparing first components in (\ref{Eq:deltaInj}), we see that $f_1=f_2$ while, comparing second components, we observe that $a_2 c_{f_1}^{-1} = a_1 c_{f_1}^{-1} h$ for some $h\in H_{f_1}'$. Thus,
\[
a_2 H_{f_2} = a_1 c_{f_1}^{-1}h c_{f_1} H_{f_1} = a_1 H_{f_1},
\]  
since $c_{f_1}^{-1} h c_{f_1} \in H_{f_1}$. Also, comparing third components in (\ref{Eq:deltaInj}), we find that 
\[
\iota_{f_1}'(h)^{-1} d_{f_1}^{-1} b_1 = d_{f_1}^{-1} b_2.
\]
Hence, the rigid map $a_1 H_{f_1} \rightarrow H$ given by $a_1\mapsto b_1$ sends $a_2 = a_1\cdot c_{f_1}^{-1} h c_{f_1}$ to 
\[
\iota_{f_1}(c_{f_1}^{-1} h c_{f_1})^{-1} b_1 = (d_{f_1} \iota_{f_1}'(h) d_{f_1}^{-1})^{-1} b_1 = d_{f_1}\cdot \iota_{f_1}'(h)^{-1} d_{f_1}^{-1} b_1 = b_2,
\]
as required.

c) \textit{$\delta(c,d)$ is a homomorphism.} For $i=1,2$, let $\mathfrak{g}_i = (f_i, a_i H_{f_i}, a_i \mapsto b_i) \in \fG$, and set $\mathfrak{g}_i':= \delta(c, d)(\mathfrak{g}_i) = (f_i, a_i c_{f_i}^{- 1} H_{f_i}',
a_i c_{f_i}^{- 1} \mapsto d_{f_i}^{- 1} b_i) \in \fG'$. By (\ref{Eq:Operation}), we get
\[
\mathfrak{g}_1 \bullet \mathfrak{g}_2 = (\eps(\alpha(A)), a_1 \rho(\alpha(A)), a_1 x \mapsto \lam(\alpha(A))(x) b_2), 
\]
where $A = (f_1, b_1 a_2, f_2),$ and
\begin{equation}
\label{Eq:PrimeProds}
\mathfrak{g}_1' \bullet' \mathfrak{g}_2' = (\eps'(\alpha'(A')), a_1 c_{f_1}^{- 1} \rho'(\alpha'(A')), a_1 c_{f_1}^{- 1} y \mapsto 
\lam'(\alpha'(A'))(y) d_{f_2}^{- 1} b_2),
\end{equation}
where $A' = (f_1, d_{f_1}^{- 1} b_1 a_2 c_{f_2}^{- 1}, f_2)$. Setting $\rho(\alpha(A)) = p H_{\varepsilon(\alpha(A))}$ with some suitable $p\in H$, we thus find that
\begin{equation}
\label{Eq:deltaImage}
\delta(c,d)(\mathfrak{g}_1\bullet \mathfrak{g}_2) = \big(\varepsilon(\alpha(A)),\, a_1 p c_{\varepsilon(\alpha(A))}^{-1} H'_{\varepsilon(\alpha(A))},\, a_1 p c_{\varepsilon(\alpha(A))}^{-1} \mapsto dp_{\varepsilon(\alpha(A))}^{-1} \lambda(\alpha(A))(p) b_2\big),
\end{equation}
which now has to be compared with (\ref{Eq:PrimeProds}). Noting that the triples $A, A'$ fit into the context of Part (iii) of Definition~\ref{Def:IsoGFS}, we in particular infer that
\[
\varepsilon'(\mathfrak{g}_1' \bullet' \mathfrak{g}_2') = \varepsilon'(\alpha'(A')) = \varepsilon(\alpha(A)) = \varepsilon'(\delta(c,d)(\mathfrak{g}_1\bullet \mathfrak{g}_2))
\]
and that
\begin{align*}
\rho'(\mathfrak{g}_1' \bullet' \mathfrak{g}_2') &= a_1 c_{f_1}^{-1} \rho'(\alpha'(A'))\\[1mm] 
&= a_1 c_{f_1}^{-1} (c_{f_1} \rho(\alpha(A)) c_{\varepsilon(\alpha(A))}^{-1})\\[1mm] 
&= a_1 p H_{\varepsilon(\alpha(A))} c_{\varepsilon(\alpha(A))}^{-1}\\[1mm] 
&= a_1 p c_{\varepsilon(\alpha(A))}^{-1} H_{\varepsilon(\alpha(A))}'\\[1mm] 
&= \rho'(\delta(c,d)(\mathfrak{g}_1\bullet \mathfrak{g}_2)),
\end{align*}
where we have made use of Part~(ii) of Definition~\ref{Def:IsoGFS} in Step~4. Finally, again using Part~(iii) of Definition~\ref{Def:IsoGFS}, we see that the element
\[
a_1 p c_{\varepsilon(\alpha(A))}^{-1} = a_1 c_{f_1}^{-1} \cdot c_{f_1} p c_{\varepsilon(\alpha(A))}^{-1} \in a_1 c_{f_1}^{-1} \rho'(\alpha'(A'))
\]
is mapped under $\lambda'(\mathfrak{g}_1' \bullet' \mathfrak{g}_2')$ to
\[
\lambda'(\alpha'(A'))(c_{f_1} p c_{\varepsilon(\alpha(A))}^{-1}) d_{f_2}^{-1} b_2 = d_{\varepsilon(\alpha(A))}^{-1} \lambda(\alpha(A))(p) d_{f_2} d_{f_2}^{-1} b_2 = d_{\varepsilon(\alpha(A))}^{-1} \lambda(\alpha(A))(p) b_2,  
\]
so that $\lambda'(\mathfrak{g}_1' \bullet' \mathfrak{g}_2') = \lambda'(\delta(c,d)(\mathfrak{g}_1\bullet \mathfrak{g}_2))$. It follows that
\[
\delta(c, d)(\mathfrak{g}_1 \bullet \mathfrak{g}_2) = \delta(c,d)(\mathfrak{g}_1) \bullet' \delta(c,d)(\mathfrak{g}_2), \quad(\mathfrak{g}_1, \mathfrak{g}_2\in\mathfrak{G}),
\]
as desired. Surjectivity of $\delta(c,d)$ is obvious, while the equations $\delta(c,d)\circ \widetilde{} = \widetilde{}\,\,'$ and $\varepsilon'\circ \delta(c,d) = \varepsilon$ follow from Parts~(i) and (ii) of Definition~\ref{Def:IsoGFS} (more precisely the facts that $c_e = d_e = 1$ and $H_e' = c_e H_e c_e^{-1} =  H$) plus the definition of the map $\delta(c,d)$. Thus, $\delta(c,d)$ is indeed a morphism in the category $\mathcal{GEXT}(E,H)$ from $(\mathfrak{G},\, \widetilde{}, \varepsilon)$ to $(\mathfrak{G}',\, \widetilde{}\,\,', \varepsilon')$ as claimed.  Functoriality of $\delta$ is clear.
 
Next, we show that the functor $\delta$ is {\em full}; in particular,
the induced map $\wh{\delta}: {\rm GFS}(E, H) \lra {\rm GEXT}(E, H)$ is injective. Let 
$\fH = (\H, \iota, \theta, \alpha), \fH' = (\H', \iota', \theta', \alpha'),$ be objects in the category $\cG\cF\cS(E, H)$, and let $\delta(\fH) = (\fG,\,\wt{}\,, \eps), 
\delta(\fH') = (\fG',\,\wt{}\,\,', \eps')$ be their images under the functor $\delta$. Let $\varphi: \fG \rightarrow \fG'$ be a group isomorphism satisfying the compatibility conditions 
$\varphi \circ\,\wt{} = \wt{}\,\,'$ and $\eps' \circ \varphi = \eps$; that is, $\varphi:\delta(\mathfrak{H})  \rightarrow \delta(\mathfrak{H}')$ is a morphism in the category $\mathcal{GEXT}(E,H)$. In order to lift $\varphi$ to a morphism $(c, d): \fH \rightarrow \fH',$ we proceed as follows. Let $f \in E$. As
$\eps(\wt{f}) = f = \eps'(\varphi(\wt{f}))$, there exist $c_f, d_f \in H$ such that $\varphi(\wt{f}) = 
(f, c_f^{- 1} H_f', c_f^{- 1} \mapsto d_f^{- 1})$. Clearly, we may choose the $c_f$'s and $d_f$'s such that the 
normalizing condition (i) from Definition~\ref{Def:IsoGFS} is satisfied, and we check that Conditions (ii) and (iii) hold, to conclude that $(c, d) \in {\rm Mor}(\fH, \fH')$. For instance, the identities in Part~(iii) of Definition~\ref{Def:IsoGFS} result from the fact that $\varphi$ is a homomorphism applied to, say, elements $\mathfrak{g}_1 = (f_1, H_{f_1}, 1\mapsto h)$ and $\mathfrak{g}_2 = \widetilde{f_2}$. We omit the straightforward but somewhat tedious computations. Since, by construction, the group homomorphisms $\delta(c,d)$ and $\varphi$ agree on the generating system $\widetilde{H} \cup \widetilde{E}$ of $\mathfrak{G}$, we have  $\delta(c, d) = \varphi$ as desired. 

Finally, we have to show that $\delta$ is {\em essentially surjective}; that is, given any  object 
\[
\mathbb{G} = \big(G,\, j: H \rightarrow G,\, p: G\rightarrow E\big) 
\]
of $\cG\cE\cX\cT(E, H)$, there exists an object $\fH$  of $\cG\cF\cS(E, H)$ such that $\mathbb{G} \cong \delta(\fH)$; 
in particular, the induced map $\wh{\delta}$ is surjective. In order to exhibit such an object $\fH$,  we choose a section $\sigma: E \rightarrow G$ of the projection map $p: G \rightarrow E$ such that $\si(e) = 1, \si(\bar{f}) = \si(f)^{- 1}$ for $f \neq \bar{f}$, 
and $\si(f)^2 \in j(H)$ for $f = \bar{f}$. We then define a graph of groups $(\H, \iota, \theta)$ based on 
$(\Gam(E), H)$ via
\begin{align*}
H_f &:= j^{- 1}\big(j(H) \cap \si(f) j(H) \si(f)^{- 1}\big),\quad (f\in E),\\[1mm]
\iota_f(h) &:= j^{- 1}\big(\si(f)^{- 1} j(h) \si(f)\big),\quad(f\in E,\, h \in H_f),\\[1mm]
\theta_f &:= j^{- 1}\big(\si(f) \si(\bar{f})\big),\quad (f\in E).
\end{align*}
It is straightforward to check Conditions~(i)--(iii) in Definition~\ref{Def:GraphGroups}. Moreover, since the surjective map 
\[
H \times E \times H \lra G,\quad (h_1, f, h_2) \mapsto j(h_1) \si(f) j(h_2),
\]
determines the equivalence relation (\ref{Eq:Equiv}) on the product $H \times E \times H$, we obtain a well-defined bijection 
\[
\zeta: G \lra \fG,\quad j(h_1) \si(f) j(h_2) \mapsto (f, h_1 H_f, h_1 \mapsto h_2).
\] 
By construction, we have $\zeta\circ j = \,\tilde{}$ and $\varepsilon\circ \zeta = p$.
 
In order to extend the graph of groups $(\H, \iota, \theta)$, in accordance with Definition~\ref{Def:GroupGraphGroups}, to a
group-like graph of groups, we have to define a suitable  map $\alpha: E \times H \times E \rightarrow \fG$. For a triple 
$A = (f, h, f') \in E \times H \times E$, we choose $a = a_A, b = b_A \in H$ such that 
\[
\si(f) j(h) \si(f') = j(a) \si(p(\si(f) j(h) \si(f'))) j(b),
\] 
and define $\alpha(A) \in \fG$ via
\begin{align*}
\eps(\alpha(A)) &:= p(\si(f) j(h) \si(f')),\\[1mm] 
\rho(\alpha(A)) &:= a H_{\eps(\alpha(A))},\\[1mm] 
\lam(\alpha(A))(ap) &:= \iota_{\varepsilon(\alpha(A))}(p)^{-1} b,\quad(p\in H_{\varepsilon(\alpha(A))}),
\end{align*}
noting that this definition does not depend on the choice of the elements $a$ and $b$. One easily checks Conditions
(i)--(iv) of Definition~\ref{Def:GroupGraphGroups} to conclude that $\fH = (\H, \iota, \theta, \alpha)$ is a group-like graph of groups based on the pair $(\Gamma(E), H)$. In addition, $\fH$ also satisfies (\ref{Eq:GFS}) since, by assumption, the projection map $p: G \rightarrow E$ 
induces an isomorphism $j(H) \sm G/ j(H) \lra E$ of polygroups. Consequently, $\fH$ is an object of the category $\cG\cF\cS(E, H)$. Finally,
using the definition (\ref{Eq:Operation}) of the group operation $\bullet$ on $\fG$, it is straightforward to see that the bijective
map $\zeta: G \rightarrow \fG$ defined above is a group homomorphism, and thus a morphism in the category $\cG\cE\cX\cT(E, H)$ from $\mathbb{G}$ to $\delta(\fH)$, whence the required isomorphism $ \mathbb{G} \cong \delta(\mathfrak{H})$.
\ep
\subsection{Proof of Theorem~\ref{Thm:PresGpAct}}
\label{Subsec:ProofMT}
Given a group $G$ and a subgroup $H \leq G$, we denote by $E$ the canonical polygroup with support $H \sm G/ H$. Thus, 
\[
\mathbb{G}:= (G,\, j: H \rightarrow G,\, p: G \rightarrow E) 
\]
is an object of the category $\cG\cE\cX\cT(E, H)$,  
where $j$ is the natural inclusion map, and $p$ is the canonical projection given by $g \mapsto C(g):= H g H$. By Theorem~\ref{Thm:SolExtP} there exists some object 
$\cH = (\H, \iota, \theta, \alpha)$ in  $\cG\cF\cS(E, H)$, such that 
\[
\mathbb{G} \cong \delta(\fH) = (\fG,\,\wt{}\,:H \rightarrow \fG, \eps: \fG \rightarrow E), 
\]
where $\mathfrak{G} = \pi_1(\H, \iota, \theta, \alpha)$. Theorem~\ref{Thm:PresGpAct} is now 
an immediate consequence of Theorem~\ref{Thm:Presentation}, the proof of Theorem~\ref{Thm:SolExtP} (more precisely, the construction of the group-like graph of groups $\mathcal{H}$), and Lemma~\ref{Lem:ReduceII}.

\end{document}